\setlist{parsep=0pt} 
\setlist[itemize,enumerate]{nolistsep,itemsep=1pt,topsep=1pt} 
\setlist{leftmargin=5mm}
\definecolor{ThmColor}{rgb}{0.85,0.85,0.99}
\definecolor{DefColor}{rgb}{0.81,0.92,0.97}
\definecolor{RemColor}{rgb}{0.92,0.85,0.92}
\definecolor{ExoColor}{rgb}{0.81,0.99,0.81}
\crefname{exercise}{exercise}{exercises}
\begin{document}

\pagestyle{empty}
\newgeometry{margin=1in}

\hypersetup{pageanchor=false}

\thispagestyle{empty}

\vspace*{1cm}
\begin{center}

{\Huge\bfseries\scshape
Long-time dynamics \\[1mm]  
of stochastic differential equations \\[1mm]
}

\vspace*{12mm}
{\large Nils Berglund}\\[2mm]
{\large Institut Denis Poisson -- UMR 7013}\\[2mm]
{\large Universite d'Orleans, Universite de Tours, CNRS}

\vspace*{12mm}
{\Large Lecture notes\\[4mm]
Summer School --- From kinetic equations to
statistical mechanics}\\[2mm]
{\large Saint Jean de Monts, June--July 2021}\\
\vspace*{12mm}

\begin{figure}[h]
\begin{center}
\includegraphics[width=12cm]{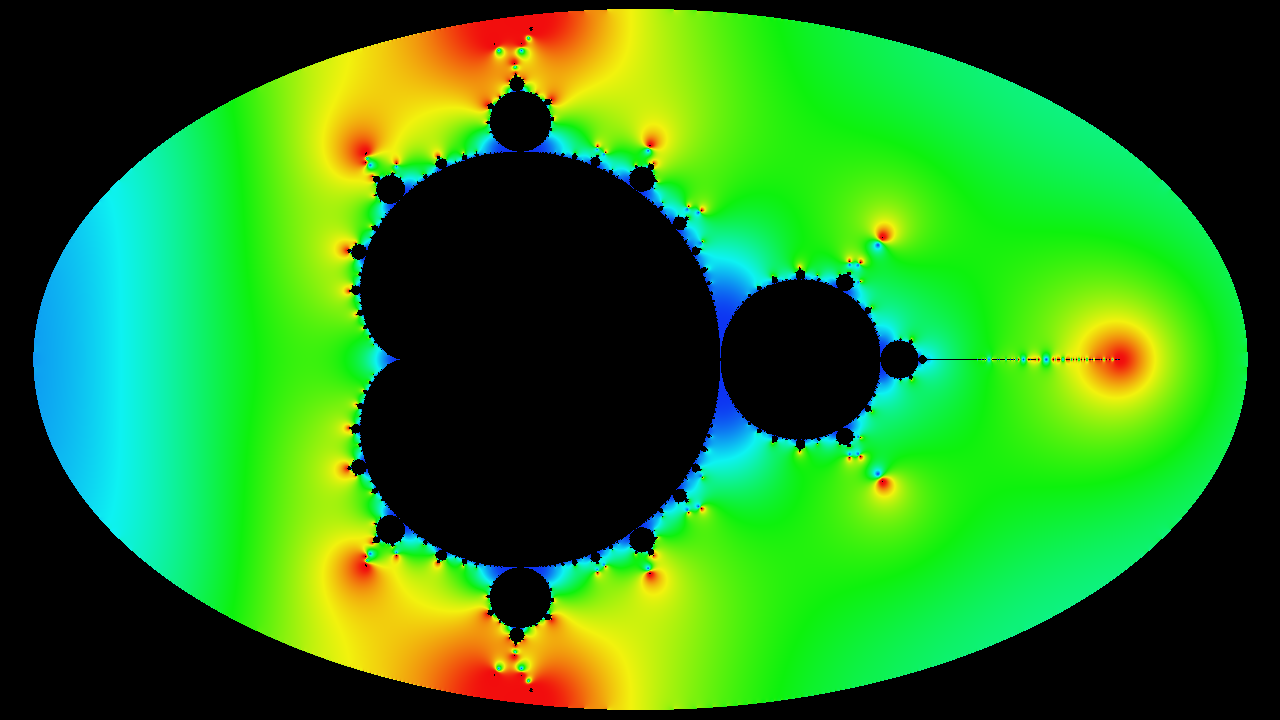}
\end{center}
\end{figure}

\vspace*{27mm}
--- Version of August 24, 2021 ---\\[2mm]

\end{center}
\hypersetup{pageanchor=true}

\cleardoublepage

\pagestyle{fancy}
\fancyhead[RO,LE]{\thepage}
\fancyhead[LO]{\nouppercase{\rightmark}}
\fancyhead[RE]{\nouppercase{\leftmark}}
\cfoot{}
\setcounter{page}{1}
\pagenumbering{roman}
\restoregeometry

\tableofcontents

\cleardoublepage


\chapter*{Preface}

These lecture notes have been prepared for a series of lectures given at 
the Summer School \lq\lq 
\href{https://www.lebesgue.fr/content/sem2021-equat_cynet}{From kinetic 
equations to statistical mechanics}\rq\rq, organised by the Henri Lebesgue 
Center in Saint Jean de Monts, from June 28th to July 2nd 2021.

The author wishes to thank the organisers of the Summer School, Fr\'ed\'eric 
H\'erau (Universit\'e de Nantes), Laurent Michel (Universit\'e de Bordeaux), and
Karel Pravda-Starov (Universit\'e de Rennes 1) for the kind invitation that 
provided the motivation to compile these lecture notes.

\vfill

\cleardoublepage
\setcounter{page}{1}
\pagenumbering{arabic}


\chapter{Stochastic Differential Equations and Partial Differential Equations}
\label{chap:SDE_PDE} 


\section{Brownian motion}
\label{sec:BM} 

The fundamental building block of the theory of stochastic differential 
equations is a mathematical object called \emph{Wiener process}, 
or~\emph{Brownian motion}. This should not be confused with the physical 
phenomenon of Brownian motion, describing for instance the erratic movements of 
a small particle in a fluid, though the mathematical model has of course been 
introduced as a simplified description of the physical process. There is a huge 
literature on properties of Brownian motion. In what follows, we will focus on 
only a few of these properties that will be important for links between 
stochastic and partial differential equations. 


\subsection{Construction of Brownian motion}
\label{ssec:BM_construct} 

Heuristically, Brownian motion can be defined as a scaling limit of a random 
walk. Let $\set{X_n}_{n\geqs0}$ be a symmetric random walk on $\Z$, defined as  
\begin{equation}
 \label{mbma1}
X_n = \sum_{i=1}^n \xi_i\;, 
\end{equation} 
where the $\xi_i$ are i.i.d.\ (independent and identically distributed) random 
variables, taking values $\pm1$ with probability $\frac12$. The following 
properties are easy to check:
\begin{enumerate}
\item	$X_n$ has zero expectation: $\expec{X_n}=0$ for all $n$;
\item	The variance of $X_n$ satisfies $\variance(X_n)=n$;
\item	$X_n$ takes values in $\set{-n,-n+2,\dots,n-2,n}$, with  
\begin{equation}
 \label{mbma2}
\bigprob{X_n=k} = \frac{1}{2^n}
\frac{n!}{\bigpar{\frac{n+k}{2}}!\bigpar{\frac{n-k}{2}}!} \;.
\end{equation} 
\item	{\it Independent increments:}\/ for all
$n>m\geqs0$, $X_n-X_m$ is independent of $X_1,\dots,X_m$;
\item	{\it Stationary increments:}\/ for all 
$n>m\geqs0$, $X_n-X_m$ has the same distribution as $X_{n-m}$.
\end{enumerate}

Consider now the sequence of processes 
\begin{equation}
 \label{mbma3}
W_t^{(n)} = \frac{1}{\sqrt{n}} X_{\intpart{nt}}\;, 
\qquad
t\in\R_+\;, \quad
n\in\N\;.
\end{equation} 
At stage $n$, space has been compressed by a factor $n$, while time has been 
sped up by a factor $\sqrt{n}$ (\figref{fig:BM}). 

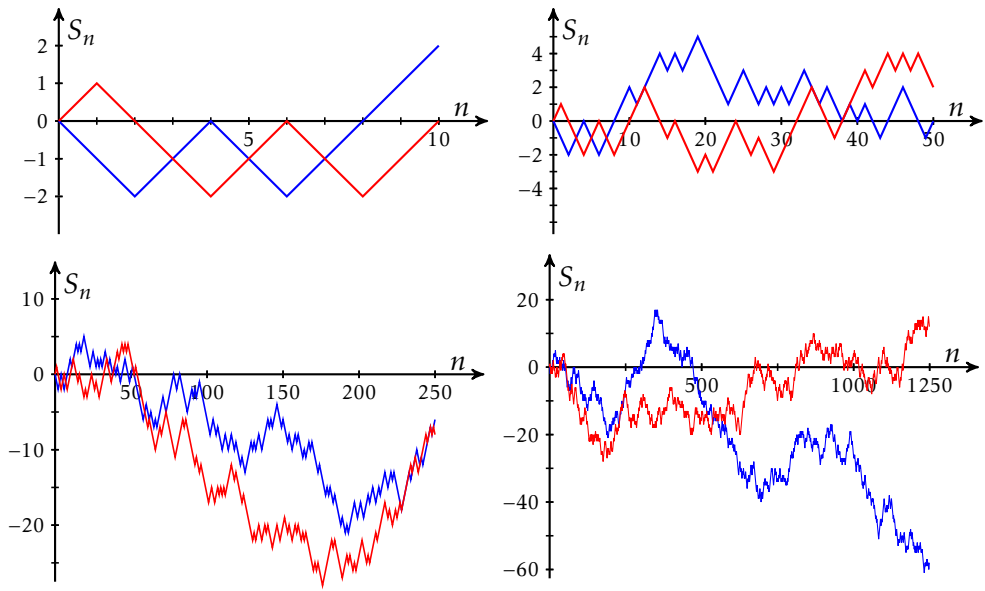
\begin{figure}
\begin{center}
\pgfmathsetmacro{\firstseed}{4}
\pgfmathsetmacro{\secondseed}{12345}
\newcommand*{\firstbmcolor}{blue}
\newcommand*{\secondbmcolor}{red}
\begin{tikzpicture}[>=stealth',main
node/.style={draw,semithick,circle,fill=white,minimum
size=2pt,inner sep=0pt},x=0.5cm,y=0.5cm]



\draw[->,thick] (0,0) -- (11.3,0);
\draw[->,thick] (0,-3) -- (0,3);


\foreach \x in {1,...,10}
\draw[semithick] (\x,-0.1) -- (\x,0.1);
\foreach \x in {5,10}
\draw[semithick] (\x,-0.1) -- node[below] {{\scriptsize $\x$}}
(\x,0.1);

\draw[semithick,white] (-0.1,-2) -- node[left] {{\scriptsize $-20$}}
(0.1,-2);	

\foreach \y in {-2,...,2}
\draw[semithick] (-0.1,\y) -- (0.1,\y);
\foreach \y in {-2,...,2}
\draw[semithick] (-0.1,\y) -- node[left] {{\scriptsize $\y$}}
(0.1,\y);


\pgfmathsetseed{\firstseed}
\draw[thick,\firstbmcolor] (0,0)
\foreach \x in {1,...,10}
{   -- ++(1,{-int(rnd*2)*2+1})
}
;

\pgfmathsetseed{\secondseed}
\draw[thick,\secondbmcolor] (0,0)
\foreach \x in {1,...,10}
{   -- ++(1,{int(rnd*2)*2-1})
}
;


\node[] at (10.6,0.3) {$n$};
\node[] at (0.6,2.35) {$S_n$};
\end{tikzpicture}
\begin{tikzpicture}[>=stealth',main
node/.style={draw,semithick,circle,fill=white,minimum
size=2pt,inner sep=0pt},x=0.5cm,y=0.5cm]


\pgfmathsetmacro{\yscale}{sqrt(0.2)}


\draw[->,thick] (0,0) -- (11.3,0);
\draw[->,thick] (0,-3) -- (0,3);


\foreach \x in {1,...,5}
\draw[semithick] (2*\x,-0.1) -- (2*\x,0.1);
\foreach \x in {1,...,5}
\draw[semithick] (2*\x,-0.1) -- node[below] {{\scriptsize $\x0$}}
(2*\x,0.1);

\draw[semithick,white] (-0.1,-2*\yscale) -- node[left] {{\scriptsize $-20$}}
(0.1,-2*\yscale);	

\foreach \y in {-6,...,5}
\draw[semithick] (-0.1,\y*\yscale) -- (0.1,\y*\yscale);
\foreach \y in {-4,-2,0,2,4}
\draw[semithick] (-0.1,\y*\yscale) -- node[left] {{\scriptsize $\y$}}
(0.1,\y*\yscale);


\pgfmathsetseed{\firstseed}
\draw[thick,\firstbmcolor] (0,0)
\foreach \x in {1,...,50}
{   -- ++(0.2,{(-int(rnd*2)*2+1)*\yscale})
}
;

\pgfmathsetseed{\secondseed}
\draw[thick,\secondbmcolor] (0,0)
\foreach \x in {1,...,50}
{   -- ++(0.2,{(int(rnd*2)*2-1)*\yscale})
}
;


\node[] at (10.6,0.3) {$n$};
\node[] at (0.6,2.35) {$S_n$};
\end{tikzpicture}

\vspace{2mm}
\begin{tikzpicture}[>=stealth',main
node/.style={draw,semithick,circle,fill=white,minimum
size=2pt,inner sep=0pt},x=0.5cm,y=0.5cm]


\pgfmathsetmacro{\yscale}{0.2}


\draw[->,thick] (0,0) -- (11.3,0);
\draw[->,thick] (0,-5.5) -- (0,3);


\foreach \x in {1,...,5}
\draw[semithick] (2*\x,-0.1) -- (2*\x,0.1);

\draw[semithick] (2,-0.1) -- node[below] {{\scriptsize $50$}}
(2,0.1);
\draw[semithick] (4,-0.1) -- node[below] {{\scriptsize $100$}}
(4,0.1);
\draw[semithick] (6,-0.1) -- node[below] {{\scriptsize $150$}}
(6,0.1);
\draw[semithick] (8,-0.1) -- node[below] {{\scriptsize $200$}}
(8,0.1);
\draw[semithick] (10,-0.1) -- node[below] {{\scriptsize $250$}}
(10,0.1);

\foreach \y in {-25,-20,-15,-10,-5,0,5,10}
\draw[semithick] (-0.1,\y*\yscale) -- (0.1,\y*\yscale);
\foreach \y in {-20,-10,0,10}
\draw[semithick] (-0.1,\y*\yscale) -- node[left] {{\scriptsize $\y$}}
(0.1,\y*\yscale);


\pgfmathsetseed{\firstseed}
\draw[semithick,\firstbmcolor] (0,0)
\foreach \x in {1,...,250}
{   -- ++(0.04,{(-int(rnd*2)*2+1)*\yscale})
}
;

\pgfmathsetseed{\secondseed}
\draw[semithick,\secondbmcolor] (0,0)
\foreach \x in {1,...,250}
{   -- ++(0.04,{(int(rnd*2)*2-1)*\yscale})
}
;


\node[] at (10.6,0.3) {$n$};
\node[] at (0.6,2.35) {$S_n$};
\end{tikzpicture}
%
\begin{tikzpicture}[>=stealth',main
node/.style={draw,semithick,circle,fill=white,minimum
size=2pt,inner sep=0pt},x=0.5cm,y=0.5cm]


\pgfmathsetmacro{\yscale}{0.2*sqrt(0.2)}



\draw[->,thick] (0,0) -- (11.3,0);
\draw[->,thick] (0,-5.6) -- (0,3);


\foreach \x in {1,...,5}
\draw[semithick] (2*\x,-0.1) -- (2*\x,0.1);

\draw[semithick] (4,-0.1) -- node[below] {{\scriptsize $500$}}
(4,0.1);
\draw[semithick] (8,-0.1) -- node[below] {{\scriptsize $1000$}}
(8,0.1);
\draw[semithick] (10,-0.1) -- node[below] {{\scriptsize $1250$}}
(10,0.1);

\foreach \y in {-60,-50,-40,-30,-20,-10,0,10,20}
\draw[semithick] (-0.1,\y*\yscale) -- (0.1,\y*\yscale);
\foreach \y in {-60,-40,-20,0,20}
\draw[semithick] (-0.1,\y*\yscale) -- node[left] {{\scriptsize $\y$}}
(0.1,\y*\yscale);


\pgfmathsetseed{\firstseed}
\draw[thin,\firstbmcolor] (0,0)
\foreach \x in {1,...,1250}
{   -- ++(0.008,{(-int(rnd*2)*2+1)*\yscale})
}
;

\pgfmathsetseed{\secondseed}
\draw[thin,\secondbmcolor] (0,0)
\foreach \x in {1,...,1250}
{   -- ++(0.008,{(int(rnd*2)*2-1)*\yscale})
}
;


\node[] at (10.6,0.3) {$n$};
\node[] at (0.6,2.35) {$S_n$};
\end{tikzpicture}
\vspace{-2mm}
\end{center}
\caption[]{Two realisations (one in red, the other one in blue) of a symmetric 
random walk on $\Z$, seen at different scales. From one picture to the next, 
the horizontal scale is compressed by a factor $5$, while the vertical scale is 
compressed by a factor $\sqrt{5}$.
}
\label{fig:BM}
\end{figure}

Formally, as $n\to\infty$, the processes $\set{W_t^{(n)}}_{t\geqs0}$ should 
converge to a stochastic process $\set{W_t}_{t\geqs0}$ satisfying the following 
properties. 

\begin{enumerate}
\item	$\expec{W_t}=0$ for all $t\geqs0$;
\item	The variance of $W_t$ satisfies
\begin{equation}
 \label{mbma5}
\variance(W_t) = \lim_{n\to\infty} \Biggpar{\frac{1}{\sqrt{n}}}^2\intpart{nt} =
t\;. 
\end{equation} 
\item	By the central limit theorem, $X_{\intpart{nt}}/\sqrt{\intpart{nt}}$ 
converges in distribution to a standard normal random variable. Therefore, for 
each $t$, $W_t$ follows a normal law $\cN(0,t)$.
\item	{\it Independent increments:}\/ for all
$t>s\geqs0$, $W_t-W_s$ est independent of $\set{W_u}_{0\leqs u\leqs s}$;
\item	{\it Stationary increments:}\/ for all
$t>s\geqs0$, $W_t-W_s$ has the same distribution as $W_{t-s}$.
\end{enumerate}

This motivates the following definition. 

\begin{definition}[Brownian motion]
\label{def_mB}
{\em Standard Brownian motion} (also called the {\em standard Wiener process})
is the stochastic process $\set{W_t}_{t\geqs0}$ satisfying:
\begin{enumerate}
\item	$W_0 = 0$;
\item 	Independent increments: for all $t>s\geqs 0$, $W_t-W_s$ est
independent of $\set{W_u}_{u\leqs s}$;
\item	Stationary increments: for all $t>s\geqs0$, $W_t-W_s$ follows a normal 
law $\cN(0,t-s)$. 
\end{enumerate}
\end{definition}

\begin{theorem}[Existence of Brownian motion]
\label{thm_cW}
There exists a stochastic process $\set{W_t}_{t\geqs0}$
satisfying Definition~\ref{def_mB}, and whose trajectories
$t\mapsto B_t(\omega)$ are continuous. 
\end{theorem}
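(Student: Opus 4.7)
The plan is to use Lévy's construction: build $W$ on the dyadic rationals of $[0,1]$ by successive midpoint interpolation with independent Gaussian corrections, prove uniform almost sure convergence of the interpolants, and then extend to $\R_+$ by concatenating independent copies. Let $D_n = \set{k 2^{-n}: 0\leqs k\leqs 2^n}$ and take i.i.d.\ $\cN(0,1)$ variables $Z_{0,1}$ together with $\set{Z_{n,k}: n\geqs 0,\, 1\leqs k\leqs 2^{n+1}-1,\, k \text{ odd}}$. Set $W_0=0$, $W_1=Z_{0,1}$, and inductively, for each odd $k$ in the indicated range,
\begin{equation*}
W_{k 2^{-(n+1)}} = \tfrac12 \bigpar{W_{(k-1)2^{-(n+1)}} + W_{(k+1)2^{-(n+1)}}} + 2^{-(n+2)/2}\, Z_{n,k} \;.
\end{equation*}
A direct variance/covariance computation shows by induction on $n$ that the consecutive increments of $W$ along $D_n$ are i.i.d.\ $\cN(0, 2^{-n})$, so the finite-dimensional distributions on the dense set $\bigcup_n D_n$ are already those required by Definition~\ref{def_mB}.

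Let $W^{(n)}$ denote the continuous piecewise linear interpolation of $W$ at the points of $D_n$. Then $W^{(n+1)} - W^{(n)}$ is a sum of disjointly supported tent functions, whence
\begin{equation*}
\bignorm{W^{(n+1)}-W^{(n)}}_\infty \leqs 2^{-(n+2)/2} \max_{k \text{ odd}} \abs{Z_{n,k}} \;.
\end{equation*}
The Gaussian tail estimate $\prob{\abs{Z}\geqs x}\leqs 2\e^{-x^2/2}$, together with a union bound over the $2^n$ odd indices, gives $\bigprob{\max_k \abs{Z_{n,k}} \geqs c\sqrt{n}} \leqs 2^{n+1}\e^{-c^2 n/2}$, which is summable as soon as $c > \sqrt{2\log 2}$. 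Borel--Cantelli then provides a (random) finite $C$ with $\max_k \abs{Z_{n,k}} \leqs C\sqrt{n}$ for all $n$ large enough, so $\sum_n \norm{W^{(n+1)}-W^{(n)}}_\infty < \infty$ almost surely. Consequently $W^{(n)}$ converges uniformly on $[0,1]$ a.s.\ to a continuous process $W$, whose finite-dimensional marginals, obtained as a.s.\ limits of the Gaussian ones at dyadic approximants, are Gaussian with the correct covariance; Definition~\ref{def_mB} is thus satisfied on $[0,1]$.

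To extend to $\R_+$, let $\set{\widetilde W^{(m)}}_{m\geqs 0}$ be an i.i.d.\ sequence of Brownian motions on $[0,1]$ constructed as above and set
\begin{equation*}
W_t = \sum_{\ell=0}^{m-1} \widetilde W^{(\ell)}_1 + \widetilde W^{(m)}_{t-m}
\qquad \text{for } t\in[m,m+1] \;.
\end{equation*}
Continuity at integer times is automatic, and the independence and stationarity of Gaussian increments extend from each block to the entire half-line. The main obstacle is the uniform almost sure convergence in the second step; everything else reduces either to an explicit Gaussian variance computation or to a routine gluing argument. The decisive quantitative input is that the maximum of $2^n$ standard Gaussians grows only like $\sqrt{n}$, which is precisely what makes the perturbation at scale $2^{-n/2}$ almost surely summable.
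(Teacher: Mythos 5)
Your proposal is correct and follows essentially the same route as the paper's proof: Lévy's dyadic midpoint construction, the observation that the increment between successive piecewise-linear interpolants is controlled by the new Gaussian at scale $2^{-(n+2)/2}$, Gaussian tails plus Borel--Cantelli to get almost sure summability and hence uniform convergence, and finally gluing independent copies to extend to $\R_+$. The only cosmetic difference is that you carry scaled standard normals $2^{-(n+2)/2}Z_{n,k}$ whereas the paper absorbs the variance into its variables $V_{k2^{-n}}$; the induction on the joint law of the increments along $D_n$ that you invoke is exactly the content of the paper's preliminary lemma on splitting a Gaussian increment into two i.i.d.\ halves.
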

\begin{proof}\hfill
\begin{enumerate}
\item	We start by constructing $\set{W_t}_{0\leqs t\leqs1}$ 
from a collection of independent Gaussian random variables $V_1, V_{1/2}, 
V_{1/4}, V_{3/4}, V_{1/8}, \dots$, all with zero mean, where $V_1$ and 
$V_{1/2}$ have variance $1$ and each $V_{k2^{-n}}$ has variance $2^{-(n-1)}$ 
($k<2^n$ odd). 

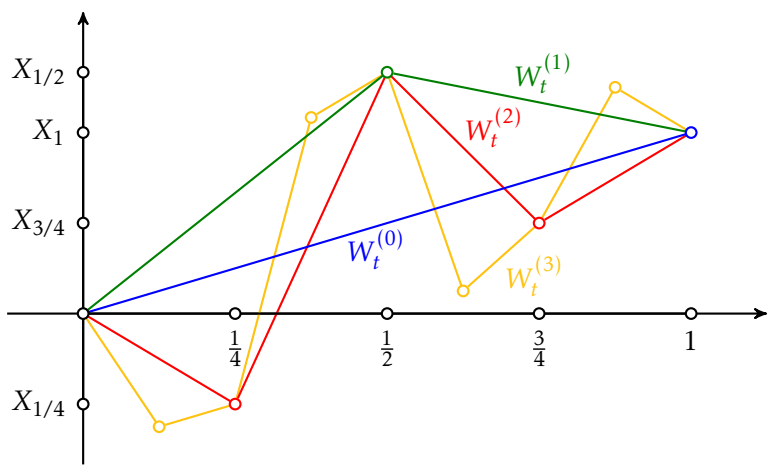
\begin{figure}
\begin{center}
\begin{tikzpicture}[-,auto,node distance=1.0cm, thick,
main node/.style={draw,circle,fill=white,minimum size=4pt,inner sep=0pt},
full node/.style={draw,circle,fill=black,minimum size=4pt,inner sep=0pt}]

  \path[->,>=stealth'] 
     (-1,0) edge (9,0)
     (0,-2) edge (0,4)
  ;
  
  \path[color=yellow!50!orange]
     (0,0) edge (1,-1.5)
     node[main node] at(1,-1.5) {}
     edge (2,-1.2)
     (2,-1.2) edge (3,2.6)
     node[main node] at(3,2.6) {}
     edge (4,3.2)
     (4,3.2) edge (5,0.3)
     node[main node] at(5,0.3) {}
     edge[below right] 
     node[label={[label distance=-0.6cm]-45:$W_t^{(3)}$}] {} (6,1.2)
     (6,1.2) edge (7,3)
     node[main node] at(7,3) {}
     edge (8,2.4)
  ;

  \path[color=red] 
     (0,0) edge (2,-1.2) 
     node[main node] at(2,-1.2) {}
     edge (4,3.2)
     (4,3.2) edge 
     node[label={[label distance=-0.6cm]45:$W_t^{(2)}$}] {} (6,1.2)
     node[main node] at(6,1.2) {}
     edge (8,2.4) 
  ;

  \path[color=green!50!black] 
     (0,0) edge (4,3.2) 
     node[main node] at(4,3.2) {}
     edge[above]
     node[label={[label distance=-0.3cm]90:$W_t^{(1)}$}] {} (8,2.4) 
  ;
  \path[color=blue] 
     (0,0) edge
     node[label={[label distance=-1.2cm]120:$W_t^{(0)}$}] {} (8,2.4) 
     node[main node] at(8,2.4) {}
  ;

  \draw (0,3.2) node[main node] {} node[left=0.08cm] {$X_{1/2}$} 
   -- (0,2.4) node[main node] {} node[left=0.08cm] {$X_{1}$} 
   -- (0,1.2) node[main node] {} node[left=0.08cm] {$X_{3/4}$} 
   -- (0,-1.2) node[main node] {} node[left=0.08cm] {$X_{1/4}$} 
   ;
     
  \draw (0,0) node[main node] {}
   -- (2,0) node[main node] {} node[below=0.08cm] {$\frac14$} 
   -- (4,0) node[main node] {} node[below=0.08cm] {$\frac12$} 
   -- (6,0) node[main node] {} node[below=0.08cm] {$\frac34$} 
   -- (8,0) node[main node] {} node[below=0.08cm] {$1$} 
   ;
\end{tikzpicture}
\end{center}
\vspace{-4mm}
 \caption[]{Construction of Brownian motion by interpolation.}
 \label{fig_bm_cons1}
\end{figure}

We first show that if  $X_s$ et $X_t$ are two random variables such that 
$X_t-X_s$ is centred, Gaussian with variance $t-s$, then there exists a random 
variable $X_{(t+s)/2}$ such that the random variables $X_t - X_{(t+s)/2}$ and 
$X_{(t+s)/2} - X_s$ are i.i.d.\ with law $\cN(0,(t-s)/2)$. If $U=X_t-X_s$ and 
$V$ is independent of $U$, with the same distribution, il suffices to define 
$X_{(t+s)/2}$ by
\begin{align}
\nonumber
X_t - X_{(t+s)/2} &= \frac{U+V}2 \\
X_{(t+s)/2} - X_s &= \frac{U-V}2\;.
\label{cW9}
\end{align}
Indeed, it is easy to check that these variables have the required 
distributions, and that they are independent, since $\expec{(U+V)(U-V)} =
\expec{U^2}-\expec{V^2} = 0$, and normal random variables are independent if 
and only if they are uncorrelated. 

Let us set $X_0=0$, $X_1=V_1$, and construct $X_{1/2}$ with the above
procedure, taking $V=V_{1/2}$. Then we construct $X_{1/4}$
with the help of $X_0$, $X_{1/2}$ and $V_{1/4}$, and so on, to obtain 
a family of variables $\set{X_t}_{t=k2^{-n},n\geqs1,k<2^n}$ such that for 
$t>s$, $X_t-X_s$ is independent of $X_s$ and has distribution $\cN(0,t-s)$. 

\item	For $n\geqs0$, let $\set{W^{(n)}_t}_{0\leqs t\leqs1}$ be the 
stochastic process with piecewise linear trajectories on intervals
$[k2^{-n},(k+1)2^{-n}]$, $k<2^n$, and such that 
$\smash{W^{(n)}}_{k2^{-n}}=X_{k2^{-n}}$ (\figref{fig_bm_cons1}).
We want to show that the sequence $\smash{W^{(n)}}(\omega)$ converges
uniformly on $[0,1]$ for any realisation $\omega$ of the $V_i$.  
We thus have to estimate
\begin{align}
\nonumber
\Delta^{(n)}(\omega) &= 
\sup_{0\leqs t\leqs 1} \bigabs{W^{(n+1)}_t(\omega)-W^{(n)}_t(\omega)} \\
\nonumber &=
\max_{0\leqs k\leqs 2^{n-1}}\max_{k2^{-n}\leqs t\leqs(k+1)2^{-n}}
\bigabs{W^{(n+1)}_t(\omega)-W^{(n)}_t(\omega)} \\
&=
\max_{0\leqs k\leqs 2^{n-1}}
\Bigabs{X_{(2k+1)2^{-(n+1)}}(\omega)
- \frac12\bigpar{X_{k2^{-n}}(\omega) 
+ X_{(k+1)2^{-n}}(\omega)}
}
\label{cW10}
\end{align}
(see \figref{fig_bm_cons2}). 
The term in the absolute value is $\frac12 V_{(2k+1)2^{-(n+1)}}$ by 
construction, c.f.~\eqref{cW9}, which is Gaussian with variance $2^{-n}$. 
Therefore,
\begin{align}
\nonumber
\bigprob{\Delta^{(n)}>\sqrt{n2^{-n}}} 
&= \biggprob{\max_{0\leqs k\leqs 2^{n-1}} \bigabs{V_{(2k+1)2^{-(n+1)}}} \geqs
2\sqrt{n2^{-n}}} \\
\nonumber
&\leqs 2\cdot2^n \int_{2\sqrt{n2^{-n}}}^\infty \e^{-x^2/2\cdot2^{-n}} 
\frac{\6x}{\sqrt{2\pi2^{-n}}} \\
&= 2\cdot2^n \int_{2\sqrt{n}}^\infty \e^{-y^2/2} 
\frac{\6x}{\sqrt{2\pi}} \leqs \const 2^n \e^{-2n}\;, 
\label{cW11}
\end{align}
and thus
\begin{equation}
\label{cW12}
\sum_{n\geqs0} \bigprob{\Delta^{(n)}>\sqrt{n2^{-n}}} 
\leqs \const \sum_{n\geqs0} (2\e^{-2})^n < \infty\;. 
\end{equation}
The Borel--Cantelli lemma shows that with probability
$1$, there exist only finitely many $n$ for which 
$\Delta^{(n)}>\sqrt{n2^{-n}}$. It follows that 
\begin{equation}
\label{cW13}
\biggprob{\sum_{n\geqs0}\Delta^{(n)} < \infty} = 1\;. 
\end{equation}
The sequence $\set{W^{(n)}_t}_{0\leqs t\leqs1}$ is thus a Cauchy sequence for 
the sup norm with probability $1$, and therefore converges
uniformly. For $t\in[0,1]$ we set
\begin{equation}
\label{cW14}
W^0_t = 
\begin{cases}
\lim_{n\to\infty} W^{(n)}_t
& \text{if the sequence converges uniformly} \\
0 & \text{otherwise (with probability $0$).}
\end{cases}
\end{equation}
It is easy to check that $B^0$ satisfies the three properties of the 
definition. 

\begin{figure}
\begin{center}
\begin{tikzpicture}[-,auto,node distance=1.0cm, thick,
main node/.style={draw,circle,fill=white,minimum size=4pt,inner sep=0pt},
full node/.style={draw,circle,fill=black,minimum size=4pt,inner sep=0pt}]

  \path[->,>=stealth'] 
     (-1,0) edge (9,0)
     (0,-1) edge (0,5)
  ;
 
  \path[color=green!50!black] 
     node[main node] at(1,0.7) {} edge (4,4) 
     node[main node] at(4,4) {} edge 
     node[label={[label distance=-0.2cm]0:$W_t^{(n+1)}$}] {} (7,2.6) 
  ;
  \path[color=blue] 
     node[main node] at(1,0.7) {} edge (4,1.65) 
     node[main node] at(4,1.65) {} edge 
     node[label={[label distance=-1.4cm]130:$W_t^{(n)}$}] {} (7,2.6) 
     node[main node] at(7,2.6) {}
  ;

  \draw (0,4.0) node[main node] {} node[left=0.1cm] {$X_{(2k+1)2^{-(n+1)}}$} 
   -- (0,2.6) node[main node] {} node[left=0.1cm] {$X_{(k+1)2^{-n}}$} 
   -- (0,1.65) node[main node] {} node[left=0.1cm]
      {$\frac{X_{k2^{-n}}+X_{(k+1)2^{-n}}}{2}$} 
   -- (0,0.7) node[main node] {} node[left=0.1cm] {$X_{k2^{-n}}$} 
   ;
     
  \draw (0,0) 
   -- (1,0) node[main node] {} node[below=0.05cm] {$k2^{-n}$} 
   -- (4,0) node[main node] {} node[below=0.05cm] {$(2k+1)2^{-(n+1)}$} 
   -- (7,0) node[main node] {} node[below=0.05cm] {$(k+1)2^{-n}$} 
   ;
\end{tikzpicture}
\end{center}
\vspace{-4mm}
 \caption[]{Computation of $\Delta^{(n)}$.}
 \label{fig_bm_cons2}
\end{figure}
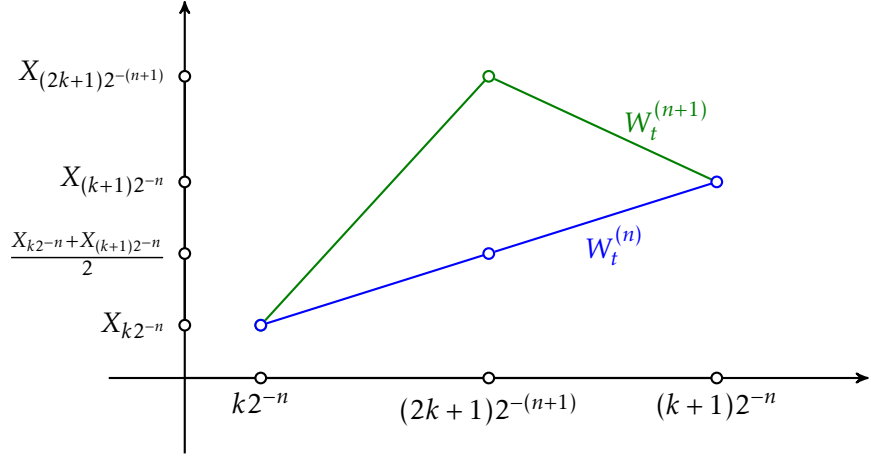

\item	To extend the process to all times, we build independent copies  
$\set{W^i}_{i\geqs0}$ and set 
\begin{equation}
\label{cW15}
W_t = 
\begin{cases}
W^0_t & 0\leqs t < 1 \\ 
W^0_1 + W^1_{t-1} & 1\leqs t < 2 \\ 
W^0_1 + W^1_1 + W^2_{t-2} & 2\leqs t < 3 \\ 
\dots &
\end{cases}
\end{equation}
This concludes the proof. \qed
\end{enumerate}
\renewcommand{\qed}{}
\end{proof}

\begin{remark}[$n$-dimensional Brownian motion]
For any $n\in\N$, one can define $n$-dimensional Brownian motion in the same 
way as in Definition~\ref{def_mB}, except that the normal laws are 
$n$-dimensional. Its components are then simply independent $1$-dimensional 
Brownian motions.  
\end{remark}


\subsection{Basic properties of Brownian motion}
\label{ssec:BM_proposition}

The following basic properties of Brownian motion follow more or less 
immediately from Definition~\ref{def_mB}.  

\begin{enumerate}
\item	{\it Markov property:}\/ For any Borel set $A\subset\R$,
\begin{equation}
\label{pW1}
\Bigpcond{W_{t+s}\in A}{W_t=x} = \int_A p(t+s,y|t,x)\6y\;,
\end{equation}
independently of $\set{W_u}_{u<t}$, with Gaussian transition probabilities
\begin{equation}
\label{pW2}
p(t+s,y|t,x) = \frac{\e^{-(y-x)^2/2s}}{\sqrt{2\pi s}}\;. 
\end{equation}
The proof follows directly from the decomposition 
$W_{t+s}=W_t + (W_{t+s}-W_t)$, where the second term is independent of the 
first one, with distribution $\cN(0,s)$. In particular, one checks the 
Chapman--Kolmogorov equation: For $t>u>s$, 
\begin{equation}
\label{pW3}
p(t,y|s,x) = \int_\R p(t,y|u,z)p(u,z|s,x)\6u\;.
\end{equation}

\item	{\it Differential property:}\/ For all $t\geqs0$,
$\set{W_{t+s}-W_t}_{s\geqs0}$ is a standard Brownian motion,
independent of $\set{W_u}_{u<t}$. 

\item	{\it Scaling proprerty:}\/ For all $c>0$,
$\set{cW_{t/c^2}}_{s\geqs0}$ is a standard Brownian motion. 

\item	{\it Symmetry:}\/ $\set{-W_t}_{t\geqs0}$ is a standard Brownian motion.

\item	{\it Gaussian process :}\/ The Wiener process is Gaussian with zero 
mean (meaning that its finite-dimensional joint distributions 
$\prob{W_{t_1} \leqs x_1, \dots, W_{t_n} \leqs x_n}$ are centred normal), 
and characterised by its covariance 
\begin{equation}
\label{pW4}
\cov\set{W_t,W_s} \equiv \expec{W_tW_s} = s \wedge t 
\end{equation}
(where $s \wedge t$ denotes the minimum of $s$ and $t$). 
\begin{proof}
For $s<t$, we have 
\begin{equation}
\label{pW5}
\expec{W_tW_s} = \expec{W_s(W_s+W_t-W_s)} = \expec{W_s^2} +
\expec{W_s(W_t-W_s)} = s\;,
\end{equation}
since the second term vanishes by the independent increments property.
\end{proof}
In fact, one can show that a centred Gaussian process whose covariance
satisfies~\eqref{pW4} is a standard Wiener process. 

\end{enumerate}

One important consequence of the scaling and independent increments properties 
is then the following. 

\begin{theorem}[Non-differentiability of Brownian paths]
\label{thm_mB2}
The paths $t\mapsto W_t(\omega)$ are almost surely nowhere 
Lipschitzian, and thus nowhere differentiable.  
\end{theorem}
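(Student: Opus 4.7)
The plan is to show the contrapositive-style statement: the event that $t\mapsto W_t$ is Lipschitz at \emph{some} point $t_0$ has probability $0$. Since non-differentiability at $t_0$ follows from non-Lipschitz-ness at $t_0$, this implies both assertions. By countable union it suffices to exclude Lipschitz behaviour at points of $[0,1]$, and by further union over $L\in\N$ it suffices to exclude, for each fixed integer $L$, the event
\begin{equation*}
E_L = \bigsetsuch{\omega}{\exists t_0\in[0,1], \delta>0 \text{ s.t. } \abs{W_s(\omega)-W_{t_0}(\omega)}\leqs L\abs{s-t_0}\ \forall s \text{ with } \abs{s-t_0}<\delta}.
\end{equation*}

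The key geometric observation is the following: if $\omega\in E_L$ with Lipschitz point $t_0$ and radius $\delta$, pick $n$ so large that $3/n<\delta$, and let $k$ be such that $t_0\in[k/n,(k+1)/n]$. Then each of the four dyadic points $k/n,(k+1)/n,(k+2)/n,(k+3)/n$ lies within $3/n$ of $t_0$, so by the triangle inequality the three consecutive increments
\begin{equation*}
Y_{n,k,i} \defby W_{(k+i+1)/n}-W_{(k+i)/n}\;,\qquad i=0,1,2\;,
\end{equation*}
all satisfy $\abs{Y_{n,k,i}}\leqs 6L/n$. Hence, defining
\begin{equation*}
A_{n,L} = \bigsetsuch{\omega}{\exists k\in\set{0,\dots,n-3}:\ \abs{Y_{n,k,i}(\omega)}\leqs 6L/n \text{ for } i=0,1,2}\;,
\end{equation*}
we have the inclusion $E_L\subset\liminf_{n\to\infty}A_{n,L}$.

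Next I would estimate $\prob{A_{n,L}}$. For each fixed $k$, the three increments $Y_{n,k,0},Y_{n,k,1},Y_{n,k,2}$ are independent (independent increments) and each $\cN(0,1/n)$-distributed (stationary increments). Writing $Z\sim\cN(0,1)$ and using the elementary bound $\prob{\abs{Z}\leqs x}\leqs x\sqrt{2/\pi}$, I get
\begin{equation*}
\bigprob{\abs{Y_{n,k,0}}\leqs 6L/n} = \bigprob{\abs{Z}\leqs 6L/\sqrt{n}} \leqs \frac{12L}{\sqrt{2\pi n}}\;,
\end{equation*}
whence by independence and the union bound over $k$,
\begin{equation*}
\bigprob{A_{n,L}} \leqs n\biggpar{\frac{12L}{\sqrt{2\pi n}}}^3 = \frac{C L^3}{\sqrt{n}} \xrightarrow[n\to\infty]{} 0\;.
\end{equation*}
Fatou's lemma then gives $\prob{E_L}\leqs\prob{\liminf_n A_{n,L}}\leqs\liminf_n\prob{A_{n,L}}=0$. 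A union bound over $L\in\N$ concludes the proof, since if $W$ were Lipschitz at some $t_0$ with \emph{any} constant, it would be Lipschitz with some integer constant.

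The only mildly delicate point is the geometric step ensuring that Lipschitz behaviour at a single interior point forces the simultaneous smallness of \emph{three} consecutive dyadic increments; the factor $3$ (rather than $2$) is crucial, as it produces the exponent $3/2$ in the denominator, which is what beats the $n$ coming from the union bound. Two consecutive increments would give only $n\cdot n^{-1} = O(1)$, insufficient. Everything else is routine once one has the Gaussian small-ball estimate and the independence/stationarity of increments granted by Definition~\ref{def_mB}.
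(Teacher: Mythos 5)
Your proof takes essentially the same route as the paper's: both arguments reduce Lipschitz-ness at a single point $t_0$ to the simultaneous smallness (of order $1/n$) of \emph{three} consecutive dyadic increments of mesh $1/n$, then exploit independence of increments and the Gaussian small-ball estimate to get probability $O(n^{-3/2})$ per location, hence $O(n^{-1/2})\to 0$ after the union bound over the $\sim n$ locations. The cosmetic differences are that the paper uses the monotonicity $A_n\subset A_{n+1}$ where you use Fatou, and that your choice \lq\lq let $k$ be such that $t_0\in[k/n,(k+1)/n]$\rq\rq\ should be capped at $k=n-3$ for $t_0$ near $1$ (or one allows dyadic times slightly beyond $1$, which is harmless since $W$ lives on $[0,\infty)$); with that small adjustment the inclusion $E_L\subset\liminf_n A_{n,L}$ holds as claimed.
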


\goodbreak

\begin{proof}
Fix $C<\infty$ and introduce, for $n\geqs1$, the event
\begin{equation}
A_n = \Biggsetsuch{\omega}{\exists s\in[0,1] \text{ s.t. }
\abs{W_t(\omega)-W_s(\omega)}\leqs C\abs{t-s} \text{ if } \abs{t-s}\leqs
\frac3n}\;.
\end{equation}
We have to show that $\fP(A_n)=0$ for all $n$. 
Observe that if $n$ increases, the condition gets weaker, so that $A_n\subset
A_{n+1}$. For $n\geqs3$ and $1\leqs k\leqs n-2$, define  
\begin{align}
Y_{k,n}(\omega) &= \max_{j=0,1,2}\biggset{\Bigabs{W_{(k+j)/n}(\omega) -
W_{(k+j-1)/n}(\omega)}}\;,\\
B_n &= \bigcup_{k=1}^{n-2} 
\biggsetsuch{\omega}{Y_{k,n}(\omega)\leqs
\frac{5C}{n}}\;.
\end{align}
The triangular inequality implies $A_n\subset B_n$. Indeed, 
let $\omega\in A_n$. If for instance $s=1$, then for $k=n-2$, one has 
\begin{equation}
\bigabs{W_{(n-3)/n}(\omega) - W_{(n-2)/n}(\omega)}
\leqs \bigabs{W_{(n-3)/n}(\omega) - W_1(\omega)} 
+ \bigabs{W_1(\omega) - W_{(n-2)/n}(\omega)}
\leqs C\biggpar{\frac 3n + \frac 2n}
\end{equation} 
and thus $\omega\in B_n$.
It follows from the independent increments and scaling properties that 
\begin{equation}
\fP(A_n) \leqs \fP(B_n) \leqs n \fP\Biggpar{\abs{W_{1/n}}\leqs \frac{5C}n}^3
= n \fP\Biggpar{\abs{W_1}\leqs \frac{5C}{\sqrt{n}}}^3
\leqs n \Biggpar{\frac{10C}{\sqrt{2\pi n}}}^3\;.
\end{equation} 
Therefore $\fP(A_n)\to0$ for all $n\to\infty$. But since
$\fP(A_n)\leqs\fP(A_{n+1})$ for all $n$, this implies $\fP(A_n)=0$ for all
$n$. 
\end{proof}

\begin{remark}[H\"older regularity of Brownian paths]
Even though paths of Brownian motion are nowhere differentiable, one can show 
that they do have a regularity that is better than continuity: namely, the 
paths are almost surely (locally) H\"older continuous of exponent $\alpha$ for 
any $\alpha < \frac12$. This can be shown by applying the Kolmogorov--Centsov 
continuity criterion. 
\end{remark}


\subsection{Brownian motion and heat equation}
\label{ssec:BM_heat}

Observe that the Gaussian transition probabilities~\eqref{pW3} of the Wiener 
process are, up to a scaling, equal to the heat kernel. In particular, 
$p(t,x|0,0)$ satisfies the heat equation 
\begin{align}
 \frac{\partial}{\partial t} p(t,x|0,0)
 &= \frac12 \Delta p(t,x|0,0) \\
 p(0,x|0,0) &= \delta(x)\;,
\end{align} 
where we write $\Delta$ for the second derivative with respect to $x$. This 
reflects the fact that paths of Brownian motion have the same diffusive 
behaviour as solutions of the heat equation.

Similarly, transition probabilities of $n$-dimensional Brownian motion are 
given by 
\begin{equation}
 p(t+s,y|t,x) = \frac{\e^{-\norm{y-x}^2/2s}}{(2\pi s)^{n/2}}\;, 
\end{equation} 
and satisfy therefore the $n$-dimensional heat equation. 

It is, however, important to realise that Brownian motion contains much more 
information than the solutions of the heat equation, since it gives a 
probability distribution on paths $t \mapsto W_t(\omega)$, rather than just a 
collection of probability distributions for the $W_t(\omega)$ with $t\geqs0$. 
To illustrate the difference, we discuss two examples of modifications of 
Brownian motion. 

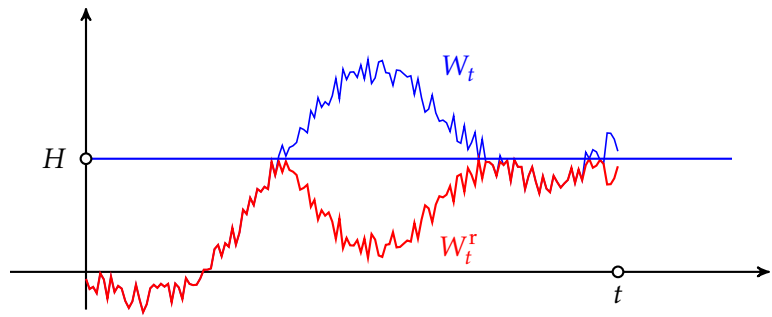
\begin{figure}
\begin{center}
\pgfmathsetmacro{\firstseed}{1234}
\begin{tikzpicture}[-,auto,node distance=1.0cm, thick,
main node/.style={draw,circle,fill=white,minimum size=4pt,inner sep=0pt},
full node/.style={draw,circle,fill=black,minimum size=4pt,inner sep=0pt},
declare function={g(\x)=2.1*sin(15*\x) - 0.9*sin(75*\x) + 0.2*rand;
f(\x)=1.5 - abs(g(\x) - 1.5); }]


\path[->,>=stealth'] 
     (-1,0) edge (9,0)
     (0,-0.5) edge (0,3.5);

\pgfmathsetseed{\firstseed}
\draw[blue,semithick,-,domain=0:7,samples=150,/pgf/fpu,
/pgf/fpu/output format=fixed] 
plot ({\x}, {g(\x)});

\pgfmathsetseed{\firstseed}
\draw[red,thick,-,domain=0:7,samples=150,/pgf/fpu,
/pgf/fpu/output format=fixed] 
plot ({\x}, {f(\x)});

\draw[thick,blue] (0,1.5) -- (8.5,1.5);

\node[blue] at (4.9,2.7) {$W_t$};
\node[red] at (4.9,0.3) {$W^{\text{r}}_t$};

\draw (0,1.5) node[main node] {} node[left=0.1cm] {$H$};
     
\draw (7,0) node[main node] {} node[below=0.05cm] {$t$};
\end{tikzpicture}
\end{center}
\vspace{-4mm}
 \caption[]{Brownian motion reflected at level $H$.}
 \label{fig_bm_reflected}
\end{figure}

\begin{example}[Reflected Brownian motion]
\label{ex:Bm_reflected} 
Denote by $W_t^{\text{r}}$ Brownian motion reflected on the line $x = H$ in 
$(t,x)$-space, for a constant $H>0$. For any $x \leqs H$ we can write 
\begin{equation}
 \bigprob{W_t^{\text{r}} \leqs x} 
 = \bigprob{W_t \leqs x} + \bigprob{W_t \geqs 2H - x}\;.
\end{equation} 
Indeed, at least heuristically, if $W_t(\omega) \leqs x$, then one obtains a 
reflected path of Brownian motion from an original path simply by reflecting 
all parts of the path that lie above the line $x = H$. If $W_t(\omega) \geqs 
2H-x$, reflecting again all parts above the line also yields a reflected path 
ending up below $H$. We thus have 
\begin{equation}
 \bigprob{W_t^{\text{r}} \leqs x} 
 = \Phi\Biggpar{\frac{x}{\sqrt{t}}} + \Phi\Biggpar{\frac{x-2H}{\sqrt{t}}}\;, 
\end{equation} 
where
\begin{equation}
 \Phi(x) = \frac{1}{\sqrt{2\pi}} \int_0^x \e^{-y^2/2}\6y
\end{equation} 
denotes the distribution function of the standard normal law. 
Note in particular that since $\Phi(-x) = 1 - \Phi(x)$, 
one has $\bigprob{W_t^{\text{r}} \leqs H} = 1$ for all $t\geqs0$, as it should 
be. Taking the derivative with respect to $x$, we obtain the density 
\begin{equation}
 p^{\text{r}}(t,x) = \frac{1}{\sqrt{2\pi t}}
 \Bigpar{\e^{-x^2/(2t)} + \e^{-(2H-x)^2/(2t)}}\;,
\end{equation} 
which solves 
\begin{align}
 \frac{\partial}{\partial t} p^{\text{r}}(t,x)
 &= \frac12 \Delta p^{\text{r}}(t,x) 
  \qquad x \leqs H\;, \\
 \nabla p(t,H) &= 0\;,
\end{align} 
that is, the heat equation with Neumann boundary conditions.
\end{example}

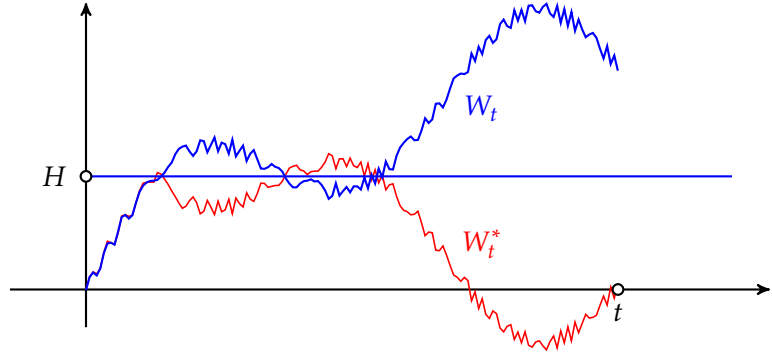
\begin{figure}
\begin{center}
\pgfmathsetmacro{\thirdseed}{1234}
\begin{tikzpicture}[-,auto,node distance=1.0cm, thick,
main node/.style={draw,circle,fill=white,minimum size=4pt,inner sep=0pt},
full node/.style={draw,circle,fill=black,minimum size=4pt,inner sep=0pt},
declare function={g(\x)=2.8*sin(15*\x) + 0.9*sin(75*\x) 
+ 0.1*sin(1200*(1.2+0.2*sin(200*\x))*\x) + 0.02*rand;
f(\x)=ifthenelse(\x<0.9,g(\x),{3.0-g(\x)});}]


\path[->,>=stealth'] 
     (-1,0) edge (9,0)
     (0,-0.5) edge (0,3.8);

\pgfmathsetseed{\thirdseed}
\draw[red,semithick,-,domain=0:7,samples=150,/pgf/fpu,
/pgf/fpu/output format=fixed] 
plot ({\x}, {f(\x)});

 \pgfmathsetseed{\thirdseed}
\draw[blue,thick,-,domain=0:7,samples=150,/pgf/fpu,
/pgf/fpu/output format=fixed] 
plot ({\x}, {g(\x)});

\draw[thick,blue] (0,1.5) -- (8.5,1.5);

\node[blue] at (5.2,2.4) {$W_t$};
\node[red] at (5.2,0.6) {$W^*_t$};

\draw (0,1.5) node[main node] {} node[left=0.1cm] {$H$};
     
\draw (7,0) node[main node] {} node[below=0.05cm] {$t$};
\end{tikzpicture}
\end{center}
\vspace{-4mm}
 \caption[]{Reflection principle.}
 \label{fig_reflection_principle}
\end{figure}

To discuss our second example, we will need the so-called~\emph{reflection 
principle}, which we give here without proof. 

\goodbreak

\begin{proposition}[Andre's reflection principle]
\label{prop:reflection_principle}
For any $H>0$ and setting $\tau=\inf\setsuch{t\geqs0}{W_t\geqs H}$, the process
\begin{equation}
\label{apr3}
W^*_t = 
\begin{cases}
W_t & \text{if $t\leqs\tau$} \\
2H - W_t & \text{if $t>\tau$}
\end{cases}
\end{equation}
is a standard Brownian motion. 
\end{proposition}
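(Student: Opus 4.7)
The plan is to combine the strong Markov property of Brownian motion at the stopping time $\tau$ with the symmetry property (item~4 of the basic properties) stating that $-W$ is again a standard Brownian motion. Intuitively, after time $\tau$ the increments of $W$ form an independent Brownian motion $B$ starting at~$H$; replacing $B$ by $-B$, which has the same law, produces exactly the reflected process $W^*$.

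First I would check that $\tau$ is an almost surely finite stopping time with respect to the natural filtration $\cF_t = \sigma(W_u : u \leqs t)$. Since the paths are continuous, $\tau = \inf\setsuch{t \geqs 0}{W_t = H}$ and $W_\tau = H$ on $\set{\tau < \infty}$; as a hitting time of a closed set by a continuous adapted process, $\tau$ is a stopping time. Finiteness follows from $\limsup_{t\to\infty} W_t = +\infty$ almost surely, which is a standard consequence of the scaling property.

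Next I would invoke the strong Markov property at $\tau$: the process
\begin{equation}
 B_s \defby W_{\tau+s} - W_\tau = W_{\tau+s} - H\;, \qquad s \geqs 0\;,
\end{equation}
is a standard Brownian motion, independent of $\cF_\tau$. By the symmetry property applied to $B$, the process $-B$ is also a standard Brownian motion, independent of $\cF_\tau$. Since the pre-$\tau$ trajectory $\set{W_u}_{0\leqs u \leqs \tau}$ is $\cF_\tau$-measurable, this yields the joint identity in distribution
\begin{equation}
 \bigpar{\set{W_u}_{0\leqs u \leqs \tau},\, \tau,\, B}
 \eqinlaw
 \bigpar{\set{W_u}_{0\leqs u \leqs \tau},\, \tau,\, -B}\;.
\end{equation}

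Finally, I would observe that both $W$ and $W^*$ can be written as the \emph{same} measurable functional of these triples. Indeed, for $t > \tau$ one has $W_t = H + B_{t-\tau}$, while
\begin{equation}
 W^*_t = 2H - W_t = H - B_{t-\tau} = H + (-B)_{t-\tau}\;,
\end{equation}
so defining $\Psi(\set{w_u}_{u\leqs\tau}, \tau, \beta)_t = w_t \indicator{t \leqs \tau} + (H + \beta_{t-\tau}) \indicator{t > \tau}$, we have $W = \Psi(\set{W_u}_{u\leqs\tau}, \tau, B)$ and $W^* = \Psi(\set{W_u}_{u\leqs\tau}, \tau, -B)$. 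The distributional equality above then implies $W^* \eqinlaw W$, so $W^*$ is a standard Brownian motion.

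The main obstacle is that the strong Markov property has not been established in the preceding text, only the ordinary Markov property at deterministic times; the whole argument rests on extending this to the (random) stopping time $\tau$. Once that tool is in hand, the remaining gluing step is a routine measurability check.
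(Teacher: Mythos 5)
Your argument is correct, and it is the standard proof of the reflection principle: strong Markov property at $\tau$, symmetry of the post-$\tau$ increment process, and a gluing functional applied to the triple (pre-$\tau$ path, $\tau$, post-$\tau$ increments). There is nothing in the paper to compare it with, since the notes state the proposition explicitly without proof; the tool you rely on does appear later, as the strong Markov property for Ito diffusions (Theorem~\ref{thm_strong_Markov_diffusion}, itself only sketched with a reference to \O ksendal), and Brownian motion is covered by that statement as the case $f=0$, $g=\one$. Note, however, that what you invoke is the process-level form (that $B_s = W_{\tau+s}-W_\tau$, $s\geqs0$, is a Brownian motion independent of $\cF_\tau$), which is slightly stronger than the one-marginal identity~\eqref{diffM05} stated there; passing from one to the other via finite-dimensional distributions is routine but worth acknowledging. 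Your gluing step is sound: since $-B$ is again independent of $\cF_\tau$ with the law of Brownian motion, the triples $(\set{W_u}_{u\leqs\tau},\tau,B)$ and $(\set{W_u}_{u\leqs\tau},\tau,-B)$ have the same law, and continuity of $W^*$ at $\tau$ holds because $W_\tau=H$ by path continuity.

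The one genuine (if minor) gap is the almost sure finiteness of $\tau$: scaling alone does not give $\limsup_{t\to\infty}W_t=+\infty$ almost surely. Scaling shows that $\fP\bigset{\sup_{t\geqs0}W_t>a}$ is independent of $a>0$, hence equals $\fP\bigset{\sup_{t\geqs0}W_t=\infty}$, and this common value is at least $\fP\set{W_1>0}=\frac12$; to conclude that it equals $1$ you need an additional ingredient, e.g.\ Blumenthal's (or a tail) zero--one law, or the recurrence computation of Example~\ref{ex_MB_recurrent_transient} based on Dynkin's formula, which shows that one-dimensional Brownian motion hits any level almost surely. With that point patched, the proof is complete.
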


\begin{example}[Brownian motion killed upon reaching level $H$]
\label{ex:Bm_killed}  
Denote by $W^{\text{k}}_t$ Brownian motion killed at level $H > 0$, which is 
defined by 
\begin{equation}
 \bigprob{W^{\text{k}}_t \leqs x} 
 = \bigprob{W_t \leqs x, \tau > t}
\end{equation} 
for any $x \leqs H$. Note that $W^{\text{k}}_t$ is an improper random variable 
for any $t>0$, in the sense that the total probability is strictly smaller than 
$1$. Since paths of Brownian motion are continuous, we can write for any $y 
\geqs H$ 
\begin{align}
 \bigprob{W_t \geqs y}
 &= \bigprob{W_t \geqs y, \tau \leqs t} \\
 &= \bigprob{2H - W_t \leqs 2H - y, \tau \leqs t} \\
 &= \bigprob{W_t^* \leqs 2H - y, \tau \leqs t} \\
 &= \bigprob{W_t \leqs 2H - y, \tau \leqs t}\;. 
\end{align}
Setting $y = 2H - x$, this provides us with an expression for $\bigprob{W_t 
\leqs x, \tau \leqs t}$ that yields 
\begin{align}
\bigprob {W^{\text{k}}_t \leqs x}  
&= \bigprob{W_t\leqs x} - \bigprob{W_t \leqs x, \tau \leqs t} \\
&= \bigprob{W_t\leqs x} - \bigprob{W_t\geqs 2H - x}\;.
\end{align}
This gives the density 
\begin{equation}
 p^{\text{k}}(t,x) = \frac{1}{\sqrt{2\pi t}}
 \Bigpar{\e^{-x^2/(2t)} - \e^{-(2H-x)^2/(2t)}}\;,
\end{equation} 
which solves 
\begin{align}
 \frac{\partial}{\partial t} p^{\text{k}}(t,x)
 &= \frac12 \Delta p^{\text{k}}(t,x) 
  \qquad x \leqs H\;, \\
 p(t,H) &= 0\;,
\end{align} 
that is, the heat equation with Dirichlet boundary conditions.
\end{example}

\begin{exercise}[$2$-dimensional Brownian motion hitting a straight line]
Let $W_t = (W^{(1)}_t, W^{(2)}_t)$ be a standard $2$-dimensional Brownian 
motion, and let 
\begin{equation}
\tau = \inf\bigsetsuch{t>0}{\smash{W^{(1)}_t} = 1} 
\end{equation} 
be the first-hitting time of the line $\set{x=1}$. Determine the density of 
$\tau$, and use it to compute the distribution of $\smash{W^{(2)}_\tau}$. 
\end{exercise}


\section{Ito calculus}
\label{sec:Ito} 

While Brownian motion is a useful model with many interesting properties, one 
may have to deal with more general processes, such as functions of Brownian 
motion. The question then arises, what kinds of stochastic or 
partial differential equations are associated with these processes. 

\begin{example}[Brownian motion squared]
Consider Brownian motion squared. Since 
\begin{equation}
 \bigprob{W_t^2 \leqs x}
 = \bigprob{\abs{W_t} \leqs \sqrt{x}}
 = \Phi\Biggpar{\sqrt{\frac{x}{t}}}
 - \Phi\Biggpar{-\sqrt{\frac{x}{t}}}\;,
\end{equation} 
we obtain that the density of $W_t^2$ is given by 
\begin{equation}
 2\frac{\partial}{\partial x} \Phi\Biggpar{\sqrt{\frac{x}{t}}}
 = \frac{1}{\sqrt{2\pi tx}} \e^{-x/(2t)}\;.
\end{equation} 
This function should solve some PDE, which, however, 
is not straightforward to guess. In this section, we will develop methods that 
will ultimately allow to determine associated PDEs quite easily. 
\end{example}


\subsection{Ito's integral}
\label{ssec:Ito_integral} 

The key notion of stochastic calculus is the Ito integral, which allows to give 
a meaning to the quantity 
\begin{equation}
 \int_0^t f(s) \6W_s
\end{equation} 
for suitable, possibly random functions $f$. Since the Wiener process is not 
Lipschitzian, it does not have bounded variation, so that the above integral 
cannot be defined as a Riemann--Stieltjes integral. There are nowadays several 
(essentially equivalent) ways of defining the above integral, the oldest one 
going back to Kiyoshi Ito. 

Fix a Brownian motion $\set{W_t}_{t\geqs0}$. The random variables 
$\set{W_s}_{0\leqs s\leqs t}$ define an increasing sequence $\sigma$-algebras 
$\set{\cF_t}_{t\geqs0}$ (called a \emph{filtration}) that will play 
a key role in what follows. In particular, we will use the notion of random 
variables that are~\emph{measurable} with respect to a given $\cF_t$. 
Intuitively, these are exactly the random variables that depend only on the 
behaviour of the Wiener process up to time $t$. 

\begin{definition}[Ito integral of elementary functions]
\label{def_ito1}
Fix a time interval $[0,T]$. 
A random function $\set{e_t}_{t\in[0,T]}$ is called 
{\em simple} or {\em elementary} if there exists a partition
$0=t_0<t_1<\dots<t_N=T$ of $[0,T]$ such that  
\begin{equation}
\label{ito3}
e_t = \sum_{k=1}^N e_{t_{k-1}} \indicator{[t_{k-1},t_k)}(t)\;.
\end{equation}
It is called \emph{adapted} to the filtration $\set{\cF_t}_{t\geqs0}$ if each 
$e_{t_{k-1}}$ is a random variable measurable with respect to 
$\cF_{t_{k-1}}$. 
For such an elementary function, \emph{Ito's integral} is defined as  
\begin{equation}
\label{ito4}
\int_0^t e_s \6W_s = \sum_{k=1}^m e_{t_{k-1}} \bigbrak{W_{t_k}-W_{t_{k-1}}}
+ e_{t_m} \bigbrak{W_{t}-W_{t_{m}}} 
\end{equation}
for any $t\in [0,T]$, where $m$ is such that $t\in[t_m,t_{m+1})$. 
\end{definition}

One easily sees that this integral 
is a linear functional of the integrand, and is additive with respect to time 
intervals. Furthermore, since each increment $\bigbrak{W_{t_k}-W_{t_{k-1}}}$ is 
independent of $e_{t_{k-1}}$, the integral has zero expectation. The key 
property is then the following. 

\begin{lemma}[Ito isometry]
 If $\displaystyle\int_0^t \expec{e_s^2} \6s<\infty$, then
\begin{equation}
\label{ito7}
\Biggexpec{\Biggpar{\int_0^t e_s \6W_s}^2} = 
\int_0^t \expec{e_s^2} \6s\;. 
\end{equation}
\end{lemma}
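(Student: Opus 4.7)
The plan is to work directly from the definition~\eqref{ito4} of the Ito integral for elementary functions, and exploit the fact that each increment $W_{t_k}-W_{t_{k-1}}$ is independent of $\cF_{t_{k-1}}$, while $e_{t_{k-1}}$ is $\cF_{t_{k-1}}$-measurable by adaptedness. For notational convenience I would first refine the partition so that $t$ coincides with some $t_M$; the formula~\eqref{ito4} then reduces to the single finite sum $I = \sum_{k=1}^M e_{t_{k-1}} \Delta_k$, with $\Delta_k \defby W_{t_k} - W_{t_{k-1}}$.

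The next step is to square $I$ and separate diagonal from off-diagonal contributions:
\[
I^2 = \sum_{k=1}^M e_{t_{k-1}}^2 \Delta_k^2
 + 2 \sum_{1\leqs j<k\leqs M} e_{t_{j-1}} e_{t_{k-1}} \Delta_j \Delta_k\;.
\]
For a diagonal term, $e_{t_{k-1}}^2$ is $\cF_{t_{k-1}}$-measurable while $\Delta_k^2$ is independent of $\cF_{t_{k-1}}$ with expectation $t_k - t_{k-1}$; hence $\E[e_{t_{k-1}}^2 \Delta_k^2] = \E[e_{t_{k-1}}^2](t_k - t_{k-1})$. Summing over $k$ produces precisely the Riemann sum for the piecewise-constant function $s\mapsto \E[e_s^2]$, which is exactly $\int_0^t \E[e_s^2]\,\dd s$.

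The main thing to verify is that the cross terms vanish. For $j<k$, the product $X_{jk} \defby e_{t_{j-1}} e_{t_{k-1}} \Delta_j$ is $\cF_{t_{k-1}}$-measurable (it only involves the Wiener process up to time $t_{k-1}$), while $\Delta_k$ is independent of $\cF_{t_{k-1}}$ with zero mean. Conditioning on $\cF_{t_{k-1}}$ and using the tower property,
\[
\E\bigbrak{X_{jk} \Delta_k}
= \E\Bigbrak{X_{jk}\, \bigecond{\Delta_k}{\cF_{t_{k-1}}}}
= \E[X_{jk}] \cdot 0 = 0\;.
\]
The only subtle point is integrability, needed to justify the tower step: the hypothesis $\int_0^t \E[e_s^2]\,\dd s < \infty$ forces $\E[e_{t_{k-1}}^2] < \infty$ for each $k$, and then Cauchy--Schwarz combined with the finite moments of the Gaussian increments $\Delta_j, \Delta_k$ controls all the products appearing above. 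Assembling the diagonal contribution and the vanishing cross terms yields $\E[I^2] = \int_0^t \E[e_s^2]\,\dd s$, as required.
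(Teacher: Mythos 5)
Your proof is correct and follows essentially the same approach as the paper: expand the square of the sum, use adaptedness and independent increments to kill the off-diagonal terms, and evaluate the diagonal terms as a Riemann sum for $\int_0^t\E[e_s^2]\,\dd s$. The paper compresses the tower-property and integrability steps into a one-line remark, whereas you spell them out, but the argument is the same.
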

\begin{proof}
Set $t_{m+1}=t$. Then  
\begin{align}
\nonumber
\Biggexpec{\Biggpar{\int_0^t e_s \6W_s}^2} 
&= \Biggexpec{\sum_{k,l=1}^{m+1} e_{t_{k-1}}e_{t_{l-1}}
\bigpar{W_{t_k}-W_{t_{k-1}}}\bigpar{W_{t_l}-W_{t_{l-1}}}} \\
\nonumber
&= \sum_{k=1}^{m+1} \bigexpec{e_{t_{k-1}}^2} 
\underbrace{\Bigexpec{\bigpar{W_{t_k}-W_{t_{k-1}}}^2}}_{t_k-t_{k-1}} 
= \int_0^t \expec{e_s^2} \6s\;.
\label{ito8}
\end{align}
We have used the property of independent increments to eliminate the terms  
$k\neq l$ from the double sum, and the fact that each $e_s$
is measurable with respect to $\cF_s$.  
\end{proof}

Ito's isometry is an isometry between the Hilbert space 
$L^2_{\text{ad}}([0,T]\times\Omega,\fP)$ of adapted square-integrable processes 
and the Hilbert space $L^2(\Omega,\fP)$ of square-integrable random variables. 
For a general square-integrable adapted process $(X_t)_{t\geqs0}$, one can find 
a sequence of elementary $e_n$ such that
\begin{equation}
\label{ito9}
\lim_{n\to\infty} \int_0^T \bigexpec{\bigpar{X_s-e^{(n)}_s}^2} \6s = 0\;.
\end{equation}
The isometry~\eqref{ito7} then shows that for any $t\in[0,T]$, the following 
limit exists in 
$L^2(\fP)$:
\begin{equation}
\label{ito10}
\lim_{n\to\infty} \int_0^t  e^{(n)}_s \6W_s \bydef 
\int_0^t  X_s \6W_s\;. 
\end{equation}
This is by definition the Ito integral of $X_s$ against $W_s$. This integral 
has the same linearity and additivity properties as integrals of elementary 
functions, and also satifies Ito's isometry
\begin{equation}
 \label{eq:Ito_isometry}
\Biggexpec{\Biggpar{\int_0^t X_s\6W_s}^2} = 
\int_0^t\expec{X_s^2}\6s\;.
\end{equation}


\subsection{Ito's formula}
\label{ssec:Ito_formula} 

Ito's formula gives a simple answer to a question we have been asking above, 
namely what kind of differential relation governs functions of Brownian motion. 
We start by a simple example, which however contains all the essential ideas of 
the general case. 

\begin{example}
Let us show that 
\begin{equation}
\label{exito1}
\int_0^t W_s \6W_s = \frac12 W_t^2 - \frac t2\;.
\end{equation}
Define the sequence of elementary functions  
$e^{(n)}_t=W_{2^{-n}\intpart{2^nt}}$. It is then sufficient to check that  
\begin{equation}
\label{exito2}
\lim_{n\to\infty} \int_0^t e^{(n)}_s \6W_s = \frac12 W_t^2 - \frac t2\;. 
\end{equation}
Write $t_k = k2^{-n}$ for $k\leqs m=\intpart{2^nt}$ and $t_{m+1}=t$. 
The definition~\eqref{ito4} implies 
\begin{align}
\nonumber
2\int_0^t e^{(n)}_s \6W_s &= 2\sum_{k=1}^{m+1}
W_{t_{k-1}}\bigpar{W_{t_k}-W_{t_{k-1}}} \\
\nonumber
&= \sum_{k=1}^{m+1}\Bigbrak{W_{t_k}^2-W_{t_{k-1}}^2
-\bigpar{W_{t_k}-W_{t_{k-1}}}^2} \\
&= W_t^2 - \sum_{k=1}^{m+1} \bigpar{W_{t_k}-W_{t_{k-1}}}^2\;.
\label{exito3}
\end{align}
Consider now the random variable 
\begin{equation}
\label{exito4}
M^{(n)}_t = \sum_{k=1}^{m+1} \bigpar{W_{t_k}-W_{t_{k-1}}}^2 - t
= \sum_{k=1}^{m+1} \bigbrak{\bigpar{W_{t_k}-W_{t_{k-1}}}^2 -
\bigpar{t_k-t_{k-1}}}\;. 
\end{equation}
Since all terms of the sum are independent and have zero expectation, we obtain 
\begin{align}
\nonumber
\bigexpec{\bigpar{M^{(n)}_t}^2} 
&= \sum_{k=1}^{m+1} \bigexpec{\bigbrak{\bigpar{W_{t_k}-W_{t_{k-1}}}^2 -
\bigpar{t_k-t_{k-1}}}^2} \\
\nonumber
&\leqs (m+1)
\bigexpec{\bigbrak{\bigpar{W_{t_1}-W_{t_0}}^2 - \bigpar{t_1-t_0}}^2} \\
\nonumber
&\leqs \const2^n\bigexpec{\bigbrak{\bigpar{W_{2^{-n}}}^2 - 2^{-n}}^2} \\
\nonumber
&= \const2^{-n}\bigexpec{\bigbrak{\bigpar{W_1}^2 - 1}^2} \\
&\leqs \const 2^{-n}\;,
\label{exito5}
\end{align}
owing to the scaling property. Therefore, $M^{(n)}_t$ converges to 
zero in $L^2$, proving~\eqref{exito1}.
\end{example}

\begin{remark}
Using more sophisticated tools from stochastic analysis, it is possible to 
prove a stronger type of convergence. Indeed, $\smash{M^{(n)}_t}$ is what is 
known as a submartingale, for which Doob's inequality yields  
\begin{equation}
\label{exito6}
\biggprob{\sup_{0\leqs s\leqs t} \bigpar{M^{(n)}_s}^2 > n^2 2^{-n}} 
\leqs 2^n n^{-2} \bigexpec{\bigpar{M^{(n)}_t}^2} 
\leqs \const n^{-2}\;.
\end{equation}
The Borel--Cantelli lemma then shows that  
\begin{equation}
\label{exito7}
\biggprob{\sup_{0\leqs s\leqs t} \bigabs{M^{(n)}_s} < n2^{-n/2},
n\to\infty}=1\;,
\end{equation}
proving almost sure convergence.   
\end{remark}

Consider now a stochastic integral of the form 
\begin{equation}
\label{fito1}
X_t = X_0 + \int_0^t f_s \6s + \int_0^t g_s \6W_s\;,
\qquad t\in[0,T]
\end{equation}
where $X_0$ is a random variable independent of the Brownian motion, and $f$ 
and $g$ are two adapted processes satisfying 
\begin{align}
\nonumber
\biggprob{\int_0^T \abs{f_s}\6s < \infty} &= 1 \\
\biggprob{\int_0^T g_s^2 \6s < \infty} &= 1\;.
\label{fito2}
\end{align}
The process~\eqref{fito1}  can also be written in differential form as 
\begin{equation}
\label{fito3}
\6X_t = f_t \6t + g_t \6W_t\;.
\end{equation}
For instance, Relation~\eqref{exito1} is equivalent to
\begin{equation}
\label{fito4}
\6\,(W_t^2) = \6t + 2 W_t \6W_t\;.
\end{equation}
Ito's formula allows to determine the effect of a change of 
variables on the stochastic integral~\eqref{fito1} in a general way. 

\begin{lemma}[Ito's formula]
\label{lem_Ito}
Let $u : [0,\infty)\times\R \to \R, (t,x)\mapsto u(t,x)$ be 
continuously differentiable with respect to $t$ and twice 
continuously differentiable with respect to $x$. Then the 
stochastic process $Y_t = u(t,X_t)$ satisfies the equation 
\begin{align}
\nonumber
Y_t ={}& Y_0 + \int_0^t \dpar ut(s,X_s) \6s 
+ \int_0^t \dpar ux(s,X_s) f_s \6s 
+ \int_0^t \dpar ux(s,X_s) g_s \6W_s \\ 
&{}+ \frac12 \int_0^t \dpar{^2u}{x^2}(s,X_s) g_s^2 \6s\;. 
\label{fito5A}
\end{align}
\end{lemma}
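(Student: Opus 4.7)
The plan is to follow the classical pattern: reduce to a nice enough setting by localisation, expand $u(t,X_t)$ along a finer and finer partition via Taylor's formula, and then identify each of the five terms on the right-hand side of~\eqref{fito5A} as the limit of one of the pieces of that expansion. The extra second-order term, which is the hallmark of Ito calculus, will come from the fact that the quadratic variation of the Wiener process over $[0,t]$ equals $t$ (this was essentially computed in~\eqref{exito4}--\eqref{exito5}).

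More concretely, I would first reduce to the case where $u$ together with its first and second derivatives is uniformly bounded, where $f$ and $g$ are bounded simple adapted processes, and where $X_0$ is bounded. A standard localisation argument (stopping at the first exit from a large ball for $X$, and truncating the integrands on the event that $\int_0^T |f_s|\6s$ or $\int_0^T g_s^2\6s$ is large) combined with the density statement leading to~\eqref{ito10} shows that the general case follows from this reduced one.

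Under the boundedness assumptions, fix $t\in[0,T]$ and a partition $0=t_0<t_1<\dots<t_N=t$ of mesh size $\delta$. Writing the telescoping sum
\begin{equation*}
u(t,X_t) - u(0,X_0) = \sum_{k=0}^{N-1} \bigbrak{u(t_{k+1},X_{t_{k+1}}) - u(t_k,X_{t_k})}\;,
\end{equation*}
I would apply Taylor's formula at each step to second order in $x$ and first order in $t$, obtaining a decomposition
\begin{equation*}
u(t,X_t) - u(0,X_0) = S_1 + S_2 + \tfrac12 S_3 + R\;,
\end{equation*}
where $S_1 = \sum \dpar ut(t_k,X_{t_k})(t_{k+1}-t_k)$, $S_2 = \sum \dpar ux(t_k,X_{t_k})(X_{t_{k+1}}-X_{t_k})$, $S_3 = \sum \dpar{^2u}{x^2}(t_k,X_{t_k})(X_{t_{k+1}}-X_{t_k})^2$, and $R$ is a Taylor remainder. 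Using the representation~\eqref{fito1}, I would split $X_{t_{k+1}}-X_{t_k}$ into its drift part $\int_{t_k}^{t_{k+1}} f_s\6s$ and its martingale part $\int_{t_k}^{t_{k+1}} g_s\6W_s$.

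The terms $S_1$ and the drift contribution to $S_2$ converge to the pathwise Riemann integrals $\int_0^t\dpar ut(s,X_s)\6s$ and $\int_0^t\dpar ux(s,X_s)f_s\6s$ by continuity; the martingale contribution to $S_2$ converges in $L^2$ to $\int_0^t\dpar ux(s,X_s)g_s\6W_s$ via the construction~\eqref{ito9}--\eqref{ito10} of the Ito integral, applied to the elementary approximation of the integrand $\partial_x u(s,X_s)g_s$. The heart of the argument is the quadratic term $S_3$: I want to show that in $L^2$,
\begin{equation*}
S_3 \longrightarrow \int_0^t \dpar{^2u}{x^2}(s,X_s)\,g_s^2\6s\;.
\end{equation*}
The strategy is to replace $(X_{t_{k+1}}-X_{t_k})^2$ by $g_{t_k}^2(W_{t_{k+1}}-W_{t_k})^2$ plus error terms, using Cauchy--Schwarz and Ito's isometry~\eqref{eq:Ito_isometry} to bound the cross terms involving the drift (which are of order $\delta^{1/2}$) and the term $\sum g_{t_k}^2\bigpar{(W_{t_{k+1}}-W_{t_k})^2-(t_{k+1}-t_k)}$ (which is of order $\delta^{1/2}$ by the same martingale computation as in~\eqref{exito5}, applied termwise). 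Finally, the Taylor remainder $R$ goes to zero thanks to uniform continuity of $\partial_{xx}u$ and the tightness of the increments $X_{t_{k+1}}-X_{t_k}$.

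The main obstacle I expect is precisely this quadratic-variation step: one has to keep track of several small-order contributions simultaneously, and show that the only surviving piece is $\int_0^t \partial_{xx} u(s,X_s)g_s^2\6s$. Everything else (choice of partition, truncation, density arguments) is essentially bookkeeping once the key identity $\sum (W_{t_{k+1}}-W_{t_k})^2\to t$ in $L^2$ is in hand, which we already established in the discussion surrounding~\eqref{exito4}--\eqref{exito5}.
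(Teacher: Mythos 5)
Your proposal is correct and rests on the same core argument as the paper's proof: a Taylor expansion of $u$ along a refining partition, with the second-order term identified through the quadratic-variation computation of~\eqref{exito4}--\eqref{exito5}, the time and drift terms converging as Riemann sums, and the martingale term converging via the $L^2$ construction~\eqref{ito10} of the Ito integral. The one structural difference is the initial reduction: the paper first passes to elementary integrands and then, by additivity, to constant $f_0,g_0$, so that $X_t=f_0t+g_0W_t$ is a deterministic function of $(t,W_t)$ and only the case $X_t=W_t$ needs to be treated, the quadratic term then being literally the martingale $M^{(n)}_t$ of the example. You instead keep general (simple, bounded) integrands, split each increment $X_{t_{k+1}}-X_{t_k}$ into its drift and martingale parts, and estimate the cross terms and the term $\sum_k g_{t_k}^2\bigl[(W_{t_{k+1}}-W_{t_k})^2-(t_{k+1}-t_k)\bigr]$ by hand, adding an explicit localisation step. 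Your route costs a few extra Cauchy--Schwarz and isometry estimates, and note that the summands in $\sum_k \partial_{xx}u(t_k,X_{t_k})\,g_{t_k}^2\bigl[(W_{t_{k+1}}-W_{t_k})^2-(t_{k+1}-t_k)\bigr]$ are no longer independent but only martingale differences, so the $L^2$ bound uses conditional orthogonality rather than the independence invoked in~\eqref{exito5}; in exchange, your version is more explicit about the passage from elementary to general integrands, about integrability, and about the truncation, all of which the paper's proof leaves implicit.
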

\begin{proof}
It suffices to prove the result for elementary integrands, and by additivity of 
the integrals, one can reduce the problem to the case of constant integrands. 
In that case, $X_t = f_0 t+g_0 W_t$ and $Y_t=u(t,f_0 t+g_0 W_t)$
can be expressed as functions of $(t,W_t)$. It suffices thus to 
consider the case $X_t=W_t$. Now for a partition
$0=t_0<t_1<\dots<t_n=t$, one has 
\begin{align}
\nonumber
u(t,W_t) - u(0,0) 
&= \sum_{k=1}^n \bigbrak{u(t_k,W_{t_k}) - u(t_{k-1},W_{t_k})} + 
\bigbrak{u(t_{k-1},W_{t_k}) - u(t_{k-1},W_{t_{k-1}})} \\
\nonumber
&=\sum_{k=1}^n \dpar ut(t_{k-1},W_{t_k}) (t_k-t_{k-1}) 
+ \dpar ux(t_{k-1},W_{t_{k-1}})(W_{t_k}-W_{t_{k-1}}) \\
\nonumber
&\phantom{=} {}+ \frac12 \dpar
{^2u}{x^2}(t_{k-1},W_{t_{k-1}})(W_{t_k}-W_{t_{k-1}})^2 + 
\Bigorder{t_k-t_{k-1}} + \Bigorder{(W_{t_k}-W_{t_{k-1}})^2} \\
\nonumber
&= \int_0^t \dpar ut(s,W_s)\6s + \int_0^t\dpar ux(s,W_s)\6W_s + 
\frac12 \int_0^t\dpar{^2u}{x^2}(s,W_s) \6s \\
&\phantom{=} {}+ \sum_{k=1}^n \frac12 
\dpar{^2u}{x^2}(t_{k-1},W_{t_{k-1}})
\bigbrak{(W_{t_k}-W_{t_{k-1}})^2-(t_k-t_{k-1})} +\order{1}\;.
\label{fito10}
\end{align}
The sum can be dealt with as $M^{(n)}_t$ in the above example when 
$t_k-t_{k-1}\to0$, c.f.~\eqref{exito4}.   
\end{proof}

\begin{remark}
\label{rem_fIto}
\begin{enumerate}
\item	Ito's formula can be written in differential form as  
\begin{equation}
\label{fito5}
\6Y_t = \dpar ut(t,X_t) \6t + \dpar ux(t,X_t) \bigbrak{f_t \6t + g_t \6W_t} 
+ \frac12 \dpar{^2u}{x^2}(t,X_t) g_t^2 \6t\;.
\end{equation}
\item	A mnemotechnic way to recover the formula is to write it in the form  
 \begin{equation}
\label{fito6}
\6Y_t = \dpar ut \6t + \dpar ux \6X_t + \frac12 \dpar{^2u}{x^2}\6X_t^2\;,
\end{equation}
where $\6X_t^2$ can be computed using the rules
\begin{equation}
\label{fito7}
\6t^2 = \6t\6W_t=0, \qquad
\6W_t^2 = \6t\;.
\end{equation}

\item	The formula can be generalised to functions
$u(t,X^{(1)}_t,\dots,X^{(n)}_t)$, depending on $n$ processes defined
by $\6X^{(i)}_t=f^{(i)}_t\6t+g^{(i)}_t\6W_t$, to 
\begin{equation}
\label{fito8}
\6Y_t = \dpar ut \6t + \sum_i\dpar u{x_i} \6X^{(i)}_t + \frac12
\sum_{i,j}\dpar{^2u}{x_i\partial x_j} \6X^{(i)}_t\6X^{(j)}_t\;,
\end{equation}
where $\6X^{(i)}_t\6X^{(j)}_t = g^{(i)}_tg^{(j)}_t\6t$.
\end{enumerate}
\end{remark}

\begin{example}
\label{ex_ito1}
\begin{enumerate}
\item	If $X_t = W_t$ and $u(x)=x^2$, one recovers Relation~\eqref{fito4}.
\item	If $\6X_t = g_t\6W_t - \frac12 g_t^2\6t$ and $u(x)=\e^x$, one obtains
\begin{equation}
\label{fito9}
\6\,(\e^{X_t}) = g_t e^{X_t}\6W_t\;.
\end{equation}
Therefore, $M_t = \exp\bigset{\gamma W_t - \gamma^2 \frac t2}$ solves the 
equation 
\begin{equation}
 \6M_t=\gamma M_t\6W_t\;.
\end{equation}
\end{enumerate}
\end{example}

\begin{exercise}[Ornstein--Uhlenbeck process]
Consider the two stochastic processes
\begin{equation}
X_t = \int_0^t \e^s \6W_s\;,
\qquad
Y_t = \e^{-t} X_t\;.
\end{equation}

\begin{enumerate}
\item	Determine $\expec{X_t}$, $\variance(X_t)$, $\expec{Y_t}$ and
$\variance(Y_t)$.
\item	Specify the law of $X_t$ and $Y_t$. 
\item	Show that $Y_t$ converges in distribution to a random variable 
$Y_\infty$ as $t\to\infty$, and specify its law. 
\item	Express $\6Y_t$ as a function of $Y_t$ and $W_t$.
\end{enumerate}
\end{exercise}

\begin{exercise}[Stratonovich integral]
\label{exo_ito4}
Let $\set{W_t}_{t\in[0,T]}$ be a standard Brownian motion. 
Let $0=t_0<t_1<\dots<t_N=T$ be a partition of $[0,T]$, and let 
\begin{equation}
e_t = \sum_{k=1}^N e_{t_{k-1}} \indicator{[t_{k-1},t_k)}(t)
\end{equation}
be an elementary function, adapted to the canonical filtration of Brownian 
motion.
The Stratonovich integral of $e_t$ is defined by 
\begin{equation}
\int_0^T e_t \circ \6W_t = \sum_{k=1}^N \frac{e_{t_k}+e_{t_{k-1}}}{2} \Delta W_k
\qquad
\text{where } \Delta W_k = W_{t_k} - W_{t_{k-1}}\;.
\end{equation}
The Stratonovich integral 
\begin{equation}
 \int_0^T X_t\circ \6W_t
\end{equation} 
of an adapted process $X_t$ is defined as the limit of the sequence 
\begin{equation}
\int_0^T e^{(n)}_t\circ \6W_t\;, 
\end{equation} 
where $e^{(n)}$ is a sequence of elementary functions 
converging to $X_t$ in $L^2$. Assume that this limit exists and is independent 
of the sequence $e^{(n)}$. 

\begin{enumerate}
\item 	Compute
\begin{equation}
\int_0^T W_t \circ \6W_t\;. 
\end{equation}

\item	Let $g:\R\to\R$ be a $\cC^2$ function, and let $X_t$ be an adapted 
process satisfying
\begin{equation}
X_t = \int_0^t g(X_s) \circ \6W_s 
\qquad
\forall t\in[0,T]\;.
\end{equation}
Let $Y_t$ be the Ito integral
\begin{equation}
Y_t = \int_0^t g(X_s) \6W_s\;.
\end{equation}
Show that 
\begin{equation}
X_t - Y_t = \frac12 \int_0^t g'(X_s)g(X_s) \6s
\qquad
\forall t\in[0,T]\;.
\end{equation}
\end{enumerate}
\end{exercise}


\subsection{Stochastic differential equations}
\label{ssec:Ito_SDEs} 

A \emph{stochastic differential equation} (SDE) is an equation of the form 
\begin{equation}
\label{edsf1}
\6X_t = f(X_t,t)\6t + g(X_t,t)\6W_t, 
\end{equation}
where $f, g:\R\times[0,T]\to\R$ are deterministic measurable functions. A 
\emph{strong solution} if this equation is by definition an adapted process 
satisfying 
\begin{equation}
\label{edsf3}
X_t = X_0 + \int_0^t f(X_s,s)\6s + \int_0^t g(X_s,s)\6W_s 
\end{equation} 
almost surely for all $t\in[0,T]$, as well as the regularity conditions
\begin{equation}
\label{edsf2}
\Biggprob{\int_0^T \abs{f(X_s,s)}\6s < \infty} =
\Biggprob{\int_0^T g(X_s,s)^2\6s < \infty} = 1\;.
\end{equation}
Here are two important examples of solvable SDEs. 

\begin{example}[Linear SDE with additive noise]
\label{ex_edslin}
Consider the linear SDE with additive noise 
\begin{equation}
\label{edsf8}
\6X_t = a(t) X_t\6t + \sigma(t)\6W_t\;,
\end{equation}
where $a$ and $\sigma$ are deterministic functions. In the particular case 
$\sigma\equiv0$, the solution can be simply written  
\begin{equation}
 \label{edsf8A}
X_t = \e^{\alpha(t)} X_0\;, \qquad
\alpha(t) = \int_0^t a(s)\,\6s\;. 
\end{equation} 
This suggest applying the method of variation of the constant, that is, looking 
for a solution of the form 
$X_t = \e^{\alpha(t)} Y_t$. Ito's formula applied to 
$Y_t=u(X_t,t)=\e^{-\alpha(t)}X_t$ gives us
\begin{equation}
 \label{edsf8B}
\6Y_t = -a(t) \e^{-\alpha(t)}X_t \,\6t 
+ \e^{-\alpha(t)}\,\6X_t
= \e^{-\alpha(t)} \sigma(t) \,\6W_t\;,
\end{equation} 
so that integrating and using $Y_0=X_0$, one gets 
\begin{equation}
 \label{edsf8C}
Y_t = X_0 + \int_0^t \e^{-\alpha(s)} \sigma(s) \,\6W_s\;. 
\end{equation} 
This finally gives the strong solution of equation~\eqref{edsf8} 
\begin{equation}
\label{edsf8D}
X_t = X_0 \e^{\alpha(t)} + 
\int_0^t \e^{\alpha(t)-\alpha(s)} \sigma(s) \,\6W_s\;. 
\end{equation}
One checks that this process indeed solves~\eqref{edsf3} by applying Ito's 
formula once again. 
Note in particular that if the initial condition $X_0$ is
deterministic, then $X_t$ follows a normal law, with expectation 
$\expec{X_t} = X_0\e^{\alpha(t)}$ and variance 
\begin{equation}
 \label{edsf8E}
\variance(X_t) = 
\int_0^t  \e^{2(\alpha(t)-\alpha(s))} \sigma(s)^2 \,\6s\;, 
\end{equation}  
as a consequence of Ito's isometry. 
\end{example}

\begin{example}[Linear SDE with multiplicative noise]
\label{ex_edslinm}
Consider the linear SDE with multiplicative noise  
\begin{equation}
\label{edsf9}
\6X_t = a(t) X_t\6t + \sigma(t)X_t\6W_t\;,
\end{equation}
with again $a$ and $\sigma$ deterministic functions. We can then write 
\begin{equation}
 \label{edsf9A}
\frac{\6X_t}{X_t} =  a(t) \6t + \sigma(t)\6W_t\;.
\end{equation} 
Integrating the left-hand side, one should get $\log(X_t)$, 
but is this compatible with Ito calculus? To check this, set 
$Y_t=u(X_t)=\log(X_t)$. Then Ito's formula gives  
\begin{align}
\nonumber
\6Y_t &= \frac{1}{X_t}\,\6X_t - \frac{1}{2X_t^2}\,\6X_t^2 \\ 
&= a(t) \6t + \sigma(t)\6W_t - \frac12 \sigma(t)^2 \6t\;.
 \label{edsf9B}
\end{align} 
Integrating and taking the exponential, one obtains the strong solution 
\begin{equation}
 \label{edsf9C} 
X_t = X_0 \exp\biggset{\int_0^t \bigbrak{a(s)-\frac12 \sigma(s)^2}\6s 
+ \int_0^t \sigma(s)\6W_s}\;.
\end{equation} 
In particular, if $a\equiv0$ and $\sigma\equiv\gamma$, one recovers 
$X_t=X_0\exp\set{\gamma W_t-\gamma^2 t/2}$, which is called the 
\defwd{geometric (or exponential) Brownian motion}.
\end{example}

\newpage 

We now state an existence and uniqueness result of solutions for a class of 
SDEs. 

\begin{theorem}[Existence and uniqueness of a strong solution]
\label{thm_edsf}
Assume the functions $f$ and $g$ satisfy the following two conditions:
\begin{enumerate}
\item	{\em Global Lipschitz condition:} there exists a constant $K$
such that 
\begin{equation}
\label{edsf5}
\abs{f(x,t) - f(y,t)} + \abs{g(x,t)-g(y,t)} \leqs K\abs{x-y}
\end{equation}
for all $x, y\in\R$ and $t\in[0,T]$. 

\item	{\em Bounded growth condition:} there exists a constant 
$L$ such that 
\begin{equation}
\label{edsf6}
\abs{f(x,t)} + \abs{g(x,t)} \leqs L(1+\abs{x})
\end{equation}
for all $x\in\R$ and $t\in[0,T]$. 
\end{enumerate}
Then the SDE~\eqref{edsf1} admits, for any square-integrable initial condition 
$X_0$, a strong solution $\set{X_t}_{t\in[0,T]}$, which is almost surely  
continuous. This solution is unique in the sense that if 
$\set{X_t}_{t\in[0,T]}$ 
and $\set{Y_t}_{t\in[0,T]}$ are two almost surely continuous solutions, then 
\begin{equation}
\label{edsf7}
\biggprob{\sup_{0\leqs t\leqs T}\abs{X_t-Y_t}>0} = 0\;. 
\end{equation}
\end{theorem}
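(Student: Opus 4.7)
The plan is to prove existence by a Picard iteration scheme, and uniqueness by a Gronwall-type estimate, both driven by Ito's isometry and the Lipschitz hypothesis~\eqref{edsf5}. Throughout, the key analytic ingredient is a control of the form $\E[|\Phi(X) - \Phi(Y)|^2] \leqs C \int_0^t \E[|X_s - Y_s|^2]\,\6s$ for the map $\Phi(X)_t = \int_0^t f(X_s,s)\,\6s + \int_0^t g(X_s,s)\,\6W_s$, obtained by splitting into drift and diffusion parts, using Cauchy--Schwarz for the drift term and the Ito isometry~\eqref{eq:Ito_isometry} for the diffusion term.

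For uniqueness, I would start from two almost surely continuous solutions $X$ and $Y$ with the same initial condition $X_0$, set $D_t = X_t - Y_t$, and write
\begin{equation*}
D_t = \int_0^t \bigbrak{f(X_s,s)-f(Y_s,s)}\6s + \int_0^t \bigbrak{g(X_s,s)-g(Y_s,s)}\6W_s\;.
\end{equation*}
Squaring, taking expectations, using the inequality $(a+b)^2\leqs 2a^2+2b^2$, Cauchy--Schwarz on the first term, Ito's isometry on the second, and the Lipschitz bound~\eqref{edsf5}, I obtain $\E[D_t^2] \leqs C(T) \int_0^t \E[D_s^2]\,\6s$ for some constant $C(T)$. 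Gronwall's lemma then forces $\E[D_t^2]=0$ for every $t\in[0,T]$, so $X_t=Y_t$ almost surely for each $t$; almost sure continuity of both processes then upgrades this to $\fP\{\sup_{t\leqs T}|X_t-Y_t|>0\}=0$, which is~\eqref{edsf7}.

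For existence, I would use a Picard iteration: set $X_t^{(0)} \equiv X_0$ and define recursively
\begin{equation*}
X_t^{(n+1)} = X_0 + \int_0^t f(X_s^{(n)},s)\6s + \int_0^t g(X_s^{(n)},s)\6W_s\;.
\end{equation*}
The bounded growth condition~\eqref{edsf6} together with $\E[X_0^2]<\infty$ ensures inductively that each $X^{(n)}$ is square-integrable and adapted, so all integrals are well defined. Then, for
\begin{equation*}
\Delta_n(t) = \Biggexpec{\sup_{0\leqs s\leqs t}\bigabs{X^{(n+1)}_s - X^{(n)}_s}^2}\;,
\end{equation*}
I would apply Cauchy--Schwarz to the drift part and Doob's maximal inequality together with Ito's isometry to the stochastic integral part, combined with~\eqref{edsf5}, to get $\Delta_n(t) \leqs C \int_0^t \Delta_{n-1}(s)\6s$. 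Iterating yields $\Delta_n(T)\leqs (CT)^n\Delta_0(T)/n!$, which is summable, so $\sum_n\|X^{(n+1)}-X^{(n)}\|_\infty$ is almost surely finite and $X^{(n)}$ converges uniformly on $[0,T]$ almost surely to a continuous adapted limit $X$. Passing to the limit in the Picard identity, which requires $L^2$-continuity of the drift and diffusion maps (again via Lipschitz), identifies $X$ as a strong solution of~\eqref{edsf3}.

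The main technical obstacle is upgrading convergence in the mean to almost sure \emph{uniform} convergence, which is essential for obtaining a continuous solution: one cannot simply take pointwise limits of $\int_0^t g(X_s^{(n)},s)\6W_s$ in $L^2$. This is where Doob's maximal inequality for the martingale $M_t^{(n)} = \int_0^t [g(X^{(n)}_s,s)-g(X^{(n-1)}_s,s)]\6W_s$ is indispensable, since it lets one convert the $L^2$ control from the Ito isometry into a bound on $\E[\sup_{s\leqs t}|M^{(n)}_s|^2]$, which together with the Borel--Cantelli lemma forces uniform convergence on $[0,T]$ with probability one. The rest of the argument is a standard reshuffle of the same Lipschitz/Gronwall ingredients used in the uniqueness part.
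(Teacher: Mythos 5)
Your proposal is correct and follows essentially the same route the paper indicates: the paper only sketches the argument, citing Gronwall's lemma applied to $\bigexpec{\abs{X_t-Y_t}^2}$ for uniqueness and convergence of the Picard iterates~\eqref{edsf8:5} for existence, which is exactly your scheme. Your additional details (Doob's maximal inequality, Ito isometry, Borel--Cantelli to get almost sure uniform convergence and continuity of the limit) are the standard and appropriate way to fill in that sketch.
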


We will omit the details of the proof of this result, which is very similar to 
corresponding proofs in the deterministic case. Uniqueness follows by 
estimating the derivative of the expected 
difference $\bigexpec{\abs{X_t-Y_t}^2}$ and applying 
Gronwall's lemma, while existence is obtained by applying a fixed-point 
argument, or more precisely by showing that the sequence of functions 
\begin{equation}
 \label{edsf8:5}
 X^{(k+1)}_t = X_0 + \int_0^t f(X^{(k)}_s,s) \,\6s
+ \int_0^t g(X^{(k)}_s,s) \,\6W_s
\end{equation} 
converges to a limit which solves the SDE. 

\begin{remark}[Weaker conditions on drift and diffusion coefficients]
The conditions on $f$ and $g$ in the above result can be relaxed to the 
following ones:
\begin{enumerate}
\item	{\em Local Lipschitz condition:} For any compact
$\cK\in\R$, there exists a constant $K=K(\cK)$ such that 
\begin{equation}
\label{edsf5B}
\abs{f(x,t) - f(y,t)} + \abs{g(x,t)-g(y,t)} \leqs K\abs{x-y}
\end{equation}
for all $x, y\in\cK$ and $t\in[0,T]$. 

\item	{\em Bounded growth condition:} There exists a constant 
$L$ such that 
\begin{equation}
\label{edsf6B}
xf(x,t) + g(x,t)^2 \leqs L^2(1+x^2)
\end{equation}
for all $x,t$. 
\end{enumerate}
Indeed, one can show that under the local Lipschitz condition, any solution 
path $X_t(\omega)$ either exists up to time $T$, or leaves any compact $\cK$ at 
a time $\tau(\omega)<T$. Therefore, there exists a random blow-up time $\tau$, 
such that either $\tau(\omega)=+\infty$ and then $X_t(\omega)$ exists up to 
time $T$, or $\tau(\omega)\leqs T$, and then $X_t(\omega)\to\pm\infty$ as
$t\to\tau(\omega)$.

Under the bounded growth condition, one shows that solution paths 
$X_t(\omega)$ cannot blow up (because the drift term does not grow fast 
enough, or pulls paths back towards the origin if $xf(x,t)$ is
negative). 
\end{remark}

\begin{exercise}
\label{exo_sde2}
Solve the SDE
\begin{equation}
\6X_t = -\frac12 X_t \6t + \sqrt{1-X_t^2} \6W_t\;,
\qquad 
X_0 = 0
\end{equation}
using the change of variables $Y=\Arcsin(X)$.
\end{exercise}

\begin{exercise}
\label{exo_sde4}
Fix $r, \alpha\in\R$. Solve the SDE
\begin{equation}
\6Y_t = r\6t + \alpha Y_t \6W_t\;,
\qquad Y_0=1
\end{equation}
by using the \lq\lq integrating factor\rq\rq\ 
$F_t = \e^{-\alpha W_t + \frac12 \alpha^2 t}$, 
and considering $X_t=F_tY_t$. 
\end{exercise}



\section{Diffusions}
\label{sec:diffusions} 

A~\defwd{diffusion} is a stochastic process solving an SDE of the form 
\begin{equation}
\label{diff00}
\6X_t = f(X_t)\6t + g(X_t)\6W_t\;,
\end{equation}
with a drift coefficient $f$ (modelling a deterministic force), and a diffusion 
coefficient $g$ (modelling a random effect such as collisions with particles of 
a fluid). When speaking of diffusions, we focus on the dependence of solutions 
on the initial condition $X_0 = x$, which is one of the main mechanisms 
creating links between SDEs and PDEs.  

\begin{definition}[Ito diffusion]
A \defwd{time-homogeneous Ito diffusion} is a stochastic process 
$\set{X_t(\omega)}_{t\geqs0}$ satisfying an SDE of the form  
\begin{equation}
\label{diffM01}
\6X_t = f(X_t)\6t + g(X_t)\6W_t\;, 
\qquad t\geqs s>0\;, \quad X_s = x\;,
\end{equation}
where $W_t$ is a standard Brownian motion of dimension $m$, and the  
\defwd{drift coefficient} $f:\R^n\to\R^n$ and \defwd{diffusion coefficient} 
$g:\R^n\to\R^{n\times m}$ are such that the SDE~\eqref{diffM01} admits a unique 
solution for all times.
\end{definition}

We will denote the solution of~\eqref{diffM01} $X^{s,x}_t$. 


\subsection{The Markov property}
\label{sec:Markov} 

Time homogeneity, that is, the fact that $f$ and $g$ do not depend on time, 
has the following important consequence. 

\begin{lemma}[Time homogeneity of the law]
\label{lem_law_diffusion} 
The processes $\set{X^{s,x}_{s+h}}_{h\geqs0}$ and $\set{X^{0,x}_h}_{h\geqs0}$
have the same distribution. 
\end{lemma}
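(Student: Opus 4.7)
The plan is to exploit time-homogeneity together with the differential property of Brownian motion (Property~2 of Subsection~\ref{ssec:BM_proposition}), which says that $\tilde W_h := W_{s+h}-W_s$ is itself a standard Brownian motion, independent of $\cF_s$. Since the coefficients $f$ and $g$ do not depend on $t$, after this time shift the equation satisfied by $X^{s,x}_{s+\cdot}$ should look identical to the equation satisfied by $X^{0,x}_{\cdot}$, only driven by $\tilde W$ instead of $W$.

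Concretely, I would start from the integral form of~\eqref{diffM01}, namely
\begin{equation*}
X^{s,x}_{s+h} = x + \int_s^{s+h} f(X^{s,x}_u)\6u + \int_s^{s+h} g(X^{s,x}_u)\6W_u\;,
\end{equation*}
perform the change of variable $u = s+v$ in both integrals and set $Y_h := X^{s,x}_{s+h}$. For the Lebesgue integral this is immediate; for the Ito integral one has to verify that $\int_s^{s+h} g(X^{s,x}_u)\6W_u = \int_0^h g(Y_v)\6\tilde W_v$, which can be checked first on elementary adapted integrands (where it reduces to reindexing a finite sum of increments $W_{t_k}-W_{t_{k-1}} = \tilde W_{t_k-s}-\tilde W_{t_{k-1}-s}$) and then extended by the $L^2$ approximation that defines the general Ito integral. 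This yields
\begin{equation*}
Y_h = x + \int_0^h f(Y_v)\6v + \int_0^h g(Y_v)\6\tilde W_v\;.
\end{equation*}

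Thus $(Y_h)_{h\geqs 0}$ satisfies exactly the same SDE as $(X^{0,x}_h)_{h\geqs 0}$, with the same deterministic initial condition $x$ and driven by the standard Brownian motion $\tilde W$. Since the hypothesis of the definition includes uniqueness of the solution, pathwise uniqueness applied with driving noise $\tilde W$ identifies $Y_h$ with the solution built from $\tilde W$ starting at $0$ from $x$. Because $\tilde W$ is a standard Brownian motion with the same distribution as $W$, this solution has the same law as $X^{0,x}_h$, proving the lemma.

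The main delicate point is the time-shift identity for the Ito integral: one must make sure that the shifted filtration $\tilde\cF_h := \cF_{s+h}$ is an admissible filtration for $\tilde W$ (so that $g(Y_v)$ is $\tilde\cF_v$-adapted and the shifted integral is well defined as an Ito integral against $\tilde W$), and that the $L^2$-approximation procedure of Subsection~\ref{ssec:Ito_integral} is compatible with this shift. Everything else — change of variable in the Lebesgue integral and the uniqueness-in-law conclusion — is routine once this is in place.
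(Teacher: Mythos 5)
Your proposal is correct and follows essentially the same route as the paper's proof: the change of variables $u=s+v$, the differential property showing that $\widetilde W_v = W_{s+v}-W_s$ is again a standard Brownian motion, and uniqueness of solutions of the SDE to conclude equality in law. Your extra care in justifying the time-shift identity for the It\^o integral (via elementary integrands and $L^2$ approximation) fills in a step the paper states without detail, but does not change the argument.
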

\begin{proof}
By definition, $X^{0,x}_h$ satisfies the integral equation  
\begin{equation}
 \label{diffM02:1} 
X^{0,x}_h = x + \int_0^h f(X^{0,x}_v)\6v + \int_0^h g(X^{0,x}_v)\6W_v\;.
\end{equation} 
Furthermore, $X^{s,x}_{s+h}$ satisfies the equation
\begin{align}
\nonumber
X^{s,x}_{s+h} &= x + \int_s^{s+h} f(X^{s,x}_u)\6u + \int_s^{s+h}
g(X^{s,x}_u)\6W_u \\
&= x + \int_0^h f(X^{s,x}_{s+v})\6v + \int_0^h
g(X^{s,x}_{s+v})\6\widetilde W_v
\label{diffM02:2} 
\end{align} 
where we have used the change of variables $u=s+v$, and $\widetilde W_v
= W_{s+v} - W_s$. By the differential property, $\widetilde W_v$ is 
a standard Brownian motion, so that by uniqueness of solutions of the 
SDE~\eqref{diffM01}, the integrals~\eqref{diffM02:2} and~\eqref{diffM02:1}
have the same distribution. 
\end{proof}

We will denote $\fP^{\mskip1.5mu x}$ the probability measure on the 
$\sigma$-algebra generated by all random variables $X^{0,x}_t$, $t\geqs0$,
$x\in\R^n$, defined by 
\begin{equation}
 \label{diffM03}
\bigprobin{x}{X_{t_1}\in A_1,\dots,X_{t_k}\in A_k} 
= \bigprob{X^{0,x}_{t_1}\in A_1,\dots,X^{0,x}_{t_k}\in A_k} 
\end{equation} 
for any choice of times $0\leqs t_1 < t_2 < \dots < t_k$ and Borel sets
$A_1, \dots,A_k\subset\R^n$. Expectations with respect to $\fP^{\mskip1.5mu
x}$ will be denoted $\E^{\mskip1.5mu x}$. 

\begin{theorem}[Markov propery for Ito diffusions]
\label{thm_Markov_diffusion} 
For any bounded measurable function $\varphi:\R^n\to\R$, 
\begin{equation}
 \label{diffM04}
\bigecondin{x}{\varphi(X_{t+h})}{\cF_t}(\omega) 
= \bigexpecin{X_t(\omega)}{\varphi(X_h)}\;,
\end{equation} 
where the right-hand side denotes the function 
$\expecin{y}{\varphi(X_h)}$ evaluated at $y=X_t(\omega)$. 
\end{theorem}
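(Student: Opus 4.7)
The plan is to reduce the claim to uniqueness of the SDE restarted at time $t$. Setting $\widetilde W_s = W_{t+s} - W_t$, which by the differential property of Brownian motion is a standard Brownian motion independent of $\cF_t$, the shifted process $Y_s = X_{t+s}$ satisfies
\begin{equation*}
Y_s = X_t + \int_0^s f(Y_u)\6u + \int_0^s g(Y_u)\6\widetilde W_u\;.
\end{equation*}
This is the very same SDE as~\eqref{diffM01}, but now with random initial condition $X_t$, which is $\cF_t$-measurable, and driven by a Brownian motion $\widetilde W$ that is independent of $\cF_t$.

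Next I would use the existence and uniqueness Theorem~\ref{thm_edsf} to produce a measurable functional $\Phi_s : \R^n \times \cC([0,\infty),\R^m) \to \R^n$ such that $X^{0,y}_s = \Phi_s(y,W)$ for any deterministic starting point $y$, and such that the shifted solution above is $Y_s = \Phi_s(X_t, \widetilde W)$. Setting $F(y) = \bigexpecin{y}{\varphi(X_h)} = \bigexpec{\varphi(\Phi_h(y,W))}$, the identity to prove reads $\bigecondin{x}{\varphi(X_{t+h})}{\cF_t} = F(X_t)$ almost surely.

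To conclude, I would invoke the standard factorisation lemma for conditional expectations: if $Z$ is $\cF_t$-measurable and $U$ is independent of $\cF_t$, then for any bounded measurable $\Psi$,
\begin{equation*}
\bigecond{\Psi(Z,U)}{\cF_t}(\omega) = \bigexpec{\Psi(y,U)}\Bigevalat{y=Z(\omega)}\;.
\end{equation*}
Applied to $\Psi(y,w) = \varphi(\Phi_h(y,w))$ with $Z = X_t$ and $U = \widetilde W$, this yields exactly~\eqref{diffM04}, once one observes that by Lemma~\ref{lem_law_diffusion} the law of $\Phi_h(y,\widetilde W)$ under $\fP^{\mskip1.5mu x}$ coincides with the law of $X_h$ under $\fP^{\mskip1.5mu y}$.

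The main obstacle will be justifying the joint measurability of $(y,\omega)\mapsto \Phi_s(y,W(\omega))$, since Theorem~\ref{thm_edsf} as stated only asserts existence and uniqueness for a fixed starting point. This can be handled by noting that the Picard iteration~\eqref{edsf8:5} preserves joint measurability in $(y,\omega)$ at each step, so that its almost-sure limit inherits this property; alternatively, one can argue via a Kolmogorov continuity estimate that the family $\set{X^{0,y}_s}_{y\in\R^n,\,s\geqs0}$ admits a jointly continuous (hence jointly measurable) version.
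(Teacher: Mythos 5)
Your proposal is correct and follows essentially the same route as the paper: express $X_{t+h}$ as a functional of $X_t$ and the post-$t$ noise via uniqueness of solutions, apply the freezing lemma for conditional expectations, and invoke Lemma~\ref{lem_law_diffusion} for time homogeneity. The only difference is cosmetic (you package the solution as an explicit map $\Phi_h$ driven by the shifted Brownian motion, and you spell out the joint-measurability point that the paper only asserts), so there is nothing to add.
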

\begin{proof}
Consider for $y\in\R^n$ and $s\geqs t$ the function 
\begin{equation}
 \label{diffM04:1}
F(y,t,s,\omega) = X^{t,y}_s(\omega) 
= y + \int_t^s f(X_u(\omega))\6u + \int_t^s g(X_u(\omega))\6W_u(\omega)\;. 
\end{equation} 
Note that $F$ is independent of $\cF_t$. By uniqueness of solutions of 
the SDE~\eqref{diffM01}, we have 
\begin{equation}
 \label{diffM04:2}
X_s(\omega) = F(X_t(\omega),t,s,\omega)\;. 
\end{equation} 
Let $g(y,\omega) = \varphi\circ F(y,t,t+h,\omega)$. One can check that this 
function is measurable. 
Relation~\eqref{diffM04} is thus equivalent to
\begin{equation}
 \label{diffM04:3}
\bigecond{g(X_t,\omega)}{\cF_t} = \bigexpec{\varphi\circ F(y,0,h,\omega)}
\Bigevalat{y=X_t(\omega)}\;.
\end{equation} 
We have 
\begin{equation}
 \label{diffM04:4}
 \bigecond{g(X_t,\omega)}{\cF_t} = \bigecond{g(y,\omega)}{\cF_t}
\Bigevalat{y=X_t(\omega)}\;.
\end{equation} 
Indeed, this relation is true for functions of the form
$g(y,\omega)=\phi(y)\psi(\omega)$, since 
\begin{equation}
 \label{diffM04:5}
 \bigecond{\phi(X_t)\psi(\omega)}{\cF_t}  
= \phi(X_t)\bigecond{\psi(\omega)}{\cF_t}
= \bigecond{\phi(y)\psi(\omega)}{\cF_t}
\Bigevalat{y=X_t(\omega)}\;.
\end{equation} 
It can thus be extended to any bounded measurable function, by approximating it 
by a sequence of linear combinations of functions as above. It follows from the 
independence of $F$ and $\cF_t$ that   
\begin{align}
\nonumber
\bigecond{g(y,\omega)}{\cF_t}
&= \bigexpec{g(y,\omega)} \\
\nonumber
&= \bigexpec{\varphi\circ F(y,t,t+h,\omega)} \\
&= \bigexpec{\varphi\circ F(y,0,h,\omega)}\;,
 \label{diffM04:6}
\end{align}
where the last equality follows from Lemma~\ref{lem_law_diffusion}. 
The result then follows by evaluating the last inequality at  $y=X_t$. 
\end{proof}

There exists an important generalisation of the Markov property to so-called 
stopping times. We have already encountered such a time in Andr\'e's reflection 
principle, see Proposition~\ref{prop:reflection_principle}, with the random 
time 
$\tau = \inf\setsuch{t\geqs0}{W_t\geqs H}$. The general definition of a 
stopping time is as follows.

\begin{definition}[Stopping time]
\label{def_stopping_Bt}
A \defwd{stopping time} is a random variable 
$\tau:\Omega\to[0,\infty]$ such that $\set{\tau<t}\in\cF_t$ for all 
$t\geqs0$. For such a stopping time, the \emph{pre-$\tau$} sigma algebra is 
defined by 
\begin{equation}
\label{mbta4}
\cF_\tau = \bigsetsuch{A\in\cF}{A\cap\set{\tau\leqs t}\in\cF_t\;\forall
t\geqs0} 
\end{equation} 
\end{definition}

In what follows, it will be sufficient to know that first-exit times 
\begin{equation}
 \tau = \inf\setsuch{t>0}{X_t\not\in A}
\end{equation} 
of an open or closed set $A$ are stopping times. The pre-$\tau$ sigma algebra 
is in this case the set of all events that only depend on the behaviour of the 
process as long as it stays in $A$. 

The generalisation of the Markov property to stopping times reads as follows. 

\begin{theorem}[Strong Markov property for Ito diffusions]
\label{thm_strong_Markov_diffusion} 
For any bounded, measurable function $\varphi:\R^n\to\R$ and almost 
surely finite stopping time $\tau$, 
\begin{equation}
 \label{diffM05}
\bigecondin{x}{\varphi(X_{\tau+h})}{\cF_\tau}(\omega) 
= \bigexpecin{X_\tau(\omega)}{\varphi(X_h)}\;.
\end{equation} 
\end{theorem}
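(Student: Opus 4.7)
The plan is to reduce to the ordinary Markov property (Theorem~\ref{thm_Markov_diffusion}) by approximating $\tau$ from above by discrete stopping times, then passing to the limit. Concretely, for each $n\geqs 1$, I would set
\[
 \tau_n(\omega) = \sum_{k=0}^{\infty} \frac{k+1}{2^n}\, \indicator{\{k2^{-n}\leqs \tau(\omega) < (k+1)2^{-n}\}},
\]
extended by $+\infty$ on $\{\tau=\infty\}$. Each $\tau_n$ takes only countably many values, is a stopping time (since $\set{\tau_n\leqs t}$ is expressible via $\tau$ on a dyadic grid), satisfies $\tau_n\downarrow\tau$ pointwise, and a short verification yields the key inclusion $\cF_\tau\subset\cF_{\tau_n}$.

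I would then establish the strong Markov property for each $\tau_n$ separately. Writing $u(y):=\bigexpecin{y}{\varphi(X_h)}$ and fixing $A\in\cF_\tau\subset\cF_{\tau_n}$, I decompose
\[
 \bigexpecin{x}{\varphi(X_{\tau_n+h})\indicator{A}}
 = \sum_{k=0}^{\infty} \bigexpecin{x}{\varphi(X_{(k+1)2^{-n}+h})\indicator{A\cap\{\tau_n=(k+1)2^{-n}\}}}.
\]
Since $A\cap\{\tau_n=(k+1)2^{-n}\}\in\cF_{(k+1)2^{-n}}$, applying Theorem~\ref{thm_Markov_diffusion} to each summand with the fixed deterministic time $(k+1)2^{-n}$ rewrites it as
\[
 \bigexpecin{x}{u(X_{(k+1)2^{-n}})\indicator{A\cap\{\tau_n=(k+1)2^{-n}\}}}.
\]
Resumming and using $X_{\tau_n}=X_{(k+1)2^{-n}}$ on $\set{\tau_n=(k+1)2^{-n}}$ gives the identity $\bigexpecin{x}{\varphi(X_{\tau_n+h})\indicator{A}} = \bigexpecin{x}{u(X_{\tau_n})\indicator{A}}$ for every $A\in\cF_\tau$, which is the strong Markov property at level $n$.

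To conclude, I pass to the limit $n\to\infty$. Since paths are almost surely continuous and $\tau_n\downarrow\tau$, one has $X_{\tau_n+h}\to X_{\tau+h}$ and $X_{\tau_n}\to X_{\tau}$ almost surely. For bounded \emph{continuous} $\varphi$ the left-hand side converges by bounded convergence, while the right-hand side requires continuity of $y\mapsto u(y)$, a Feller-type property that follows from continuous dependence of the solution of~\eqref{diffM01} on its initial condition (itself a standard byproduct of the Gronwall estimates behind Theorem~\ref{thm_edsf}). Extension from bounded continuous to bounded measurable $\varphi$ is then handled by a monotone class argument. I expect the main obstacle to be precisely this Feller-continuity step: verifying that $y\mapsto \bigexpecin{y}{\varphi(X_h)}$ is continuous (or at least measurable with enough regularity to carry the limit) and then upgrading the result from continuous test functions to arbitrary bounded measurable ones; the dyadic discretization itself is essentially mechanical.
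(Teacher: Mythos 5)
Your argument is correct in outline, but it takes a genuinely different route from the one the paper intends. The paper's proof (deferred to \cite[Theorem~7.2.4]{Oksendal}) adapts the flow argument of Theorem~\ref{thm_Markov_diffusion}: one writes $X_{\tau+h}=F(X_\tau,\tau,\tau+h,\omega)$ with $F$ the solution map, and uses the strong Markov property of Brownian motion (the increments $W_{\tau+v}-W_\tau$ form a Brownian motion independent of $\cF_\tau$), uniqueness of solutions, and the same factorisation lemma as before to condition on $\cF_\tau$ directly; this handles bounded \emph{measurable} $\varphi$ in one stroke and needs no continuity of $y\mapsto\bigexpecin{y}{\varphi(X_h)}$. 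Your dyadic discretisation $\tau_n\downarrow\tau$ instead reduces everything to the already-proved Markov property at deterministic times, which is more elementary on that side, but the price is exactly the limit step you flag: you need the Feller property of the semigroup to pass to the limit in $\bigexpecin{x}{u(X_{\tau_n})\indicator{A}}$, and then a monotone class argument to upgrade from continuous to measurable $\varphi$. Be aware that the paper's definition of an Ito diffusion only assumes global existence and uniqueness, so Feller continuity is not automatic at this point of the text (it is only stated in Chapter~2, under a global Lipschitz hypothesis); under the Lipschitz conditions of Theorem~\ref{thm_edsf} your Gronwall argument does deliver it, so your proof is complete in that setting. One small point to make explicit: you should also note that $X_\tau$ is $\cF_\tau$-measurable (progressive measurability of the continuous adapted process), so that the right-hand side of~\eqref{diffM05} is indeed a version of the conditional expectation with respect to $\cF_\tau$.
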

\begin{proof}
The proof is a relatively direct adaptation of the previous proof. See for 
instance \cite[Theorem~7.2.4]{Oksendal}.
\end{proof}


\subsection{Semigroups and generators}
\label{sec:generator} 

\begin{definition}[Markov semi-group]
\label{def_semigroup}
To any bounded measurable function $\varphi:\R^n\to\R$, one associates for all 
$t\geqs0$ the function $P_t\varphi$ defined by 
\begin{equation}
 \label{diffsg1}
(P_t\varphi)(x) = \bigexpecin{x}{\varphi(X_t)}\;. 
\end{equation}  
The linear operator  $P_t$ is called the \defwd{Markov semi-group}
of the diffusion. 
\end{definition}

For instance, if $\varphi(x)=\indicator{A}(x)$ denotes the indicator function 
of a Borel set $A\subset\R^n$, one has 
\begin{equation}
 \label{diffsg2}
(P_t\indicator{A})(x) = \bigprobin{x}{X_t\in A}\;. 
\end{equation} 

The name semi-group is justified by the following result. 

\begin{lemma}[Semi-group property]
For any $t, h\geqs 0$, one has 
\begin{equation}
 \label{diffsg3}
P_h\circ P_t = P_{t+h}\;. 
\end{equation} 
\end{lemma}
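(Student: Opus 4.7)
The plan is to unfold both sides via Definition~\ref{def_semigroup} and then apply the Markov property from Theorem~\ref{thm_Markov_diffusion} combined with the tower property of conditional expectation. There is essentially only one substantial step in the argument.

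First I would fix a bounded measurable $\varphi : \R^n \to \R$ and $x \in \R^n$, and write out the right-hand side: by definition
\begin{equation*}
(P_{t+h}\varphi)(x) = \bigexpecin{x}{\varphi(X_{t+h})}.
\end{equation*}
Next, using the tower property of conditional expectations with respect to $\cF_h$, this equals $\bigexpecin{x}{\bigecondin{x}{\varphi(X_{t+h})}{\cF_h}}$. The Markov property~\eqref{diffM04} (applied with the roles of $t$ and $h$ interchanged, which is legitimate since $t$ and $h$ play symmetric roles there) identifies the inner conditional expectation as
\begin{equation*}
\bigecondin{x}{\varphi(X_{t+h})}{\cF_h}(\omega) = \bigexpecin{X_h(\omega)}{\varphi(X_t)} = (P_t\varphi)(X_h(\omega)).
\end{equation*}
Substituting back and using the definition of $P_h$ applied to the (bounded, measurable) function $P_t\varphi$ gives
\begin{equation*}
(P_{t+h}\varphi)(x) = \bigexpecin{x}{(P_t\varphi)(X_h)} = (P_h(P_t\varphi))(x) = ((P_h \circ P_t)\varphi)(x),
\end{equation*}
which is the claim.

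I do not expect a serious obstacle here; the only minor points to verify are that $P_t\varphi$ is itself bounded and measurable (boundedness is immediate from $\norm{P_t\varphi}_\infty \leqs \norm{\varphi}_\infty$, and measurability follows from the measurable dependence of $X_t$ on the initial condition, which is implicit in the well-posedness assumption for the SDE), so that $P_h(P_t\varphi)$ is well-defined as in Definition~\ref{def_semigroup}. Everything else is a direct application of Theorem~\ref{thm_Markov_diffusion}.
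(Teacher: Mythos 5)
Your proof is correct and follows essentially the same route as the paper's: the identity is established by combining the Markov property (Theorem~\ref{thm_Markov_diffusion}, with the roles of $t$ and $h$ swapped) with the tower property of conditional expectation, your version simply reading the chain of equalities from $(P_{t+h}\varphi)(x)$ towards $(P_h(P_t\varphi))(x)$ instead of the other way around. The added remark that $P_t\varphi$ is bounded and measurable, so that $P_h(P_t\varphi)$ makes sense, is a harmless refinement the paper leaves implicit.
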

\begin{proof}
We have
\begin{align}
\nonumber
(P_h\circ P_t)(\varphi)(x)
&= (P_h(P_t\varphi))(x) \\
\nonumber
&= \bigexpecin{x}{(P_t\varphi)(X_h)} \\
\nonumber
&= \bigexpecin{x}{\bigexpecin{X_h}{\varphi(X_t)}} \\
\nonumber
&= \bigexpecin{x}{\bigecondin{x}{\varphi(X_{t+h})}{\cF_t}} \\
\nonumber
&= \bigexpecin{x}{\varphi(X_{t+h})} \\
&= (P_{t+h}\varphi)(x)\;,
\label{diffsg3:1} 
\end{align}
where we have used the Markov property to go from the third to the fourth line.
\end{proof}

The following properties are easy to check:
\begin{enumerate}
\item	$P_t$ preserves constant functions: $P_t (c\indicator{\R^n}) 
= c\indicator{\R^n}$;
\item	$P_t$ preserves non-negative functions: $\varphi(x)\geqs0
\;\forall x \Rightarrow (P_t\varphi)(x)\geqs 0 \;\forall x$;
\item	$P_t$ is contracting (in the non-strict sense) in the $L^\infty$-norm:
\begin{equation}
 \label{diffsg4}
\sup_{x\in\R^n} \bigabs{(P_t\varphi)(x)} = \sup_{x\in\R^n} 
\bigabs{\bigexpecin{x}{\varphi(X_t)}}
\leqs \sup_{y\in\R^n}\bigabs{\varphi(y)} 
\sup_{x\in\R^n} \bigexpecin{x}{1} = \sup_{y\in\R^n}\bigabs{\varphi(y)}\;.
\end{equation} 
\end{enumerate}
The Markov semigroup is thus a positive, linear operator, which is bounded in 
$L^\infty$-norm (in fact, it has operator norm $1$). 

The semi-group property implies that the behaviour of $P_t$ on any interval 
$[0,\eps]$, with $\eps>0$ arbitrarily small, determines its behaviour for any 
$t\geqs0$. It is thus natural to consider the derivative of $P_t$ in $t=0$. 

\begin{definition}[Infinitesimal generator of an Ito diffusion]
\label{def_generateur}
The \defwd{infinitesimal generator} $\cL$ of an Ito diffusion is defined by its 
action on test functions $\varphi$ via 
\begin{equation}
 \label{diffsg5}
(\cL\varphi)(x) = \lim_{h\to0_+}
\frac{(P_h\varphi)(x) - \varphi(x)}{h}\;.
\end{equation}  
The domain of $\cL$ is by definition the set of functions $\varphi$ for which 
the limit~\eqref{diffsg5} exists for all $x\in\R^n$. 
\end{definition}

\begin{remark}
\label{rem_generator} 
Formally, Relation~\eqref{diffsg5} can be written
\begin{equation}
 \label{diffsg5A}
\cL = \dtot{P_t}{t}\Bigevalat{t=0}\;.
\end{equation} 
By the Markov property, this relation generalises to 
\begin{equation}
 \label{diffsg5B} 
\dtot{}{t} P_t = \lim_{h\to0_+} \frac{P_{t+h}-P_t}{h}
= \lim_{h\to0_+} \frac{P_h-\id}{h}P_t
= \cL P_t\;,
\end{equation}
and the semigroup can thus by formally written 
\begin{equation}
 \label{diffsg5C}
P_t = \e^{t\cL}\;. 
\end{equation} 
\end{remark}

\begin{proposition}
\label{prop_generator_Ito}
The generator of the Ito diffusion~\eqref{diffM01} is the differential 
operator
\begin{equation}
 \label{diffsg6}
\cL = \sum_{i=1}^n f_i(x) \dpar{}{x_i}
+ \frac{1}{2} \sum_{i,j=1}^n (gg^T)_{ij}(x) \dpar{^2}{x_i\partial x_j}\;. 
\end{equation} 
The domain of $\cL$ contains the set of twice continuously 
differentiable functions of compact support.  
\end{proposition}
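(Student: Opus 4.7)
The plan is to apply Ito's formula to $\varphi(X_t)$, take expectation under $\fP^{\mskip1.5mu x}$, and let the time horizon shrink to zero. Fix $\varphi \in \cC^2_c(\R^n)$ and start with $X_0 = x$. By the multidimensional version of Ito's formula stated in Remark~\ref{rem_fIto}(3), applied componentwise to $\6X^{(i)}_t = f_i(X_t)\6t + \sum_k g_{ik}(X_t)\6W^{(k)}_t$ and using the rules $\6X^{(i)}_t\6X^{(j)}_t = \sum_k g_{ik}(X_t)g_{jk}(X_t)\6t = (gg^T)_{ij}(X_t)\6t$, one gets for every $h\geqs0$:
\begin{equation*}
 \varphi(X_h) = \varphi(x)
 + \int_0^h (\cL\varphi)(X_s)\6s
 + \int_0^h \sum_{i,k} \dpar{\varphi}{x_i}(X_s)\, g_{ik}(X_s)\6W^{(k)}_s\;,
\end{equation*}
where $\cL$ is exactly the operator given in~\eqref{diffsg6}.

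Next I would take expectation under $\fP^{\mskip1.5mu x}$. The key point is that since $\varphi$ has compact support $K$, its gradient vanishes outside $K$, hence each integrand $\dpar{\varphi}{x_i}(X_s)g_{ik}(X_s)$ is bounded (by continuity of $g$ on $K$). This ensures the Ito isometry~\eqref{eq:Ito_isometry} applies and the stochastic integrals have zero expectation. Applying Fubini on the remaining term yields
\begin{equation*}
 (P_h\varphi)(x) - \varphi(x)
 = \int_0^h \bigexpecin{x}{(\cL\varphi)(X_s)}\6s\;.
\end{equation*}

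It then remains to divide by $h$ and let $h\to 0_+$. For this I use that $\cL\varphi$ is bounded and continuous (compact support of $\nabla\varphi$ and $\Hess\varphi$, combined with continuity of $f,g$), and that the paths of $X_s$ are almost surely continuous with $X_0 = x$. By dominated convergence, $s \mapsto \bigexpecin{x}{(\cL\varphi)(X_s)}$ is continuous at $s=0$ with value $(\cL\varphi)(x)$, so the fundamental theorem of calculus gives
\begin{equation*}
 \lim_{h\to0_+}\frac{(P_h\varphi)(x)-\varphi(x)}{h} = (\cL\varphi)(x)\;,
\end{equation*}
uniformly in $x$, which simultaneously identifies the generator and shows $\cC^2_c \subset \operatorname{dom}(\cL)$.

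The main technical obstacle is actually modest: justifying that the martingale part vanishes in expectation requires enough integrability on $\nabla\varphi \cdot g$, which compact support of $\varphi$ supplies for free provided $g$ is continuous; one should also check that the local Lipschitz/growth hypotheses used to define the diffusion indeed imply continuity of $f$ and $g$. Everything else is bookkeeping with Ito's formula and dominated convergence.
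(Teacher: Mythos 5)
Your proof is correct and follows essentially the same route as the paper: apply Ito's formula to $\varphi(X_t)$, take $\E^{\mskip1.5mu x}$, observe the martingale term vanishes, and let $h\to 0_+$; the paper simply spells this out for $n=m=1$ and refers to the multidimensional Ito formula for the rest, while you carry the indices through directly. One small caveat: the claim that the limit holds \emph{uniformly} in $x$ is not needed (the domain of $\cL$ in Definition~\ref{def_generateur} only requires pointwise existence of the limit) and is not actually justified by the dominated-convergence argument you give, so you should simply drop that word.
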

\begin{proof}
Consider the case $n=m=1$. Let $\varphi$ be a twice continuously 
differentiable function of compact support, and let $Y_t=\varphi(X_t)$.
By Ito's formula, 
\begin{equation}
 \label{diffsg7:1}
Y_h = \varphi(X_0) + \int_0^h \varphi'(X_s)f(X_s)\6s 
+ \int_0^h \varphi'(X_s)g(X_s)\6W_s + \frac12\int_0^h
\varphi''(X_s)g(X_s)^2 \6s\;.
\end{equation}  
Taking the expectation, as the expectation of the Ito integral vanishes, one 
gets 
\begin{equation}
 \label{diffsg7:2}
\bigexpecin{x}{Y_h} = \varphi(x) + \biggexpecin{x}{
\int_0^h \varphi'(X_s) f(X_s)\6s + \frac12\int_0^h \varphi''(X_s) g(X_s)^2\6s
 }\;,
\end{equation}
so that 
\begin{equation}
 \label{diffsg7:3}
\frac{\bigexpecin{x}{\varphi(X_h)} - \varphi(x)}{h} 
= \frac 1h
\int_0^h \bigexpecin{x}{\varphi'(X_s) f(X_s)}\6s 
+ \frac1{2h}\int_0^h
\bigexpecin{x}{\varphi''(X_s) g(X_s)^2}\6s\;.
\end{equation}  
Taking the limit $h\to0_+$, we get 
\begin{equation}
 \label{diffsg7:4}
(\cL\varphi)(x) = \varphi'(x)f(x) + \frac12 \varphi''(x)g(x)^2\;. 
\end{equation} 
The cases $n\geqs2$ or $m\geqs2$ are treated similarly, using the  
multidimensional Ito formula. 
\end{proof}

\begin{example}[Generator of Brownian motion]
Let $W_t$ be an $m$-dimensional Brownian motion. This is a particular case of
diffusion, with $f=0$ and $g=\one$. Its generator is thus given by  
\begin{equation}
 \label{diffsg8}
\cL = \frac{1}{2}\sum_{i=1}^m \dpar{^2}{x_i^2} = \frac12\Delta\;. 
\end{equation} 
\end{example}


\subsection{Dynkin's formula}
\label{sec:Dynkin} 

Dynkin's formula is essentially a generalisation of the 
expression~\eqref{diffsg7:2} to stopping times. It will yield a first 
important class of links between SDEs and PDEs.

\begin{proposition}[Dynkin's formula]
Let $\set{X_t}_{t\geqs0}$ be a diffusion with generator $\cL$. Fix 
$x\in\R^n$, a stopping time $\tau$ such that $\expecin{x}{\tau}<\infty$, and
a compactly supported, twice continuously differentiable 
function $\varphi:\R^n\to\R$. Then 
\begin{equation}
 \label{diffD01}
\bigexpecin{x}{\varphi(X_\tau)} 
= \varphi(x) + \Biggexpecin{x}{\int_0^\tau (\cL\varphi)(X_s)\6s}\;. 
\end{equation}  
\end{proposition}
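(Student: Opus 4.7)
The plan is to apply Ito's formula (Lemma~\ref{lem_Ito}, extended to the $n$-dimensional setting as in Remark~\ref{rem_fIto}) to the process $Y_t = \varphi(X_t)$ and then let the time variable tend to $\tau$, taking expectations along the way. Writing out Ito's formula for $\varphi(X_t)$ collects all the $\6s$ terms into $(\cL\varphi)(X_s)\6s$, by the very definition of $\cL$ given in Proposition~\ref{prop_generator_Ito}, and leaves an Ito integral of $\sum_i \partial_{x_i}\varphi(X_s)\, g_{ij}(X_s)$ against $\6W_s^{(j)}$. Thus formally
\begin{equation*}
 \varphi(X_t) = \varphi(x) + \int_0^t (\cL\varphi)(X_s)\6s
 + \int_0^t \nabla\varphi(X_s)^T g(X_s)\6W_s\;.
\end{equation*}

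Since $\tau$ is not a priori bounded, I would first truncate, working with the bounded stopping times $\tau_N = \tau\wedge N$ for $N\in\N$, and evaluate the above identity at $t=\tau_N$ (equivalently, multiply the integrands by $\indicator{\set{s\leqs\tau_N}}$, which is adapted). The key step is then to show that the Ito integral stopped at $\tau_N$ has zero expectation. This follows from Ito's isometry~\eqref{eq:Ito_isometry}: because $\varphi$ has compact support, $\nabla\varphi$ is bounded, and $\nabla\varphi(x)^T g(x)$ is then bounded on $\R^n$ (it vanishes off the compact support of $\varphi$, on which $g$ is bounded by continuity). Hence $\int_0^{\tau_N} \expecin{x}{\norm{\nabla\varphi(X_s)^T g(X_s)}^2}\6s \leqs CN < \infty$, and the stopped Ito integral is in $L^2$ with zero mean. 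Taking $\fPin{x}$-expectation therefore yields
\begin{equation*}
 \bigexpecin{x}{\varphi(X_{\tau_N})} = \varphi(x)
 + \Biggexpecin{x}{\int_0^{\tau_N} (\cL\varphi)(X_s)\6s}\;.
\end{equation*}

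Finally I would pass to the limit $N\to\infty$. On the left, since $\tau$ is almost surely finite (as $\expecin{x}{\tau}<\infty$), $\tau_N\to\tau$ a.s., and continuity of paths together with boundedness of $\varphi$ give $\expecin{x}{\varphi(X_{\tau_N})}\to\expecin{x}{\varphi(X_\tau)}$ by dominated convergence. On the right, $\cL\varphi$ is continuous and compactly supported, hence bounded by some constant $C$, so the integrand is dominated by $C\indicator{[0,\tau]}(s)$; integrability of the dominating function is precisely ensured by the hypothesis $\expecin{x}{\tau}<\infty$, allowing dominated convergence to conclude.

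The main obstacle I expect is the justification that the Ito integral contributes nothing in expectation once stopped at $\tau_N$: one has to be careful that the integrand, while nice on the support of $\varphi$, is being integrated only up to a random (albeit bounded) time, and that the optional stopping argument is legitimate. The compact support of $\varphi$ is exactly what rescues this, by turning $\nabla\varphi \cdot g$ into a globally bounded process regardless of how $g$ behaves at infinity; the $\expecin{x}{\tau}<\infty$ hypothesis, by contrast, is needed only in the second passage to the limit for the drift integral.
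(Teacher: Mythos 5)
Your proposal is correct and follows essentially the same route as the paper's proof: apply Ito's formula to $\varphi(X_t)$, truncate at $\tau\wedge N$, use adaptedness/boundedness (from the compact support of $\varphi$) to see that the stopped stochastic integral has zero expectation, and invoke $\expecin{x}{\tau}<\infty$ to pass to the limit $N\to\infty$. The only cosmetic difference is that the paper takes the limit in the stochastic integral itself (via Ito's isometry and $\expecin{x}{\tau-\tau\wedge N}\to0$), whereas you take it in the left-hand side and the drift term by dominated convergence; both rest on exactly the same ingredients.
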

\begin{proof}
Consider the case $n=m=1$, $m$ being the dimension of Brownian motion.
Proceeding as in the proof of Proposition~\ref{prop_generator_Ito},
we obtain 
\begin{equation}
 \label{diffD01:1}
\bigexpecin{x}{\varphi(X_\tau)} = \varphi(x) 
+  \Biggexpecin{x}{\int_0^\tau (\cL\varphi)(X_s)\6s}
+  \Biggexpecin{x}{\int_0^\tau g(X_s)\varphi'(X_s)\6W_s}\;.
\end{equation}
It thus suffices to show that the expectation of the stochastic integral 
vanishes. For any function $h$ bounded by $M$ and any $N\in\N$, one has 
\begin{equation}
 \label{diffD01:2}
\Biggexpecin{x}{\int_0^{\tau\wedge N} h(X_s)\6W_s}
= \Biggexpecin{x}{\int_0^N \indexfct{s<\tau}h(X_s)\6W_s} = 0\;,
\end{equation} 
owing to the $\cF_s$-measurability of $\indexfct{s<\tau}$ and $h(X_s)$.
Moreover, 
\begin{align}
\nonumber
\Biggexpecin{x}{\Biggpar{\int_0^\tau h(X_s)\6W_s - \int_0^{\tau\wedge
N} h(X_s)\6W_s}^2}
&= \Biggexpecin{x}{\int_{\tau\wedge N}^\tau h(X_s)^2\6s} \\
&\leqs M^2 \bigexpecin{x}{\tau - \tau\wedge N}\;,
 \label{diffD01:3}
\end{align}
which goes to $0$ as $N\to\infty$, owing to the assumption 
$\expecin{x}{\tau}<\infty$, by Lebesgues' dominated convergence theorem. One 
can thus write  
\begin{equation}
 \label{diffD01:4}
0 = \lim_{N\to\infty}  \Biggexpecin{x}{\int_0^{\tau\wedge N} h(X_s)\6W_s}
= \Biggexpecin{x}{\int_0^{\tau} h(X_s)\6W_s}\;,
\end{equation} 
which finishes the proof, after plugging~\eqref{diffD01:4} 
into~\eqref{diffD01:1}. The proof of the general case is analogous. 
\end{proof}

Consider now the particular case where the stopping time $\tau$ is the 
first-exit time from an open bounded set $D\subset\R^n$. Assume the boundary 
value problem
\begin{align}
\nonumber
(\cL u)(x) &= \theta(x) \qquad x\in D \\
u(x) &= \psi(x) \qquad x\in\partial D
\label{diffD02}
\end{align}
admits a unique solution. This is the case if $D$, $\theta$ and $\psi$ 
are sufficiently regular. Replacing $\varphi$ by $u$ in Dynkin's formula, we 
get the relation
\begin{equation}
 \label{diffD03}
u(x) = \biggexpecin{x}{\psi(X_\tau) - \int_0^\tau \theta(X_s)\6s}\;. 
\end{equation} 
For $\psi=0$ and $\theta=-1$, $u(x)$ is equal to the expectation of $\tau$,
starting form $x$. For $\theta=0$ and $\psi$ the indicator of a subset $A$ of 
the boundary $\partial D$, $u(x)$ is the probability of leaving $D$ though $A$.
Hence, if one can solve the problem~\eqref{diffD02}, one obtains 
information on the first-exit time and location from $D$. Conversely,
simulating the expression~\eqref{diffD03} by a Monte-Carlo method, one
gets a numerical approximation of the solution of the boundary value 
problem~\eqref{diffD02}. 

\begin{example}[Mean exit time of Brownian motion from a ball]
\label{ex_MB_exit_ball}
Let $K=\setsuch{x\in\R^n}{\norm{x}<R}$ be the ball of radius $R$ centred at 
the origin. Given a point $x\in K$, let  
\begin{equation}
 \label{diffD04}
\tau_K = \inf\setsuch{t>0}{x+W_t\not\in K} 
\end{equation} 
and let 
\begin{equation}
 \label{diffD05}
\tau(N) = \tau_K\wedge N\;. 
\end{equation} 
The function $\varphi(x)=\norm{x}^2\indexfct{\norm{x}\leqs R}$ is 
compactly supported and satisfies $\Delta \varphi(x)=2n$ for all $x\in K$. One 
can furthermore extend it outsite $K$ in a smooth and compactly supported way. 
Plugging into Dynkin's formula, one gets 
\begin{align}
 \nonumber
\bigexpecin{x}{\norm{x+W_{\tau(N)}}^2} &= \norm{x}^2 + 
\biggexpecin{x}{\int_0^{\tau(N)} \frac12\Delta\varphi(W_s)\6s} \\
&= \norm{x}^2 + n \bigexpecin{x}{\tau(N)}\;.
 \label{diffD06}
\end{align} 
Since $\norm{x+W_{\tau(N)}}\leqs R$, 
letting $N$ go to infinity, one obtains by dominated convergence 
\begin{equation}
 \label{diffD07}
\bigexpecin{x}{\tau_K} = \frac{R^2 - \norm{x}^2}{n}\;. 
\end{equation} 
\end{example}

\begin{example}[Recurrence/transience of Brownian motion]
\label{ex_MB_recurrent_transient} 
Let again $K=\setsuch{x\in\R^n}{\norm{x}<R}$. We now consider
the case where $x\not\in K$, and we want to determine if
Brownian motion starting in $x$ hits $K$ almost surely, in which case it is 
called \defwd{recurrent}, or if it hits $K$ with a probability
strictly less than $1$, in which case it is called \defwd{transient}.
As for random walks, the answer depends on the dimension $n$ of space. 

We define 
\begin{equation}
 \label{diffD08}
\tau_K = \inf\setsuch{t>0}{x+W_t\in K}\;. 
\end{equation} 
For $N\in\N$, let $A_N$ be the ring 
\begin{equation}
 \label{diffD09}
A_N = \setsuch{x\in\R^n}{R<\norm{x}<2^N R}\;, 
\end{equation} 
and let $\tau$ be the first-exit time of $x+W_t$ from $A_N$. We thus have 
\begin{equation}
 \label{diffD10}
\tau = \tau_K \wedge \tau'\;, 
\qquad
\tau' = \inf\setsuch{t>0}{\norm{x+W_t}=2^NR}\;. 
\end{equation} 
Finally, let
\begin{equation}
 \label{diffD11}
p = \bigprobin{x}{\tau_K < \tau'} 
= \bigprobin{x}{\norm{x+W_\tau} = R}
= 1 - \bigprobin{x}{\norm{x+W_\tau} = 2^NR}\;.  
\end{equation} 
The spherically symmetric solutions of $\Delta\varphi=0$ are of the form
\begin{equation}
 \label{diffD12}
\varphi(x) = 
\begin{cases}
\abs{x} & \text{if $n=1$\;,} \\
-\log\norm{x} & \text{if $n=2$\;,} \\
\norm{x}^{2-n} & \text{if $n>2$\;.}
\end{cases}
\end{equation} 
For such a $\varphi$, Dynkin's formula yields 
\begin{equation}
 \label{diffD13}
\bigexpecin{x}{\varphi(x+W_\tau)} = \varphi(x)\;. 
\end{equation} 
On the other hand,  
\begin{equation}
 \label{diffD14}
\bigexpecin{x}{\varphi(x+W_\tau)} = \varphi(R) p + \varphi(2^N R) (1-p)\;. 
\end{equation} 
Solving with respect to $p$, one gets 
\begin{equation}
 \label{diffD15}
p = \frac{\varphi(x) - \varphi(2^N R)}{\varphi(R) - \varphi(2^N R)}\;. 
\end{equation} 
As $N\to\infty$, one has $\tau'\to\infty$, so that 
\begin{equation}
 \label{diffD16}
\bigprobin{x}{\tau_K < \infty} = \lim_{N\to\infty}
\frac{\varphi(x) - \varphi(2^N R)}{\varphi(R) - \varphi(2^N R)}\;. 
\end{equation} 
Consider now separately the cases $n=1$, $n=2$ and $n>2$. 
\begin{enumerate}
\item	For $n=1$, one has 
\begin{equation}
 \label{diffD17}
\bigprobin{x}{\tau_K < \infty} = \lim_{N\to\infty}
\frac{2^N R - \abs{x}}{2^N R - R} = 1\;, 
\end{equation} 
showing that Brownian motion is recurrent in dimension $1$. 
\item	For $n=2$, one has 
\begin{equation}
 \label{diffD18}
\bigprobin{x}{\tau_K < \infty} = \lim_{N\to\infty}
\frac{\log\norm{x} + N\log2 - \log R}{N\log2} = 1\;, 
\end{equation} 
showing that Brownian motion is also recurrent in dimension $2$. 
\item	For $n>2$, one has 
\begin{equation}
 \label{diffD19}
\bigprobin{x}{\tau_K < \infty} = \lim_{N\to\infty}
\frac{(2^N R)^{2-n} + \norm{x}^{2-n}}{(2^N R)^{2-n} + R^{2-n}} =
\biggpar{\frac{R}{\norm{x}}}^{n-2} < 1\;. 
\end{equation} 
Brownian motion is thus transient in dimension $n>2$. 
\end{enumerate}
\end{example}

\begin{exercise}[First-exit time of geometric Brownian motion]
\label{exo_diff2}
Consider the diffusion defined by the equation 
\begin{equation}
\6X_t = X_t \6W_t\;.
\end{equation} 

\begin{enumerate}
\item	Determine its generator $\cL$. 
\item	Find the general solution of the equation $\cL u=0$. 
\item	Determine $\probin{x}{\tau_a<\tau_b}$, where $\tau_a$ denotes the 
first-passage time of $X_t$ in $a$.

{\it Hint:} This amounts to computing $\expecin{\!x}{\psi(X_\tau)}$, where
$\tau$ is the first-exit time from $[a,b]$, and $\psi(a)=1$,
$\psi(b)=0$.
\end{enumerate}
\end{exercise}

\goodbreak

\begin{exercise}[First-exit time of geometric Brownian motion with linear drift]
\label{exo_diff3}
Consider more generally the diffusion defined by the equation 
\begin{equation}
\6X_t = r X_t \6t + X_t \6W_t\;, 
\qquad r\in\R\;.
\end{equation}

\begin{enumerate}
\item	Compute its generator $\cL$. 

\item	Show that if  $r\neq\frac12$, the general solution of the equation
$\cL u=0$ is given by 
\begin{equation}
u(x) = c_1 x^\gamma + c_2\;,
\end{equation} 
where $\gamma$ is a function of $r$ to be determined. 

\item	Assume $r<1/2$. Compute $\probin{x}{\tau_b<\tau_a}$ for $0<a<x<b$,
and then $\probin{x}{\tau_b<\tau_0}$ by letting $a$ go to $0$. Note that if 
$X_{t_0}=0$, then $X_t=0$ for all $t\geqs t_0$. Therefore, if 
$\tau_0<\tau_b$, then $X_t$ will never reach $b$. What is the probability that 
this happens?

\item	Assume now $r>1/2$. 
\begin{enumerate}
\item	Compute $\probin{x}{\tau_a<\tau_b}$ for $0<a<x<b$, and show that 
this probability goes to~$0$ as $a\to0_+$ for all $x\in]a,b[$. Conclude that 
almost surely, $X_t$ will never reach $0$ in this situation.

\item	Find $\alpha$ and $\beta$ such that $u(x)=\alpha\log x+\beta$
satisfies the problem 
\begin{equation}
\begin{cases}
(Lu)(x) = -1 & \text{if $0<x<b$\;,}\\
u(x) = 0 &\text{if $x=b$\;.}
\end{cases}
\end{equation} 
\item	Use this to compute $\expecin{\!x}{\tau_b}$. 
\end{enumerate}
\end{enumerate}
\end{exercise}


\subsection{Kolmogorov's equations}
\label{sec:Komogorov} 

A second class of links between SDEs and PDEs is given by Kolmogorov's 
equations, which are initial value problems. 

Observe that by taking the derivative of Dynkin's formula with respect to $t$, 
in the particuliar case $\tau=t$, one gets 
\begin{equation}
 \label{diffK01}
\dpar{}{t} (P_t\varphi)(x) = 
\dpar{}{t} \bigexpecin{x}{\varphi(X_t)}
= \bigexpecin{x}{(\cL\varphi)(X_t)} = (P_t\cL\varphi)(x)\;,
\end{equation} 
which can be written in compact form as 
\begin{equation}
 \label{diffK02}
\dtot{}{t} P_t = P_t \cL\;. 
\end{equation} 
We have seen in Remark~\ref{rem_generator} that one can also formally write 
$\dtot{}{t} P_t = \cL P_t$. Therefore, the operators $\cL$ et $P_t$ commute, at 
least formally. The next theorem makes this observation rigorous.

\begin{theorem}[Backward Kolmogorov equation]
Let $\varphi:\R^n\to\R$ be a compactly supported, twice continuously 
differentiable function. 
\begin{enumerate}
\item	The function 
\begin{equation}
 \label{diffK03}
u(t,x) = (P_t\varphi)(x) = \bigexpecin{x}{\varphi(X_t)} 
\end{equation} 
satisfies the initial value problem 
\begin{align}
\nonumber
\dpar ut(t,x) &= (\cL u)(t,x)\;, && t>0\;, \quad x\in\R^n\;, \\
u(0,x) &= \varphi(x)\;, && x\in\R^n\;.
\label{diffK04} 
\end{align} 
\item	If $w(t,x)$ is a bounded function, which is continuously differentiable
in $t$ and twice continuously differentiable in $x$, satisfying 
the initial value problem~\eqref{diffK04}, then $w(t,x)=(P_t\varphi)(x)$.
\end{enumerate}
\end{theorem}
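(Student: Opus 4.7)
The plan is to prove existence and uniqueness by two quite different tools. For existence (part~1), I combine Dynkin's formula with the semigroup identity to derive the PDE directly. For uniqueness (part~2), I apply Ito's formula to $w$ along the diffusion in order to exhibit a martingale and then identify $w$ with $u$ by taking expectations.

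For part~1, I start from Dynkin's formula applied with the deterministic stopping time $\tau = t$, which trivially has finite expectation. Together with Fubini (justified because $\cL\varphi$ is continuous with compact support, hence bounded), this yields
\[
u(t,x) = \varphi(x) + \biggexpecin{x}{\int_0^t (\cL\varphi)(X_s)\,\dd s} = \varphi(x) + \int_0^t (P_s\cL\varphi)(x)\,\dd s.
\]
The integrand is continuous in $s$, so differentiating in $t$ gives $\partial_t u(t,x) = (P_t \cL\varphi)(x)$. Writing the semigroup identity both as $P_{t+h}\varphi = P_hP_t\varphi$ and as $P_{t+h}\varphi = P_tP_h\varphi$, subtracting $P_t\varphi$, dividing by $h$, and letting $h\to 0^+$ yields the intertwining relation $\cL P_t\varphi = P_t\cL\varphi$. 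Combining these gives $\partial_t u(t,x) = (\cL P_t\varphi)(x) = (\cL u)(t,x)$, and the initial condition $u(0,x)=\varphi(x)$ follows from $P_0 = \id$.

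For part~2, fix $T>0$ and $x\in\R^n$ and consider $M_s = w(T-s, X_s)$ for $s \in [0,T]$, where $X$ is the diffusion starting at $x$ under $\fP^{\mskip1.5mu x}$. Ito's formula applied to $(s,y) \mapsto w(T-s, y)$, in the multidimensional time-dependent form of Remark~\ref{rem_fIto}, gives
\[
\dd M_s = \Bigbrak{-\dpar{w}{t}(T-s, X_s) + (\cL w)(T-s, X_s)}\dd s + \sum_{i,k}\dpar{w}{x_i}(T-s, X_s)\, g_{ik}(X_s)\,\dd W^{(k)}_s.
\]
The finite variation part vanishes by the PDE $\partial_t w = \cL w$, so $M$ is a local martingale. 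Introducing the localising stopping times $\tau_n = \inf\set{s \geqs 0 : \norm{X_s} \geqs n}$, the stopped process $M_{\cdot \wedge \tau_n}$ is a bounded (hence true) martingale, because $w$ is bounded; thus $\bigexpecin{x}{M_{T \wedge \tau_n}} = M_0 = w(T,x)$. Non-explosion of $X$ together with dominated convergence (using once more the boundedness of $w$) yields $w(T,x) = \bigexpecin{x}{w(0, X_T)} = \bigexpecin{x}{\varphi(X_T)} = u(T,x)$, as required.

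The main technical obstacle sits in part~1: to read the intertwining $\cL P_t\varphi = P_t\cL\varphi$ pointwise in the classical sense, one needs $P_t\varphi$ to lie in the domain of $\cL$, which is a statement about the $C^2$ regularity of $x \mapsto \bigexpecin{x}{\varphi(X_t)}$. This typically requires either ellipticity of $gg^T$ or an explicit regularity estimate on the stochastic flow $x\mapsto X^{0,x}_t$. A cleaner reading of part~1 is to interpret $(\cL u)(t,x)$ through the very definition of the generator, namely as the pointwise limit $\lim_{h\to 0^+} h^{-1}\bigbrak{(P_{t+h}\varphi)(x) - (P_t\varphi)(x)}$, which exists and equals $(P_t\cL\varphi)(x)$ by the integral identity above. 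In part~2, by contrast, the regularity of $w$ is postulated, so the only vigilance needed is in the localisation argument that promotes the local martingale to a true martingale.
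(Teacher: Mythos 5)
Your proposal is correct and follows essentially the same route as the paper: part~1 rests on Dynkin's formula with $\tau=t$ plus the semigroup property and the definition of the generator (exactly the paper's computation), and part~2 is the paper's space--time argument in martingale clothing, since applying It\^o to $w(T-s,X_s)$, localising with ball exit times and using boundedness of $w$ is just Dynkin's formula for the process $Y_t=(s-t,X_t)$ with $\widetilde\cL w=0$. The extra care you take about reading $(\cL u)(t,x)$ via the generator's definition is a fair clarification, not a departure.
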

\begin{proof} \hfill
\begin{enumerate}
\item	One has $u(0,x)=(P_0\varphi)(x)=\varphi(x)$ and 
\begin{align}
\nonumber
(\cL u)(t,x) 
&= \lim_{h\to0_+} \frac{(P_h\circ P_t\varphi)(x) - (P_t\varphi)(x)}{h} \\
\nonumber
&= \lim_{h\to0_+} \frac{(P_{t+h}\varphi)(x) - (P_t\varphi)(x)}{h} \\
&= \dpar{}{t} (P_t\varphi)(x) = \dpar{}{t} u(t,x)\;.
\label{diffK04:1}
\end{align}
\item	If $w(t,x)$ satisfies~\eqref{diffK04}, then one has 
\begin{equation}
 \label{diffK04:2}
\widetilde \cL w = 0 
\qquad 
\text{where } \quad \widetilde \cL w = -\dpar{w}{t} + \cL w\;. 
\end{equation} 
Fix $(s,x)\in\R_+\times\R^n$. The process $Y_t=(s-t,X^{0,x}_t)$ admits
$\widetilde \cL$ as generator. Let 
\begin{equation}
 \label{diffK04:3}
\tau_R = \inf\setsuch{t>0}{\norm{X_t}\geqs R}\;. 
\end{equation} 
Dynkin's formula shows that 
\begin{equation}
 \label{diffK04:4}
\bigexpecin{s,x}{w(Y_{t\wedge\tau_R})} 
= w(s,x) + \biggexpecin{s,x}{\int_0^{t\wedge\tau_R} (\widetilde \cL w)(Y_u)\6u}
= w(s,x)\;. 
\end{equation} 
Letting $R$ go to infinity, one obtains 
\begin{equation}
 \label{diffK04:5}
w(s,x) = \bigexpecin{s,x}{w(Y_t)} \qquad \forall t\geqs0\;.
\end{equation} 
In particular, taking $t=s$, one has 
\begin{equation}
 \label{diffK04:6}
w(s,x) = \bigexpecin{s,x}{w(Y_s)} 
= \bigexpec{w(0,X^{0,x}_s)}
= \bigexpec{\varphi(X^{0,x}_s)} 
= \bigexpecin{x}{\varphi(X_s)}\;,
\end{equation} 
as claimed. 
\end{enumerate}
\end{proof}

Note that in the case of Brownian motion, which has generator  
$\cL=\frac12\Delta$, Kolmogorov's backward equation~\eqref{diffK04} is nothing 
but the heat equation.

Since Kolmogorov's backward equation is linear, it is sufficient to solve it 
for a complete family of initial conditions $\varphi$ to determine its solution 
for all initial conditions.
A first important case occurs when one knows all eigenfunctions and eigenvalues 
of $\cL$. Then the general solution can be decomposed on a basis of 
eigenfunctions, with coefficients depending exponentially on time.

\begin{example}[Brownian motion]
Eigenfunctions of the generator $\cL = \frac12\Delta = \frac12\dtot{^2}{x^2}$ 
of 
one-dimensional Brownian motion are of the form $\e^{\icx kx}$. Decomposing the 
solution on this basis of eigenfunctions amounts to solving the heat equation 
by 
Fourier transform. One knows that the solution can be written as  
\begin{equation}
 \label{diffK05}
u(t,x) = \frac{1}{\sqrt{2\pi}} \int_\R \e^{-k^2t/2}\hat \varphi(k) \e^{\icx kx}
\6k\;, 
\end{equation}
where $\hat \varphi(k)$ is the Fourier transform of the
initial  condition. 
\end{example}

A second important case occurs when formally decomposing the initial condition
on a \lq\lq basis\rq\rq\ of Dirac distributions. In practice,
this amounts to using the notion of transition density.

\begin{definition}[Transition density]
\label{transition_density}
The diffusion $\set{X_t}_t$ is said to admit the \defwd{transition density}
$p_t(x,y)$, also written $p(y,t|x,0)$, if 
\begin{equation}
 \label{diffK06}
\bigexpecin{x}{\varphi(X_t)} = \int_{\R^n} \varphi(y)p_t(x,y)\6y  
\end{equation} 
for all bounded measurable functions $\varphi:\R^n\to\R$. 
\end{definition}

By linearity, if the transition density exists and is smooth, it satisfies  
Kolmogorov's backward equation (the generator $\cL$ acting on the variable 
$x$), with initial condition $p_0(x,y)=\delta(x-y)$. 

\begin{example}[Brownian motion and heat kernel]
In the case of one-dimensional Brownian motion, we have seen 
(c.f.~\eqref{pW2}) that the transition density is given by the heat kernel 
\begin{equation}
 \label{diffK07}
p(y,t|x,0) = \frac{1}{\sqrt{2\pi t}} \e^{-(x-y)^2/2t}\;. 
\end{equation} 
This is also the value of the integral~\eqref{diffK05} with 
$\hat\varphi(k)=\e^{-\icx k y}/\sqrt{2\pi}$, which is indeed the  
Fourier transform of $\varphi(x)=\delta(x-y)$. 
\end{example}

The~\emph{adjoint} of the generator $\cL$ is by definition the linear operator
$\cL^\dagger$ such that 
\begin{equation}
 \label{diffK08}
\pscal{\cL\phi}{\psi} = \pscal{\phi}{\cL^\dagger\psi}
\end{equation} 
for any choice of twice continuously differentiable functions 
$\phi,\psi:\R^n\to\R$, with $\phi$ compactly supported, where 
$\pscal{\cdot}{\cdot}$ denotes the usual inner product in $L^2$. Integrating
$\pscal{\cL\phi}{\psi}$ by parts twice, one obtains 
\begin{equation}
 \label{diffK09}
(\cL^\dagger\psi)(y) = \frac12\sum_{i,j=1}^n \dpar{^2}{y_i\partial y_j} 
\bigpar{(gg^T)_{ij}\psi}(y) - \sum_{i=1}^n \dpar{}{y_i} 
\bigpar{f_i\psi}(y)\;.
\end{equation} 

\begin{theorem}[Forward Kolmogorov equation]
If $X_t$ admits a smooth transition density $p_t(x,y)$, then it 
satisfies the equation 
\begin{equation}
 \label{diffK10}
\dpar{}{t} p_t(x,y) = \cL^\dagger_y p_t(x,y)\;, 
\end{equation} 
where the notation $\cL^\dagger_y$ means that $\cL^\dagger$ acts on the 
variable $y$. 
\end{theorem}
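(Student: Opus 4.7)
The plan is to derive the forward equation by testing $p_t(x,y)$ against smooth compactly supported functions $\varphi$ in the $y$-variable and using the semigroup identity already established together with the definition of the adjoint.

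First, by the definition of the transition density,
\begin{equation*}
(P_t\varphi)(x) = \bigexpecin{x}{\varphi(X_t)} = \int_{\R^n} \varphi(y) p_t(x,y)\,\6y
\end{equation*}
for every bounded measurable $\varphi$. Restricting now to $\varphi \in \cC^2_c(\R^n)$, I differentiate this identity in $t$. On the left-hand side I may use the relation $\dtot{}{t}P_t = P_t\cL$ derived in~\eqref{diffK01}--\eqref{diffK02}, which gives
\begin{equation*}
\dpar{}{t}(P_t\varphi)(x) = (P_t \cL\varphi)(x) = \int_{\R^n} (\cL\varphi)(y) p_t(x,y)\,\6y\;.
\end{equation*}
On the right-hand side, assuming enough smoothness and decay of $p_t$ to differentiate under the integral, I obtain $\int \varphi(y)\,\partial_t p_t(x,y)\,\6y$.

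Next I apply the defining property~\eqref{diffK08} of the adjoint with $\phi=\varphi$ (compactly supported) and $\psi = p_t(x,\cdot)$, yielding
\begin{equation*}
\int_{\R^n}(\cL\varphi)(y) p_t(x,y)\,\6y = \int_{\R^n} \varphi(y) (\cL^\dagger_y p_t(x,\cdot))(y)\,\6y\;.
\end{equation*}
Combining the two computations, I find that for every test function $\varphi \in \cC^2_c(\R^n)$,
\begin{equation*}
\int_{\R^n} \varphi(y)\Bigbrak{\dpar{}{t} p_t(x,y) - \cL^\dagger_y p_t(x,y)}\,\6y = 0\;.
\end{equation*}
Since the bracketed expression is continuous in $y$ (by the smoothness hypothesis on $p_t$) and orthogonal to a dense class of functions, it must vanish identically, which is the desired forward Kolmogorov equation.

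The main technical obstacle is justifying the two analytic manipulations: differentiating under the integral sign, which needs an integrable bound on $\partial_t p_t(x,\cdot)$ uniform on a neighbourhood of $t$, and performing the integration by parts implicit in the adjoint identity without generating boundary terms at infinity. Both are genuine smoothness/decay assumptions on $p_t$, packaged into the word \emph{smooth} in the statement; once granted, the remainder of the argument is the short chain above.
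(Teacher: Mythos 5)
Your proposal is correct and follows essentially the same route as the paper: test $p_t(x,\cdot)$ against $\varphi\in\cC^2_c$, use $\dtot{}{t}P_t = P_t\cL$ (which the paper obtains by differentiating Dynkin's formula with $\tau=t$, exactly the identity you cite from~\eqref{diffK01}--\eqref{diffK02}), move $\cL$ onto $p_t$ via the adjoint definition~\eqref{diffK08}, and conclude by the arbitrariness of $\varphi$. Your explicit remarks on differentiating under the integral and the absence of boundary terms are reasonable elaborations of what the paper leaves implicit in the word \lq\lq smooth\rq\rq.
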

\begin{proof}
Dynkin's formula with $\tau=t$ implies 
\begin{align}
\nonumber
\int_{\R^n} \varphi(y) p_t(x,y)\6y
&= \bigexpecin{x}{\varphi(X_t)} \\
\nonumber
&= \varphi(x) + \int_0^t \bigexpecin{x}{(\cL\varphi)(X_s)} \6s \\
&= \varphi(x) + \int_0^t \int_{\R^n} (\cL\varphi)(y) p_s(x,y)\6y\;.
 \label{diffK10:1}
\end{align}
Taking the derivative with respect to time, and using~\eqref{diffK08}, we get
\begin{equation}
 \label{diffK10:2}
\dpar{}{t} \int_{\R^n} \varphi(y) p_t(x,y)\6y
= \int_{\R^n} (\cL\varphi)(y) p_t(x,y)\6y
= \int_{\R^n} \varphi(y) (\cL^\dagger_yp_t)(x,y)\6y\;,
\end{equation} 
which implies the result. 
\end{proof}

Assume the distribution of $X_0$ admits a density $\rho$ with respect to
Lebesgue measure. Then $X_t$ has the density 
\begin{equation}
 \label{diffK10:3}
\rho(t,y) = (Q_t\rho)(y) \defby \int_{\R^n} p_t(x,y)\rho(x)\6x\;. 
\end{equation} 
Applying Kolmogorov's forward equation~\eqref{diffK10}, one
obtains the \defwd{Fokker--Planck equation}
\begin{equation}
 \label{diffK10:4}
\dpar{}{t} \rho(t,y) = \cL^\dagger_y \rho(t,y)\;,
\end{equation} 
which can also be formally written
\begin{equation}
 \label{diffK10:5}
\dtot{}{t} Q_t = \cL^\dagger Q_t\;. 
\end{equation} 
The adjoint generator $\cL^\dagger$ is thus the generator of the adjoint 
semi-group $Q_t$. 

\begin{corollary}
\label{cor_stationary}
If $\rho_0(y)$ is the density of a probability measure satisfying
$\cL^\dagger\rho_0=0$, then $\rho_0$ is a stationnary measure of the 
diffusion. In other words, if the distribution of $X_0$ admits the density 
$\rho_0$, then $X_t$ admits the density $\rho_0$ for all $t\geqs0$.  
\end{corollary}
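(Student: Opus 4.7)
The plan is to combine the Fokker--Planck equation~\eqref{diffK10:4} with the uniqueness of its Cauchy problem. Suppose the initial distribution of $X_0$ has density $\rho_0$. By~\eqref{diffK10:3}, the density of $X_t$ is $\rho(t,y) = (Q_t\rho_0)(y)$, and by the forward Kolmogorov equation this function solves
\begin{equation*}
\dpar{}{t}\rho(t,y) = \cL^\dagger_y \rho(t,y)\;, \qquad \rho(0,y) = \rho_0(y)\;.
\end{equation*}
The constant-in-time candidate $\tilde\rho(t,y) \defby \rho_0(y)$ also satisfies this Cauchy problem: $\partial_t \tilde\rho \equiv 0$, and by hypothesis $\cL^\dagger_y \tilde\rho = \cL^\dagger \rho_0 = 0$, while $\tilde\rho(0,\cdot) = \rho_0$. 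Invoking uniqueness of non-negative $L^1$ solutions to the Fokker--Planck Cauchy problem (granted by the regularity and growth assumptions on $f$ and $g$ that ensured existence and uniqueness of the diffusion in the first place), we conclude $\rho(t,\cdot) = \rho_0$ for every $t \geqs 0$, which is exactly the stationarity statement.

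The main obstacle is precisely this uniqueness claim; in full generality one needs to argue in a suitable class of weak/distributional solutions and to control behaviour at infinity. A cleaner way to avoid touching uniqueness of the PDE is to test against observables on the semigroup side. For any compactly supported $\cC^2$ function $\varphi$, set
\begin{equation*}
F(t) = \bigexpec{\varphi(X_t)} = \int_{\R^n} (P_t\varphi)(x)\,\rho_0(x)\,\6x\;.
\end{equation*}
Differentiating, using~\eqref{diffK02} in the form $\dtot{}{t} P_t\varphi = P_t \cL \varphi$, and then applying the duality~\eqref{diffK08} together with the hypothesis $\cL^\dagger \rho_0 = 0$, one gets
\begin{equation*}
F'(t) = \int_{\R^n} (\cL P_t\varphi)(x)\,\rho_0(x)\,\6x
= \int_{\R^n} (P_t\varphi)(x)\,(\cL^\dagger \rho_0)(x)\,\6x = 0\;.
\end{equation*}
Hence $F(t) = F(0) = \int \varphi\, \rho_0\, \6x$ for all $t\geqs0$, and since $\varphi$ is arbitrary, the law of $X_t$ coincides with $\rho_0$.

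The subtle point in this second route is that $P_t\varphi$ need not be compactly supported, so applying~\eqref{diffK08} requires a short approximation/cutoff argument (multiply by a smooth indicator of a large ball and control the boundary terms using the growth bound on $\rho_0$ and the fact that $\rho_0 \in L^1$). Once this integration-by-parts is justified, the computation of $F'(t)$ is unconditional. Either route yields the conclusion; the semigroup/duality version is probably the lightest, as it sidesteps the question of uniqueness for the Fokker--Planck equation and relies only on tools already developed in the chapter.
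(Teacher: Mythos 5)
Both of your routes reach the right conclusion; the first is essentially the argument the notes leave implicit, the second is genuinely different. The corollary is stated in the notes without proof, as a direct consequence of the Fokker--Planck equation~\eqref{diffK10:4}: the constant-in-time function $\rho(t,\cdot)=\rho_0$ solves that equation with initial condition $\rho_0$, and one tacitly identifies it with the density of $X_t$ --- which, as you rightly point out, is really a uniqueness statement for the Cauchy problem that the notes never address. Your semigroup/duality argument buys exactly the avoidance of that uniqueness question: writing $F(t)=\int (P_t\varphi)(x)\rho_0(x)\6x$, using the backward Kolmogorov equation (part~1 of the theorem around~\eqref{diffK04}, i.e.\ $\dtot{}{t}P_t\varphi=\cL P_t\varphi$, cf.~\eqref{diffsg5B}) and then the duality~\eqref{diffK08} together with $\cL^\dagger\rho_0=0$ gives $F'\equiv0$, and testing against all compactly supported $\cC^2$ functions determines the law of $X_t$. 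Two points to tighten. First, the relation you actually cite, \eqref{diffK02} in the form $\dtot{}{t}P_t\varphi=P_t\cL\varphi$, would give $F'(t)=\int(\cL\varphi)(y)\rho_t(y)\6y$ with the unknown time-$t$ density, and the argument does not close; what your displayed computation uses is the backward form $\dtot{}{t}P_t\varphi=\cL P_t\varphi$, which is the one to invoke. Second, as you note yourself, \eqref{diffK08} is stated for a compactly supported first argument, whereas here $\cL$ acts on $P_t\varphi$, so the cutoff/decay argument (plus differentiating under the integral in $t$, and enough $x$-regularity of $P_t\varphi$ to apply $\cL$ classically) is genuinely needed --- though this is at the same level of informality as the notes' own proof of the forward equation, so your second route is a perfectly acceptable, and arguably cleaner, replacement for the implicit uniqueness argument.
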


\begin{exercise}[Invariant measure of the Ornstein--Uhlenbeck process]
\label{exo_diff1}
Consider the diffusion defined by the equation 
\begin{equation}
\6X_t = -X_t\6t + \6W_t\;.
\end{equation}

\begin{enumerate}
\item	Give its generator $\cL$ and its adjoint $\cL^\dagger$.
\item	Let $\rho(x)=\pi^{-1/2}\e^{-x^2}$. Compute $\cL^\dagger\rho(x)$ and 
interpret the result. 
\end{enumerate}
\end{exercise}


\subsection{The Feynman--Kac formula}
\label{sec:Feynman-Kac} 

So far, we have encountered elliptic boundary value problems of the form $\cL 
u=\theta$, as well as parabolic evolution equations of the form $\sdpar ut=\cL 
u$. The Feynman--Kac formula will show that one can also link properties of 
diffusions with those of parabolic equations containing a term linear in $u$.
Adding a linear term to the generator can be interpreted as \lq\lq 
killing\rq\rq\ the diffusion at certain rate. The simplest case is that of a 
constant rate. Let $\zeta$ be a random variable of exponential distribution 
with parameter $\lambda$, independent of $W_t$. Set
\begin{equation}
 \label{FK01} 
\widetilde X_t = 
\begin{cases}
X_t & \text{if $t<\zeta$\;,} \\
\Delta & \text{if $t\geqs\zeta$\;,}
\end{cases}
\end{equation} 
where $\Delta$ is a \lq\lq cemetery state\rq\rq\ that has been added to
$\R^n$. One checks that owing to the exponential distribution of $\zeta$,
$\widetilde X_t$ is a Markov process on $\R^n\cup\set{\Delta}$. 
If $\varphi:\R^n\to\R$ is a bounded measurable test function, one has 
(setting $\varphi(\Delta)=0$)
\begin{equation}
 \label{FK02}
\bigexpecin{x}{\varphi(\widetilde X_t)}
=  \bigexpecin{x}{\varphi(X_t) \indexfct{t<\zeta}} 
= \prob{\zeta>t} \bigexpecin{x}{\varphi(X_t)} 
= \e^{-\lambda t} \bigexpecin{x}{\varphi(X_t)}\;.
\end{equation} 
It follows that
\begin{equation}
 \label{FK03}
\lim_{h\to0} \frac{\bigexpecin{x}{\varphi(\widetilde X_h)} - \varphi(x)}{h} 
= -\lambda \varphi(x) + (\cL\varphi)(x)\;,
\end{equation} 
which shows that the infinitesimal generator of $\widetilde X$ is the 
differential operator 
\begin{equation}
 \label{FK04}
\widetilde \cL = \cL - \lambda\;. 
\end{equation} 
More generally, if $q:\R^n\to\R$ is a continuous function bounded from below,
one can construct a random variable $\zeta$ such that  
\begin{equation}
 \label{FK05}
 \bigexpecin{x}{\varphi(\widetilde X_t)}
= \biggexpecin{x}{\varphi(X_t) \e^{-\int_0^t q(X_s)\6s}}\;.
\end{equation} 
In this case, the generator of $\widetilde X_t$ is 
\begin{equation}
 \label{FK06}
 \widetilde \cL = \cL - q\;, 
\end{equation} 
that is, $(\widetilde \cL\varphi)(x)=(\cL\varphi)(x) - q(x)\varphi(x)$. 

\begin{theorem}[Feynman--Kac formula]
\label{thm:Feynman-Kac} 
Let $\varphi:\R^n\to\R$ be a compactly supported, twice continuously 
differentiable function, and let $q:\R^n\to\R$ be a continuous function bounded 
from below. 
\begin{enumerate}
\item	The function 
\begin{equation}
 \label{FK07}
v(t,x) = \Bigexpecin{x}{\e^{-\int_0^t q(X_s)\6s}\varphi(X_t)} 
\end{equation} 
solves the initial value problem 
\begin{alignat}{2}
\nonumber
\dpar vt(t,x) &= (\cL v)(t,x) - q(x)v(t,x)\;, 
\qquad\qquad && t>0\;, \quad x\in\R^n\;, \\
v(0,x) &= \varphi(x)\;, && x\in\R^n\;.
\label{FK08} 
\end{alignat} 
\item	If $w(t,x)$ is continuously differentiable
in $t$ and twice continuously differentiable in $x$, bounded for $x$
in a compact set, and satisfies~\eqref{FK08}, then $w(t,x)$ is equal to the 
right-hand side of~\eqref{FK07}.
\end{enumerate}
\end{theorem}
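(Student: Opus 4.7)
The key observation is that $v(t,x)$ is precisely $\bigexpecin{x}{\varphi(\widetilde X_t)}$ in the notation~\eqref{FK05} of the killed diffusion $\widetilde X_t$ introduced just above the theorem, whose formal generator was identified in~\eqref{FK06} as $\widetilde\cL = \cL - q$. The plan for part~(1) is to make this rigorous by setting $\widetilde P_t\varphi(x) = \bigexpecin{x}{\e^{-\int_0^t q(X_s)\,\dd s}\varphi(X_t)}$, showing that $\{\widetilde P_t\}_{t\geqs0}$ is a semigroup with infinitesimal generator $\widetilde\cL$, and then repeating the proof of the backward Kolmogorov equation verbatim. For part~(2), the plan is a direct Ito-formula argument applied to an exponentially weighted, time-reversed version of $w$.

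For the semigroup property in~(1), I would split $\int_0^{t+s} q(X_u)\,\dd u$ at time $t$ and condition on $\cF_t$: by the Markov property, extended from Theorem~\ref{thm_Markov_diffusion} to the path functional $\omega\mapsto \e^{-\int_0^s q(X_u(\omega))\,\dd u}\varphi(X_s(\omega))$ in exactly the manner used inside the proof of that theorem, one gets
\begin{equation*}
\bigecondin{x}{\e^{-\int_t^{t+s}q(X_u)\,\dd u}\varphi(X_{t+s})}{\cF_t} = (\widetilde P_s\varphi)(X_t)\;,
\end{equation*}
and pulling the $\cF_t$-measurable factor $\e^{-\int_0^t q(X_u)\,\dd u}$ outside the inner expectation yields $\widetilde P_{t+s} = \widetilde P_t\circ\widetilde P_s$. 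For the generator, I would expand $\e^{-\int_0^h q(X_s)\,\dd s} = 1 - \int_0^h q(X_s)\,\dd s + O(h^2)$ to obtain
\begin{equation*}
\frac{(\widetilde P_h\varphi)(x)-\varphi(x)}{h} = \frac{(P_h\varphi)(x)-\varphi(x)}{h} - \frac{1}{h}\biggexpecin{x}{\varphi(X_h)\int_0^h q(X_s)\,\dd s} + O(h)\;,
\end{equation*}
and let $h\to0_+$, using path continuity and the fact that $\varphi$ and $q$ are bounded on a neighbourhood of $x$, to conclude $\widetilde\cL\varphi = \cL\varphi - q\varphi$. The backward-Kolmogorov argument applied to $\widetilde P_t$ instead of $P_t$ then gives $\partial_t v = \widetilde\cL v = \cL v - qv$ with $v(0,\cdot)=\varphi$.

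For part~(2), I would apply the multidimensional Ito formula to
\begin{equation*}
M_s = \e^{-\int_0^s q(X_u)\,\dd u}\,w(t-s, X_s)\;, \qquad 0\leqs s\leqs t\;.
\end{equation*}
Combining the smooth chain rule on the exponential prefactor with Ito's rule applied to $w(t-s,X_s)$, the $\dd s$-terms collect into $\e^{-\int_0^s q(X_u)\,\dd u}(-\partial_t w + \cL w - qw)(t-s,X_s)\,\dd s$, which vanishes by~\eqref{FK08}, so $M_s$ is a local martingale. Localising at $\tau_R = \inf\setsuch{s\geqs 0}{\norm{X_s}\geqs R}$, the boundedness of $w$ on $\setsuch{(u,y)}{0\leqs u\leqs t,\norm{y}\leqs R}$ combined with the lower bound on $q$ makes $M$ a true martingale on $[0,t\wedge\tau_R]$, hence $\bigexpecin{x}{M_{t\wedge\tau_R}} = w(t,x)$. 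On $\set{\tau_R>t}$ one has $M_t = \e^{-\int_0^t q(X_u)\,\dd u}\varphi(X_t)$, dominated by $\e^{-t\inf q}\norm{\varphi}_\infty$ because of the compact support of $\varphi$, so dominated convergence identifies the contribution of this event with $v(t,x)$ as $R\to\infty$. The main obstacle is the boundary contribution on $\set{\tau_R\leqs t}$: here $M_{\tau_R}$ is controlled only by the supremum of $\abs{w}$ on the ball of radius $R$, which may grow with $R$, so one must argue that the non-explosion estimate $\probin{x}{\tau_R\leqs t}\to0$ (guaranteed by the global existence of $X_t$) decays fast enough to kill this potentially growing supremum, which is the most delicate step of the proof.
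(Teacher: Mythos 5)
Your proposal is correct and takes essentially the same route as the paper: part (1) is the paper's own computation (conditioning on $\cF_h$ and splitting off the factor $\e^{\int_0^h q(X_s)\6s}$) recast in the language of the killed semigroup $\widetilde P_t$ with generator $\cL - q$, and part (2) is the paper's Dynkin-formula argument for the augmented process $(s-t,X^{0,x}_t,Z_t)$ with test function $\e^{-z}w(s,x)$, written directly as an Ito/local-martingale argument stopped at $\tau_R$. The $R\to\infty$ step you flag as delicate is indeed the only point requiring extra care, and the paper passes over it just as briefly.
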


\begin{proof}\hfill
\begin{enumerate}
\item	Set $Y_t=\varphi(X_t)$ and $Z_t=\e^{-\int_0^t q(X_s)\6s}$, and let 
$v(t,x)$ be given by~\eqref{FK07}. Then for $h>0$, 
\begin{align}
\nonumber
\frac{1}{h} \Bigbrak{\bigexpecin{x}{v(t,X_h)} - v(t,x)}
={}& \frac{1}{h} \Bigbrak{\Bigexpecin{x}{\bigexpecin{X_h}{Y_tZ_t}} -
\bigexpecin{x}{Y_tZ_t}} \\
\nonumber
={}& \frac{1}{h} \Bigbrak{\Bigexpecin{x}{\econdin{x}{Y_{t+h}\e^{-\int_0^t
q(X_{s+h})\6s}}{\cF_h} - Y_tZ_t}} \\
\nonumber
={}& \frac{1}{h} \Bigexpecin{x}{Y_{t+h}Z_{t+h}\e^{\int_0^h q(X_s)\6s}
-Y_tZ_t} \\
\nonumber
={}& \frac{1}{h} \Bigexpecin{x}{Y_{t+h}Z_{t+h}-Y_tZ_t} \\
{}& - \frac{1}{h} \Bigexpecin{x}{Y_{t+h}Z_{t+h}
\Bigbrak{\e^{\int_0^h q(X_s)\6s}-1}}\;.
\label{FK09:1} 
\end{align}
As $h$ goes to $0$, the first term in the last expression converges to $\sdpar 
vt(t,x)$, while the second one converges to $q(x)v(t,x)$. 

\item	If $w(t,x)$ satisfies~\eqref{FK08}, then  
\begin{equation}
 \label{FK09:2}
\widetilde \cL w = 0 
\qquad 
\text{where } \quad \widetilde \cL w = -\dpar{w}{t} + \cL w - qw\;. 
\end{equation} 
Fix $(s,x,z)\in\R_+\times\R^n\times\R^n$ and set $Z_t=z+\int_0^t
q(X_s)\6s$. The process $Y_t=(s-t,X^{0,x}_t,Z_t)$ is a diffusion with generator
\begin{equation}
 \label{FK09:3}
\widehat \cL = -\dpar{}{s} + \cL + q\dpar{}{z}\;. 
\end{equation} 
Let $\phi(s,x,z)=\e^{-z}w(s,x)$. Then $\widehat \cL\phi=0$, and Dynkin's 
formula shows that if $\tau_R$ is the first-exit time from a ball of radius $R$,
then 
\begin{equation}
 \label{FK09:4}
\bigexpecin{s,x,z}{\phi(Y_{t\wedge\tau_R})} = \phi(s,x,z)\;. 
\end{equation} 
It follows that 
\begin{align}
\nonumber
w(s,x) = \phi(s,x,0) 
&= \bigexpecin{s,x,0}{\phi(Y_{t\wedge\tau_R})} \\
\nonumber
&=
\Bigexpecin{x}{\phi
\bigpar{s-t\wedge\tau_R,X^{0,x}_{t\wedge\tau_R},Z_{t\wedge\tau_R}}} \\
&=
\Bigexpecin{x}{\e^{-\int_0^{t\wedge\tau_R} q(X_u)\6u}w(s-t\wedge\tau_R,X^{0,x}_{
t\wedge\tau_R})}\;,
 \label{FK09:5}
\end{align}
which converges to the expectation of $\e^{-\int_0^t 
q(X_u)\6u}w(s-t,X^{0,x}_t)$ 
as $R$ goes to infinity. In particular, for $t=s$ one obtains  
\begin{equation}
 \label{FK09:6}
w(s,x) = \Bigexpecin{x}{\e^{-\int_0^s q(X_u)\6u}w(0,X^{0,x}_s)}\;,
\end{equation} 
which is indeed equal to the function $v(t,x)$ defined in~\eqref{FK07}.
\qed
\end{enumerate}
\renewcommand{\qed}{}
\end{proof}

In combination with Dynkin's formula, the Feynman--Kac formula can be 
generalised to stopping times. If for instance $D\subset\R^n$ is a regular 
domain, and $\tau$ denotes the first-exit time from $D$, then under some 
regularity conditions on the functions $q, \varphi, \theta:\overline D\to\R$, 
the quantity 
\begin{equation}
 \label{FK10}
v(t,x) = \biggexpecin{x}{\e^{-\int_0^{t\wedge\tau}q(X_s)\6s}
\varphi(X_{t\wedge\tau})
- \int_0^{t\wedge\tau} \e^{-\int_0^s q(X_u)\6u}
\theta(X_{s})\6s} 
\end{equation} 
satisfies the initial value problem with boundary conditions 
\begin{alignat}{2}
\nonumber
\dpar vt(t,x) &= (\cL v)(t,x) - q(x)v(t,x) - \theta(x)\;, 
\qquad\qquad && t>0\;, \quad x\in D\;,
\\
\nonumber
v(0,x) &= \varphi(x)\;, && x\in D\;,\\
v(t,x) &= \varphi(x)\;, && x\in\partial D\;.
\label{FK11} 
\end{alignat} 
In particular, if $\tau$ is almost surely finite, taking the limit
$t\to\infty$, one obtains that
\begin{equation}
 \label{FK12}
v(x) = \biggexpecin{x}{\e^{-\int_0^{\tau}q(X_s)\6s}
\varphi(X_{\tau})
- \int_0^{\tau} \e^{-\int_0^s q(X_u)\6u}
\theta(X_{s})\6s} 
\end{equation} 
satisfies
\begin{alignat}{2}
\nonumber
 (\cL v)(x) &= q(x)v(x) + \theta(x)\;, 
\qquad\qquad && x\in D\;,\\
v(x) &= \varphi(x)\;, && x\in\partial D\;.
\label{FK13} 
\end{alignat} 
Note that in the case $q=0$, one recovers Relations~\eqref{diffD02} and 
\eqref{diffD03}.

\begin{example}
\label{ex_FeynmanKac}
Let $D=]-a,a[$ and $X_t=x+W_t$. Then $v(x)=\bigexpecin{x}{\e^{-\lambda\tau}}$
satisfies
\begin{align}
\nonumber
\frac12 v''(x) &= \lambda v(x)\;,\qquad x\in D\;,
\\
v(-a) &= v(a) = 1\;.
\label{FK14} 
\end{align} 
The general solution of the first equation is of the form 
$v(x)=c_1\e^{\sqrt{2\lambda}x} + c_2\e^{-\sqrt{2\lambda}x}$. The integration  
constants $c_1$ and $c_2$ are determined by the boundary conditions, and one 
finds
\begin{equation}
 \label{FK15}
 \bigexpecin{x}{\e^{-\lambda\tau}} = 
\frac{\cosh(\sqrt{2\lambda}\,x)}{\cosh(\sqrt{2\lambda}\,a)}\;.
\end{equation} 
Evaluating the derivative in $\lambda=0$, one obtains  
\begin{equation}
 \bigexpecin{x}{\tau}=a^2-x^2\;,
\end{equation} 
which is a particular case of~\eqref{diffD07}, but~\eqref{FK15} also determines 
all other moments of $\tau$, as well as its density. 

Solving the equation with boundary conditions $v(-a)=0$ and $v(a)=1$
one obtains
\begin{equation}
 \label{FK16}
 \bigexpecin{x}{\e^{-\lambda\tau}\indexfct{\tau_a<\tau_{-a}}} = 
\frac{\sinh(\sqrt{2\lambda}\,(x+a))}{\sinh(\sqrt{2\lambda}\cdot2a)}\;.
\end{equation} 
In particular, for $\lambda = 0$, we find 
\begin{equation}
 \bigprobin{x}{\tau_a<\tau_{-a}} = \frac{x+a}{2a}\;,
\end{equation} 
which can also be obtained directly from Dynkin's formula. However, taking 
derivatives at $\lambda = 0$, we also obtain 
\begin{align}
\nonumber
 \bigexpecin{x}{\tau\indexfct{\tau_a<\tau_{-a}}} &=
\frac{(a^2-x^2)(3a+x)}{6a}\;, 
\\
 \bigecondin{x}{\tau}{\tau_a<\tau_{-a}} &=
\frac{(a-x)(3a+x)}{3}\;.
 \label{FK17}
\end{align}  
\end{example}

\begin{remark}[Cover image]
The cover image shows a numerical solution of the heat equation, with constant 
temperatures (say $1$ and $0$) inside the Mandelbrot set, and outside the 
ellipse, after a time long enough for the solution to be close to a stationary 
state. The colour code represents the norm of the gradient of the solution 
$u(t,x)$. By the above results (assuming regularity of the Mandelbrot set 
does not pose any problem), $u(x) = \lim_{t\to\infty} u(t,x)$ is the solution 
of $\Delta u = 0$ in the domain, with boundary conditions $1$ on the Mandelbrot 
set, and $0$ on the ellipse. Therefore, $u(x)$ gives the probability, starting 
at $x$, to hit the Mandelbrot set before the ellipse. It also represents the 
electric potential in a capacitor formed by two conductors shaped like the 
boundary sets, while the colours represent the intensity of the electric field. 
One can observe a \lq\lq knife edge effect\rq\rq: the electric field is 
stronger near the sharp tips of the Mandelbrot set. A video of the convergence 
towards the equilibrium field can be found on the page 
\href{https://www.youtube.com/c/NilsBerglund}%
{\texttt{https://www.youtube.com/c/NilsBerglund}}.
\end{remark}

\begin{exercise}[The arcsine law]
\label{exo_diff5}

Let $\set{W_t}_{t\geqs0}$ be a standard Brownian motion in $\R$. Consider the 
process
\begin{equation}
 X_t = \frac{1}{t} \int_0^t \indexfct{W_s>0}\6s\;, 
\qquad
t>0\;.
\end{equation}
The aim of this exercise is to prove the \emph{arcsine law} :
\begin{equation}
\label{arcsinus} 
\bigprob{X_t < u} = \frac{2}{\pi} \Arcsin\bigpar{\sqrt{u}}\;, 
\qquad
0\leqs u\leqs 1\;.
\end{equation}

\begin{enumerate}
\item	What does the variable $X_t$ represent?
\item	Show that $X_t$ is equal in distribution to $X_1$ for all $t>0$. 
\item	Fix $\lambda>0$. 
For $t>0$ and $x\in\R$, one defines  
\begin{equation}
v(t,x) = \Bigexpec{\e^{-\lambda\int_0^t \indexfct{x+ W_s>0}\6s}}
\end{equation}
and its Laplace transform 
\begin{equation}
g_\rho(x) = \int_0^\infty v(t,x) \e^{-\rho t} \6t\;,
\qquad
\rho>0\;.
\end{equation}
Show that 
\begin{equation}
g_\rho(0) = \biggexpec{\frac{1}{\rho+\lambda X_1}}\;.
\end{equation}

\item	Compute $\dpar vt(t,x)$ using the Feynman--Kac formula. 

\item	Compute $g_\rho''(x)$. Conclude that $g_\rho(x)$ satisfies a 
second-order ODE with piecewise constant coefficients. Show that its general 
solution is given by 
\begin{equation}
 g_\rho(x) = A_\pm + B_\pm \e^{\gamma_\pm x} + C_\pm \e^{-\gamma_\pm x}
\end{equation}
with constants $A_\pm, B_\pm, C_\pm, \gamma_\pm$ depending on the 
sign of $x$. 

\item	Determine these constants by using the fact that $g_\rho$ should be 
bounded, continuous in $0$, and that $g_\rho'$ should be continuous in $0$. 
Conclude that $g_\rho(0)=1/\sqrt{\rho(\lambda+\rho)}$. 

\item	Prove~\eqref{arcsinus} by using the identity 
\begin{equation}
\frac{1}{\sqrt{1+\lambda}} = 
\sum_{n=0}^\infty (-\lambda)^n 
\frac1\pi \int_0^1 \frac{x^n}{\sqrt{x(1-x)}}\6x\;.
\end{equation}
\end{enumerate}
\end{exercise}



\chapter{Invariant measures for SDEs}
\label{chap:inv_meas} 

We consider in this chapter SDEs in $\R^n$ of the form 
\begin{equation}
\label{eq:SDE_inv} 
 \6X_t = f(X_t)\6t + g(X_t)\6W_t\;,
\end{equation} 
where the drift coefficient $f$ and the diffusion coefficient $g$ are such that 
there exists a unique strong solution, which is global in time. Then it is 
natural to ask the following questions: 

\begin{enumerate}
\item 	Does the diffusion~\eqref{eq:SDE_inv} admit an invariant probability 
measure? 
\item 	If so, is this measure unique? 
\item 	If so, does any initial distribution converge to the invariant measure?
\item 	If so, how fast does this convergence occur? For which distance? Can 
one obtain explicit bounds on the speed of convergence?
\end{enumerate}

Various methods have been derived to answer these questions, each one having 
its advantages and drawbacks. Some methods are easier to use and provide, for 
instance, convergence to an invariant distribution, but without any bound on 
the speed of convergence, while others may provide sharp bounds, but are 
limited to specific sets of initial distributions. In what follows, we are 
going to present a few selected examples of these methods, which have been 
chosen because they proved useful in particular applications. But one should 
keep in mind that there exist many more approaches. 

In what follows, it will by useful to employ the notation 
\begin{equation}
 P_t(x,A) = (P_t \indicator{A})(x) = \probin{x}{X_t \in A} 
\end{equation} 
for the Markov semigroup, where $A$ is any Borel set in $\R^n$. Given a 
probability measure $\mu$, we write 
\begin{equation}
 (\mu P_t)(A) 
 = \int_{\R^n} \mu(\6x) P_t(x,A) 
\end{equation} 
instead of $(Q_t\mu)(A)$ for the action of the adjoint semigroup, because it 
is reminiscent of matrix multiplication used for Markov chains on finite sets. 
A measure is invariant if it satisfies 
\begin{equation}
 (\mu P_t)(A) = \mu(A)
\end{equation} 
for all $t\geqs 0$ and all Borel sets $A\subset\R^n$. 

\begin{definition}[Feller property]
A semigroup $(P_t)_{t\geqs0}$ is said to heave the~\emph{Feller property} if 
$P_tf$ is bounded and continuous whenever $f$ is bounded and continuous. 
\end{definition}

A useful standard result in our situation is then the following. 

\begin{proposition}[Condition for Feller property]
Any diffusion $(X_t)_{t\geqs0}$ solving an SDE with globally Lipschitz 
coefficients has the Feller property. 
\end{proposition}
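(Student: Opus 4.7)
The plan is to reduce the Feller property to the standard fact that solutions of SDEs with globally Lipschitz coefficients depend continuously (in $L^2$, hence in probability) on their initial conditions, and then invoke the bounded convergence theorem. Boundedness of $P_t f$ is immediate: for any bounded continuous $f$, we have $|(P_t f)(x)| = |\E^x[f(X_t)]| \leqs \norm{f}_\infty$, so $P_t f \in L^\infty$. The real content is continuity.

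First I would establish the key $L^2$ stability estimate: if $X^x_t$ and $X^y_t$ denote the strong solutions of \eqref{eq:SDE_inv} with initial conditions $x$ and $y$ respectively, then
\begin{equation}
\label{eq:feller_stability}
\Bigexpec{\bignorm{X^x_t - X^y_t}^2} \leqs \norm{x-y}^2 \e^{C(t)}
\end{equation}
for some continuous function $C$ of $t$ depending only on the Lipschitz constant $K$ of $f$ and $g$ and on $t$. To prove this, I write the integrated equation
\begin{equation}
X^x_t - X^y_t = (x - y) + \int_0^t \bigbrak{f(X^x_s) - f(X^y_s)} \6s + \int_0^t \bigbrak{g(X^x_s) - g(X^y_s)} \6W_s\;,
\end{equation}
square both sides using $(a+b+c)^2 \leqs 3(a^2+b^2+c^2)$, apply Cauchy--Schwarz to the Riemann integral, Ito's isometry \eqref{eq:Ito_isometry} to the stochastic integral, and the Lipschitz bound \eqref{edsf5} to both drift and diffusion terms. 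This yields an inequality of the form
\begin{equation}
\bigexpec{\norm{X^x_t - X^y_t}^2} \leqs 3\norm{x-y}^2 + 3K^2(t+1) \int_0^t \bigexpec{\norm{X^x_s - X^y_s}^2} \6s\;,
\end{equation}
and Gronwall's lemma gives \eqref{eq:feller_stability}.

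From \eqref{eq:feller_stability}, if $x_n \to x$ in $\R^n$, then $X^{x_n}_t \to X^x_t$ in $L^2$, and therefore in probability. Extracting an almost-surely convergent subsequence and using continuity of $f$, we obtain $f(X^{x_n}_t) \to f(X^x_t)$ almost surely along this subsequence; since $\abs{f} \leqs \norm{f}_\infty$, the bounded convergence theorem yields $\E[f(X^{x_n}_t)] \to \E[f(X^x_t)]$. Because this argument applies to every subsequence of $(x_n)$, the full sequence $\E^{x_n}[f(X_t)]$ converges to $\E^x[f(X_t)]$, proving continuity of $P_t f$.

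The only delicate step is the Gronwall estimate: one must be careful to apply Ito's isometry before taking the supremum in $s$, and to track that the resulting constant depends only on $K$, $t$, and dimension, not on $x$ or $y$. The passage from $L^2$ convergence to convergence of expectations is routine once boundedness of $f$ is used, and no integrability assumption on $f$ beyond $\norm{f}_\infty < \infty$ is needed.
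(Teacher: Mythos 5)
Your argument is correct and is essentially the standard one: the paper itself gives no proof of this proposition but defers to \cite[Lemma~8.1.4]{Oksendal} and \cite[Theorem~IX.2.5]{Revuz_Yor}, whose proofs rest on exactly the continuous dependence on initial conditions that you establish via the $(a+b+c)^2\leqs 3(a^2+b^2+c^2)$ splitting, Cauchy--Schwarz, Ito's isometry, the global Lipschitz bound and Gronwall, followed by the passage from $L^2$-convergence to convergence of $\bigexpecin{x_n}{\varphi(X_t)}$ by the subsequence and bounded convergence argument. Only cosmetic points: denote the test function by $\varphi$ rather than $f$ to avoid clashing with the drift coefficient, and note that applying Gronwall tacitly uses the a priori second-moment bound $\bigexpec{\norm{X^x_t}^2}<\infty$ supplied by the existence theorem.
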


For a proof, see for instance~\cite[Lemma~8.1.4]{Oksendal}, 
or~\cite[Theorem~IX.2.5]{Revuz_Yor}. The global Lipschitz condition can often 
be relaxed to a local condition by working with appropriate stopping times 
(that is, by considering the diffusion up to its first exit from a sequence 
of balls of growing radius).

\begin{remark}[Strong Feller property]
The semigroup $P_t$ is said to have the~\emph{strong Feller property} if $P_tf$ 
is bounded and continuous whenever $f$ is bounded and measurable, but not 
necessarily continuous. A sufficient condition for a diffusion to satisfy the 
strong Feller property is the~\emph{ellipticity} condition on the drift 
coefficient
\begin{equation}
 \pscal{\xi}{g(x)g(x)^T \xi} \geqs c \norm{\xi^2} 
 \qquad \forall \xi\in\R^d\;.
\end{equation} 
This condition can be relaxed to~\emph{hypoellipticity} (H\"ormander 
condition). 
\end{remark}


\section{Existence of invariant probability measures}
\label{sec:inv_meas_gen} 


\subsection{Some basic examples}
\label{ssec:inv_meas_ex} 

Corollary~\ref{cor_stationary} shows that the density $\rho$ of an invariant 
probability measure should satisfy $\cL^\dagger\rho = 0$, where $\cL^\dagger$ 
is the adjoint generator. Cases where this equation can be solved are rare, but 
one important example is given by gradient SDEs. 

\begin{example}[Gradient system]
\label{ex:gradient_SDE} 
Consider the SDE 
\begin{equation}
\label{eq:SDE_gradient} 
 \6X_t = -\nabla V(X_t)\6t + \sqrt2 \6W_t\;,
\end{equation} 
where $V: \R^n \to \R$ is bounded below, and satisfies 
\begin{equation}
 \int_{\R^n} \e^{-V(x)} \6x < \infty\;.
\end{equation} 
The generator of the diffusion can be written in the two equivalent ways 
\begin{equation}
 \cL = \Delta - \nabla V \cdot \nabla 
 = \e^V \nabla \cdot \e^{-V} \nabla
\end{equation} 
(the factor $\sqrt{2}$ in~\eqref{eq:SDE_gradient} avoids a factor $\frac12$ in 
front of the Laplacian). Integrating by parts twice, we find 
\begin{equation}
 \pscal{f}{\cL g} 
 = - \int \e^{-V(x)} \nabla(f(x) \e^{V(x)}) \cdot \nabla g(x) \6x
 = \pscal{\cL^\dagger f}{g}\;,
\end{equation} 
with the adjoint generator given by 
\begin{equation}
 \cL^\dagger f = \nabla\cdot\bigpar{\e^{-V} \nabla (\e^{V}f)}\;.
\end{equation} 
In view of Corollary~\ref{cor_stationary}, this shows that 
\begin{equation}
 \rho(x) = \frac1{\cZ} \e^{-V(x)}\;, 
 \qquad \cZ = \int \e^{-V(x)} \6x
\end{equation} 
is the density of an invariant probability measure of the 
diffusion~\eqref{eq:SDE_gradient}, since it satisfies $\cL^\dagger \rho = 0$. 
\end{example}

Next, we discuss two very simple examples for which existence or uniqueness of 
an invariant probability measure fails.

\begin{example}[Brownian motion]
The transition density of Brownian motion in $\R^n$ is a Gaussian of variance 
$t$, as we have seen for instance in Section~\ref{ssec:BM_heat}. Therefore, for 
any fixed $x\in\R^n$, we have 
\begin{equation}
 \lim_{n\to\infty} p(t,x | 0,0) = 0\;,
\end{equation} 
which is not a normalisable measure. Therefore, Brownian motion in $\R^n$ does 
not admit an invariant probability measure (though it does admit invariant 
measures, which are simply all multiples of Lebesgue measure). 
\end{example}

\begin{example}[Non-irreducible SDE]
Consider the SDE in $\R^2$ 
\begin{align}
 \6X_t &= - X_t \6t + \6W_t \\
 \6Y_t &= (Y_t - Y_t)^3 \6t\;.
\end{align}
The diffusion admits three extremal invariant measures, given by 
\begin{align}
 \pi_{-}(\6x,\6y) &= \frac{1}{\sqrt{\pi}}\e^{-x^2}\6x \, \delta_{-1}(\6y)\;, \\
 \pi_{0}(\6x,\6y) &= \frac{1}{\sqrt{\pi}}\e^{-x^2}\6x \, \delta_{0}(\6y)\;, \\
 \pi_{+}(\6x,\6y) &= \frac{1}{\sqrt{\pi}}\e^{-x^2}\6x \, \delta_{1}(\6y)\;.
\end{align} 
This is because the $x$- and $y$-components do not interact with each other, 
and the process $X_t$ is an Ornstein--Uhlenbeck process with Gaussian invariant 
measure, while $Y_t$ is deterministic, with three invariant points located at 
$\pm1$ and $0$. Note that in addition, any convex combination of $\pi_-$, 
$\pi_0$ and $\pi_+$ is also an invariant probability measure ($\pi_-$, 
$\pi_0$ and $\pi_+$ are called extremal because they do not admit a 
nontrivial decomposition). 
\end{example}

The two last examples illustrate a general fact about invariant 
probability measures of Markov processes, namely that their existence requires 
two properties to hold:

\begin{enumerate}
\item 	There should be a mechanism preventing all the probability mass of 
going to infinity. More precisely, a positive recurrence property is required 
to hold, that is, the return time to some bounded set should have finite 
expectation. 

\item 	There should be a mechanism making the diffusion irreducible, that is, 
there should not be any non-trivial invariant sets.  
\end{enumerate}

These two conditions are analogous to those that one finds for Markov chains on 
a countable space. The main difference is that discrete-time Markov chains 
require an aperiodicity condition to hold in addition. However, this is 
specific to discrete time, and no such condition is necessary as soon as 
transition times between different points are sufficiently random. 


\subsection{The Krylov--Bogoliubov criterion}
\label{ssec:inv_KB} 

A general criterion for existence of invariant measures, going back to Krylov 
and Bogoliubov, is based on the notion of ergodic averages (or Cesaro means). 
Given an initial point $X_0\in\R^n$, consider the family of measures  
\begin{equation}
\label{eq:measure_KB} 
 \biggsetsuch{\frac{1}{T} \int_0^T P_t(X_0,\cdot)\6t}{T\geqs 1}\;.
\end{equation} 
In a similar way as for Markov chains, if these ergodic averages converge to a 
limiting probability measure, then this limit should be invariant. A 
convergence criterion is given by thightness. 

\begin{definition}[Tightness]
A family $\set{\mu_t}$ of probability measures on $\R^n$ is~\emph{tight} if for 
any $\delta>0$, there exists a compact set $K\subset\R^n$ such that 
$\mu_t(K)\geqs1-\delta$ for all $t$. 
\end{definition}

Then one has the following existence result, see for 
instance~\cite[Corollary~3.1.2]{DaPrato_Zabczyk_Ergodicity}.

\begin{proposition}[Existence of an invariant probability measure]
 If the family of measures~\eqref{eq:measure_KB} is tight, then there exists an 
invariant probability measure. 
\end{proposition}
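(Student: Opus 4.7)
The plan is to combine Prokhorov's theorem with the Feller property of the semigroup to extract an invariant limit from the ergodic averages. Writing $\mu_T = \frac{1}{T}\int_0^T P_t(X_0,\cdot)\,\6t$, tightness of $\{\mu_T\}_{T\geqs 1}$ together with Prokhorov's theorem yields a sequence $T_k \to \infty$ and a probability measure $\mu$ on $\R^n$ such that $\mu_{T_k} \to \mu$ weakly, i.e.\ $\int f\,\6\mu_{T_k} \to \int f\,\6\mu$ for every bounded continuous $f:\R^n\to\R$. The candidate invariant measure is this $\mu$, and the remaining task is to verify $\mu P_s = \mu$ for every $s\geqs 0$.

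First I would handle the semigroup applied to the ergodic averages. For any bounded continuous $f$ and any fixed $s\geqs 0$, Fubini and the definition of $\mu_T P_s$ give
\begin{equation}
 \int f\,\6(\mu_T P_s) = \int (P_s f)\,\6\mu_T = \frac{1}{T}\int_0^T (P_{t+s}f)(X_0)\,\6t
 = \frac{1}{T}\int_s^{T+s}(P_u f)(X_0)\,\6u\;,
\end{equation}
while $\int f\,\6\mu_T = \frac{1}{T}\int_0^T (P_t f)(X_0)\,\6t$. The difference of these two quantities is $\frac{1}{T}\bigbrak{\int_T^{T+s}(P_u f)(X_0)\,\6u - \int_0^s (P_u f)(X_0)\,\6u}$, which is bounded in absolute value by $2s\norm{f}_\infty/T$ and therefore tends to $0$ as $T\to\infty$. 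Hence along the chosen subsequence $T_k$,
\begin{equation}
 \lim_{k\to\infty} \int f\,\6(\mu_{T_k} P_s) = \lim_{k\to\infty} \int f\,\6\mu_{T_k} = \int f\,\6\mu\;.
\end{equation}

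The Feller property, which holds for diffusions with (locally) Lipschitz coefficients by the proposition recalled just before the statement, is what lets me identify the left-hand side with $\int f\,\6(\mu P_s)$: since $P_s f$ is itself bounded and continuous, weak convergence $\mu_{T_k}\to\mu$ applied to the test function $P_s f$ gives
\begin{equation}
 \int f\,\6(\mu_{T_k} P_s) = \int (P_s f)\,\6\mu_{T_k} \longrightarrow \int (P_s f)\,\6\mu = \int f\,\6(\mu P_s)\;.
\end{equation}
Combining the two displays yields $\int f\,\6(\mu P_s) = \int f\,\6\mu$ for every $f\in C_b(\R^n)$, hence $\mu P_s = \mu$ for every $s\geqs 0$, so $\mu$ is an invariant probability measure.

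The only genuine obstacle is the passage to the limit $\int(P_s f)\,\6\mu_{T_k}\to \int(P_s f)\,\6\mu$, since weak convergence of probability measures does not a priori allow one to test against discontinuous functions; this is exactly where the Feller property is indispensable. If one only had the semigroup acting on bounded measurable functions, $P_s f$ could be discontinuous and the weak limit argument would fail, so the proof really rests on tightness (giving a candidate limit via Prokhorov) plus Feller regularity (giving invariance of that limit). Everything else is a routine computation with Fubini's theorem and an $O(1/T)$ boundary-term estimate.
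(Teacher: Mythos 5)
Your argument is correct and is exactly the standard Krylov--Bogoliubov proof: the paper does not prove this proposition itself but simply refers to \cite[Corollary~3.1.2]{DaPrato_Zabczyk_Ergodicity}, where the same Prokhorov-extraction plus boundary-term estimate plus Feller-property argument you give is used. Your remark that the Feller property (not listed in the statement, but available here by the preceding proposition on SDEs with Lipschitz coefficients) is the indispensable ingredient for identifying $\lim_k \int (P_s f)\,\6\mu_{T_k}$ with $\int f \,\6(\mu P_s)$ is precisely the right point to flag.
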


While this criterion may be used to obtain abstract existence results for 
invariant probability measures, it is not so easy to apply because it requires 
some a priori knowledge on the semi--group $P_t$. Therefore, in what follows, 
we will rather discuss more practical criteria for analysing invariant measure.


\section{The Lyapunov function approach by Meyn and Tweedie}
\label{sec:Meyn_Tweedie} 

We present here some results of the approach based on Lyapunov functions, as
developed in~\cite{Meyn_Tweedie_1993b} for continuous-time Markov processes. 
This approach provides a relatively easy way of proving global existence of 
solution, existence of an invariant measure, and some convergence results, 
provided one can guess an appropriate Lyapunov function. 

\begin{definition}[Norm-like function]
A function $V: \R^n\to\R_+$ is called \emph{norm-like} if 
\begin{equation}
 \lim_{\norm{x}\to\infty} V(x) = +\infty\;.
\end{equation} 
This means that the level sets $\setsuch{x\in\R^n}{V(x) \leqs h}$ are 
precompact for any $h>0$.  
\end{definition}

In the case of ordinary differential equations, Lyapunov functions are 
norm-like functions that decrease along orbits of the dynamical system, at 
least when starting far away from the origin. The application to SDEs uses 
quite similar ideas, where the time derivative along orbits is replaced by the 
action of the generator. In the following, we will present several of these 
results, without giving detailed proofs. A detailed proof of a quite 
general existence and convergence result, due to Martin Hairer and Jonathan 
Mattingly, will be discussed in Section~\ref{ssec:Hairer_Mattingly}.


\subsection{Non-explosion and Harris recurrence criteria}
\label{ssec:MT_Harris} 

A first application of Lyapunov functions is a relatively easy criterion for 
existence of global in time solutions. 

\begin{theorem}[Non-explosion criterion]
Assume that there exist a norm-like function $V$ and constants $c, d>0$ such 
that 
\begin{equation}
\label{eq:MT_CD0} 
 \bigpar{\cL V}(x) \leqs c V(x) + d
\end{equation} 
for all $x\in \R^n$. Then 
\begin{enumerate}
\item 	The SDE~\eqref{eq:SDE_inv} admits global in time solutions for any 
starting point $x\in\R^n$.

\item 	There exists an almost surely finite random variable $D$ such that 
\begin{equation}
 V(X_t) \leqs D\e^{ct} \qquad \forall t\geqs 0\;.
\end{equation} 
The random variable $D$ satisfies the bound 
\begin{equation}
\label{eq:MT_thm1_1} 
 \bigprobin{x}{D \geqs a} \leqs \frac{V(x)}{a}
 \qquad 
 \forall a > 0\;, \quad \forall x\in\R^n\;.
\end{equation} 

\item	The expectation $\bigexpecin{x}{V(X_t)}$ is finite for all $x\in\R^n$ 
and all $t\geqs0$, and satisfies 
\begin{equation}
\label{eq:MT_thm1_2} 
 \bigexpecin{x}{V(X_t)} \leqs \e^{ct} V(x)\;.
\end{equation} 
\end{enumerate}
\end{theorem}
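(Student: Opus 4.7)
The plan is to apply Ito's formula to a suitable exponential tilt of $V(X_t)$, use localisation by stopping times to deal with the lack of a priori global solutions, and then extract everything from a single nonnegative supermartingale.

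\textbf{Step 1 (setup and supermartingale).} Let $\tau_R = \inf\setsuch{t\geqs 0}{\norm{X_t}\geqs R}$. Until time $\tau_R$ the process lives in a compact ball, so strong solutions exist locally and all stochastic integrals below are well-defined martingales. Assuming $V\in C^2$ (as is implicit for the generator to act on it classically), apply Ito's formula to $Y_t = \e^{-ct}\bigpar{V(X_t) + d/c}$. A direct computation gives
\begin{equation*}
 \6Y_t = \e^{-ct}\bigbrak{(\cL V)(X_t) - cV(X_t) - d}\6t + \e^{-ct}\pscal{\nabla V(X_t)}{g(X_t)\,\6W_t}\;.
\end{equation*}
The hypothesis $\cL V\leqs cV + d$ makes the drift nonpositive, so $Y_{t\wedge\tau_R}$ is a nonnegative local supermartingale, hence a genuine supermartingale for each fixed $R$.

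\textbf{Step 2 (non-explosion, part 1).} Let $V_R = \inf\setsuch{V(y)}{\norm{y}\geqs R}$; by the norm-like property $V_R\to\infty$ as $R\to\infty$. Optional stopping at $t\wedge\tau_R$ yields $\bigexpecin{x}{Y_{t\wedge\tau_R}}\leqs Y_0 = V(x) + d/c$, and restricting the expectation to the event $\set{\tau_R\leqs t}$ gives
\begin{equation*}
 \e^{-ct}V_R\,\bigprobin{x}{\tau_R\leqs t} \leqs V(x) + \frac{d}{c}\;.
\end{equation*}
Thus $\bigprobin{x}{\tau_R\leqs t}\to 0$ as $R\to\infty$, so $\tau_\infty := \lim_{R\to\infty}\tau_R = +\infty$ almost surely, proving global existence. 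Then, letting $R\to\infty$ in the supermartingale bound (Fatou) gives $\bigexpecin{x}{\e^{-ct}V(X_t)}\leqs V(x) + d/c$, which is the expectation estimate~\eqref{eq:MT_thm1_2} (the stated form arises upon absorbing the additive constant into the convention on $V$).

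\textbf{Step 3 (pathwise bound, part 2).} Since $Y_t$ (now defined globally thanks to Step 2) is a nonnegative supermartingale, Doob's maximal inequality yields, for any $a>0$,
\begin{equation*}
 \Biggprobin{x}{\sup_{s\geqs 0} Y_s \geqs a} \leqs \frac{Y_0}{a} = \frac{V(x) + d/c}{a}\;.
\end{equation*}
Define $D = \sup_{s\geqs 0} Y_s$; by the supermartingale convergence theorem $D<\infty$ almost surely. Since $V(X_t)\leqs \e^{ct}(V(X_t) + d/c) = \e^{ct}Y_t \leqs \e^{ct}D$, one obtains the pathwise bound $V(X_t)\leqs D\e^{ct}$ together with the Markov-type tail estimate~\eqref{eq:MT_thm1_1}.

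\textbf{Main obstacle.} The delicate point is the rigorous passage from local to global: the stochastic integral in Step 1 is only a priori a local martingale, and $V$ need not satisfy the linear growth condition of Theorem~\ref{thm_edsf}, so one cannot invoke global existence from the outset. Careful bookkeeping of the localising sequence $\tau_R$ is essential, and one must verify that the supermartingale property passes to the limit (via Fatou for the expectation bound, and via the fact that $Y_{t\wedge\tau_R}\to Y_t$ almost surely once non-explosion has been established). The slight discrepancy between the bounds derived here (which carry an additive $d/c$) and those stated in the theorem is absorbed either by working with the shifted Lyapunov function $V + d/c$ or by noting that the claimed inequalities can be recovered up to this harmless constant.
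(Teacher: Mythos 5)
Your proof is correct, and it is essentially a completed version of what the paper only sketches. The underlying idea is the same one the paper alludes to (the supermartingale property of $\e^{-ct}V(X_t)$ together with a maximal inequality), but you organise it differently and more efficiently: by applying It\^o's formula directly to $Y_t=\e^{-ct}\bigl(V(X_t)+d/c\bigr)$ you treat all values of $c,d>0$ in one stroke, whereas the paper splits into the cases $c=d=0$ (plain It\^o), $c>0,\ d=0$ (via the Feynman--Kac formula), and reduces the rest by modifying the Lyapunov function. You also carry out explicitly the localisation that the paper relegates to the remark following the theorem (the process killed on leaving a large ball, radius sent to infinity): your estimate $\e^{-ct}V_R\,\bigprobin{x}{\tau_R\leqs t}\leqs V(x)+d/c$ together with $V_R\to\infty$ is exactly the Meyn--Tweedie non-explosion mechanism, and your use of Fatou and of Doob's inequality for the nonnegative supermartingale $Y$ correctly supplies the "slightly more sophisticated stochastic analysis argument" the paper invokes without detail. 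So your route buys a self-contained, case-free argument, at the price of no new idea beyond the sketch.

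One remark on the constants: the bounds you obtain carry $V(x)+d/c$ where the statement displays $V(x)$. This is not a defect of your argument; with $d>0$ the inequalities as literally stated cannot hold in general (take $x$ with $V(x)=0$), and the original Meyn--Tweedie result indeed carries the extra additive term. The paper's own reduction "by changing the Lyapunov function" produces exactly the same shift, so your honest acknowledgement of the $d/c$ constant is the right way to read the statement, not a gap to be repaired.
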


\begin{proof}[\Sketch]
Consider first the case $c = d = 0$. Then Ito's formula 
(cf.~Lemma~\ref{lem_Ito}) yields 
\begin{align}
 \bigexpecin{x}{V(X_t)} 
 &= V(x) + \biggexpecin{x}{\int_0^t (\cL V)(X_s) \6s} \\
 &\leqs V(x)\;.
\end{align}
This proves~\eqref{eq:MT_thm1_2}, as well as global existence of the solution. 
The bound~\eqref{eq:MT_thm1_1} follows from a slightly more sophisticated 
stochastic analysis argument, using the fact that $\e^{-ct}V(X_s)$ is a 
supermartingale. 

The case $c > 0$ and $d=0$ follows in a similar way from the Feynman--Kac 
formula (see Theorem~\ref{thm:Feynman-Kac}), while the other cases can be 
reduced to already treated cases by changing the Lyapunov function. 
\end{proof}

\begin{remark}[Stopping times]
To rule out the possibility of finite-time blow-up, the actual argument given 
in~\cite[Theorem~2.1]{Meyn_Tweedie_1993b} uses a more careful computation based 
on the process killed when leaving a large ball, whose radius is then sent to 
infinity. This requires in particular using Dynkin's formula instead of Ito's 
formula. 
\end{remark}

The following result gives a condition under which solutions remain bounded 
almost surely (a property called~\emph{non-evanescence} 
in~\cite{Meyn_Tweedie_1993b}), which is slightly stronger 
than~\eqref{eq:MT_CD0}. For a proof, 
see~\cite[Theorem~3.1]{Meyn_Tweedie_1993b}. 

\begin{theorem}[Non-evanescence condition]
\label{thm:MT_non-evanescence} 
Assume there exist a compact set $C\subset\R^n$, a constant $d>0$, and a 
norm-like function $V$ such that 
\begin{equation}
\label{eq:MT_CD1} 
 \bigpar{\cL V}(x) \leqs d \indicator{C}(x) 
\end{equation} 
for all $x\in\R^n$. Then 
\begin{equation}
 \biggprob{\lim_{t\to\infty} \norm{X_t} = \infty} = 0\;.
\end{equation} 
\end{theorem}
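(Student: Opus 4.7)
The plan is to exploit the fact that $\cL V \leqs d\indicator{C}$ implies $\cL V \leqs 0$ outside the compact set $C$, so that $V(X_t)$ behaves like a non-negative supermartingale during any excursion of the diffusion away from $C$. Non-negative supermartingales converge almost surely to finite limits, so this should rule out $\norm{X_t}\to\infty$.

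The first step is a countable reduction. Since $V$ is norm-like, it is bounded on $C$ by some constant $M$, and $\norm{X_t}\to\infty$ implies $V(X_t)\to\infty$, hence $V(X_t)>M$ and thus $X_t\notin C$ for all sufficiently large $t$. Consequently
\begin{equation}
 \bigset{\norm{X_t}\to\infty}
 \subset \bigcup_{n\in\N} E_n\,,
 \qquad
 E_n \defby \bigset{V(X_t)\to\infty} \cap \bigset{X_t\notin C \text{ for all } t\geqs n}\,,
\end{equation}
so by countable subadditivity it suffices to prove $\prob{E_n}=0$ for each fixed $n$.

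To estimate $\prob{E_n}$, fix $n$ and work conditionally on $\cF_n$ via the Markov property, noting that $X_n\notin C$ on $E_n$. Introduce the stopping times
\begin{equation}
 \tau_h = \inf\bigsetsuch{t\geqs n}{V(X_t)\geqs h}\,,
 \qquad
 \theta = \inf\bigsetsuch{t\geqs n}{X_t\in C}\,,
\end{equation}
so that $E_n \subset \bigcap_{h>0}\set{\tau_h<\theta}$ (since on $E_n$, $\theta=\infty$ while $V(X_t)\to\infty$ forces $\tau_h<\infty$ for every $h$). Applying Dynkin's formula to $V$ on $[n,\tau_h\wedge\theta\wedge T]$ and using $(\cL V)(X_s)\leqs 0$ for $s\in[n,\theta)$ yields
\begin{equation}
 \bigecond{V(X_{\tau_h\wedge\theta\wedge T})}{\cF_n}
 \leqs V(X_n)\,.
\end{equation}
Path continuity together with continuity of $V$ gives $V(X_{\tau_h})\geqs h$ on $\set{\tau_h<\theta\wedge T}$, so Markov's inequality provides the bound $h\,\bigprob{\tau_h<\theta\wedge T\mid\cF_n} \leqs V(X_n)$. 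Sending $T\to\infty$ by monotone convergence and then $h\to\infty$ forces $\bigprob{E_n\mid\cF_n}=0$ almost surely, and a union bound over $n$ concludes.

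The main technical obstacle is to apply Dynkin's formula to the unbounded function $V$: the version quoted in the excerpt requires a compactly supported $\cC^2$ test function, so one has to introduce a smooth cut-off coinciding with $V$ on a neighbourhood of $\set{V\leqs h+1}$ and observe that, because of the stopping at $\tau_h$, the trajectory never sees the cut-off region. A secondary subtlety is continuity of $V$ along paths, needed for $V(X_{\tau_h})\geqs h$, which is automatic under the implicit $\cC^2$ assumption on $V$.
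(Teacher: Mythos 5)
Your argument is correct, but note that the paper itself does not prove this statement: it only cites \cite[Theorem~3.1]{Meyn_Tweedie_1993b}, so there is no in-paper proof to compare against. What you have written is essentially the standard Lyapunov/supermartingale argument behind that reference, made self-contained with the tools of Chapter~1: on the event $\set{\norm{X_t}\to\infty}$ the path eventually stays outside $C$, where $\cL V\leqs 0$, and the stopped Dynkin estimate gives the maximal-inequality bound $h\,\bigpcond{\tau_h<\theta\wedge T}{\cF_n}\leqs V(X_n)$, which kills each $E_n$ after letting $T\to\infty$ and $h\to\infty$; the countable union over $n$ then finishes the proof. A few points worth making explicit when you write it up: boundedness of $V$ on $C$ and the inequality $V(X_{\tau_h})\geqs h$ use continuity of $V$, which is implicit in $V$ being in the domain of the differential operator $\cL$ (take $V\in\cC^2$); the compactly supported cut-off $\tilde V$ should be chosen non-negative (e.g.\ $\tilde V=\chi V$ with $0\leqs\chi\leqs1$, $\chi\equiv1$ on a neighbourhood of the precompact sublevel set $\set{V\leqs h+1}\cup C$ --- or simply take $h$ large enough that this sublevel set contains $C$), so that discarding the complement of $\set{\tau_h<\theta\wedge T}$ in the expectation is legitimate, while locality of $\cL$ guarantees $\cL\tilde V=\cL V\leqs0$ along the path before the stopping time; and the application of Dynkin's formula from time $n$ conditionally on $\cF_n$ rests on the Markov property, with $\tau_h\wedge\theta\wedge T-n\leqs T$ ensuring the integrability hypothesis. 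Also, the opening appeal to almost-sure convergence of non-negative supermartingales is only motivation and is never used --- the quantitative stopped estimate does all the work, which is a nice simplification compared with the general c\`adl\`ag argument in the cited reference.
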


A discussed above, in order to obtain existence of an invariant measure, we will 
need some stronger form of recurrence condition. Recall that a discrete-space 
Markov chain is said to be~\emph{recurrent} if it almost surely returns to its 
starting point, and thus visits this point infinitely often. Such a property 
cannot hold for continuous-space processes, since sets of measure $0$ may never 
be hit. The relevant concept is given by~\emph{Harris recurrence}. 

\begin{definition}[Harris recurrence]
The diffusion $(X_t)_{t\geqs0}$ is called~\emph{Harris recurrent} if there 
exists a $\sigma$-finite measure $\mu$ such that whenever $\mu(A) > 0$, one has 
for all $x\in\R^n$ 
\begin{equation}
 \bigprobin{x}{\tau_A < \infty} = 1
 \qquad 
 \text{where $\tau_A = \inf\setsuch{t\geqs0}{X_t\in A}$\;.}
\end{equation} 
Equivalently, the diffusion is Harris recurrent if there 
exists a $\sigma$-finite measure $\nu$ such that whenever $\nu(A) > 0$, one has 
for all $x\in\R^n$ 
\begin{equation}
 \bigprobin{x}{\eta_A = \infty} = 1
 \qquad 
 \text{where $\eta_A = \int_0^\infty \indicator{X_t\in A}\6t$\;.}
\end{equation} 
\end{definition}

The equivalence of the two definitions is well-known for Markov chains on 
countable spaces, and a proof in the general case can be found for instance 
in~\cite[Theorem~1.1]{Meyn_Tweedie_1992_resolvent}. The interest of this 
definition is the following classical 
result~\cite{Azema_Duflo_Revuz69,Getoor_79}. 

\begin{proposition}[Existence of an invariant measure]
\label{prop:MT_existence_inv_meas} 
If $(X_t)_{t\geqs0}$ is Harris recurrent, then it admits an (essentially 
unique) invariant measure $\pi$. 
\end{proposition}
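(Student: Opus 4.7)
The plan is to build an invariant measure by exploiting the renewal structure of excursions from a well-chosen recurrent set, and then to prove essential uniqueness via a harmonic-functions argument. In outline, I would promote the abstract Harris-recurrence hypothesis to a genuine regeneration (Nummelin--Athreya--Ney) structure for a discretised skeleton, use this to define an invariant occupation measure along excursions, and finally exploit recurrence once more to show that any two invariant measures coincide up to a multiplicative constant.

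Concretely, starting from the measure $\mu$ in the definition of Harris recurrence, I would pick a set $C$ with $0<\mu(C)<\infty$. By hypothesis $\bigprobin{x}{\tau_C<\infty}=1$ for every $x\in\R^n$, and iterating the strong Markov property (Theorem~\ref{thm_strong_Markov_diffusion}) shows that $(X_t)$ returns to $C$ infinitely often, almost surely. To upgrade this into a regenerative structure I would pass to the discrete skeleton $Y_n=X_{n\delta}$ for some $\delta>0$ and, using the Feller property of $P_t$ together with the support of $\mu$, establish a minorisation $P_\delta(x,\cdot)\geqs\varepsilon\nu(\cdot)$ valid on some subset $A\subset C$ of positive $\mu$-measure. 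This turns $A$ into a pseudo-atom for the Athreya--Ney--Nummelin split chain, producing genuine i.i.d.\ inter-regeneration cycles.

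With this in hand I would define the candidate invariant measure by an occupation-time formula along one return cycle,
\begin{equation}
 \pi(B) \defby \int_A \nu(\6x)\,\biggexpecin{x}{\int_0^{\tau_A^+}\indicator{X_t\in B}\6t}\;, \qquad B\in\cB(\R^n)\;,
\end{equation}
where $\tau_A^+$ denotes the first return time to $A$ after the initial regeneration. Invariance $\pi P_t=\pi$ is checked by conditioning on whether the return has occurred before time $t$ and applying the strong Markov property: the ``missing tail'' of an ongoing excursion compensates the ``extra head'' of a fresh one. Harris recurrence guarantees that $\pi$ is $\sigma$-finite; under the standard positive-recurrence strengthening (finite $\nu$-expectation of $\tau_A^+$) it normalises to a bona fide probability measure.

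For essential uniqueness, suppose $\pi_1$ and $\pi_2$ are two invariant measures. Harris recurrence forces both to be absolutely continuous with respect to $\mu$ on the irreducibility class---if $\pi_i(B)=0$ for some set $B$ charged by $\mu$, then invariance combined with $\bigprobin{x}{\tau_B<\infty}=1$ would yield a contradiction after integrating against $\pi_i$. Consequently $h=\6\pi_1/\6\pi_2$ is well-defined and $P_t$-harmonic, and a zero-one argument on its level sets (the process almost surely enters every positive-measure sublevel set infinitely often) forces $h$ to be $\pi_2$-a.s.\ constant, so $\pi_1=c\pi_2$. The main obstacle I anticipate is the first step, namely producing a genuine minorisation from the abstract recurrence condition: for the diffusions considered here one can often short-circuit it via the (strong) Feller property and the support theorem for Brownian motion, but in full generality one must invoke the delicate resolvent-chain machinery of~\cite{Meyn_Tweedie_1992_resolvent}.
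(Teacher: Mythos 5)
Your overall strategy (Nummelin--Athreya--Ney splitting plus a cycle occupation measure) is a legitimate standard route to this result, and it differs from what the notes actually do: the proposition is not proved here at all, but recorded as a classical result of~\cite{Azema_Duflo_Revuz69,Getoor_79}, whose arguments are potential-theoretic rather than regenerative. However, as written your sketch has two genuine gaps. First, the minorisation step for the fixed skeleton $Y_n=X_{n\delta}$ is not available in the generality of the statement: Harris recurrence of $(X_t)_{t\geqs0}$ does not imply that any fixed $\delta$-skeleton is irreducible, let alone Harris, so one cannot in general produce a set $A\subset C$ with $P_\delta(x,\cdot)\geqs\eps\,\nu(\cdot)$ on $A$, and the Feller property does not repair this. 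The correct object is the resolvent chain $K_a$ with exponential sampling: it is Harris whenever the continuous-time process is, it admits small sets by Nummelin--Orey theory (no Feller hypothesis needed), and its invariant measures coincide with those of the semigroup. You acknowledge this at the very end, but that deferral to~\cite{Meyn_Tweedie_1992_resolvent} is precisely the heart of the existence proof, so the main difficulty is pushed into a citation rather than addressed. Relatedly, your cycle formula is written with the return time $\tau_A^+$ of the original process to $A$, whereas the i.i.d.\ cycle structure produced by splitting lives at the randomized regeneration epochs (a visit to the pseudo-atom together with a successful coin flip); with $\tau_A^+$ the excursions are neither independent nor identically distributed and the invariance computation does not close as stated. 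This is fixable, but the formula must be expressed in terms of the regeneration times (equivalently, with $A$ a genuine atom of the split chain).

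The uniqueness argument also has a directional flaw. The contradiction you sketch (if $\pi_i(B)=0$ while $\mu(B)>0$, then invariance plus $\bigprobin{x}{\tau_B<\infty}=1$, or better the occupation-time form of Harris recurrence, gives $\int_0^\infty P_t(x,B)\,\6t=0$ for $\pi_i$-a.e.\ $x$, contradicting $\eta_B=\infty$ a.s.) proves $\mu\ll\pi_i$, i.e.\ every invariant measure charges all $\mu$-positive sets; it does not give $\pi_1\ll\pi_2$, so the Radon--Nikodym derivative $h=\6\pi_1/\6\pi_2$ is not yet defined --- mutual absolute continuity of two invariant measures is part of what must be proved. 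Even granting it, $h$ is not $P_t$-harmonic in the naive sense (densities of invariant measures transform under the adjoint action), and the zero--one step requires the nontrivial fact that nonnegative superharmonic functions of a Harris process are a.e.\ constant. A cleaner route, consistent with your splitting setup, is to show that any invariant measure of the split chain is proportional to the cycle occupation measure of the atom via a first-entrance/last-exit decomposition; essential uniqueness then follows directly.
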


One way of showing Harris recurrence is to use the (somewhat tricky) 
notion of \emph{petite sets}. 
\begin{definition}[Petite set]
Let $a$ be a probability distribution on $\R_+$, and define a Markov kernel 
$K_a$ by 
\begin{equation}
 K_a(x,A) = \int_0^\infty P_t(x,A) a(\6t)
\end{equation}
for any Borel set $A\subset\R^n$. 
Let $\ph_a$ be a nontrivial measure on $\R^n$. Then a non-empty Borel set 
$C\in\R^n$ is called \emph{$\ph_a$-petite} if $K_a(x,A) \geqs \ph_a(A)$ for all 
$x\in C$.  
\end{definition}

The intuition behind this definition is as follows. If we choose, say, $a = 
\delta_1$, then $K_a(x,A) = P_1(x,A)$. The condition $K_a(x,A) \geqs \ph_a(A)$ 
then requires that the probability of being in the set $A$ at time $1$ is 
bounded below by a measure $\ph_a(A)$ independent of the starting point $x$ in 
the petite set (the measure $\ph_a$ need not be a probability mesure). This 
would be quite restrictive, but the definition allows to replace $P_1(x,A)$ by 
an average of $P_t(x,A)$ over all times $t\geqs0$, a much weaker requirement.
Petite sets allow us to give a first condition for a diffusion to be Harris 
recurrent. 

\begin{theorem}[Harris recurrence condition]
\label{thm:MT_Harris_rec}
If all compact subsets of $\R^n$ are petite and~\eqref{eq:MT_CD1} holds for a 
compact set $C\subset\R^n$, a constant $d>0$, and a norm-like function $V$, 
then the process $(X_t)_{t\geqs0}$ is Harris recurrent, and therefore admits an 
essentially unique invariant measure $\pi$. 
\end{theorem}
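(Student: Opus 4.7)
The plan is to combine the non-evanescence consequences of the Lyapunov drift condition with the regeneration structure provided by petiteness, in order to verify the Harris recurrence definition and then invoke Proposition~\ref{prop:MT_existence_inv_meas}.

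First, I would establish that the compact set $C$ is hit in finite time from every starting point. Fix $x\not\in C$ and let $\tau_C = \inf\setsuch{t\geqs0}{X_t\in C}$. Since $\cL V\leqs 0$ on $C^\complement$, Dynkin's formula applied to $V$ stopped at $\tau_C\wedge t\wedge \tau_R$ (with $\tau_R$ the exit time from the ball of radius $R$, which is needed since $V$ is unbounded) shows that the stopped process $V(X_{t\wedge\tau_C\wedge\tau_R})$ is a non-negative supermartingale. Passing $R\to\infty$ using Theorem~\ref{thm:MT_non-evanescence} (which already applies here and rules out escape to infinity in finite time), and then $t\to\infty$ using that $V$ is norm-like, I would conclude $\bigprobin{x}{\tau_C<\infty}=1$ for every $x\in\R^n$. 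By the strong Markov property, iterating the return time yields an almost surely infinite sequence $0\leqs\tau_C^{(1)}<\tau_C^{(2)}<\dots$ of visits to $C$.

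Next, I would use petiteness to promote recurrence of $C$ to Harris recurrence. By hypothesis, the compact set $C$ is $\ph_a$-petite for some probability $a$ on $\R_+$ and nontrivial measure $\ph_a$, i.e.\ $K_a(x,A)\geqs \ph_a(A)$ for all $x\in C$. Pick a Borel set $A$ with $\ph_a(A)>0$. Draw an i.i.d.\ sequence $\sigma_1,\sigma_2,\dots$ with law $a$, and at each successful return time $\tau_C^{(k)}$ consider the event that $X_{\tau_C^{(k)}+\sigma_k}\in A$. By the strong Markov property, conditionally on $\cF_{\tau_C^{(k)}}$, this event has probability at least $\ph_a(A)>0$. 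Hence the indicator events are stochastically bounded below by independent Bernoulli$(\ph_a(A))$ variables, so by a Borel--Cantelli argument infinitely many of them occur almost surely. Setting $\tau_A=\inf\setsuch{t\geqs0}{X_t\in A}$, this yields $\bigprobin{x}{\tau_A<\infty}=1$ for every $x$. Taking $\mu=\ph_a$ in the definition of Harris recurrence, the process is Harris recurrent, and Proposition~\ref{prop:MT_existence_inv_meas} then delivers the essentially unique invariant measure $\pi$.

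The main obstacle I anticipate is the second step: turning the averaged-kernel minorisation into an almost-sure hitting statement. The subtlety is that $a$ is a probability on $\R_+$, so the regeneration epochs $\tau_C^{(k)}+\sigma_k$ are random and need not be stopping times adapted to $(\cF_t)$ in a naive sense; one has to enlarge the filtration by the $\sigma_k$'s and verify that conditional independence still gives the required Bernoulli lower bound. Once that measurability point is handled (it is where the proof in \cite{Meyn_Tweedie_1993b} uses the full strength of the petite set machinery via resolvent kernels), the remaining arguments are soft: the renewal/Borel--Cantelli conclusion and the appeal to the classical existence result for Harris recurrent processes.
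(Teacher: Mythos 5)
The weak link is your first step. From $\cL V\leqs 0$ on $C^\complement$ the stopped process $V(X_{t\wedge\tau_C\wedge\tau_R})$ is indeed a nonnegative supermartingale, but all this buys you is $\probin{x}{\tau_R<\tau_C}\leqs V(x)/\inf_{\norm{y}=R}V(y)\to 0$, i.e.\ that almost surely either $\tau_C<\infty$ or the path stays in some bounded ball forever without entering $C$. Neither the supermartingale bound, nor letting $t\to\infty$, nor Theorem~\ref{thm:MT_non-evanescence} excludes the second alternative: non-evanescence only kills the event $\norm{X_t}\to\infty$, which is already absent here. Condition~\eqref{eq:MT_CD1} is the ``weak'' drift condition ($\cL V\leqs 0$ off $C$, not $\leqs -\eps$), so it gives no bound on $\tau_C$ whatsoever. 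A blunt counterexample to your step~1 as stated: take $f\equiv 0$, $g\equiv 0$, so $\cL=0$; then \eqref{eq:MT_CD1} holds for any compact $C$, $d>0$ and any smooth norm-like $V$, non-evanescence holds trivially, yet a path started outside $C$ never hits $C$. Your argument never uses petiteness in step~1, so it would ``prove'' a false statement in this example (where, of course, compact sets are not petite -- which is exactly the hypothesis you must invoke to rule such behaviour out). In other words, the hard part of the theorem is precisely the implication ``bounded-in-probability paths $+$ all compacts petite $\Rightarrow$ the process keeps re-entering sets of positive $\ph_a$-measure''; this is what the paper delegates to the discrete-time result \cite[Theorem~5.1]{Meyn_Tweedie_1993a} (applied to a sampled/resolvent chain), and your step~1 silently assumes it.

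Your second step is the standard regeneration argument and is fine in spirit: once one knows that $C$ (or some compact set) is visited at arbitrarily large times almost surely from every starting point, sampling at an independent time with law $a$ after each visit, defining the next visit only after that sampled time, and using $K_a(x,A)\geqs\ph_a(A)$ on $C$ gives a conditional Bernoulli lower bound and hence $\probin{x}{\tau_A<\infty}=1$ for every $A$ with $\ph_a(A)>0$; the filtration enlargement you mention is routine. But note that this step needs recurrence of $C$ (infinitely many a.s.-finite returns after arbitrary times), not a single hit, and that is exactly the unproven input from step~1. So the architecture is the right one, but as written the proof is circular at its key point: you must bring the petiteness (irreducibility) hypothesis into the argument \emph{before} concluding that $C$ is hit, for instance via the skeleton-chain argument of Meyn and Tweedie that the paper cites, rather than after.
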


This result follows from Theorem~\ref{thm:MT_non-evanescence}, combined 
with~\cite[Theorem~5.1]{Meyn_Tweedie_1993a}, which gives an analogous statement 
in the discrete-time case.


\subsection{Positive Harris recurrence and existence of an invariant 
probability}
\label{ssec:MT_positive_rec} 

Combining Theorem~\ref{thm:MT_Harris_rec} with 
Proposition~\ref{prop:MT_existence_inv_meas}, we obtain a condition for the 
existence of an invariant measure. This measure need not have finite mass, 
preventing it from being normalisable to yield an invariant probability 
measure. This motivates the following definition. 

\begin{definition}[Positive Harris recurrence]
Let $(X_t)_{t\geqs0}$ be a Harris recurrent diffusion, with invariant measure 
$\pi$. If $\pi(\R^n) < \infty$, then $(X_t)_{t\geqs0}$ is said to 
be~\emph{positive Harris recurrent}. 
\end{definition}

\begin{remark}[Link with expected return times]
A diffusion satisfying $\bigexpecin{x}{\tau_A} < \infty$ for any $x\in\R^n$ and 
any Borel set $A$ such that $\mu(A) > 0$ for a $\sigma$-finite measure $\mu$ is 
positive Harris recurrent. Indeed, in this case, $\tau_A$ is almost surely 
finite, so that the chain is Harris recurrent. Furthermore, given $x\in\R^n$ and 
$A$ with $\mu(A)>0$, the probability measure $\pi_A(x,\cdot)$ given by
\begin{equation}
 \pi_A(x,B) = \frac{1}{\bigexpecin{x}{\tau_A}} 
 \Biggexpecin{x}{\int_0^{\tau_A} \indicator{X_t\in B} \6t}
\end{equation}
for any Borel set $B\subset\R^n$ can easily be shown to be invariant. By 
essential uniqueness, any invariant measure is thus normalisable.  
\end{remark}

A sufficient condition for the process $(X_t)_{t\geqs0}$ to be positive 
Harris recurrent will thus automatically be a sufficient condition for the 
existence of an invariant probability measure. A first such condition is 
provided by the following result, which 
is~\cite[Theorem~4.2]{Meyn_Tweedie_1993b}. 

\begin{theorem}[Positive Harris recurrence condition]
Assume there exist constants $c, d > 0$, a function $f: \R^n\to [1,\infty)$, a 
closed petite set $C\subset\R^n$, and a positive function $V$ such that
\begin{equation}
\label{eq:MT_CD2} 
 (\cL V)(x) \leqs - cf(x) + d\indicator{C}(x)  
\end{equation} 
for all $x\in\R^n$. Assume furthermore that $V$ is bounded on $C$. Then the 
process $(X_t)_{t\geqs0}$ is positive Harris recurrent, and therefore admits an 
invariant probability measure $\pi$. Furthermore, 
\begin{equation}
 \pscal{\pi}{f} 
 := \bigexpecin{\pi}{f}  
 = \int_{\R^n} f(x)\pi(\6x) < \infty\;.
\end{equation} 
\end{theorem}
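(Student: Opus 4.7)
The plan is to combine the drift condition~\eqref{eq:MT_CD2} with Dynkin's formula to control expected hitting times of $C$, and then use an excursion representation of the invariant measure to obtain the moment bound. First, since $f\geqs 1$, the hypothesis implies the weaker bound $(\cL V)(x) \leqs d\indicator{C}(x)$. Assuming in addition that all compact subsets are petite (as in the hypothesis of Theorem~\ref{thm:MT_Harris_rec}), this already yields Harris recurrence and hence, by Proposition~\ref{prop:MT_existence_inv_meas}, an essentially unique invariant measure $\pi$; it remains to show that $\pi$ has finite mass and that $\pscal{\pi}{f}<\infty$.

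To upgrade Harris recurrence to \emph{positive} Harris recurrence, I would show that $\bigexpecin{x}{\tau_C}<\infty$ for every $x\in\R^n$, where $\tau_C = \inf\setsuch{t\geqs0}{X_t\in C}$. Introducing $\tau_R$, the first exit time from the ball of radius $R$, and $\tau = \tau_C\wedge\tau_R\wedge t$, the drift condition gives $(\cL V)(x)\leqs -cf(x)\leqs -c$ on $\R^n\setminus C$. Dynkin's formula applied to $V$ at the stopping time $\tau$ then yields
\[
V(x) \geqs \bigexpecin{x}{V(X_\tau)} + c\,\bigexpecin{x}{\tau} \geqs c\,\bigexpecin{x}{\tau}\;,
\]
using positivity of $V$. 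Letting first $R\to\infty$ (which is licit because $(\cL V)(x)\leqs d$ forces non-explosion) and then $t\to\infty$ by monotone convergence, we obtain $\bigexpecin{x}{\tau_C}\leqs V(x)/c$. Since $V$ is bounded on $C$, this gives $\sup_{x\in C}\bigexpecin{x}{\tau_C^+}<\infty$, where $\tau_C^+$ is the first return time to $C$ after leaving it; this is the standard criterion for positive Harris recurrence, so $\pi(\R^n)<\infty$ and $\pi$ can be normalised to a probability measure.

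For the moment bound, I would use the excursion representation
\[
\pi(A) = \frac{1}{K}\int_C \biggexpecin{x}{\int_0^{\tau_C^+}\indicator{X_s\in A}\6s}\nu(\6x)\;,
\]
valid for a suitable probability measure $\nu$ on $C$, with normalisation $K=\int_C\bigexpecin{x}{\tau_C^+}\nu(\6x)$. Integrating $f$ rather than $\indicator{A}$ gives $\pscal{\pi}{f} = K^{-1}\int_C \bigexpecin{x}{\int_0^{\tau_C^+}f(X_s)\6s}\nu(\6x)$, and applying exactly the same Dynkin estimate over a full excursion yields $\bigexpecin{x}{\int_0^{\tau_C^+}f(X_s)\6s}\leqs V(x)/c$, which is bounded on $C$ by hypothesis. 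This produces the claimed bound.

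The main obstacle will be making the limits $R\to\infty$ and $t\to\infty$ in Dynkin's formula fully rigorous, notably controlling $\bigexpecin{x}{V(X_\tau)}$ since $V$ is unbounded; the standard route is to combine the non-explosion bound $(\cL V)\leqs d$ with Fatou's lemma. A further technical point is that $V$ is assumed only positive and norm-like, so applying $\cL$ and invoking Dynkin rigorously requires either a $C^2$ assumption on $V$, or the extended-generator and martingale-problem formalism used systematically by Meyn and Tweedie. Finally, the existence and uniqueness of the excursion representation of $\pi$ must itself be imported from the regenerative theory of Harris chains, which is the one step I would not attempt to redo from scratch.
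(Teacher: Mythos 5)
The paper itself gives no proof of this statement -- it is quoted directly as \cite[Theorem~4.2]{Meyn_Tweedie_1993b} -- so there is no internal argument to compare with; your outline does follow the standard Meyn--Tweedie strategy (Dynkin plus the drift condition to bound hitting times of $C$, petiteness of $C$ to get Harris recurrence, a Kac-type cycle representation over excursions from $C$ for positivity and the moment bound), which is essentially how the cited proof proceeds. One step, however, is off as written: you invoke Theorem~\ref{thm:MT_Harris_rec} \lq\lq assuming in addition that all compact subsets are petite\rq\rq. That hypothesis is not part of the statement (and the set $C$ here is only closed and petite, not compact), so as written you prove a different theorem. The detour is also unnecessary: your own Dynkin estimate gives $\bigexpecin{x}{\tau_C}\leqs V(x)/c<\infty$, hence $\bigprobin{x}{\tau_C<\infty}=1$ for every $x$, and Harris recurrence then follows from the standard fact (in \cite{Meyn_Tweedie_1993b}) that a process which almost surely reaches a closed petite set from every starting point is Harris recurrent. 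That is the result you should import, rather than the all-compacts-petite criterion.

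A second, smaller inaccuracy is the claimed cycle bound $\bigexpecin{x}{\int_0^{\tau_C^+}f(X_s)\6s}\leqs V(x)/c$ for $x\in C$: during the part of the cycle spent inside $C$ the drift condition only gives $(\cL V)\leqs -cf+d$, so Dynkin yields
\begin{equation}
 c\,\Bigexpecin{x}{\int_0^{\tau_C^+}f(X_s)\6s}
 \leqs V(x) + d\,\Bigexpecin{x}{\int_0^{\tau_C^+}\indicator{C}(X_s)\6s}\;,
\end{equation}
and one should work with a delayed return time $\tau_C(\delta)=\inf\bigsetsuch{t\geqs\delta}{X_t\in C}$ so that the cycle is well defined and has finite expectation (bounded using $\bigexpecin{x}{V(X_\delta)}\leqs V(x)+d\delta$ and the boundedness of $V$ on $C$). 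The extra term is harmless once you have the cycle representation, since it integrates to $K\pi(C)\leqs K$ and gives $\pscal{\pi}{f}\leqs \sup_C V/(cK)+d/c<\infty$, but the correction is needed for the argument to close. The remaining caveats you list -- localising Dynkin's formula for an unbounded, merely positive $V$ via the extended generator, and the regenerative (Nummelin-splitting) representation of $\pi$ for Harris processes -- are genuine imports from the Meyn--Tweedie machinery, and they are exactly where the cited proof also leans on earlier results, so relying on them is consistent with the level of the text.
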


While Condition~\eqref{eq:MT_CD2} is usually easy to check for a given $f$ and 
$V$, the requirement that $C$ be petite may be harder to verify. Fortunately, 
for Feller diffusions, there exists an alternative criterion for the existence 
of an invariant measure, which avoids having to check that $C$ is petite. The 
following result is~\cite[Theorem~4.5]{Meyn_Tweedie_1993b}. 

\begin{theorem}[Existence of invariant probability measures for Feller 
diffusions]
Assume that the diffusion $(X_t)_{t\geqs0}$ has the Feller property, and 
that~\eqref{eq:MT_CD2} holds for a compact $C\subset \R^n$. Then the diffusion 
admits an invariant probability measure $\pi$. Furthermore, any invariant 
probability $\pi$ satisfies $\pscal{\pi}{f} \leqs d/c$. 
\end{theorem}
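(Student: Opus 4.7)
The plan is to apply the Krylov--Bogoliubov construction. Fix a starting point $x_0\in\R^n$ and set $\mu_t(\cdot) = t^{-1}\int_0^t P_s(x_0,\cdot)\6s$ for $t\geqs 1$. The strategy is in three steps: (i) use the Lyapunov condition~\eqref{eq:MT_CD2} to show that the family $\set{\mu_t}$ is tight; (ii) extract by Prokhorov's theorem a weakly convergent subsequence $\mu_{t_k}\to\pi$; and (iii) invoke the Feller property to verify that the limit $\pi$ is invariant.

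For step~(i), I would apply Dynkin's formula to $V$ along the stopping times $\tau_n=\inf\set{s\geqs0:\norm{X_s}\geqs n}$, which circumvents potential domain issues with $V$. Using $V\geqs0$ together with~\eqref{eq:MT_CD2} yields
\[
c\biggexpecin{x_0}{\int_0^{t\wedge\tau_n} f(X_s)\6s} \leqs V(x_0) + d t\;.
\]
Since $f\geqs 1$, the left-hand side dominates $c\bigexpecin{x_0}{t\wedge\tau_n}$, which forces $\tau_n\to\infty$ almost surely. Monotone convergence then yields the uniform estimate $\bigpscal{\mu_t}{f}\leqs V(x_0)/(ct) + d/c$. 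Assuming, as is implicit in this framework, that $f$ has precompact sublevel sets, Markov's inequality $\mu_t(\set{f>M})\leqs\bigpscal{\mu_t}{f}/M$ then delivers tightness of $\set{\mu_t}_{t\geqs 1}$.

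For step~(iii), I would test invariance of a weak subsequential limit $\pi$ against an arbitrary bounded continuous $\ph$. The Feller property ensures $P_t\ph$ is bounded continuous, so $\bigpscal{\pi}{P_t\ph} = \lim_k\bigpscal{\mu_{t_k}}{P_t\ph}$ by weak convergence. On the other hand, a change of variables gives the telescoping identity
\[
\bigpscal{\mu_{t_k}}{P_t\ph} - \bigpscal{\mu_{t_k}}{\ph}
= \frac{1}{t_k}\biggpar{\int_{t_k}^{t_k+t}P_s\ph(x_0)\6s - \int_0^t P_s\ph(x_0)\6s}\;,
\]
which vanishes as $k\to\infty$ since $\ph$ is bounded. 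Combining these gives $\bigpscal{\pi}{P_t\ph} = \bigpscal{\pi}{\ph}$, i.e.\ $\pi P_t = \pi$. The bound $\bigpscal{\pi}{f}\leqs d/c$ then follows by letting $k\to\infty$ in the uniform estimate on $\bigpscal{\mu_{t_k}}{f}$, using lower semicontinuity of $\pi\mapsto\pscal{\pi}{f}$ along monotone increasing bounded continuous approximations of $f$.

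The main obstacle is the tightness step, which rests on $f$ (or $V$) having precompact sublevel sets; this property, together with the Feller continuity, is precisely what replaces the petite-set hypothesis used in the earlier Harris-recurrence versions. The Feller property also enters essentially in the invariance step, since without continuity of $P_t\ph$ the weak convergence of $\mu_{t_k}$ cannot be transported through the semigroup, and the whole limiting argument would break down.
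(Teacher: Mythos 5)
Your overall strategy (Ces\`aro averages and Krylov--Bogoliubov, with the Feller property used to pass invariance to the limit) is the natural one, but the tightness step, which you yourself identify as the crux, contains a genuine gap. The hypotheses only provide a measurable $f:\R^n\to[1,\infty)$ and a positive $V$ satisfying~\eqref{eq:MT_CD2}; nothing makes $f$ (or $V$) norm-like, and this is not \lq\lq implicit in the framework\rq\rq: the choice $f\equiv1$ is admissible and is in fact the typical one. For such an $f$, your uniform estimate $\pscal{\mu_t}{f}\leqs V(x_0)/(ct)+d/c$ only bounds the total mass, so Prokhorov cannot be invoked and the construction stops. The missing idea---and the reason why \lq\lq$C$ compact $+$ Feller\rq\rq\ can replace the petite-set hypothesis---is that the drift condition pins a fixed fraction of the Ces\`aro mass on the compact set $C$ itself: since $f\geqs1$ and $V\geqs0$, the same Dynkin computation gives $0\leqs V(x_0)-ct+d\int_0^t P_s(x_0,C)\,\6s$, hence $\mu_t(C)\geqs c/d-V(x_0)/(dt)$. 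One then extracts a \emph{vague} (not weak) subsequential limit $\tilde\pi$: it is a sub-probability measure which is nonzero because it carries mass at least $c/d$ on a neighbourhood of $C$; for nonnegative continuous compactly supported $\varphi$, the telescoping identity plus the Feller property (and approximation of the continuous nonnegative function $P_h\varphi$ from below by compactly supported ones) give $\pscal{\tilde\pi}{P_h\varphi}\leqs\pscal{\tilde\pi}{\varphi}$, i.e.\ $\tilde\pi P_h\leqs\tilde\pi$; since $P_h$ preserves total mass, this sub-invariance is in fact invariance, and normalising $\tilde\pi$ yields the invariant probability. If you insist on genuine tightness you must add a norm-like assumption, which is strictly stronger than the theorem.

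Two further points. The inference that $c\,\expecin{x_0}{t\wedge\tau_n}\leqs V(x_0)+dt$ \lq\lq forces $\tau_n\to\infty$ a.s.\rq\rq\ is not valid (the inequality is vacuous whenever $d\geqs c$, since $t\wedge\tau_n\leqs t$); global existence is, however, a standing assumption of this chapter, so you may simply use it to let $n\to\infty$. Finally, the theorem asserts $\pscal{\pi}{f}\leqs d/c$ for \emph{every} invariant probability, whereas your argument treats only the constructed limit; moreover, weak convergence cannot be tested against the merely measurable $f$, and monotone continuous approximations from below exist for lower semicontinuous functions, not general measurable ones. A clean fix uses invariance directly: for $M<\infty$, $\pscal{\pi}{f\wedge M}=\pscal{\pi}{\tfrac1t\int_0^t P_s(f\wedge M)\,\6s}\leqs\int\min\bigset{M,\,V(x)/(ct)+d/c}\,\pi(\6x)$, and letting $t\to\infty$ (dominated convergence) and then $M\to\infty$ (monotone convergence) gives the bound for any invariant $\pi$.
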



\subsection{Convergence to the invariant probability measure}
\label{ssec:MT_convergence} 

Once the existence of an invariant probability measure $\pi$ is established, 
the next natural question is whether the distribution of $(X_t)_{t\geqs0}$ will 
converge to $\pi$, at least under some conditions on the law of $X_0$. There 
are many choices of norms quantifying such a convergence, and results exist for 
several of them. Here we consider the following weighted norm on signed 
measures. 

\begin{definition}[$f$-norm of a signed measure]
Let $\mu$ be a signed measure on $(\R^n,\cB)$, and let $f:\R^n\to[1,\infty)$ be 
a measurable function. Then we define the $f$-norm of $\mu$ by  
\begin{equation}
 \norm{\mu}_f 
 = \sup_{g: \abs{g} < f} \bigabs{\pscal{\mu}{g}}\;,
 \qquad
 \pscal{\mu}{g} 
 := \bigexpecin{\mu}{g} 
 = \int_{\R^n} g(x)\mu(\6x)\;,
\end{equation} 
where the supremum runs over all measurable functions such that $\abs{g(x)} 
\leqs f(x)$ for all $x\in\R^n$.
\end{definition}

\begin{definition}[Exponential ergodicity]
Given a measurable function $f:\R^n\to[0,\infty)$, a diffusion process 
$(X_t)_{t\geqs0}$ admitting an invariant probability measure $\pi$ is 
called~\defwd{$f$-exponentially ergodic} if there exist $\beta>0$ and a function 
$B:\R^n\to\R_+$ such that 
\begin{equation}
 \norm{P_t(x,\cdot) - \pi}_f \leqs B(x) \e^{-\beta t}
\end{equation}
for any $x\in\R^n$ and $t\geqs0$.  
\end{definition}

The following result, which is~\cite[Theorem~6.1]{Meyn_Tweedie_1993b}, provides 
a condition on Lyapunov functions that guarantees exponential ergodicity. 

\begin{theorem}[Condition for exponential ergodicity]
Assume there exist a norm-like function $V$, and constants $c>0$, $d\in\R$ such
that the diffusion $(X_t)_{t\geqs0}$ satisfies the condition 
\begin{equation}
\label{eq:MT_CD3} 
 (\cL V)(x) \leqs - cV(x) + d  
\end{equation} 
for all $x\in\R^n$. Assume further that all compact $K\subset\R^n$ are petite 
for some discrete-time Markov chain $(X_{n\Delta})_{n\geqs0}$. Then the 
diffusion is exponentially ergodic. More precisely, there exist constants 
$\beta, b > 0$ such that 
\begin{equation}
 \norm{P_t(x,\cdot) - \pi}_{1+V} \leqs b \bigpar{1 + V(x)} \e^{-\beta t}
\end{equation}
for any $x\in\R^n$ and $t\geqs0$.  
\end{theorem}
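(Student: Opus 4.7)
My strategy is the classical drift-plus-minorization approach: derive a geometric Foster--Lyapunov bound on the skeleton chain $(X_{n\Delta})_{n\geqs0}$ from the continuous-time condition~\eqref{eq:MT_CD3}, combine it with petiteness of sublevel sets to obtain $V$-geometric ergodicity of the skeleton, and then interpolate the decay estimate back to continuous time.

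First I would exploit the drift condition via Dynkin's formula, applied to the process stopped upon exit from balls of increasing radius (to ensure integrability) and passing to the limit. This should yield the integrated bound
\begin{equation*}
 \bigexpecin{x}{V(X_t)} \leqs \e^{-ct}V(x) + \frac{d}{c}\bigpar{1-\e^{-ct}}\;.
\end{equation*}
Evaluating at $t=\Delta$ and splitting the right-hand side using the sublevel set $C := \setsuch{x\in\R^n}{V(x)\leqs R}$ for $R$ sufficiently large, I obtain a discrete-time geometric drift condition
\begin{equation*}
 (P_\Delta V)(x) \leqs \lambda V(x) + K\indicator{C}(x)
\end{equation*}
with $\lambda = \e^{-c\Delta}\in(0,1)$ and a finite constant $K$. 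Because $V$ is norm-like, $C$ is compact, hence petite by hypothesis.

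The heart of the argument is then to invoke the discrete-time $V$-geometric ergodicity theorem, the continuous-time counterpart of~\cite[Theorem~6.2]{Meyn_Tweedie_1993a}: together, a geometric drift condition and a petite sublevel set entail the existence of $\rho\in(0,1)$ and $B_0>0$ such that
\begin{equation*}
 \bignorm{P_{n\Delta}(x,\cdot) - \pi}_{V+1} \leqs B_0 \bigpar{1 + V(x)} \rho^n
\end{equation*}
for all $x\in\R^n$ and $n\geqs 0$. This is where I expect the main obstacle to lie: establishing this intermediate result requires the splitting technique of Nummelin and Athreya--Ney to build an artificial regeneration atom from the petite minorization, then coupling two independent copies of the split chain and controlling the coupling time in $V$-weighted norm via the drift inequality. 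I would take this result as a black box, since its proof is standard but lengthy.

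To conclude, any $t\geqs0$ decomposes as $t=n\Delta+r$ with $r\in[0,\Delta)$. Using $P_t=P_{n\Delta}P_r$, invariance $\pi P_r=\pi$, and the continuous-time moment bound $P_r(V+1)\leqs(1+d/c)(V+1)$, I check that for any measurable $g$ with $\abs{g}\leqs V+1$,
\begin{equation*}
 \bigabs{(P_t g)(x) - \pscal{\pi}{g}}
 = \bigabs{(P_{n\Delta}(P_rg))(x) - \pscal{\pi}{P_r g}}
 \leqs (1+d/c)\bignorm{P_{n\Delta}(x,\cdot)-\pi}_{V+1}\;.
\end{equation*}
The skeleton bound then produces the claimed decay with rate $\beta = -\Delta^{-1}\log\rho$ and an adjusted prefactor $b$, completing the argument. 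Once the discrete-time geometric ergodicity theorem is available, this interpolation to continuous time is routine.
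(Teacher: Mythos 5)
Your strategy is the classical Meyn--Tweedie route, and your outline is structurally correct; note, however, that the paper does not prove this theorem at all, but simply states it as \cite[Theorem~6.1]{Meyn_Tweedie_1993b}, so there is no internal proof to compare against. The steps you carry out explicitly are fine: the Gronwall bound from Dynkin's formula is standard, and the interpolation from the skeleton to general $t$ via $P_t=P_{n\Delta}P_r$, the invariance $\pi P_r=\pi$, and the one-step moment bound on $P_r$ is routine. One slip: after setting $t=\Delta$ you claim $(P_\Delta V)(x)\leqs\lambda V(x)+K\indicator{C}(x)$ with $\lambda=\e^{-c\Delta}$, but for $x\notin C$ this would force the additive remainder $(d/c)(1-\e^{-c\Delta})$ to be nonpositive. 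You must either inflate $\lambda$ to $\e^{-c\Delta}+(d/c)R^{-1}$ (still $<1$ once $R$ is large enough), or simply retain the bound $(P_\Delta V)(x)\leqs\lambda V(x)+b$ without the indicator, which is also an admissible input to the discrete-time theorem.

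The real content, as you rightly say, is the black-boxed discrete-time $V$-geometric ergodicity from geometric drift plus petiteness, which requires the Nummelin/Athreya--Ney split chain and a $V$-weighted coupling argument. The paper responds to exactly this difficulty by presenting, in Section~\ref{ssec:Hairer_Mattingly}, the Hairer--Mattingly argument (Theorem~\ref{thm:Hairer_Mattingly}) as a self-contained alternative. That route replaces petiteness by the explicit minorisation condition~\eqref{eq:minorisation} on a sublevel set, and replaces splitting-plus-coupling by a direct contraction estimate (Proposition~\ref{prop:contract_rhobeta}) in a one-parameter family of weighted total-variation metrics $\rho_\beta$, identified as dual to the Lipschitz-type seminorm $\normDgamma{\cdot}_\beta$. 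Your route reproduces the stated Meyn--Tweedie theorem in its full generality (skeleton petiteness for some $\Delta$); the Hairer--Mattingly route trades some generality and requires a concrete minorisation, but yields a short, quantitative, entirely elementary proof with an explicit contraction rate, which is precisely why the paper invests its effort there rather than in the theorem you were asked to prove.
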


Table~\ref{tab:MT} summarises the main results on invariant measures stated in 
this section. 
Again, while condition~\eqref{eq:MT_CD3} is usually easy to check for a given 
Lyapunov function $V$, the requirement that all compact subsets be petite is 
often more difficult to verify. This is why we present in the next section an 
alternative approach, due to Martin Hairer and Jonathan Mattingly, that 
provides an exponential ergodicity criterion in a slightly different norm, and 
avoids any condition on sets being petite. 

\begin{table}
\begin{center}
\begin{tabular}{|l|l|}
\hline
\textbf{Criterion} & \textbf{Property} \\
\hline \hline 
\rule{0pt}{2.5ex}
$\bigpar{\cL V}(x) \leqs c V(x) + d$ 
& Non-explosion \\
\hline 
\rule{0pt}{2.5ex}
$\bigpar{\cL V}(x) \leqs d \indicator{C}(x)$ & \\
\quad $C$ compact & Non-evanescence \\ 
\quad $C$ compact, all compact $K$ petite 
& Harris recurrence, existence of invariant measure \\
\hline 
\rule{0pt}{2.5ex}
$\bigpar{\cL V}(x) \leqs -c f(x) + d \indicator{C}(x)$ & \\
\quad $C$ closed petite & Positive Harris recurrence \\
\quad or $C$ compact and diffusion Feller & and existence of invariant 
probability measure \\
\hline
\rule{0pt}{2.5ex}
$\bigpar{\cL V}(x) \leqs -c V(x) + d$ & \\
\quad all compact $K$ petite for $(X_{n\Delta})_{n\geqs0}$ & 
Exponential ergodicity \\
\hline 
\end{tabular}
\vspace{-3mm}
\end{center}
\caption[]{Summary of the main results of the Meyn--Tweedie theory for 
diffusions. Here $V$ is a norm-like function, $c$ and $d$ are positive 
constants, and $C$ and $K$ are subsets of $\R^n$.}
\label{tab:MT} 
\end{table}

\begin{exercise}[One-dimensional diffusion]
Consider a one-dimensional diffusion of the form 
$\6X_t = -V'(X_t)\6t + \sqrt{2}\6W_t$ (cf.~Example~\ref{ex:gradient_SDE}). 
Assume that $V(x)$ behaves like $x^\alpha$ for some $\alpha > 0$ for $x$ large.
Taking $V$ as a Lyapunov function, discuss what one can say on convergence to 
the invariant probability measure depending on the value of $\alpha$.  
\end{exercise}


\subsection{A simplified proof by Hairer and Mattingly}
\label{ssec:Hairer_Mattingly} 

We present here a convergence criterion from~\cite{Hairer_Mattingly_11}, which 
applies to discrete-time Markov processes. This is, however, not really a 
restriction, because if $P_t$ is the semi-group of a diffusion 
$(X_t)_{t\geqs0}$, then for any $\delta>0$, $P_\delta$ generates the embedded 
discrete-time Markov chain $(X_{\delta n})_{n\in\N}$ obtained by restricting 
$t$ to integer multiples of $\delta$. An invariant measure $\pi$ of the 
diffusion is clearly also invariant for the discrete-time Markov chain, and it 
is not difficult to convert a convergence result in discrete time to a 
convergence result in continuous time. To avoid confusion, we will denote the 
discrete semi-group by $\cP$, and its actions on bounded measurable functions 
$f$ and signed measures $\mu$ by 
\begin{align}
 \bigpar{\cP f}(x) &= \bigexpecin{x}{f(X_\delta)} 
 := \int_{\R^n} f(y) \cP(x,\6y)\;, \\
 \bigpar{\mu\cP}(A) &= \bigprobin{\mu}{X_\delta\in A} 
 := \int_{\R^n} \cP(x,A) \mu(\6x)\;.
\end{align}
The convergence result of~\cite{Hairer_Mattingly_11} requires two rather simple 
conditions on $\cP$. The first one is a discrete-time analogue 
of~\eqref{eq:MT_CD3}, which guarantees that the Markov process does not escape 
to infinity. 

\begin{assumption}[Geometric drift condition]
\label{ass:geom_drift} 
There exist a function $V:\R^n\to[0,\infty)$ and constants $d\geqs0$ and 
$\gamma\in(0,1)$ such that 
\begin{equation}
\label{eq:geom_drift} 
\bigpar{\cP V}(x) \leqs \gamma V(x) + d
\end{equation} 
for all $x\in\R^n$. 
\end{assumption}

Note that $\gamma$ is the discrete-time analogue of $\e^{-\delta c}$ in 
continuous time, so that~\eqref{eq:geom_drift} is indeed the discrete anlogue 
of~\eqref{eq:MT_CD3}. The second condition is a form of irreducibility 
condition, which is analogous to the conditions on sets being petite that we 
encountered above, but simpler to verify. It is also similar to what is 
known as~\emph{Doeblin condition} in the Markov chain literature. 

\begin{assumption}[Minorisation condition]
\label{ass:minorisation} 
Let $C = \setsuch{x\in\R^n}{V(x) < R}$ for some $R > 2d(1-\gamma)^{-1}$. Then 
there exists $\alpha\in(0,1)$ and a probability measure $\nu$ such that 
\begin{equation}
\label{eq:minorisation} 
 \inf_{x\in C} \cP(x,A) \geqs \alpha \nu(A)
\end{equation} 
holds for all Borel sets $A\subset\R^n$. 
\end{assumption}

Under these two conditions, the main result of~\cite{Hairer_Mattingly_11} is 
the following statement, which provides both existence of a unique invariant 
measure and convergence to this measure. Convergence takes place in the 
weighted supremum norm defined by 
\begin{equation}
 \norm{f}_{1+V} = \sup_{x\in\R^n} \frac{\abs{f(x)}}{1 + V(x)}\;.
\end{equation} 

\begin{theorem}[
Exponential ergodicity in discrete time]
\label{thm:Hairer_Mattingly} 
If Assumptions~\ref{ass:geom_drift} and~\ref{ass:minorisation} hold, then $\cP$ 
admits a unique invariant probability measure $\pi$. Furthermore, there exist 
constants $M > 0$ and $\gamma\in(0,1)$ such that 
\begin{equation}
\label{eq:expo_ergodicity_HM} 
 \norm{\cP^n f - \pscal{\pi}{f}}_{1+V}
 \leqs M \gamma^n \norm{f - \pscal{\pi}{f}}_{1+V}
\end{equation} 
holds for all measurable functions $f:\R^n\to\R$ such that $\norm{f}_{1+V} < 
\infty$.
\end{theorem}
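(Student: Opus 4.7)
The plan is to prove the theorem via a contraction argument for the Markov operator $\cP$ acting on probability measures, using a carefully chosen Wasserstein-like semi-metric. The key insight is to combine the geometric drift condition (controlling mass at infinity) with the minorisation condition (providing a meeting mechanism in the centre) into a single contraction estimate, thus avoiding the need to verify any petite-set condition separately.

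First I would introduce a distance-like function on $\R^n$ of the form
\begin{equation*}
d_\beta(x,y) = \bigpar{2 + \beta V(x) + \beta V(y)} \indicator{x \neq y}\;,
\end{equation*}
depending on a small parameter $\beta > 0$ to be fixed later. This is a genuine metric on $\R^n$, and it induces a Wasserstein-type distance $W_\beta$ on probability measures, which by Kantorovich duality coincides with the norm on signed measures of zero mass given by the supremum of $\abs{\pscal{\mu}{f} - \pscal{\nu}{f}}$ over functions $f$ with Lipschitz seminorm $\seminormbox{f}_\beta := \sup_{x\neq y}\abs{f(x)-f(y)}/d_\beta(x,y)$ at most $1$. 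The core step is to establish the contraction $W_\beta(\delta_x \cP, \delta_y \cP) \leqs \bar\gamma\, d_\beta(x,y)$ for all $x\neq y$, for some $\bar\gamma \in (0,1)$ and suitable $\beta$. I would split the analysis according to whether $V(x) + V(y)$ is small or large. When both points lie in $C = \setsuch{x}{V(x) < R}$, the minorisation condition guarantees a coupling of $\cP(x,\cdot)$ and $\cP(y,\cdot)$ under which the two processes meet with probability at least $\alpha$, so the constant part of $d_\beta$ contracts by a factor $1-\alpha$, while the $V$-dependent part is controlled using the drift bound. When $V(x) + V(y)$ is large, the additive $2$ in $d_\beta$ is dominated by $\beta(V(x)+V(y))$, and iterating the drift condition gives
\begin{equation*}
\int d_\beta \,\6(\delta_x\cP\otimes\delta_y\cP)
\leqs 2 + \beta\bigbrak{\gamma\bigpar{V(x)+V(y)} + 2d}\;,
\end{equation*}
which is a strict contraction of $d_\beta(x,y)$ provided $R > 2d(1-\gamma)^{-1}$ leaves enough margin. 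The mixed case $x\in C$, $y\notin C$ is handled by interpolating between the two arguments.

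Once the contraction is proved, existence and uniqueness of the invariant probability measure $\pi$ follow by applying the Banach fixed-point theorem on the space of probability measures with finite $V$-integral, equipped with $W_\beta$ (one checks that $\pscal{\pi}{V} \leqs d/(1-\gamma) < \infty$ by iterating~\eqref{eq:geom_drift}). For the quantitative bound~\eqref{eq:expo_ergodicity_HM}, the observation is that $\seminormbox{f}_\beta$ is equivalent, up to multiplicative constants depending only on $\beta$, to $\norm{f - \pscal{\pi}{f}}_{1+V}$; more precisely, any centred $f$ with finite $(1+V)$-weighted supremum norm is automatically $d_\beta$-Lipschitz. Duality then yields
\begin{equation*}
\bigabs{(\cP^n f)(x) - \pscal{\pi}{f}}
\leqs \seminormbox{f}_\beta\, W_\beta(\delta_x \cP^n, \pi)
\leqs \bar\gamma^n\, \seminormbox{f}_\beta\, W_\beta(\delta_x, \pi)\;,
\end{equation*}
and $W_\beta(\delta_x,\pi)$ is bounded by a constant times $1+V(x)$, giving the claimed bound with $\gamma = \bar\gamma$ up to relabelling.

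The main obstacle is the choice of the free parameter $\beta$: it must be small enough that in the far-field case the coefficient multiplying $V(x)+V(y)$ stays strictly below $1$ after averaging, yet the effective contraction rate achieved in the central case $x,y\in C$ should be comparable, so that gluing the two regimes yields a uniform $\bar\gamma < 1$. Balancing these competing requirements through the threshold $R > 2d(1-\gamma)^{-1}$ of Assumption~\ref{ass:minorisation}, and verifying that the mixed case does not ruin the estimate, is the technical heart of the argument; the rest is a standard application of Banach's fixed-point theorem and duality.
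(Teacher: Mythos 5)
Your proposal is correct and follows essentially the same route as the paper (and the original Hairer--Mattingly argument): the same metric $d_\beta(x,y)=2+\beta V(x)+\beta V(y)$ for $x\neq y$, the same dichotomy $V(x)+V(y)\geqs R$ (drift, using $R>2d(1-\gamma)^{-1}$) versus $V(x)+V(y)<R$ (minorisation), a fixed-point argument for existence and uniqueness of $\pi$, and duality plus equivalence of the weighted norms for the quantitative bound; your coupling/Wasserstein phrasing is just the Kantorovich dual of the paper's computation with the Lipschitz seminorm $\normDgamma{\cdot}_\beta$. The only cosmetic remark is that your \lq\lq mixed case\rq\rq\ is vacuous: if $V(x)+V(y)<R$ then automatically both $x,y\in C$, so no interpolation between the two regimes is needed.
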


Since the proof of this result is rather elementary (and also quite elegant), 
we will provide the details of it in the remainder of this section. Our 
presentation follows closely the article~\cite{Hairer_Mattingly_11}. We first 
recall the definition of the total variation distance between probability 
measures.

\begin{definition}[Total variation distance]
Let $\mu$ and $\nu$ be two probability measures on a measure space $(\cX,\cF)$. 
The~\emph{total variation distance} between $\mu$ and $\nu$ is defined as 
\begin{equation}
\label{eq:def_TVdist} 
 \normTV{\mu-\nu} = 2 \sup\Bigsetsuch{\abs{\mu(A)-\nu(A)}}{A\in\cF}\;.
\end{equation} 
\end{definition}

It is known that the total variation distance between $\mu$ and $\nu$ coincides 
with the $L^1$-distance: 

\begin{lemma}
For any probability measures $\mu$ and $\nu$ on $(\cX,\cF)$, one has 
\begin{equation}
 \normTV{\mu-\nu} = \int_{\cX} \abs{\mu - \nu}(\6x)\;.
\end{equation}
\end{lemma}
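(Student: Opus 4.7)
The plan is to exploit the Hahn--Jordan decomposition of the signed measure $\lambda := \mu - \nu$. By the Hahn decomposition theorem, there exist disjoint measurable sets $P$ and $N$ with $\cX = P \cup N$ such that $\lambda(A\cap P) \geqs 0$ and $\lambda(A\cap N) \leqs 0$ for every $A\in\cF$. This yields the Jordan decomposition $\lambda = \lambda^+ - \lambda^-$ with $\lambda^+(A) = \lambda(A\cap P)$, $\lambda^-(A) = -\lambda(A\cap N)$, and the total variation measure $\abs{\lambda} = \lambda^+ + \lambda^-$. The right-hand side of the claimed identity is then, by definition, $\abs{\lambda}(\cX) = \lambda^+(\cX) + \lambda^-(\cX)$.

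The key observation is that since $\mu$ and $\nu$ are probability measures, $\lambda(\cX) = 0$, hence $\lambda^+(\cX) = \lambda^-(\cX)$ and therefore $\abs{\lambda}(\cX) = 2\lambda^+(\cX)$. It thus suffices to show that the supremum in~\eqref{eq:def_TVdist} equals $\lambda^+(\cX)$. For the upper bound, I would note that for any $A\in\cF$,
\begin{equation*}
\lambda(A) = \lambda^+(A) - \lambda^-(A) \leqs \lambda^+(A) \leqs \lambda^+(\cX)\;,
\end{equation*}
and symmetrically $-\lambda(A) \leqs \lambda^-(\cX) = \lambda^+(\cX)$, so $\abs{\lambda(A)} \leqs \lambda^+(\cX)$. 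For the matching lower bound, I would simply choose $A = P$, which gives $\lambda(P) = \lambda^+(P) = \lambda^+(\cX)$, so the supremum is attained. Combining the two estimates yields
\begin{equation*}
\normTV{\mu-\nu} = 2\sup_{A\in\cF} \abs{\lambda(A)} = 2\lambda^+(\cX) = \abs{\lambda}(\cX) = \int_\cX \abs{\mu-\nu}(\6x)\;,
\end{equation*}
which is the desired equality.

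No step here is really difficult; the only subtle point is to keep track of why the factor $2$ in the definition of $\normTV{\cdot}$ is exactly the right normalisation, and this is accounted for precisely by the cancellation $\lambda^+(\cX) = \lambda^-(\cX)$ coming from $\mu$ and $\nu$ both having total mass one. If one wanted to avoid invoking Hahn decomposition as a black box, an alternative plan would be to dominate both measures by $\sigma := \mu + \nu$, write $\mu = u\sigma$ and $\nu = v\sigma$ via Radon--Nikodym, and work with the set $A = \set{u > v}$ on which the supremum in~\eqref{eq:def_TVdist} is achieved, leading to the same identity $\normTV{\mu-\nu} = \int \abs{u-v}\6\sigma$.
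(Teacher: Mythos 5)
Your proof is correct, and the underlying idea is the same as the paper's: identify the set on which $\mu - \nu$ is positive, show that the supremum in the definition of $\normTV{\cdot}$ is attained there, and use $\mu(\cX) = \nu(\cX) = 1$ to relate the masses of the positive and negative parts. The difference is one of framing and generality. The paper restricts to the case where $\mu$ and $\nu$ have densities with respect to Lebesgue measure, takes $B = \setsuch{x}{\mu(x) > \nu(x)}$, and verifies the key inequalities by hand; you instead invoke the Hahn--Jordan decomposition of $\lambda = \mu - \nu$, and your set $P$ plays exactly the role of $B$, with $\lambda^+(\cX) = \lambda^-(\cX)$ being the abstract counterpart of the paper's identity $\mu(B) - \nu(B) = \nu(B^c) - \mu(B^c)$. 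Your version is actually a bit better matched to the lemma as stated, which concerns arbitrary probability measures on $(\cX,\cF)$ rather than just absolutely continuous ones; the paper's proof, as written, covers only the special case, while yours handles the general statement. The alternative route you sketch (Radon--Nikodym with respect to $\sigma = \mu + \nu$ and $A = \set{u > v}$) is essentially a way to reduce the general case to the paper's density computation, so both paths are available and compatible.
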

\begin{proof}
Consider the case where the measures have densities with respect to Lebesgue 
measure, and let $B = \bigsetsuch{x\in\R^n}{\mu(x) > \nu(x)}$.
Note that since $\mu$ and $\nu$ are probability measures, we have 
\begin{equation}
 0 \leqs \mu(B) - \nu(B) 
 = 1 - \mu(B^c) - \bigbrak{1 - \nu(B^c)} 
 = \nu(B^c) - \mu(B^c)\;,
\end{equation} 
which implies 
\begin{equation}
\label{eq:proof_TV} 
 \int_{\cX} \abs{\mu - \nu}(\6x)
 = (\mu - \nu)(B) + (\nu - \mu)(B^c) 
 = 2(\mu - \nu)(B)\;.
\end{equation} 
For any Borel set $A\in\cF$, we can write 
\begin{equation}
 \mu(A) - \nu(A) 
 \leqs \int_{A\cap B} (\mu - \nu)(\6x) 
 \leqs \int_B (\mu - \nu)(\6x)
 = \mu(B) - \nu(B)\;,
\end{equation} 
since $\mu - \nu$ is negative on $A \cap B^c$, and positive on $A^c\cap B$. A 
similar argument yields 
\begin{equation}
 \nu(A) - \mu(A) 
 \leqs \int_{B^c} (\nu - \mu)(\6x)
 = \nu(B^c) - \mu(B^c)
 = \mu(B) - \nu(B)\;.
\end{equation} 
We have thus shown the $\abs{\mu(A) - \nu(A)} \leqs (\mu-\nu)(B)$, where 
equality holds whenever $A = B$ or $A = B^c$. The result thus follows 
from~\eqref{eq:proof_TV}. 
\end{proof}

The main idea of the proof of Theorem~\ref{thm:Hairer_Mattingly} is to work with 
a whole family of equivalent norms. Instead of just $\norm{f}_{1+V}$, we 
thus consider the norms $\norm{f}_{1+\beta V}$ where $\beta>0$ is a scale 
parameter. We also consider the dual metric on probability measures given by
\begin{align}
 \rho_\beta(\mu,\nu) 
 &= \sup_{f: \norm{f}_{1+\beta V} \leqs 1} \int_{\R^n} f(x) (\mu - \nu)(\6x)\\
 &= \sup_{f: \norm{f}_{1+\beta V} \neq 0} 
 \frac{1}{\norm{f}_{1+\beta V}}\int_{\R^n} f(x) (\mu - \nu)(\6x)\;,
\label{eq:rho_beta} 
\end{align} 
which is in fact equivalent to the weighted total variation distance given by
\begin{equation}
 \rho_\beta(\mu,\nu)
 = \int_{\R^n} \bigpar{1+\beta V(x)}\abs{\mu-\nu}(\6x)\;.
\end{equation} 
In particular, for $\beta = 0$, we have $\rho_0(\mu,\nu) = \normTV{\mu-\nu}$.  
The supremum in~\eqref{eq:rho_beta} is attained for 
$f(x) = \brak{1 + \beta V(x)}\brak{\indicator{B}(x) - \indicator{B^c}(x)}$, 
with $B$ as in the proof of the above lemma. 

The key result for the distance $\rho_\beta$ is the following. 

\begin{proposition}[Contraction estimate in $\rho_\beta$ distance]
\label{prop:contract_rhobeta} 
If Assumptions~\ref{ass:geom_drift} and~\ref{ass:minorisation} hold, then there 
exist $\bar\alpha\in(0,1)$ and $\beta > 0$ such that 
\begin{equation}
 \rho_\beta(\mu\cP,\nu\cP) \leqs \bar\alpha \rho_\beta(\mu,\nu)
\end{equation} 
holds for all probability measures $\mu, \nu$ on $\R^n$. More precisely, for 
any $\alpha_0\in(0,\alpha)$ and any $\gamma_0\in(\gamma+2dR^{-1},1)$, one can 
choose 
\begin{equation}
 \beta = \frac{\alpha_0}{d}\;, \qquad 
 \bar\alpha = \Bigpar{1 - (\alpha-\alpha_0)} \vee 
\frac{2+R\beta\gamma_0}{2+R\beta}\;.
\end{equation}
\end{proposition}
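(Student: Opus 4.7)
The plan is to dualise the $\rho_\beta$-contraction into a Lipschitz-type estimate for $\cP f$, and then to prove that estimate by two complementary mechanisms: the drift bound far from the origin, and the minorisation close to it.

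First I would exploit the fact that $\mu$ and $\nu$ are probability measures, so that $\int c\,\6(\mu-\nu)=0$ for every constant $c$. Since $\rho_\beta(\mu\cP,\nu\cP) = \sup_f \int \cP f\,\6(\mu-\nu)$ over $f$ with $\norm{f}_{1+\beta V}\leqs 1$, it is sufficient to produce, for every such $f$, a constant $c$ with $\norm{\cP f - c}_{1+\beta V}\leqs\bar\alpha$. Writing the two-sided bound $|\cP f(x)-c|\leqs\bar\alpha(1+\beta V(x))$ at arbitrary $x,y$ and optimising over $c$, such a $c$ exists if and only if $|\cP f(x)-\cP f(y)|\leqs \bar\alpha(2+\beta V(x)+\beta V(y))$ for all $x,y$. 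Taking the supremum over $f$ with $|f|\leqs 1+\beta V$, this in turn reduces to the kernel-level Wasserstein estimate
\begin{equation*}
 \int(1+\beta V)\,\6\bigabs{\cP(x,\cdot)-\cP(y,\cdot)} \leqs \bar\alpha\bigpar{2+\beta V(x)+\beta V(y)} \qquad \forall x,y\in\R^n\;,
\end{equation*}
which is what I would actually prove.

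I would split the proof of this kernel estimate according to the size of $T := V(x)+V(y)$. In the far regime $T\geqs R$, I would use only the drift: the triangle inequality $\6|\cP(x,\cdot)-\cP(y,\cdot)|\leqs \6\cP(x,\cdot)+\6\cP(y,\cdot)$ together with Assumption~\ref{ass:geom_drift} yields the bound $2+2\beta d+\beta\gamma T$. Inserting $\beta=\alpha_0/d$ turns this into $2(1+\alpha_0)+\beta\gamma T$, whose ratio to $2+\beta T$ is decreasing in $T$ and hence maximised at $T=R$; the hypothesis $\gamma_0>\gamma+2d/R$ (equivalently $\beta\gamma R+2\alpha_0<\beta\gamma_0 R$) is exactly what is needed to conclude that this maximal value is bounded by $(2+R\beta\gamma_0)/(2+R\beta)$. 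In the close regime $T<R$ one has $x,y\in C$, so Assumption~\ref{ass:minorisation} lets me split $\cP(x,\cdot)=\alpha\ph(\cdot)+(1-\alpha)\widetilde Q(x,\cdot)$ with $\widetilde Q(x,\cdot)$ a probability kernel (I relabel the minorising measure as $\ph$ to avoid clashing with the $\nu$ of the statement). The common mass $\alpha\ph$ cancels in $\cP(x,\cdot)-\cP(y,\cdot)$, and applying the drift condition once more to the remainders and dropping the nonpositive term $-2\alpha\beta\int V\,\6\ph$ yields $2(1-\alpha)+2\beta d+\beta\gamma T=2(1-(\alpha-\alpha_0))+\beta\gamma T$ with the same choice $\beta=\alpha_0/d$.

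It then remains to check that $\bar\alpha(2+\beta T)$ dominates both bounds in their respective regimes. The far bound gives directly the factor $(2+R\beta\gamma_0)/(2+R\beta)$ appearing in the statement. For the close bound, the ratio $[2(1-(\alpha-\alpha_0))+\beta\gamma T]/(2+\beta T)$ is monotone in $T$: when $1-(\alpha-\alpha_0)\geqs\gamma$ it is decreasing, so bounded by its value $1-(\alpha-\alpha_0)$ at $T=0$; when $1-(\alpha-\alpha_0)<\gamma$ it is increasing, and its supremum as $T\to R$ is still dominated by the far-regime bound, using $2(1-(\alpha-\alpha_0))<2$ and $\gamma<\gamma_0$. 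Taking the maximum of both contributions gives exactly the claimed $\bar\alpha$, which is strictly less than $1$ because $\alpha_0<\alpha$ and $\gamma_0<1$. The main technical difficulty is the bookkeeping needed to have the drift and minorisation mechanisms fit together into a single contraction factor; the specific choice $\beta=\alpha_0/d$ is dictated by the requirement that the excess $2\beta d$ produced by the drift term recombine cleanly with $2(1-\alpha)$ from the minorisation into $2(1-(\alpha-\alpha_0))$, and any other scaling of $\beta$ would spoil this arithmetic.
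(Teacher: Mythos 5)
Your proof is correct and takes essentially the same route as the paper's: the same split into the regimes $V(x)+V(y)\geqs R$ (drift condition) and $V(x)+V(y)<R$ (minorisation via the residual kernel $\widetilde\cP$), the same choice $\beta=\alpha_0/d$, and the same duality step, with your reduction to the existence of a centering constant $c$ being an inline version of the paper's Lemma~\ref{lem:equiv_norms_HM} and your kernel-level weighted total-variation estimate being the dual formulation of the paper's bound on $\bigabs{(\cP f)(x)-(\cP f)(y)}$. The remaining differences (monotonicity-in-$T$ bookkeeping, strict $\gamma_0>\gamma+2dR^{-1}$ instead of the boundary value) are cosmetic and do not affect the argument.
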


To prove this result, we introduce an alternative definition of $\rho_\beta$. 
Consider first the function
\begin{equation}
 d_\beta(x,y)
 = 
 \begin{cases}
  0 & \text{if $x=y$\;,} \\
  2 + \beta V(x) + \beta V(y) & \text{if $x\neq y$\;,}
 \end{cases}
\end{equation} 
which can easily be checked to be a metric on $\R^n$. This metric induces the 
Lipschitz seminorm
\begin{equation}
\label{def:seminormf} 
 \normDgamma{f}_\beta = \sup_{x\neq y} \frac{\abs{f(x)-f(y)}}{d_\beta(x,y)}\;,
\end{equation} 
and a dual metric on probability measures given by
\begin{equation}
 \rho^*_\beta(\mu,\nu) 
 = \sup_{f: \normDgamma{f}_\beta \leqs 1} \int_{\R^n} f(x) (\mu - 
\nu)(\6x)\;.
\end{equation} 
Note that the supremum is taken on a different set of functions 
than in~\eqref{eq:rho_beta}. 

\begin{lemma}[Equivalence of norms]
\label{lem:equiv_norms_HM} 
We have 
\begin{equation}
 \normDgamma{f}_\beta = \inf_{c\in\R} \norm{f + c}_{1+\beta V}\;.
\end{equation} 
In particular, $\rho^*_\beta = \rho_\beta$. 
\end{lemma}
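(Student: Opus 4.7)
The plan is to establish the identity $\normDgamma{f}_\beta = \inf_{c\in\R}\norm{f+c}_{1+\beta V}$ by proving the two inequalities separately, and then to deduce $\rho^*_\beta = \rho_\beta$ from this identity by exploiting the fact that $\mu-\nu$ is a signed measure of total mass zero, so that $\int (f+c)\,\6(\mu-\nu) = \int f\,\6(\mu-\nu)$ for every constant $c$.

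For the easy direction $\normDgamma{f}_\beta \leqs \inf_c \norm{f+c}_{1+\beta V}$, I would fix $c\in\R$, let $M=\norm{f+c}_{1+\beta V}$, and use the triangle inequality: for any $x\neq y$,
\begin{equation*}
\abs{f(x)-f(y)} = \bigabs{(f(x)+c)-(f(y)+c)} \leqs M\bigpar{1+\beta V(x)} + M\bigpar{1+\beta V(y)} = M\, d_\beta(x,y)\;.
\end{equation*}
Taking the sup over $x\neq y$ gives $\normDgamma{f}_\beta \leqs M$, and the infimum over $c$ follows.

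For the harder direction, I would set $M=\normDgamma{f}_\beta$ and look for a constant $c$ such that $-M(1+\beta V(x)) \leqs f(x)+c \leqs M(1+\beta V(x))$ for all $x$. This is equivalent to the interval
\begin{equation*}
\Bigbrak{\sup_{x}\bigpar{-M(1+\beta V(x))-f(x)},\;\inf_{y}\bigpar{M(1+\beta V(y))-f(y))}}
\end{equation*}
being nonempty, which in turn reduces to the pointwise condition $f(y)-f(x) \leqs M\,d_\beta(x,y)$ for all $x\neq y$. But this is exactly the statement $\normDgamma{f}_\beta \leqs M$, which holds by definition of $M$. Hence such a $c$ exists and the infimum is attained, proving $\inf_c\norm{f+c}_{1+\beta V} \leqs \normDgamma{f}_\beta$.

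For the dual identity $\rho^*_\beta = \rho_\beta$, I would exploit that for probability measures $(\mu-\nu)(\R^n) = 0$, so adding constants to test functions leaves the integral invariant. For the $\leqs$ direction, any $f$ with $\normDgamma{f}_\beta \leqs 1$ admits, by the first part, some $c$ with $\norm{f+c}_{1+\beta V}\leqs 1$, so $\int f\,\6(\mu-\nu)=\int(f+c)\,\6(\mu-\nu)\leqs\rho_\beta(\mu,\nu)$. For the reverse direction, taking $c=0$ in the infimum gives $\normDgamma{f}_\beta \leqs \norm{f}_{1+\beta V}$, so any $f$ admissible in $\rho_\beta$ is admissible (up to the same value of the integral) in $\rho^*_\beta$. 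Taking suprema gives the equality.

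The only delicate point is the sup/inf feasibility argument in step two, but it is a direct translation of the Lipschitz condition, so I expect no real obstacle. The construction even shows that the infimum over $c$ is attained, which is useful downstream.
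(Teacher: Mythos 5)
Your proof is correct and follows essentially the same route as the paper's: the paper normalises $\normDgamma{f}_\beta = 1$, sets $c^*=\inf_x\bigl(1+\beta V(x)-f(x)\bigr)$ -- which is precisely the right endpoint of your feasibility interval -- and verifies both one-sided bounds, exactly the content of your argument. The one point worth making explicit (as the paper does when it notes $\abs{c^*}<\infty$) is that the Lipschitz bound, evaluated against a fixed reference point where $V$ and $f$ are finite, forces the endpoints of your interval to be finite reals, so that a genuine $c\in\R$ exists; the inequality $\sup\leqs\inf$ alone would not exclude a degenerate interval at $\pm\infty$.
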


\begin{proof}
We first note that since $\abs{f(x)} \leqs \norm{f}_{1+\beta V}(1+\beta V(x))$ 
for all $x\in\R^n$, we have 
\begin{equation}
 \frac{\abs{f(x)-f(y)}}{2 + \beta V(x) + \beta V(y)}
 \leqs \frac{\abs{f(x)} + \abs{f(y)}}{2 + \beta V(x) + \beta V(y)}
 \leqs \norm{f}_{1+\beta V} 
\end{equation} 
for all $x, y\in\R^n$, so that $\normDgamma{f}_\beta 
\leqs \norm{f}_{1+\beta V}$. It follows from the 
definition~\eqref{def:seminormf} of $\normDgamma{f}_\beta$ that  
this seminorm is invariant under addition of a constant to $f$, so that 
\begin{equation}
\normDgamma{f}_\beta 
\leqs \inf_{c\in\R} \; \norm{f+c}_{1+\beta V}\;.
\end{equation} 
To prove the reverse inequality, it suffices by homogeneity of the norm to show 
that if holds for $f$ with $\normDgamma{f}_\beta= 1$. We set 
\begin{equation}
 c^* = \inf_{x\in\R^n} \biggpar{1 + \beta V(x) - f(x)}\;.
\end{equation} 
For any $x,y\in\R^n$, we have 
\begin{equation}
 f(x)
 \leqs \abs{f(y)} + \abs{f(x) - f(y)} 
 \leqs \abs{f(y)} + 2 + \beta V(x) + \beta V(y)\;,
\end{equation} 
which implies 
\begin{equation}
 1 + \beta V(x) - f(x) \geqs -1 - \beta V(y) - \abs{f(y)}\;.
\end{equation} 
Since $V(y)$ is finite at one point at least, $c^*$ is bounded 
below, and hence $\abs{c^*} < \infty$. Now we observe that on one hand, 
\begin{equation}
 f(x) + c^* \leqs f(x) + 1 + \beta V(x) - f(x) = 1 + \beta V(x)\;,
\end{equation} 
while on the other hand, 
\begin{align}
 f(x) + c^*
 &= \inf_{y\in\R^n} \biggpar{f(x) + 1 + \beta V(y) - f(y)} \\
 &\geqs \inf_{y\in\R^n} \biggpar{1 + \beta V(y) - \normDgamma{f}_\beta 
d_\beta(x,y)} \\
 &\geqs -1-\beta V(x)\;,
\end{align}
where we have used the fact that $\normDgamma{f}_\beta = 1$. 
Hence $\abs{f(x)+c^*} \leqs 1 + \beta V(x)$, and thus 
\begin{equation}
\inf_{c\in\R} \; \norm{f+c}_{1+\beta V}
 \leqs \norm{f+c^*}_{1+\beta V} 
 \leqs 1
 = \normDgamma{f}_\beta\;,
\end{equation} 
proving the reverse inequality. The equality of $\rho^*_\beta$ and $\rho_\beta$ 
follows from the fact that the unit balls $\setsuch{f}{\norm{f}_{1+\beta V}} 
\leqs 1$ and $\setsuch{f}{\normDgamma{f}_\beta\leqs 1}$ only differ by additive 
constants, and homogeneity of the norms.
\end{proof}

\begin{proof}[\textsc{Proof of Proposition~\ref{prop:contract_rhobeta}}]
We prove that under Assumptions~\ref{ass:geom_drift} 
and~\ref{ass:minorisation}, one has 
\begin{equation}
\label{eq:proof_MT_bound0} 
 \normDgamma{\cP f}_{\beta} \leqs \bar\alpha \normDgamma{f}_\beta\;.
\end{equation} 
Fix a test function $f$ with $\normDgamma{f}_\beta \leqs 1$. By 
Lemma~\ref{lem:equiv_norms_HM} we can assume, without loss of generality, that 
one also has $\norm{f}_{1+\beta V}\leqs 1$. By homogeneity, it then suffices to 
show that 
\begin{equation}
 \bigabs{\bigpar{\cP f}(x) - \bigpar{\cP f}(y)} 
 \leqs \bar\alpha d_\beta(x,y)\;.
\end{equation} 
Since the claim is true for $x=y$, we consider the case $x\neq y$. We treat 
separately the cases $V(x) + V(y) \geqs R$ and $V(x) + V(y) < R$.

\begin{itemize}
\item	If $V(x) + V(y) \geqs R$, we note that 
\begin{equation}
\label{eq:proof_MT_bound1} 
 \bigabs{\bigpar{\cP f}(x)}
 \leqs \norm{f}_{1+\beta V} \int_{\R^n} (1+\beta V(y)) \cP(x,\6y) 
 \leqs 1 + \beta \bigpar{\cP V}(x)\;.
\end{equation} 
Therefore, the geometric drift condition~\eqref{eq:geom_drift} yields 
\begin{align}
 \bigabs{\bigpar{\cP f}(x) - \bigpar{\cP f}(y)}
 &\leqs 2 + \beta \bigpar{\cP V}(x) + \beta \bigpar{\cP V}(y) \\
 &\leqs 2 + \beta\gamma V(x) + \beta\gamma V(y) + 2\beta d \\
 &\leqs 2 + \beta\gamma_0 V(x) + \beta\gamma_0 V(y)\;, 
\end{align}
where we have set $\gamma_0 = \gamma + 2dR^{-1}$ and used $V(x) + 
V(y) \geqs R$. We now set 
\begin{equation}
 \gamma_1 = \frac{2+\beta R\gamma_0}{2+\beta R}\;.
\end{equation} 
One readily checks that $2(1-\gamma_1) = \beta R(\gamma_1-\gamma_0)
\leqs \beta(\gamma_1-\gamma_0)(V(x)+V(y))$, so that 
\begin{equation}
\label{eq:proof_MT_bound2} 
 \bigabs{\bigpar{\cP f}(x) - \bigpar{\cP f}(y)}
 \leqs \gamma_1 \Bigpar{2 + \beta V(x) + \beta V(y)} 
 = \gamma_1 d_\beta(x,y)\;.
\end{equation} 

\item 	If $V(x) + V(y) < R$, then $x, y\in C$. We introduce the Markov kernel 
$\widetilde\cP$ defined by 
\begin{equation}
 \widetilde\cP(x,A) = \frac{1}{1-\alpha} \cP(x,A) 
 - \frac{\alpha}{1-\alpha} \nu(A)\;.
\end{equation} 
Note that $\widetilde\cP(x,\R^n) = 1$, while the minorisation 
condition~\eqref{eq:minorisation} implies that $\widetilde\cP(x,A)$ is always 
positive, as required. Then we have 
\begin{equation}
 \bigpar{\cP f}(x) = (1-\alpha) \bigpar{\widetilde\cP f}(x) 
 + \alpha \int_{\R^n} f(y) \nu(\6y)\;,
\end{equation} 
showing that 
\begin{align}
 \bigabs{\bigpar{\cP f}(x) - \bigpar{\cP f}(y)}
 &= (1-\alpha) 
 \bigabs{\bigpar{\widetilde\cP f}(x) - \bigpar{\widetilde\cP f}(y)} \\
 &\leqs (1-\alpha) \bigbrak{2 + \beta \bigpar{\widetilde\cP V}(x) + \beta 
\bigpar{\widetilde\cP V}(y)} \\
 &\leqs 2(1-\alpha) + \beta \bigpar{\cP V}(x) + \beta \bigpar{\cP V}(y) \\
 &\leqs 2(1-\alpha) + \gamma\beta \bigbrak{V(x)+V(y)} + 2\beta d\;.
\end{align}
Here, to obtain the second line, we have used a similar argument as 
in~\eqref{eq:proof_MT_bound1}, while the third line uses the fact that 
\begin{equation}
 \bigpar{\widetilde\cP V}(x) 
 \leqs \frac{1}{1-\alpha} \bigpar{\cP V}(x)
\end{equation} 
since $V$ is non-negative. It follows that setting 
\begin{equation}
 \beta  = \frac{\alpha_0}{d}\;, \qquad 
 \gamma_2 = \gamma\vee\Bigpar{1-(\alpha-\alpha_0)}
\end{equation} 
for some $\alpha_0\in(0,\alpha)$, one obtains 
\begin{equation}
\label{eq:proof_MT_bound3} 
 \bigabs{\bigpar{\cP f}(x) - \bigpar{\cP f}(y)} 
 \leqs \gamma_2 d_\beta(x,y)\;.
\end{equation}
\end{itemize}
It follows from~\eqref{eq:proof_MT_bound2} and~\eqref{eq:proof_MT_bound3} that
\begin{equation}
 \bigabs{\bigpar{\cP f}(x) - \bigpar{\cP f}(y)} 
 \leqs \bar\alpha d_\beta(x,y)\;, 
 \qquad 
 \bar\alpha = \gamma_1 \vee \gamma_2\;.
\end{equation} 
Since $\gamma_1\geqs\gamma$, this implies~\eqref{eq:proof_MT_bound0}.
The result then follows from the fact that $d_\beta$ is the norm dual to 
$\normDgamma{\cdot}_\beta$. Indeed, we have 
\begin{align}
\rho^*_\beta(\mu\cP, \nu\cP) 
&= \sup_{f: \normDgamma{f}_\beta \neq 0} 
\frac{1}{\normDgamma{f}_\beta} \pscal{\mu\cP - \nu\cP}{f} 
= \sup_{f: \normDgamma{f}_\beta \neq 0} 
\frac{1}{\normDgamma{f}_\beta} \pscal{\mu - \nu}{\cP f} \\
&\leqs \sup_{\tilde f: \normDgamma{\tilde f}_\beta \neq 0} 
\frac{\bar\alpha}{\normDgamma{\tilde f}_\beta} \pscal{\mu - \nu}{\tilde f} 
= \bar\alpha \rho^*_\beta(\mu,\nu)\;,
\end{align}
and we obtain the conclusion since $\rho_\beta = \rho^*_\beta$. 
\end{proof}

To conclude the proof of Theorem~\ref{thm:Hairer_Mattingly}, it remains to 
prove existence of the invariant measure, which can be done by a contraction 
argument.

\begin{proof}[\textsc{Proof of Theorem~\ref{thm:Hairer_Mattingly}}] We fix some 
$x\in\R^n$, and define for any $n\in\N$ the measure $\mu_n^x = \delta_x\cP^n$. 
Then by Proposition~\ref{prop:contract_rhobeta}, we have 
\begin{equation}
\label{eq:rhobeta_contraction} 
 \rho_\beta(\mu_{n+1}^x,\mu_n^x) \leqs \bar\alpha^n 
\rho_\beta(\mu_1^x,\delta_x)
\end{equation} 
for some $\bar\alpha\in(0,1)$ and $\beta > 0$. Therefore, $(\mu_n^x)_n$ is a 
Cauchy sequence. It is known that the total variation distance is complete for 
the space of measures with finite mass, implying that $\rho_\beta$ is complete 
for the space of probability measures integrating $V$. Therefore, there exists 
a probability measure $\pi = \mu_\infty$ such that 
$\rho_\beta(\mu_n^x,\mu_\infty)\to0$ as $n\to\infty$. This implies that 
$\mu_n^x$ converges to $\mu_\infty$ in total variation. Since $\cP$ is a 
contraction in total variation, it follows that $\mu_\infty\cP = 
\lim_{n\to\infty} \mu_n^x\cP = \lim_{n\to\infty} \mu_{n+1}^x = \mu_\infty$. 
In order to show~\eqref{eq:expo_ergodicity_HM}, we observe that 
\begin{equation}
 \bigpar{\cP^n f}(x) - \pscal{\pi}{f} 
 = \pscal{\delta_x - \pi}{\cP^n f} 
 = \pscal{\delta_x\cP^n - \pi}{f} 
 = \pscal{\mu_n^x - \pi}{f}\;.
\end{equation} 
Therefore, by the definition~\eqref{eq:rho_beta}  of $\rho_\beta$, we have for 
any $\beta\geqs0$, 
\begin{equation}
\label{eq:proof_HM_contraction} 
 \norm{\cP^n f - \pscal{\pi}{f}}_{1+\beta V} 
 = \sup_{x\in\R^n} \frac{\abs{\pscal{\mu_n^x - \pi}{f}}}{1 + \beta V(x)}
 \leqs \norm{f}_{1+\beta V} 
 \sup_{x\in\R^n} \frac{\rho_\beta(\mu_n^x,\pi)}{1 + \beta V(x)}\;.
\end{equation}
Now it follows from~\eqref{eq:rhobeta_contraction} and a telescopic sum 
argument that for $\beta$ as in Proposition~\ref{prop:contract_rhobeta}, one 
has $\rho_\beta(\mu_n^x,\pi) \leqs M(x)\bar\alpha^n$, where $M(x)$ is 
proportional to 
\begin{equation}
 \rho_\beta(\mu_1^x,\delta_x) 
 = \int_{\R^n} \bigpar{1+\beta V(y)} \abs{\mu_1^x - \delta_x}(\6y) 
 \leqs \pscal{\mu_1^x}{1+\beta V} + 1 + \beta V(x)\;.
\end{equation} 
Since $\pscal{\mu_1^x}{V} = (\cP V)(x) \leqs \gamma V(x) + d$ by the geometric 
drift condition~\eqref{eq:geom_drift}, the supremum 
in~\eqref{eq:proof_HM_contraction} has order $\bar\alpha^n$, proving the result 
for this particular $\beta$. However, all $\norm{\cdot}_{1+\beta V}$-norms are 
equivalent, so that the result holds in particular for $\beta = 1$.  
\end{proof}


\section{Garrett Birkhoff's approach}
\label{sec:Birkhoff} 

The aim of this section is to present a slightly different approach to 
estimating the rate of convergence to an invariant probability distribution, 
due to Garret Birkhoff~\cite{Birkhoff1957}. Compared to the approaches we have 
discussed so far, it has the following advantages:

\begin{enumerate}
\item 	The proof has a more transparent geometric interpretation, that helps 
understand the minorisation condition~\eqref{eq:minorisation} we have seen in 
the last section.

\item 	As Theorem~\ref{thm:Hairer_Mattingly}, the result provides explicit 
bounds on the rate of convergence to the invariant probability.

\item	The result also works for submarkovian processes, that is, processes in 
which the total probability decreases. In that case, it provides information on 
the principal eigenvalue of the process, as well as on the spectral gap to the 
next-to-leading eigenvalue.
\end{enumerate}

As in the last subsection, the approach applies to discrete-time Markov chains. 
For simplicity, we are going to assume that the transition kernel $\cP$ has a 
density $p(x,y)$ defined on $\cX\times\cX$ for a domain $\cX\subset\R^n$ (or 
possibly on a more general Banach space). The transition kernel thus acts on 
bounded measurable functions $f$ and on signed measures $\mu$ according to 
\begin{align}
\bigpar{\cP f}(x) &= \int_{\cX} p(x,y) f(y) \6y = \expecin{x}{f(X_1)}\;, \\
\bigpar{\mu \cP}(\6y) &= \int_{\cX} \mu(\6x) p(x,y) \6y 
= \probin{\mu}{X_1\in \6y}\;.
\label{eq:integral_semigroups} 
\end{align}

The main property that will guarantee convergence to an invariant probability 
distribution (or to a so-called quasistationary distribution in the 
submarkovian case) is the following. Note the similarity of the lower bound 
with the minorisation condition~\eqref{eq:minorisation}. 

\begin{definition}[Uniform positivity]
\label{def_uniformly_positive} 
The transformation $\cP$ is called \emph{uniformly positive} if there exist
strictly positive functions $s, m: \cX\to(0,+\infty)$ and a constant $L$ such 
that 
\begin{equation}
\label{eq:uniform_positivity} 
 s(x) m(y) \leqs p(x,y) \leqs L s(x) m(y)  
 \qquad
 \forall x, y\in \cX\;.
\end{equation} 
\end{definition}

The main results we are going to prove are as follows. The first one is an 
existence result for an invariant probability measure, or for its equivalent if 
the chain is submarkovian. It is thus a generalisation to integral operators of 
the well-known Perron--Frobenius theorem, which was first obtained by 
Jentzsch~\cite{Jentzsch1912}. 

\begin{theorem}[Perron--Frobenius--Jentzsch theorem]
\label{thm:Birkhoff} 
If $\cP$ is uniformly positive, there exist $\lambda_0>0$, a bounded 
measurable function $h_0: \cX\to\R_+$, and a probability measure $\pi_0$ on 
$\cX$ such that 
\begin{align}
 \bigpar{\cP h_0}(x) = \lambda_0 h_0(x)\;, \\
 \bigpar{\pi_0 \cP}(A) = \lambda_0 \pi_0(A)
 \label{eq:def_P_Birkhoff} 
\end{align} 
for all $x\in\cX$ and all Borel sets $A\subset\cX$. In particular, in the 
Markovian case $\cP(x,\cX) = 1$ for all $x\in\cX$, one has $\lambda_0 = 1$ and 
$h_0(x) = 1$ for all $x\in\cX$.
\end{theorem}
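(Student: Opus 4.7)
The plan is to use the approach suggested by Garrett Birkhoff himself, namely to work with Hilbert's projective metric on the cone $K = \setsuch{f \in L^\infty(\cX)}{f\geqs 0}$ (and the analogous cone of non-negative measures). Recall that for two nonzero $u, v\in K$, the Hilbert projective (pseudo-)metric is defined by $d_H(u,v) = \log(M(u,v)/m(u,v))$, where $M(u,v) = \inf\setsuch{\lambda > 0}{\lambda v - u \in K}$ and $m(u,v) = \sup\setsuch{\mu \geqs 0}{u - \mu v\in K}$. For functions, this reduces to $d_H(u,v) = \log(\sup_x (u/v) / \inf_x (u/v))$. This metric is invariant under scaling, so it is really a metric on rays, which is the natural setting for an eigenvector problem.

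First I would observe that uniform positivity implies a uniform bound on the diameter of the image of $K\setminus\set{0}$ under $\cP$. Indeed, for any $u\in K\setminus\set{0}$, \eqref{eq:uniform_positivity} gives
\begin{equation}
 s(x) \pscal{m}{u} \leqs (\cP u)(x) \leqs L s(x) \pscal{m}{u}\;,
\end{equation}
where $\pscal{m}{u} = \int m(y)u(y)\6y$. Comparing two such images $\cP u$ and $\cP v$, this yields $d_H(\cP u, \cP v) \leqs 2\log L$. Applying Birkhoff's contraction theorem (the statement that a positive linear map whose image has finite projective diameter $\Delta$ is a strict contraction of ratio $\tanh(\Delta/4) < 1$ in $d_H$), I obtain that $\cP$ is a strict contraction on $K\setminus\set{0}$ for $d_H$, with ratio $\alpha = \tanh((\log L)/2) < 1$. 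Completeness of the cone modulo scaling under $d_H$ (a standard but nontrivial fact, provable here because all elements are sandwiched by multiples of $s$) and Banach's fixed point theorem then produce a ray of fixed points, which after normalisation gives a function $h_0\in K$ such that $\cP h_0 = \lambda_0 h_0$ for some $\lambda_0 > 0$; the value of $\lambda_0$ is forced by the normalisation, e.g.\ $\lambda_0 = \pscal{m}{\cP h_0}/\pscal{m}{h_0}$.

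Next I would run exactly the same argument on the cone of finite non-negative measures, where $\cP$ acts from the right as in \eqref{eq:integral_semigroups}. The adjoint operator has kernel $p(y,x)$ which satisfies the same uniform positivity bound with the roles of $s$ and $m$ swapped, hence is also a Hilbert contraction with the same ratio. This yields a probability measure $\pi_0$ (after normalisation to total mass one) with $\pi_0 \cP = \lambda_0' \pi_0$ for some $\lambda_0' > 0$. To see that $\lambda_0 = \lambda_0'$, I would pair the two eigenrelations: $\pscal{\pi_0}{\cP h_0} = \lambda_0 \pscal{\pi_0}{h_0}$ but also $\pscal{\pi_0 \cP}{h_0} = \lambda_0' \pscal{\pi_0}{h_0}$, and $\pscal{\pi_0}{h_0} > 0$ because $h_0$ is bounded below by a positive multiple of $s$ on the support of $\pi_0$ (which is all of $\cX$ again by uniform positivity). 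Finally, in the Markovian case $\cP\unit = \unit$ is already an eigenfunction with eigenvalue $1$, and uniqueness of the eigenray for the strict contraction forces $h_0 \propto \unit$ and $\lambda_0 = 1$.

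The main obstacle I expect is the verification of Birkhoff's contraction estimate with its sharp constant, together with the completeness of the quotient cone in $d_H$, which requires some care when $\cX$ is not compact; the uniform sandwich by $s$ (respectively $m$) for elements in the image is what makes this work, since it reduces the problem to a bounded set in a genuine norm. A minor secondary issue is the rigorous justification that the fixed ray obtained in $L^\infty$ actually gives a bounded measurable function, which again is immediate from $h_0(x)/s(x) \in [\pscal{m}{h_0}, L\pscal{m}{h_0}]$. Everything else, including strict positivity of $\lambda_0$ and the identification of the eigenvalues for $\cP$ and its adjoint, is essentially automatic once the contraction picture is in place.
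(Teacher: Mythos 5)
Your proposal is correct and follows essentially the same route as the paper: Hilbert's projective metric on the positive cone, the diameter bound $\Delta\leqs 2\log L$ deduced from uniform positivity, Birkhoff's $\tanh(\Delta/4)$ contraction estimate plus completeness of the cone to get the eigenray $h_0$, the dual argument on measures for $\pi_0$, and pairing of the two eigenrelations to identify $\lambda_0$ (the paper's Remark on the dual picture), with the Markovian case handled identically via $\cP\unit=\unit$. The only cosmetic differences are your choice of $L^\infty$ rather than continuous functions and your sandwich-by-$s$ justification of completeness, which the paper instead derives from strong comparability.
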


The number $\lambda_0$ is called the~\emph{principal eigenvalue} of the Markov 
process, the function $h_0$ is called the~\emph{principal eigenfunction}, while 
$\pi_0$ is called the~\emph{quasistationary distribution} (in the submarkovian 
case, when $\lambda_0 < 1$), and is equal to the stationary distribution in the 
Markovian case, when $\lambda_0 = 1$.
The main interest of the uniform positivity condition is the following result 
on speed of convergence towards the principal eigenfunction $h_0$. 

\begin{theorem}[Spectral gap estimate]
\label{thm:Birkhoff2} 
If $\cP$ is uniformly positive, then for any bounded measurable $f: 
\cX\to\R_+$, there exist finite constants $M_1(f), M_2(f)$ such that 
\begin{equation}
\label{eq:exp_ergodicity_Birkhoff} 
 \bigabs{\cP^n f(x) - \lambda_0^n M_1(f) h_0(x)} \leqs M_2(f)
\lambda_0^n\Biggpar{1-\frac1{L^2}}^n h_0(x)
\end{equation} 
for all $x\in\cX$. In particular,  
\begin{equation}
 M_1(f) = \frac{\pscal{\pi_0}{f}}{\pscal{\pi_0}{h_0}}\;,
\end{equation}
which reduces in the Markovian case to $M_1(f) = \pscal{\pi_0}{f}$. 
\end{theorem}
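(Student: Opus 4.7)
My plan is to prove the estimate in two stages: a Doob-type reduction to the Markovian setting, followed by a contraction argument in Hilbert's projective metric. Using the principal eigenfunction $h_0$ furnished by Theorem~\ref{thm:Birkhoff}, I would introduce the conjugated operator
\begin{equation*}
 \bar\cP g(x) := \frac{1}{\lambda_0 h_0(x)} \cP(h_0 g)(x)\;,
\end{equation*}
whose integral kernel $\bar p(x,y) = p(x,y) h_0(y)/(\lambda_0 h_0(x))$ still satisfies the uniform positivity bounds of Definition~\ref{def_uniformly_positive} with the \emph{same} constant $L$, and for which $\bar\cP 1 = 1$ by the eigenfunction equation $\cP h_0 = \lambda_0 h_0$. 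Hence $\bar\cP$ is Markovian and admits the invariant probability measure $\bar\pi(\6x) = h_0(x)\pi_0(\6x)/\pscal{\pi_0}{h_0}$. Since $\cP^n f = \lambda_0^n h_0\, \bar\cP^n(f/h_0)$ and $\pscal{\bar\pi}{f/h_0} = \pscal{\pi_0}{f}/\pscal{\pi_0}{h_0} = M_1(f)$, the theorem reduces to showing $\|\bar\cP^n g - \pscal{\bar\pi}{g}\|_\infty \leqs M_2\,(1-1/L^2)^n$ for $g = f/h_0$.

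For the contraction step, I would integrate the sandwiching $\bar s(x)\bar m(y) \leqs \bar p(x,y) \leqs L\bar s(x)\bar m(y)$ against $\6y$ and use $\int \bar p(x,y)\6y = 1$ to deduce that $\bar s(x)\bar M \in [1/L, 1]$ with $\bar M = \int \bar m$. Setting $\nu(\6y) = \bar m(y)\6y/\bar M$ then yields the two-sided estimate $\bar p(x,y)/\nu(y) \in [1/L, L]$ uniformly in $x, y$. From this it follows that the Hilbert projective diameter of the image $\bar\cP(\cC_+)$ in the cone of strictly positive functions is bounded by $2\log L$, so by Birkhoff's contraction theorem, $\bar\cP$ is a strict contraction in Hilbert's metric with ratio $\kappa := \tanh((\log L)/2) = (L-1)/(L+1)$, which is itself bounded above by $1 - 1/L^2$. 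Combining this with $\bar\cP 1 = 1$ and iterating, one obtains $d_H(\bar\cP^n g, 1) \leqs \kappa^{n-1}\,d_H(\bar\cP g, 1)$.

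The main obstacle will be translating this projective contraction into the absolute $L^\infty$-bound stated in the theorem. The elementary identity $\sup u - \inf u = \sup u\,\bigpar{1 - \e^{-d_H(u,1)}}$ shows that for any constant $c$ with $\inf u \leqs c \leqs \sup u$ one has $\|u - c\|_\infty \leqs \|u\|_\infty\, d_H(u, 1)$; applying this with $u = \bar\cP^n g$ and $c = \pscal{\bar\pi}{g}$ (which lies in the interval $[\inf u, \sup u]$ by invariance of $\bar\pi$), and using $\|\bar\cP^n g\|_\infty \leqs \|g\|_\infty$ (from $\bar\cP^n 1 = 1$ and positivity preservation), yields the desired bound with $M_2 \propto \|g\|_\infty\,d_H(\bar\cP g, 1)/\kappa$. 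A subtle technical point is that $d_H(g, 1)$ may be infinite when $f$ vanishes somewhere, but $\bar\cP g$ is strictly positive after one step whenever $g \not\equiv 0$, thanks to the Doeblin-type lower bound $\bar\cP g \geqs L^{-1}\pscal{\nu}{g}$, so the loss of a single step can be absorbed into the constant. As an alternative route, one could skip the Hilbert metric entirely and use the Doeblin decomposition $\bar\cP = L^{-1}\cR + (1-L^{-1})\cQ$ with $\cR(x,\cdot) = \nu$ together with a coupling argument, obtaining the stronger rate $(1-1/L)$ of which $(1-1/L^2)$ is a convenient upper bound; undoing the Doob transform then yields the statement of Theorem~\ref{thm:Birkhoff2}.
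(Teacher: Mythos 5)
Your argument is correct, but it follows a genuinely different route from the one in the text. The proof given here stays entirely inside the Banach-lattice framework: writing $f_n=\cP^nf$, it takes the optimal constants with $\alpha_n h_0\leqs f_n/\lambda_0^n\leqs\beta_n h_0$, applies the strong comparability property of Proposition~\ref{prop_comparable} to the positive elements $r_n=f_n-\alpha_n\lambda_0^nh_0$ and $s_n=\beta_n\lambda_0^nh_0-f_n$, and deduces the recursion $\beta_{n+1}-\alpha_{n+1}\leqs(1-R^{-1})(\beta_n-\alpha_n)$ with $R\leqs\e^\Delta\leqs L^2$; the value of $M_1(f)$ is then identified separately through the dual eigenmeasure as in Remark~\ref{rem_dual}. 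You instead conjugate by the principal eigenfunction (a Doob $h$-transform), reducing at once to a genuinely Markovian kernel $\bar p(x,y)=p(x,y)h_0(y)/(\lambda_0 h_0(x))$ that inherits the uniform positivity bound with the same constant $L$, and you conclude either by the Birkhoff contraction theorem in the Hilbert metric (ratio $\tanh(\tfrac12\log L)=(L-1)/(L+1)$) or by a Doeblin coupling (ratio $1-1/L$), both indeed dominated by $1-1/L^2$, converting the projective contraction into the sup-norm bound via $\sup u-\inf u\leqs(\sup u)\,d_H(u,1)$. Your handling of the two technical points is the right one: $g=f/h_0$ may be unbounded (when $h_0$ is not bounded below) and has infinite projective distance to $1$ when $f$ vanishes, but a single application of $\bar\cP$ pins $\bar\cP g$ between $L^{-1}\pscal{\nu}{g}$ and $L\pscal{\nu}{g}$, so the lost step only enters the constant $M_2(f)$. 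As for what each approach buys: the lattice proof never divides by $h_0$ and carries over verbatim to general Banach lattices (bounded measurable functions, finite signed measures) where no kernel density is assumed, whereas your transform exploits the strict positivity of $h_0$ and the density formulation; in exchange, the constant $M_1(f)=\pscal{\pi_0}{f}/\pscal{\pi_0}{h_0}$ drops out automatically as $\pscal{\bar\pi}{f/h_0}$ for the invariant law $\bar\pi(\6x)=h_0(x)\pi_0(\6x)/\pscal{\pi_0}{h_0}$ (no separate dual argument is needed), the submarkovian case becomes transparent, and you even obtain a slightly sharper contraction rate before relaxing it to the stated $1-1/L^2$.
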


Note that in the Markovian case $\lambda_0 = 1$, the 
bound~\eqref{eq:exp_ergodicity_Birkhoff} is equivalent to the exponential 
ergodicity result~\eqref{eq:expo_ergodicity_HM} of 
Theorem~\ref{thm:Hairer_Mattingly}, but with an explicit value of the 
contraction constant $\gamma$ given by $1-L^{-2}$.  

In what follows, we are going to provide a detailed proof of 
Theorem~\ref{thm:Birkhoff}, starting with some simpler situations in order to 
build the intuition. 

\subsection{Two-dimensional case}

The simplest case occurs when $\cX$ is a discrete set of cardinality $2$. Then 
$\cP$ is a linear operator on $\cE=\R^2$, that is, a $2\times2$ matrix 
\begin{equation}
\cP=
 \begin{pmatrix}
 a & b \\ c & d
 \end{pmatrix}
\end{equation} 
with strictly positive entries. Therefore $\cP$ maps the cone 
$\cE^+=\R_+\times\R_+$ strictly into itself. Iterating $\cP$, the image of 
$\cE^+$ becomes thinner and thinner, and concentrates on the eigenvector of 
$\cP$ for the largest eigenvalue (\figref{fig:positive_cone}). However, unless 
the principal eigenvalue $\lambda_0$ of $\cP$ is $1$, iterates of a vector in 
$\cE^+$ will not converge to an eigenvector: they will shrink to $0$ if 
$\lambda_0<1$ and diverge if $\lambda_0>1$. 

To avoid this, one can identify all vectors $f, g$ such that $f=\lambda g$ for
some $\lambda>0$. In other words, this amounts to working on the projective 
line. Iterates of a projective line in $\cE^+$ will converge to the eigenspace 
associated with $\lambda_0$.

Birkhoff introduces Hilbert's \emph{projective metric} by defining, for
$f=(f_1,f_2)$ and $g=(g_1,g_2)\in \cE^+$, the distance  
\begin{equation}
 \theta(f,g) = \biggabs{\log\biggpar{\frac{f_2g_1}{f_1g_2}}}\;.
\end{equation} 
Note that this distance is infinite if $f$ or $g$ belongs to a coordinate axis; 
in fact, it induces a hyperbolic geometry. Also note that by definition, 
\begin{equation}
\label{3}
 \theta(\lambda f,\mu g) = \theta(f,g)
 \qquad \forall \lambda,\mu>0\;.
\end{equation} 

\begin{lemma}[Projective operator norm of $\cP$]
The operator norm of $\cP$ in the projective metric is given by  
\begin{equation}
\label{4}
\sup_{f,g\in \cE^+} 
 \frac{\theta(\cP f,\cP g)}{\theta(f,g)} = \tanh\Biggpar{\frac{\Delta}{4}}\;,
\end{equation} 
where $\Delta=\abs{\log(ad/bc)}$ is the diameter, in the projective norm, of
$\cP(\cE^+)$. 
\end{lemma}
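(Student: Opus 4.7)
The plan is to use the scale invariance property~\eqref{3} of $\theta$ to reduce the computation to a one-dimensional problem on the projective line, where $\theta$ becomes the standard absolute value.

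First, I would parametrize each ray of $\cE^+$ by its projective coordinate $u = f_2/f_1 \in (0,\infty)$, so that $\theta(f,g) = |\log u - \log v|$ with $v = g_2/g_1$. A direct computation shows that $\cP$ descends to the projective line as the M\"obius transformation $\phi(u) = (c+du)/(a+bu)$, so that $\theta(\cP f, \cP g) = |\log \phi(u) - \log \phi(v)|$. Introducing the coordinate $x = \log u$ and the function $\psi(x) = \log \phi(e^x)$, the ratio appearing in~\eqref{4} is exactly $|\psi(x_f) - \psi(x_g)|/|x_f - x_g|$, whose supremum over $x_f \neq x_g$ is the Lipschitz constant of $\psi$ on $\R$.

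Next, I would compute
\[
\psi'(x) = \frac{e^x(ad-bc)}{(c+de^x)(a+be^x)}\;,
\]
which has constant sign (the sign of $ad-bc$), so by the mean value theorem the Lipschitz constant of $\psi$ equals $\sup_x |\psi'(x)|$. Setting $t = e^x > 0$, the stationarity condition $(\log|\psi'|)'=0$ reduces to $ac = bdt^2$, giving a unique critical point $t^* = \sqrt{ac/(bd)}$ that is easily seen to be a maximum.

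The final step, which I expect to be the main algebraic obstacle, is identifying the optimal value with $\tanh(\Delta/4)$. Substituting $t^*$ yields $(c+dt^*)(a+bt^*) = 2ac + (ad+bc)\sqrt{ac/(bd)}$, and a short manipulation based on the identity $(\sqrt{ad}+\sqrt{bc})^2 = ad+bc+2\sqrt{abcd}$ gives
\[
\sup_x |\psi'(x)| = \frac{|\sqrt{ad}-\sqrt{bc}|}{\sqrt{ad}+\sqrt{bc}}\;.
\]
The half-angle identity $\tanh(s/2) = (e^s-1)/(e^s+1)$ applied to $s = \frac12 |\log(ad/bc)| = \Delta/2$ then recognizes this quantity as $\tanh(\Delta/4)$. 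Finally, to justify that $\Delta$ is indeed the $\theta$-diameter of $\cP(\cE^+)$, I would observe that the image cone is spanned by the two extreme rays $(a,c)$ and $(b,d)$ (the images of the coordinate axes), whose projective distance is precisely $|\log(bc/ad)| = \Delta$, and any $\cP f$ lies on a ray between these two, so no pair in $\cP(\cE^+)$ is further apart.
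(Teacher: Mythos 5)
Your proposal is correct and follows essentially the same route as the paper: reduce to the induced homographic map on the projective coordinate, maximise the logarithmic derivative $t(ad-bc)/\bigl((a+bt)(c+dt)\bigr)$ at $t^*=\sqrt{ac/(bd)}$, and identify the maximal value with $\tanh(\Delta/4)$. Your passage through log-coordinates and the mean value theorem makes the step from the infinitesimal contraction rate to the supremum over pairs slightly more explicit than the paper's wording, and your closing remark verifying that $\Delta$ is indeed the projective diameter of $\cP(\cE^+)$ (via the images $(a,c)$ and $(b,d)$ of the coordinate axes) is a welcome but minor addition.
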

\begin{proof}
We may assume, without limitation of generality, that $ad > bc$ (note that in 
the case $ad = bc$, the cone $\cE^+$ is projected to a half line by $\cP$, so 
that the operator norm of $\cP$ is zero). Let $f = (1,x)$ be a point in 
$\cE^+$. Then 
\begin{equation}
 \cP f = (a + bx, c + dx) = \lambda \bigpar{1, \ph(x)}\;,
\end{equation} 
where $\lambda = a+bx$, and $\ph$ denotes the homographic transformation 
\begin{equation}
 x \mapsto \ph(x) = \frac{c+dx}{a+bx}\;.
\end{equation} 
The projective distance between two infinitesimally close points $f=(1,x)$ and 
$(1,x+\6x)$ is given by $\6\theta(x) = \abs{\log(x/(x+\6x))} = \abs{\6x}/x$. 
Therefore, we obtain 
\begin{equation}
 \frac{\6\theta(\ph(x))}{\6\theta(x)}
 = \frac{x\ph'(x)}{\ph(x)} 
 = \frac{x(ad-bc)}{(a+bx)(c+dx)}\;.
\end{equation} 
It is straightforward to check that this expression is maximal in $x = 
\sqrt{ac/(bd)}$, where it has value 
\begin{equation}
 \frac{ad-bc}{ad + bc + 2\sqrt{abcd}}
 = \frac{\e^\Delta - 1}{1 + \e^\Delta + 2\e^{\Delta/2}}
 = \tanh\Biggpar{\frac{\Delta}{4}}\;. 
\end{equation} 
Since this value gives the smallest rate of contraction, the claim follows. 
\end{proof}

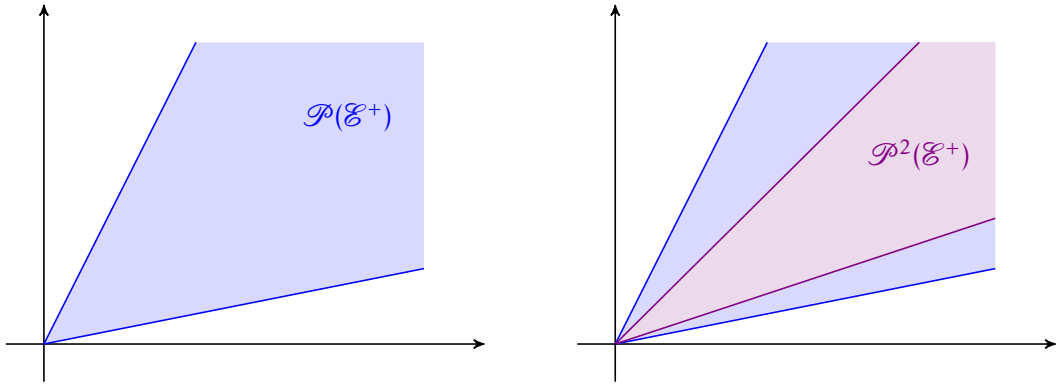
\begin{figure}
\begin{center}
\begin{tikzpicture}[-,auto,node distance=1.0cm, thick,
main node/.style={draw,circle,fill=white,minimum size=4pt,inner sep=0pt},
full node/.style={draw,circle,fill=black,minimum size=4pt,inner sep=0pt}]

\path[fill = blue!15] (0,0) -- (5,1) -- (5,4) -- (2,4) -- (0,0);

\draw[semithick,->,>=stealth'] (-0.5, 0.0) -- (5.8, 0.0);
\draw[semithick,->,>=stealth'] (0.0, -0.5) -- (0.0, 4.5);

\draw[semithick, blue] (0,0) -- (5,1);
\draw[semithick, blue] (0,0) -- (2,4);

\node[blue] at (4,3) {$\cP(\cE^+)$};
     
\end{tikzpicture}
\hspace{10mm}
\begin{tikzpicture}[-,auto,node distance=1.0cm, thick,
main node/.style={draw,circle,fill=white,minimum size=4pt,inner sep=0pt},
full node/.style={draw,circle,fill=black,minimum size=4pt,inner sep=0pt}]

\path[fill = blue!15] (0,0) -- (5,1) -- (5,4) -- (2,4) -- (0,0);
\path[fill = violet!15] (0,0) -- (5,{5/3}) -- (5,4) -- (4,4) -- (0,0);

\draw[semithick,->,>=stealth'] (-0.5, 0.0) -- (5.8, 0.0);
\draw[semithick,->,>=stealth'] (0.0, -0.5) -- (0.0, 4.5);

\draw[semithick, blue] (0,0) -- (5,1);
\draw[semithick, blue] (0,0) -- (2,4);

\draw[semithick, violet] (0,0) -- (5,{5/3});
\draw[semithick, violet] (0,0) -- (4,4);

\node[violet] at (4,2.5) {$\cP^2(\cE^+)$};
     
\end{tikzpicture}
\end{center}
\vspace{-4mm}
 \caption[]{Geometrical explanation for the convergence of iterates of a 
two-dimensional positive operator $\cP$. The positive cone 
$\cE^+=\R_+\times\R_+$ is mapped to a smaller cone $\cP(\cE^+)$, strictly 
contained in $\cE_+$. Each iterate $\cP^n(\cE^+)$ has a strictly smaller 
diameter, and the sequence of these iterates converges to a half line.}
 \label{fig:positive_cone}
\end{figure}

\subsection{General vector space}

Let now $\cE$ be a general vector space, of finite or infinite dimension. 
Again, let $\cE^+$ be the cone of elements whose components are all
non-negative. 

\begin{definition}[Projective metric]
\label{def_theta} 
Let $f,g\in\cE^+$. Consider the two-dimensional vector space $E$ spanned by $f$
and $g$. The intersection $C=E\cap\cE^+$ is a cone (it is invariant under
multiplication by positive constants). There exists a linear map $A$, mapping
$C$ to $\R_+\times\R_+$. We define 
\begin{equation}
 \theta(f,g;\cE^+) = \theta(Af,Ag)\;.
\end{equation} 
The definition does not depend on the choice of the map $A$ (this follows 
from \eqref{3}). $\theta$ is called the \emph{projective metric}
associated with $\cE^+$. 
\end{definition}

\begin{figure}
\begin{center}
\begin{tikzpicture}[-,auto,node distance=1.0cm, thick,
main node/.style={draw,circle,fill=white,minimum size=4pt,inner sep=0pt},
full node/.style={draw,circle,fill=black,minimum size=4pt,inner sep=0pt}]

\path[fill = blue!15] (0,0) -- (8,0) -- (8,6) -- (0,6) -- (0,0);

\draw[semithick,->,>=stealth'] (-0.5, 0.0) -- (8.8, 0.0);
\draw[semithick,->,>=stealth'] (0.0, -0.5) -- (0.0, 6.5);

\draw[thick, red] (0,5) -- (5,0);
\draw[semithick, violet] (0,0) -- (8,2);
\draw[semithick, violet] (0,0) -- (4,6);

\node[main node] at (0,5) {};
\node[main node] at (5,0) {};
\node[main node] at (4,1) {};
\node[main node] at (2,3) {};

\node[violet] at (4.2,1.3) {$f$};
\node[violet] at (2.3,3.0) {$g$};
\node[violet] at (5,-0.35) {$f_{\alpha^*}$};
\node[violet] at (-0.35,5) {$g_{\beta^*}$};
\node[blue] at (6,5) {$A(E\cap\cE^+)$};
     
\end{tikzpicture}
\end{center}
\vspace{-4mm}
 \caption[]{Construction of the map $A$ via the points $f_{\alpha^*}$ and 
$g_{\beta^*}$. The points $f, g, \dots$ should be viewed as equivalence classes 
represented by violet half-lines.}
 \label{fig:projective_norm}
\end{figure}
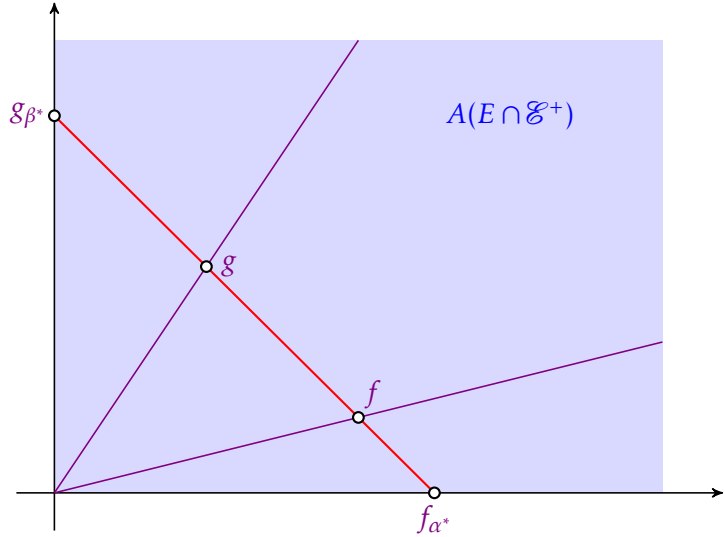

To understand this definition better, consider the
line $\set{f_\alpha=f-\alpha g, \alpha\in\R}$. If $\alpha\leqs0$, then
$f_\alpha$, being the sum of two positive elements, is in $\cE^+$. When
$\alpha>0$, however, the components of $f_\alpha$ decrease with increasing
$\alpha$, and change sign at some point. Let 
\begin{equation}
\label{7} 
 \alpha^* = \sup\bigsetsuch{\alpha>0}{f-\alpha g \in\cE^+}
\end{equation} 
(see \figref{fig:projective_norm}).
Similarly, we define 
\begin{equation}
\label{8} 
\beta^* = \sup\bigsetsuch{\beta>0}{g-\beta f \in\cE^+}\;.
\end{equation} 
Then the linear map of matrix (in the basis $(f,g)$) 
\begin{equation}
\label{9} 
A = 
 \begin{pmatrix}
 1 & \beta^* \\ \alpha^* & 1 
 \end{pmatrix}
\end{equation}
maps $f_{\alpha^*}$ to a multiple of $(1,0)$ and $f_{\beta^*}$ to a multiple of 
$(0,1)$. It thus satisfies the definition. 
Furthermore we have $Af=(1,\alpha^*)$ and $Ag=(\beta^*,1)$, so that 
\begin{equation}
\label{10} 
 \theta(f,g;\cE^+) = \theta(Af,Ag) = \bigabs{\log(\alpha^*\beta^*)}\;.
\end{equation} 

\begin{proposition}[Operator norm of $\cP$]
Let $\cP:\cE^+\to\cE^+$ be a linear map. If $\cP(\cE^+)$ has finite diameter 
$\Delta$,
then the operator norm of $\cP$ is given by 
\begin{equation}
\sup_{f,g\in \cE^+} 
 \frac{\theta(\cP f,\cP g;\cE^+)}{\theta(f,g;\cE^+)} = 
\tanh\Biggpar{\frac{\Delta}{4}}\;.
\end{equation} 
\end{proposition}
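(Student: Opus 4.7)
The plan is to reduce the general statement to the two-dimensional case established in the preceding lemma, by restricting $\cP$ to the two-dimensional subspace spanned by arbitrary $f,g \in \cE^+$. Fix such $f$ and $g$ at positive finite projective distance, set $E = \vspan(f,g)$, and assume first that $\cP|_E$ is injective so that $F = \cP(E)$ is two-dimensional; the non-injective case is trivial since $\cP f$ and $\cP g$ are then proportional and $\theta(\cP f,\cP g;\cE^+)=0$. The cones $C = E \cap \cE^+$ and $D = F \cap \cE^+$ are two-dimensional, and the crucial inclusion $\cP(C) \subseteq D$ holds because any element of $C$ lies in $\cE^+$, so its image lies in $\cP(\cE^+) \subseteq \cE^+$ and by construction also in $F$.

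By Definition~\ref{def_theta}, the projective distances $\theta(f,g;\cE^+)$ and $\theta(\cP f,\cP g;\cE^+)$ are computed by means of any linear isomorphisms sending $C$ and $D$ onto $\R_+ \times \R_+$. Fix such isomorphisms $\phi: E \to \R^2$ and $\psi: F \to \R^2$, and form
\begin{equation*}
\widetilde \cP = \psi \circ \cP|_E \circ \phi^{-1} : \R^2 \to \R^2\;.
\end{equation*}
The inclusion $\cP(C) \subseteq D$ ensures that $\widetilde\cP$ maps $\R_+ \times \R_+$ into itself and therefore has non-negative entries; these entries must in fact be strictly positive, for otherwise some interior point of $\R_+ \times \R_+$ would be sent to a coordinate axis, producing an infinite projective distance in the image and contradicting the finiteness of the diameter $\Delta$ of $\cP(\cE^+)$. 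The preceding two-dimensional lemma thus applies to $\widetilde\cP$.

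Applying that lemma yields that the projective operator norm of $\widetilde\cP$ equals $\tanh(\widetilde\Delta/4)$, where $\widetilde\Delta$ is the diameter of $\widetilde\cP(\R_+ \times \R_+)$. Since $\psi$ is a projective isometry between $D$ and $\R_+ \times \R_+$, $\widetilde\Delta$ coincides with the diameter of $\cP(C)$ measured inside $\cE^+$, and this is at most $\Delta$ since $\cP(C) \subseteq \cP(\cE^+)$. Hence
\begin{equation*}
\theta(\cP f,\cP g;\cE^+) \leqs \tanh\Bigpar{\frac{\Delta}{4}}\, \theta(f,g;\cE^+)\;,
\end{equation*}
giving the upper bound on the supremum. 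For the matching lower bound I would use the definition of $\Delta$: for each $\eps > 0$ pick $u,v \in \cE^+$ with $\theta(\cP u,\cP v;\cE^+) > \Delta-\eps$, apply the two-dimensional lemma inside $\vspan(u,v)$ to produce elements of $C \subset \cE^+$ whose contraction ratio is arbitrarily close to $\tanh((\Delta-\eps)/4)$, and let $\eps \downarrow 0$.

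The main obstacle is the rigorous verification that the Hilbert projective metric transfers faithfully through the linear isomorphisms $\phi$ and $\psi$, which amounts to a consistency check that Definition~\ref{def_theta} does not depend on the chosen linear map to $\R_+ \times \R_+$; this is a purely linear-algebraic point but easy to overlook. Once this reduction is justified, the explicit $\tanh(\Delta/4)$ identity obtained in the two-dimensional lemma transports to the general setting without further work.
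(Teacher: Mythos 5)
Your proof is correct and takes essentially the same route as the paper: reduce to the two-dimensional lemma on the plane spanned by $f$ and $g$ (the paper does this tersely via the endpoints $a,b$ of $E\cap\cE^+$, you via explicit conjugation with the identifications of Definition~\ref{def_theta}), observe that the diameter of the image of this two-dimensional cone is at most $\Delta$, and obtain equality by an approximation argument with pairs whose images nearly realise $\Delta$. The only small slip is in the strict-positivity check: a zero entry of $\widetilde\cP$ does not send an interior point to a coordinate axis, but it does send an extreme ray of $C$ to the boundary, forcing the image cone to have infinite projective diameter, which is the contradiction with $\Delta<\infty$ you actually need.
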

\begin{proof}
If $\theta(f,g;\cE^+)<\infty$, let $a, b$ be the endpoints of the intersection
of $\cE^+$ with the line through $f$ and $g$. By definition of $\Delta$, 
$\theta(\cP a,\cP b;\cE^+)\leqs\Delta$. Thus the operator norm is bounded by 
$\tanh(\Delta/4)$ as a consequence of~\eqref{4}. To show equality, one uses an
approximation argument for a sequence of $(f_n,g_n)$ of growing projective
distance. 
\end{proof}

\begin{theorem}[Convergence of iterates in projective space]
\label{thm_convergence} 
If $\cP(\cE^+)$ has finite diameter $\Delta$ and the cone $\cE^+$ is complete 
with respect to the distance $\theta$, then there is a unique ray $h$ in 
$\cE^+$ to which $\cP^n f$ converges for all $f\in\cE^+$.  
\end{theorem}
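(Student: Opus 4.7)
The plan is to run a Banach fixed-point style argument on the quotient of $\cE^+$ by positive rescaling, exploiting the preceding proposition. Since $\cP(\cE^+)$ has finite diameter $\Delta$, the contraction constant $\kappa := \tanh(\Delta/4)$ is strictly less than $1$, so $\cP$ acts as a strict contraction in the projective metric $\theta$ as soon as the distances it is applied to are finite.

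Fix any $f \in \cE^+$. The iterates $\cP f$ and $\cP^2 f$ both lie in $\cP(\cE^+)$, so $\theta(\cP^2 f, \cP f; \cE^+) \leqs \Delta$ by definition of the diameter. Iterating the contraction bound from the previous proposition,
\[
\theta(\cP^{n+1} f, \cP^n f; \cE^+) \leqs \kappa^{n-1} \Delta
\qquad \forall n \geqs 1,
\]
and the triangle inequality for $\theta$ on the space of rays (Definition~\ref{def_theta}) gives, for $m > n \geqs 1$,
\[
\theta(\cP^m f, \cP^n f; \cE^+)
\leqs \sum_{k=n}^{m-1} \kappa^{k-1} \Delta
\leqs \frac{\kappa^{n-1} \Delta}{1-\kappa}.
\]
Thus $(\cP^n f)_{n\geqs 1}$ is Cauchy for $\theta$, and by the completeness assumption it converges to some ray $h_f$ in $\cE^+$.

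To remove the dependence on $f$ and obtain uniqueness, I observe that for any $f, g \in \cE^+$, the iterates $\cP f, \cP g$ both lie in $\cP(\cE^+)$, hence $\theta(\cP f, \cP g; \cE^+) \leqs \Delta$, and therefore
\[
\theta(\cP^n f, \cP^n g; \cE^+) \leqs \kappa^{n-1} \Delta \longrightarrow 0.
\]
So $h_f = h_g =: h$, and this ray is the unique possible limit ray: if $h'$ is another ray to which some sequence $\cP^{n_k} f$ converges, then $\theta(h, h'; \cE^+) = 0$, i.e.\ $h = h'$.

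The main subtle point I expect to have to treat carefully is the very first step: $\theta(f, \cP f; \cE^+)$ may well be infinite (typically if $f$ sits on a face of $\cE^+$ that $\cP$ moves off), so one cannot naively apply the contraction estimate starting at $n=0$. The fix is precisely to start the Cauchy estimate at $n=1$, exploiting the uniform bound $\Delta$ on the diameter of $\cP(\cE^+)$ to control $\theta(\cP^2 f, \cP f; \cE^+)$ without reference to $f$ itself; from then on the contraction takes over. A secondary point is that throughout, $\theta$ must be interpreted as a metric on the quotient of $\cE^+$ by positive scalars (so that "convergence of $\cP^n f$" means convergence of the corresponding rays), which is built into Definition~\ref{def_theta} and is consistent with the completeness hypothesis of the theorem.
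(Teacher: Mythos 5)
Your proof is correct and follows exactly the route the paper intends: the text merely says \lq\lq the proof is a standard contraction argument\rq\rq, and you have supplied the details, including the right way to start the Cauchy estimate at $n=1$ using the finite diameter of $\cP(\cE^+)$ to bypass the possibly infinite distance $\theta(f,\cP f;\cE^+)$, and then using the hypothesised completeness and the contraction bound $\tanh(\Delta/4)<1$ to get a common limit ray for all $f$. The only ingredient you invoke that the paper leaves implicit is the triangle inequality for the projective metric on rays, which is standard for Hilbert's metric and harmless to assume here.
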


The proof is a standard contraction argument. A proof of completeness will be
given in Corollary~\ref{cor_completeness} below. 

\subsection{Integral transformations and Jentzsch's theorem}

We now return to the situation described at the beginning of this section, where 
$\cX$ is a Borel set of $\R^n$, and $\cE$ is the Banach space of continuous 
functions $f:\cX\to\R$, equipped with the supremum norm. Let $\cE^+$ denote the 
cone of positive functions $f:\cX\to\R_+$. Consider the integral operator 
$\cP:\cE^+\to\cE^+$ defined by~\eqref{eq:integral_semigroups}.

\begin{proposition}[Bound on the projective diameter]
If $\cP$ satisfies the uniform positivity 
condition~\eqref{eq:uniform_positivity}, then the diameter of $\cP(\cE^+)$ 
satisfies 
\begin{equation}
\label{eq:bound_Delta} 
 \Delta \leqs 2\log L\;.
\end{equation} 
\end{proposition}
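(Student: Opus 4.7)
The plan is to exploit the product structure $s(x)m(y)$ in the uniform positivity bound to show that every image $\cP f$ lies within a prescribed projective distance of the reference function $s$. First I would apply the definition of $\cP$ together with~\eqref{eq:uniform_positivity}: for any $f\in\cE^+$ and any $x\in\cX$,
\begin{equation}
c(f)\, s(x) \leqs (\cP f)(x) \leqs L\, c(f)\, s(x)\;,
\qquad
c(f) \defby \int_\cX m(y) f(y)\6y\;,
\end{equation}
where $c(f) > 0$ since $m > 0$ and $f\in\cE^+\setminus\set{0}$.

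Next I would unpack the projective distance. Given two positive functions $F, G \in \cE^+$, the quantities $\alpha^*$ and $\beta^*$ defined in~\eqref{7}--\eqref{8} are $\alpha^* = \inf_x F(x)/G(x)$ and $\beta^* = \inf_x G(x)/F(x) = 1/\sup_x F(x)/G(x)$, so that by~\eqref{10},
\begin{equation}
\theta(F,G;\cE^+) = \log\biggpar{\frac{\sup_{x\in\cX} F(x)/G(x)}{\inf_{x\in\cX} F(x)/G(x)}}\;.
\end{equation}
Applying this with $F = \cP f$ and $G = \cP g$, the crucial observation is that $s(x)$ cancels in the ratio $(\cP f)(x)/(\cP g)(x)$, leaving only a quotient of two functions each pinched between $c(f)$ and $Lc(f)$ (respectively $c(g)$ and $Lc(g)$).

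Concretely, writing $(\cP f)(x) = c(f) s(x) \rho_f(x)$ with $1\leqs \rho_f(x) \leqs L$, and similarly for $g$, one gets
\begin{equation}
\frac{(\cP f)(x)}{(\cP g)(x)} = \frac{c(f)}{c(g)} \cdot \frac{\rho_f(x)}{\rho_g(x)}\;,
\qquad
\frac{1}{L} \leqs \frac{\rho_f(x)}{\rho_g(x)} \leqs L\;,
\end{equation}
so that the ratio of the supremum to the infimum of $(\cP f)/(\cP g)$ is at most $L/(1/L) = L^2$. Therefore $\theta(\cP f,\cP g;\cE^+) \leqs \log(L^2) = 2\log L$, and taking the supremum over $f, g\in\cE^+$ yields~\eqref{eq:bound_Delta}. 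There is no real obstacle here; the only point that needs a little care is the characterization of $\alpha^*, \beta^*$ as infima of the pointwise ratio, which follows directly from the definitions~\eqref{7}--\eqref{8} applied to $\cE^+$ viewed as the cone of pointwise non-negative functions.
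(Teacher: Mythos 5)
Your proof is correct and follows essentially the same route as the paper: pinch $(\cP f)(x)$ between $c(f)s(x)$ and $Lc(f)s(x)$ via the uniform positivity bound, then bound the projective distance through $\alpha^*$ and $\beta^*$. The only cosmetic difference is that the paper normalises $\int f m = \int g m = 1$ and checks $\cP f - \frac1L \cP g \geqs 0$ directly, whereas you keep the constants $c(f), c(g)$ and let them cancel in the pointwise ratio — the two computations are equivalent.
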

\begin{proof}
Let $f,g\in\cE^+$. Without limiting the generality, we may assume
\begin{equation}
\int_{\cX} f(y)m(y)\6y = \int_{\cX} g(y)m(y)\6y = 1 
\end{equation} 
This implies 
\begin{equation}
 s(x) \leqs (\cP f)(x)\;,\; (\cP g)(x) \leqs L s(x)
 \qquad \forall x\in\cX\;.
\end{equation} 
It follows that $(\cP f) - \frac{1}{L}(\cP g) \geqs 0$ and $(\cP g) - 
\frac{1}{L}(\cP f)
\geqs 0$. Thus $\alpha^*, \beta^*$ defined in~\eqref{7} and~\eqref{8} are
greater or equal than $1/L$, that is $1/\alpha^*, 1/\beta^* \leqs L$ 
and the result follows from~\eqref{10}.  
\end{proof}

Applying Theorem~\ref{thm_convergence}, we recover Jentzsch's 
generalisation of the Perron--Frobenius theorem to integral 
operators~\cite{Jentzsch1912}:

\begin{theorem}[Perron--Frobenius theorem for integral operators]
If $\cP$ is uniformly positive, then there exists a strictly positive
$h_0\in\cE^+$ and $\lambda_0>0$ such that $\cP h_0=\lambda_0h_0$. Moreover, for 
any
$f\in\cE^+$, the sequence of lines spanned by $\cP^nf$ converges to the line
spanned by $h_0$. 
\end{theorem}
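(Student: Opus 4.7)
The plan is to combine the three preceding results in a straightforward way: the projective-diameter bound $\Delta \leqs 2\log L$ shows that $\cP(\cE^+)$ has finite projective diameter, the operator-norm proposition then makes $\cP$ a strict contraction in Hilbert's projective metric with rate $\tanh(\Delta/4)<1$, and Theorem~\ref{thm_convergence} (together with the forthcoming completeness result, Corollary~\ref{cor_completeness}) produces a unique attracting ray. The theorem I have to prove is then essentially an extraction statement: from the unique attracting ray one must exhibit an honest fixed \emph{function} $h_0$ and a strictly positive scalar eigenvalue $\lambda_0$, and argue that $h_0$ is itself strictly positive.

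First I would fix an arbitrary nonzero $f\in\cE^+$ and apply Theorem~\ref{thm_convergence}: the sequence of rays $[\cP^n f]$ converges, in the projective metric $\theta$, to a ray that is independent of $f$. Choose any representative $h_0$ of that ray. To obtain the eigenvalue relation, I would use that $\cP$ is continuous on $\cE^+\setminus\set{0}$ in the projective metric (indeed it is a strict contraction by~\eqref{4} and~\eqref{eq:bound_Delta}), so that
\begin{equation*}
[\cP h_0] = \Bigbrak{\cP\bigpar{\lim_{n\to\infty} \cP^n f}} = \lim_{n\to\infty} [\cP^{n+1} f] = [h_0]\;.
\end{equation*}
Hence $\cP h_0$ lies on the same ray as $h_0$, i.e.\ $\cP h_0 = \lambda_0 h_0$ for some scalar $\lambda_0$.

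Next I would establish the positivity statements using the lower bound in the uniform positivity condition~\eqref{eq:uniform_positivity}. For any nonzero $f\in\cE^+$, one has
\begin{equation*}
(\cP f)(x) \geqs s(x) \int_{\cX} m(y) f(y)\6y > 0 \qquad \forall x\in\cX\;,
\end{equation*}
since $m$ is strictly positive and $f$ is nonnegative and not identically zero. Applied to $f=h_0$ this shows that $\cP h_0 = \lambda_0 h_0$ is strictly positive, which forces $\lambda_0 > 0$ and $h_0$ strictly positive on $\cX$. (If one worries that the limit $h_0$ might be the zero function, one can simply normalise the iterates, e.g.\ fix $\int m\,\cP^n f = 1$ at every stage; the lower bound above then prevents any representative from collapsing to $0$.) Finally, the convergence of the ray spanned by $\cP^n f$ to the ray spanned by $h_0$ is exactly the conclusion of Theorem~\ref{thm_convergence} applied to the same cone.

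The only genuinely nontrivial input is Theorem~\ref{thm_convergence}, which relies on completeness of $\cE^+$ in the projective metric; that is precisely what Corollary~\ref{cor_completeness} will furnish, so the main obstacle is logically deferred. The mild subtlety inside the present argument is making sure the projective limit really yields a nonzero continuous function (rather than an abstract equivalence class), which is why the normalisation through $\int m\, \cP^n f = 1$ combined with the two-sided bound $s(x) \leqs (\cP f)(x) \leqs L s(x)$ used in the proof of the diameter bound is convenient: it gives equicontinuous, uniformly bounded below (by $s(x)$) representatives whose limit is automatically a strictly positive element of $\cE^+$.
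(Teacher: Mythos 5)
Your proposal is correct and follows essentially the same route as the paper: apply the projective-diameter bound to get $\Delta\leqs2\log L<\infty$, invoke Theorem~\ref{thm_convergence} (via the completeness in Corollary~\ref{cor_completeness}) to obtain a unique attracting ray, and then extract a strictly positive eigenfunction and eigenvalue from the lower bound $s(x)m(y)\leqs p(x,y)$. The paper's own proof is a one-line appeal to those same ingredients; you have usefully filled in the details of the extraction step.
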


\begin{remark}[Dual picture]
\label{rem_dual} 
The dual map $\cP^*$ given by 
\begin{equation}
 \bigpar{\cP^*v}(y) := \bigpar{v\cP}(y) = \int_{\cX} v(x) p(x,y) \6x
\end{equation} 
satisfies $\pscal{\cP^*v}{f} = \pscal{x}{\cP f}$, where $\pscal{\cdot}{\cdot}$ 
denotes the usual inner product. Jentzsch's theorem shows the existence of a 
strictly positive function $p_0$ such that $\cP^*p_0=\lambda_0p_0$, with a 
similar convergence property. The eigenvalue $\lambda_0$ is the same, since 
\begin{equation}
 \lambda_0\pscal{p_0}{h_0} = \pscal{p_0}{\cP h_0} = \pscal{\cP^*p_0}{h_0} 
\end{equation} 
and $\pscal{p_0}{h_0}>0$. Furthermore, we have for any $f\in\cE^+$ that 
\begin{equation}
 \lambda_0^n \pscal{p_0}{f} = \pscal{(\cP^*)^np_0}{f} 
 = \pscal{p_0}{\cP^n f}\;,
\end{equation} 
which implies that 
\begin{equation}
 \lim_{n\to\infty} \frac{\cP^nf}{\lambda_0^n} = c(f) h_0
 \qquad
 \text{where } c(f) = \frac{\pscal{p_0}{f}}{\pscal{p_0}{h_0}}\;.
\end{equation} 
\end{remark}

\subsection{Banach lattices and spectral gap}

Birkhoff extends the theory to Banach lattices, that is, Banach spaces $\cE$
with a (partial) order in which every pair of elements $f,g$ admits an infimum
$f\wedge g$ and a supremum $f\vee g$. 
Examples of vector lattices include 
\begin{enumerate}
 \item the space of continuous functions $f:\cX\to\R$, equipped with the
supremum norm, with pointwise order given by 
\begin{equation}
 f\leqs g
 \qquad \Leftrightarrow\qquad
 f(x)\leqs g(x) \quad \forall x\in\cX\;,
\end{equation} 
and 
\begin{equation}
 (f\wedge g)(x) = f(x)\wedge g(x)
 \qquad \text{and} \qquad 
 (f\vee g)(x) = f(x)\vee g(x)\;;
\end{equation} 

\item the space of bounded measurable functions $f:\cX\to\R$, with the same
norm;

\item the space of finite signed measures $\mu$ on $\cX$, equipped with the
$L^1$-norm.
\end{enumerate}
The last two examples are associated with Markov kernels $\cP(x,A)$ and the
(dual) maps introduced in~\eqref{eq:integral_semigroups}.
As in Definition~\ref{def_uniformly_positive}, the Markov kernel is called 
uniformly positive if there exist a positive function $f$, a measure $\nu$ 
(absolutely continuous with respect to Lebesgue measure, with strictly positive 
density\footnote{For results on more general measures, 
see~\cite{Nummelin84,Orey1971}.}) and a constant $L$ such that 
\begin{equation}
 s(x) \nu(A) \leqs \cP(x,A) \leqs L s(x) \nu(A)
 \qquad
 \forall x\in\cX, \forall A\subset \cX\;.
\end{equation} 
A similar computation as above shows that $\cP(\cE^+)$ has projective diameter 
$\Delta\leqs 2\log L$. Then similar arguments as before show that $\cP$ admits a 
unique principal eigenvalue $\lambda_0$, a measure $\pi_0$ such that 
$\pi_0\cP=\lambda_0\pi_0$, called the \emph{quasistationary distribution}, and a 
positive function $h_0$ such that $\cP h_0=\lambda_0h_0$. 

We now examine the speed of convergence of iterates of a positive 
map $\cP$ for a general Banach lattice. The following proposition is a key 
result.

\begin{proposition}[Strong comparability]
\label{prop_comparable} 
Any $f,g\in\cE^+$ are strongly comparable, in the sense that there exist
strictly positive constants $\alpha, \beta, R$ such that 
\begin{align}
\alpha f &\leqs g \leqs R\alpha f\;, \\
\beta g  &\leqs f \leqs R\beta  g\;.
\label{4.4} 
\end{align}
The optimal constant is $R=\e^{\theta(f,g;\cE^+)}$. 
\end{proposition}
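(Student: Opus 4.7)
The plan is to read off the comparability constants directly from the definitions of $\alpha^*$ and $\beta^*$ in \eqref{7}--\eqref{8}, which were used to construct the projective metric. By definition, for $f, g \in \cE^+$ with $\theta(f,g;\cE^+) < \infty$ (which must be tacitly assumed, since strong comparability is exactly this condition), the suprema $\alpha^*, \beta^*$ are strictly positive, and the limit elements $f - \alpha^* g$ and $g - \beta^* f$ lie in the closed cone $\cE^+$ (here one uses that $\cE^+$ is closed, as is standard for Banach lattices). Thus $\alpha^* g \leqs f$ and $\beta^* f \leqs g$, which already provide the lower bounds in \eqref{4.4} with $\alpha = \beta^*$ and $\beta = \alpha^*$.

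To obtain the upper bounds, I would rewrite $\alpha^* g \leqs f$ as $g \leqs (1/\alpha^*)\, f$ and $\beta^* f \leqs g$ as $f \leqs (1/\beta^*)\, g$, giving
\begin{equation*}
\beta^* f \,\leqs\, g \,\leqs\, \frac{1}{\alpha^*}\, f
\quad\text{and}\quad
\alpha^* g \,\leqs\, f \,\leqs\, \frac{1}{\beta^*}\, g\;.
\end{equation*}
Thus both pairs of inequalities hold with the same factor $R = 1/(\alpha^* \beta^*)$. Combining the two sandwiches yields $\alpha^* \beta^* f \leqs \alpha^* g \leqs f$, which (for $f\neq 0$) forces $\alpha^* \beta^* \leqs 1$, so $R \geqs 1$. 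Invoking \eqref{10} then gives $R = 1/(\alpha^* \beta^*) = \e^{-\log(\alpha^* \beta^*)} = \e^{\theta(f,g;\cE^+)}$, which is the announced value.

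The main step left is optimality, i.e.\ showing that no smaller $R$ works. The argument is a short duality: if $\alpha f \leqs g$ for some $\alpha > 0$, then by the definition of $\beta^*$ as a supremum one has $\alpha \leqs \beta^*$; similarly, if $g \leqs R\alpha f$, then $(R\alpha)^{-1} g \leqs f$, whence $(R\alpha)^{-1} \leqs \alpha^*$. Multiplying these, $1/R \leqs \alpha^* \beta^*$, i.e.\ $R \geqs 1/(\alpha^* \beta^*) = \e^{\theta(f,g;\cE^+)}$. By symmetry, the same lower bound controls the second pair of inequalities.

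The main obstacle, if any, is conceptual rather than computational: it is ensuring that the suprema $\alpha^*,\beta^*$ are actually attained, so that the limiting inequalities $\alpha^* g \leqs f$ and $\beta^* f \leqs g$ genuinely hold in $\cE^+$. This relies on $\cE^+$ being a closed cone (so a supremum of admissible $\alpha$'s gives an admissible element in the limit), which is standard for Banach lattices. Once this is in place, the rest of the proof is an essentially one-line bookkeeping exercise built on the definitions \eqref{7}--\eqref{10}.
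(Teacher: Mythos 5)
Your proof is correct and, once the notation is unwound, it is essentially the same argument as the paper's. The paper works through the linear map $A$ of Definition~\ref{def_theta}, writes $Af=(f_1,f_2)=(1,\alpha^*)$ and $Ag=(g_1,g_2)=(\beta^*,1)$, and derives $\frac{g_1}{f_1}f\leqs g\leqs \frac{g_2}{f_2}f$; with these identifications that is exactly your $\beta^* f\leqs g\leqs \frac{1}{\alpha^*}f$, and the paper's WLOG assumption $f_1g_2\geqs f_2g_1$ is exactly your $\alpha^*\beta^*\leqs 1$. You simply bypass $A$ and read the comparability off $\alpha^*,\beta^*$ directly. Two small points of added value in your write-up: (i) you make explicit that the sups in \eqref{7}--\eqref{8} are attained because $\cE^+$ is norm-closed — the paper uses this silently when it constructs $A$; and (ii) you supply the short duality argument showing that $R=\e^{\theta(f,g;\cE^+)}$ is actually optimal, which the paper asserts in the statement but does not prove. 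One assumption you flag correctly should really be in the statement itself: strong comparability only holds when $\theta(f,g;\cE^+)<\infty$ (otherwise one of $\alpha^*,\beta^*$ vanishes), and your proof legitimately restricts to that case.
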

\begin{proof}
Let $A$ be the linear map of Definition~\ref{def_theta}, and write 
$Af=(f_1,f_2)$, $Ag=(g_1,g_2)$. Assume without limiting the generality that 
$f_1g_2\geqs f_2g_1$. Then 
\begin{equation}
 f_1(Ag) - g_1(Af) = (0, f_1g_2 - g_1f_2) \in\R_+\times\R_+
\end{equation} 
and thus $f_1g -g_1 f \in\cE^+$. Similarly, we have $g_2f-f_2g\in\cE^+$. 
This shows that 
\begin{equation}
 \frac{g_1}{f_1}f \leqs g \leqs \frac{g_2}{f_2}f 
 = \e^{\theta(f,g;\cE^+)} \frac{g_1}{f_1} f\;,
\end{equation} 
and thus~\eqref{4.4} holds with $\alpha=g_1/f_1$. The proof of the second
inequality is analogous. 
\end{proof}

A first consequence of this result is that we can prove completeness. 

\begin{corollary}[Completeness of the metric]
\label{cor_completeness} 
If $\norm{f}=\norm{g}=1$, then 
\begin{equation}
 \norm{f-g} \leqs \e^{\theta(f,g;\cE^+)} - 1\;.
\end{equation} 
As a consequence, in the metric defined by $\theta$, any $\theta$-connected
component of the unit sphere is a complete metric space. 
\end{corollary}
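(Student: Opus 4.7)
My plan is to invoke Proposition~\ref{prop_comparable} and translate its order-theoretic conclusions into the desired norm bound via monotonicity of the Banach lattice norm. By that proposition applied to $f, g \in \cE^+$, there exist $\alpha, \beta > 0$ such that
\[
\alpha f \leqs g \leqs R\alpha f\;, \qquad \beta g \leqs f \leqs R\beta g\;,
\]
where $R = \e^{\theta(f,g;\cE^+)}$. Inspection of the construction via \eqref{9}--\eqref{10} shows that the optimal constants satisfy $\alpha\beta = \e^{-\theta} = 1/R$. Since $\norm{f}=\norm{g}=1$, norm monotonicity applied to $\alpha f \leqs g$ yields $\alpha = \alpha\norm{f} = \norm{\alpha f} \leqs \norm{g} = 1$; symmetrically $\beta \leqs 1$, so $\alpha,\beta \in [1/R,1]$.

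The main step is to exploit the Banach lattice decomposition $|f-g| = (f-g)^+ + (g-f)^+$. Because $g \geqs \alpha f$ and $\alpha \leqs 1$, we have $f - g \leqs (1-\alpha) f$, hence $(f-g)^+ \leqs (1-\alpha) f$, and norm monotonicity gives $\norm{(f-g)^+} \leqs 1-\alpha$; symmetrically $\norm{(g-f)^+} \leqs 1-\beta$. Therefore
\[
\norm{f-g} = \bignorm{|f-g|} \leqs \norm{(f-g)^+} + \norm{(g-f)^+} \leqs 2 - \alpha - \beta\;.
\]
Using $\alpha\beta = 1/R$ and AM--GM, $\alpha+\beta \geqs 2/\sqrt{R}$, so $\norm{f-g} \leqs 2 - 2/\sqrt{R}$. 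Setting $u = \sqrt{R} \geqs 1$, the inequality $2 - 2/u \leqs u^2 - 1$ is equivalent to $(u-1)^2(u+2) \geqs 0$, which always holds. This yields $\norm{f-g} \leqs R - 1 = \e^{\theta(f,g;\cE^+)} - 1$. None of the steps are serious obstacles; one must simply be careful to invoke norm monotonicity (a defining property of Banach lattices) at the right moments.

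For the completeness consequence, given a $\theta$-Cauchy sequence $(f_n)$ on a $\theta$-connected component of the unit sphere, the bound just proved shows $(f_n)$ is Cauchy in $\norm{\cdot}$, and hence converges to some $f$. Norm-closedness of $\cE^+$ and continuity of the norm place $f$ on the unit sphere of $\cE^+$. The main obstacle is to upgrade norm convergence to $\theta$-convergence: for $\eps > 0$ and $n$ large enough that $\theta(f_n,f_m) \leqs \eps$ for all $m \geqs n$, Proposition~\ref{prop_comparable} gives $\alpha_m f_n \leqs f_m \leqs \e^\eps \alpha_m f_n$ with $\alpha_m \in [\e^{-\eps},1]$; extracting a subsequence along which $\alpha_m \to \alpha_\infty \in [\e^{-\eps},1]$ and using norm-closedness of $\cE^+$ to pass order inequalities through the norm limit $m \to \infty$ yields $\alpha_\infty f_n \leqs f \leqs \e^\eps \alpha_\infty f_n$, so that $\theta(f,f_n) \leqs \eps$.
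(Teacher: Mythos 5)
Your proof is correct, but it follows a somewhat different route than the one in the text. The proof given here uses only the constant $\alpha$: since $\norm{f}=\norm{g}=1$ forces $\alpha\leqs 1\leqs R\alpha$, both $f$ and $g$ lie in the order interval $[\alpha f,R\alpha f]$, so that $\abs{f-g}=f\vee g-f\wedge g\leqs (R-1)\alpha f$, and monotonicity of the lattice norm gives $\norm{f-g}\leqs R-1$ in one line. You instead decompose $\abs{f-g}=(f-g)^++(g-f)^+$ and bound the two parts by $(1-\alpha)$ and $(1-\beta)$; to close the argument you then need the identity $\alpha\beta=\e^{-\theta(f,g;\cE^+)}$ for the optimal constants, which is indeed a correct consequence of the construction in~\eqref{9}--\eqref{10} (your $\alpha,\beta$ are the $\beta^*,\alpha^*$ of~\eqref{7}--\eqref{8}), but goes beyond what Proposition~\ref{prop_comparable} literally states, and must be combined with AM--GM and the elementary inequality $2-2/u\leqs u^2-1$ for $u\geqs1$. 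What your route buys is the marginally sharper bound $\norm{f-g}\leqs 2-2\e^{-\theta/2}\leqs\e^{\theta}-1$; what it costs is the extra identity and algebra, where the text's symmetric use of the single order interval is more direct. Your treatment of the completeness statement, which the text leaves implicit, is correct and welcome: the norm bound turns a $\theta$-Cauchy sequence into a norm-Cauchy one, and the upgrade of the norm limit back to a $\theta$-limit via near-optimal constants, a convergent subsequence of the $\alpha_m$, and norm-closedness of the positive cone (a standard Banach lattice fact, as is monotonicity of the norm) is exactly the right argument, including the final observation that $a f_n\leqs f\leqs \e^{\eps}a f_n$ forces $\theta(f,f_n)\leqs\eps$, so the limit stays in the same $\theta$-connected component.
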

\begin{proof}
If $\norm{f}=\norm{g}=1$, then~\eqref{4.4} holds with $\alpha\leqs 1\leqs
R\alpha$ and $R=\e^{\theta(f,g;\cE^+)}$. Thus 
\begin{equation}
 \norm{f-g} = \norm{f\vee g - f\wedge g} \leqs 
 \norm{R\alpha f - \alpha f} = (R-1)\alpha\norm{f}
 \leqs R-1\;,
\end{equation}
as claimed. 
\end{proof}
It follows that Theorem~\ref{thm_convergence} indeed applies in this setting. 
Let us finally derive a spectral-gap estimate. 

\begin{proof}[\textsc{Proof of Theorem~\ref{thm:Birkhoff2}}]
Denote $\cP^nf$ by $f_n$. 
For any $n$ let $\alpha_n$ and $\beta_n$ be the optimal constants for which 
\begin{equation}
 \alpha_n h_0 \leqs \frac{f_n}{\lambda_0^n} \leqs \beta_n h_0\;.
\end{equation} 
Such constants exist and are positive for $n=1$ because $f_1, h_0$ belong to a
cone with diameter $\Delta$. Assuming by induction that the above inequality
holds for some $n$, and applying $\cP$, we obtain that it holds for $n+1$ with 
\begin{equation}
 \alpha_n \leqs \alpha_{n+1} \leqs \beta_{n+1} \leqs \beta_n\;.
\end{equation} 
Define 
\begin{align}
\nonumber
r_n &= f_n - \alpha_n \lambda_0^n h_0 \in \cE^+\;, \\
s_n &= \beta_n\lambda_0^n h_0 - f_n \in \cE^+\;.
\label{4.13} 
\end{align}
We have 
\begin{align}
\nonumber
r_n + s_n &= (\beta_n -\alpha_n) \lambda_0^n h_0 \;, \\
\cP r_n + \cP s_n &= (\beta_n -\alpha_n) \lambda_0^{n+1} h_0 \;. 
\end{align}
By Proposition~\ref{prop_comparable}, there exist positive constants $a_n, b_n$
and $R\leqs\e^\Delta$ such that 
\begin{align}
\nonumber
a_n h_0 &\leqs \cP r_n \leqs Ra_nh_0\;, \\
b_n h_0 &\leqs \cP s_n \leqs Rb_nh_0\;.
\end{align} 
On one hand it follows that 
\begin{equation}
\label{4.16} 
 (a_n+b_n)h_0 \leqs \cP r_n + \cP s_n = (\beta_n -\alpha_n) \lambda_0^{n+1} h_0
 \leqs R(a_n+b_n)h_0\;.
\end{equation}
On the other hand, we conclude by applying $\cP$ to~\eqref{4.13} that 
\begin{equation}
 (\alpha_n\lambda_0^{n+1}+a_n)h_0 \leqs \cP f_n = f_{n+1} \leqs
(\beta_n\lambda_0^{n+1}-b_n)h_0\;, 
\end{equation} 
This yields 
\begin{equation}
 \alpha_{n+1} \geqs \alpha_n + \frac{a_n}{\lambda_0^{n+1}}\;, 
 \qquad
 \beta_{n+1} \leqs \beta_n - \frac{b_n}{\lambda_0^{n+1}}\;.
\end{equation} 
Using~\eqref{4.16} it follows that 
\begin{equation}
 (\beta_{n+1}-\alpha_{n+1}) \leqs (\beta_n-\alpha_n) -
\frac{a_n+b_n}{\lambda_0^{n+1}}
\leqs \biggpar{1-\frac{1}{R}} (\beta_n-\alpha_n)\;.
\end{equation} 
This shows that the sequences $\alpha_n$ and $\beta_n$ converge to a common
limit $M_1(f)$, and thus that $f_n/\lambda_0^n$ converges to $M_1(f)h_0$ at
rate $(1-R^{-1})^n=(1-\e^{-\Delta})^n$. 

Finally, the uniform positivity condition~\eqref{eq:uniform_positivity} implies 
that $\e^{-\Delta}$ is bounded below by $1/L^2$, which concludes the proof. 
\end{proof}

\begin{remark}[Dual picture]
As in Remark~\ref{rem_dual}, we have 
\begin{equation}
 \lambda_0^n \pscal{\pi_0}{f} = \pscal{\pi_0\cP^n}{f} = \pscal{\pi_0}{\cP^nf} 
\end{equation} 
for all $n$, which shows that 
\begin{equation}
 M_1(f) = \frac{\pscal{\pi_0}{f}}{\pscal{\pi_0}{h_0}}\;.
\end{equation} 
\end{remark}

\subsection{From discrete time to continuous time}

We provide here a simple illustration of how the discrete-time results 
presented in this section (and in Section~\ref{ssec:Hairer_Mattingly}) can be 
applied to continuous-time SDEs. Consider the SDE 
\begin{equation}
\label{eq:SDE_example_Birkhoff} 
 \6X_t = f(X_t) \6t + \sigma \6W_t\;,
\end{equation} 
where $\sigma>0$ is a small parameter, and $f$ has a stable equilibrium point 
at the origin, that is 
\begin{equation}
 f(0) = 0\;,
\end{equation} 
and all the eigenvalues of the Jacobian matrix 
\begin{equation}
 A = \dpar{f}{x}(0) 
\end{equation} 
are strictly negative. The approaches we just introduced require upper and 
lower bounds on the transition density $p_t(x,y)$ for some $t>0$, say $t=1$. A 
general approach for obtaining such bounds is based on Malliavin calculus, but 
it is also possible to obtain the required information by less elaborate 
methods. The approach we outline here is a simplification of the method used 
in~\cite{BG_periodic2,BaudelBerglund}. 

A first point is that one can use Harnack inequalities for $\cL$-harmonic 
functions (see~\cite[Corollaries~9.24 and 9.25]{Gilbarg_Trudinger}) to show 
that the transition density at time $1$, $p_1(x,y)$, satisfies the following 
two regularity estimates on small balls. For $x\in\R^n$ and $r>0$, we let 
$\cB_r(x)$ denote the ball of radius $r$ centred in $x$. 

\begin{lemma}[Harnack-type bounds on the transition density]
\begin{enumerate}
\item 	Fix $x_0,y\in\R^n$. There exists a constant $C_0$, independent of 
$x_0$ and $\sigma$, such that 
\begin{equation}
\label{eq:Harnack1} 
  \sup_{x\in \cB_{\sigma^2}(x_0)} p_1(x,y) 
  \leqs C_0 \inf_{x\in \cB_{\sigma^2}(x_0)} p_1(x,y)\;.
\end{equation} 

\item 	Fix $x_0,y\in\R$ and $r_0>0$, and let $R_0 = r_0\sigma^2$. Then there 
exist constants $C_1\geqs1$ and $\alpha>0$, independent of $\sigma$, such 
that for any $R\leqs R_0$, one has  
\begin{equation}
\label{eq:Harnack2} 
 \osc_{\cB_R(x_0)} p_1 
 := \sup_{x\in \cB_R(x_0)} p_1(x,y)
  - \inf_{x\in \cB_R(x_0)} p_1(x,y) 
  \leqs C_1 \Biggpar{\frac{R}{R_0}}^\alpha 
  \osc_{\cB_{R_0}(x_0)} p_1\;. 
\end{equation} 
\end{enumerate}
\end{lemma}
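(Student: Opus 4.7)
\Sketch
The plan is to view $p_1(x,y)$, for fixed $y$, as a solution of a second-order linear parabolic (or equivalently, inhomogeneous elliptic) equation in the variable $x$, and then to invoke the classical estimates of Gilbarg--Trudinger. Throughout, the radius $\sigma^2$ should be regarded as the natural scale on which the Harnack constants for the operator $\cL = f\cdot\nabla + \frac{\sigma^2}{2}\Delta$ remain uniform in $\sigma$.

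The first step is to observe that, by Kolmogorov's backward equation, $u(t,x) := p_t(x,y)$ solves $\partial_t u = \cL u$ in the region $x \neq y$, and in particular $p_1(\cdot,y)$ satisfies the inhomogeneous elliptic equation $\cL w = \psi$ with $\psi(x) = \partial_t p_t(x,y)\big|_{t=1}$. On any compact set not containing $y$, the density $p_t(x,y)$ is smooth and $\psi$ is bounded; moreover $p_1(\cdot,y)$ is non-negative. For $y$ itself, one handles the singularity by a standard approximation or by working with $p_{1+\eps}(x,y) = \int p_\eps(x,z)p_1(z,y)\,\6z$ and passing to the limit.

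Part (i) then follows from the Harnack inequality (Corollary 9.25 in Gilbarg--Trudinger) applied to $p_1(\cdot,y)$ on the ball $\cB_{\sigma^2}(x_0)$. The crucial point is to verify that the constant $C_0$ can be taken independent of $x_0$ and $\sigma$. The Harnack constant for an operator $\sum a_{ij}\partial_{ij} + \sum b_i \partial_i$ depends on the ratio $\nu := \sup\norm{b}/\lambda$ and on the product $\nu R$ where $R$ is the radius of the ball and $\lambda$ the ellipticity constant. Here $\lambda = \sigma^2/2$, $\norm{b}_{L^\infty(\cB_{R_0})} \leqs \norm{f}_{L^\infty(\cB_{R_0})}$, and $R = \sigma^2$, so $\nu R \leqs 2\norm{f}_{L^\infty}$, which is indeed independent of $\sigma$. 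This is precisely why the choice $R = \sigma^2$ appears, rather than a larger radius; the ball $\cB_{\sigma^2}(x_0)$ is small enough that the drift cannot degrade the ellipticity picture. The contribution from the inhomogeneous term $\psi$ is absorbed using the uniform bound on $\partial_t p_t$ on compact regions.

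For part (ii), I would apply the Hölder regularity estimate of Corollary 9.24 at scale $R_0 = r_0\sigma^2$: under the same scaling, one obtains a Hölder seminorm bound of the form
\begin{equation}
|p_1(x,y) - p_1(x',y)| \leqs C_1 \Biggpar{\frac{|x-x'|}{R_0}}^\alpha \osc_{\cB_{R_0}(x_0)} p_1
\end{equation}
for $x,x' \in \cB_R(x_0)$ with $R \leqs R_0$, which immediately yields the stated oscillation bound upon taking supremum and infimum over $\cB_R(x_0)$. The exponent $\alpha$ depends only on the dimension and the quantity $\nu R_0$, hence is independent of $\sigma$ for the same reason as above. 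The main obstacle in both parts is this careful tracking of the $\sigma$-dependence of the constants; once the scaling $R \sim \sigma^2$ is identified, the application of the Gilbarg--Trudinger machinery is standard.
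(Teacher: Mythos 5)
Your overall route is the one the notes themselves gesture at: they give no proof of this lemma, only the citation of \cite[Corollaries~9.24 and 9.25]{Gilbarg_Trudinger}, and your scaling analysis is exactly the intended key point -- with ellipticity constant $\lambda=\sigma^2/2$, drift bounded by $\norm{f}_{L^\infty}$ on the relevant compact set, and radius $R\leqs r_0\sigma^2$, the dimensionless quantities $\norm{f}R/\lambda$ governing the Harnack constant and the H\"older exponent stay bounded uniformly in $\sigma$, which is why the scale $\sigma^2$ appears. (The worry about a singularity at $x=y$ is unnecessary: at the fixed positive time $t=1$ the density is smooth in $x$ everywhere, so no approximation by $p_{1+\eps}$ is needed.)

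There is, however, a genuine gap in the step where you trade the parabolic structure for an inhomogeneous elliptic equation. The function $x\mapsto p_1(x,y)$ is not $\cL$-harmonic; it satisfies $\cL p_1(\cdot,y)=\psi$ with $\psi=\partial_t p_t(\cdot,y)\vert_{t=1}$, and the Gilbarg--Trudinger estimates for $\cL u=\psi$ are not purely multiplicative: schematically they read $\sup_{\cB}u\leqs C\bigpar{\inf_{\cB}u + k\norm{\psi}}$, and the H\"older/oscillation estimate likewise carries an additive term in $\norm{\psi}$. Your proposal to \lq\lq absorb\rq\rq\ this term using a uniform bound on $\partial_t p_t$ on compacts fails in precisely the regime where the lemma is used: for small $\sigma$ the quantity $\inf_{\cB_{\sigma^2}(x_0)}p_1(\cdot,y)$ is typically exponentially small in $1/\sigma^2$ (this is why the discussion following the lemma only obtains ratios of order $\e^{C/\sigma^2}$), so an $x$-uniform bound on $\partial_t p_t$, which is not exponentially small relative to that infimum, destroys both conclusions; absorbing it would require a pointwise bound of the type $\abs{\partial_t p_1(x,y)}\leqs C\,p_1(x,y)$, which is essentially as hard as the lemma itself. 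The standard repairs are either to argue parabolically -- apply the Krylov--Safonov/parabolic Harnack and oscillation estimates to the nonnegative caloric function $(t,x)\mapsto p_t(x,y)$, which solves $\partial_t u=\cL u$ exactly (no inhomogeneity), on cylinders $\cB_{\sigma^2}(x_0)\times[1-c\sigma^2,1]$, where the parabolic scaling (diffusivity $\sigma^2$, spatial scale $\sigma^2$, time scale $\sigma^2$) again makes the constants uniform in $\sigma$, and then return to the fixed time $1$ via Chapman--Kolmogorov -- or, as in the reference~\cite{BG_periodic2} that these notes simplify, to apply the elliptic Harnack inequality to functions that genuinely satisfy $\cL u=0$ (exit distributions and harmonic-measure densities) and transfer the resulting bounds to the transition kernel.
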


Using~\eqref{eq:Harnack1}, one can then show 
(cf.~\cite[Lemma~5.7]{BG_periodic2}) that for $y$ in a compact set $D$, one has 
the rough a priori bound 
\begin{equation}
 \frac{\displaystyle\sup_{x\in D} p_1(x,y)}
 {\displaystyle\inf_{x\in D} p_1(x,y)}
 \leqs \e^{C/\sigma^2}
\end{equation} 
for a constant $C$, depending on $D$, but not on $\sigma$. Furthermore, 
combining~\eqref{eq:Harnack2} with~\eqref{eq:Harnack1}, one obtains 
(cf.~\cite[Lemma 5.8]{BG_periodic2}) that for $x_0, y$ in $D$ and any $\eta > 
0$, there exists $r = r(y,\eta) > 0$, independent of $\sigma$, such that 
\begin{equation}
 \sup_{x\in\cB_{r\sigma^2}(x_0)} p_1(x,y) 
 \leqs 
 (1+\eta) \inf_{x\in\cB_{r\sigma^2}(x_0)} p_1(x,y)\;.
\end{equation} 
This result can now be extended to larger balls by using a coupling argument. 
Let $X^x_t$ denote the solution of~\eqref{eq:SDE_example_Birkhoff} with 
initial condition $x$, conditionned to stay in $D$ up to time $t$. For $x_1, 
x_2,y\in D$, let 
\begin{equation}
 N(x_1,x_2) 
 = \inf\Bigsetsuch{n\geqs 1}{\abs{X^{x_2}_n - X^{x_1}_n} < r(\eta,y)\sigma^2}\;.
\end{equation} 
If $p^D_n$ denotes the transition density at time $n$ of the process 
conditioned to stay in $D$, one obtains~\cite[Proposition~5.9]{BG_periodic2} 
that for all $n\geqs2$, one has 
\begin{equation}
\label{eq:bound_proof_ratio} 
 \frac{\displaystyle\sup_{x\in D} p^D_n(x,y)}
 {\displaystyle\inf_{x\in D} p^D_n(x,y)}
 \leqs 1 + \eta 
 + \sup_{x_1,x_2\in D} \Bigprob{N(x_1,x_2) > n-1} \e^{C/\sigma^2}
\end{equation} 
for a constant $C$ independent of $\sigma$ and $y\in D$. Controlling the tails 
of $N(x_1,x_2)$ thus amounts to a coupling argument, with an error of size 
$r\sigma^2$. To do this, we observe that the difference 
$Y_t = X^{x_1}_t - X^{x_2}_t$ satisfies the equation 
\begin{equation}
 \6Y_t = A Y_t \6t + b(t,Y_t) \6t\;,
\end{equation} 
where $b(t,y) = \Order{y^2}$ for small $y$. Using the integral representation 
\begin{equation}
 Y_t = (x_2 - x_1) \e^{At} + \int_0^t \int_0^t \e^{A(t-s)} b(s,Y_s) \6s\;,
\end{equation} 
one can prove an estimate of the form 
\begin{equation}
 \Bigprob{\norm{X^{x_1}_1 - X^{x_2}_1} \geqs \rho\norm{x_2 - x_1}} 
 \leqs \e^{-\kappa/\sigma^2}
\end{equation} 
uniformly over $x_1, x_2\in D$, for some $\rho\in(0,1)$ and $\kappa>0$. Using 
the Markov property\footnote{We simplified the argument somewhat, because one 
has to account for the difference between the initial process, and the process 
conditioned on staying in $D$. See~\cite[Proposition~6.13]{BG_periodic2} for 
the precise argument.}, one arrives at
\begin{equation}
 \Bigprob{N(x_1,x_2) > n} \leqs \e^{-n\kappa/\sigma^2}\;,
\end{equation} 
so that by choosing $n$ large enough, one can make the right-hand side 
of~\eqref{eq:bound_proof_ratio} smaller than $1+2\eta$. We thus obtain a 
bound on the variation of the map $x\mapsto p(x,y)$ inside $D$, yielding a 
uniform positivity property of the form~\eqref{eq:uniform_positivity} with 
$m(y)=1$ and $L$ close to $1$.



\chapter{Large deviations for stochastic differential equations}
\label{chap:ld} 

The theory of large deviations is a very powerful tool to analyse sequences 
$(X_n)_{n\geqs0}$ or $(X_\eps)_{\eps>0}$ of random variables that obey some 
scaling behaviour as $n\to\infty$ or $\eps\to0$. Standard textbooks on that 
theory are~\cite{DZ} and~\cite{DS}. In the particular case of scaled Brownian 
motion $\sqrt{\eps}W_t$, Schilder's theorem provides a large-deviation 
principle, which can be transferred to SDEs using the so-called contraction 
principle. This is the starting point of the theory developed by Freidlin and 
Wentzell in the monograph~\cite{FW}. 

In this chapter, we will first provide a general introduction to 
large-deviation principles, before specialising it to the case of SDEs, and 
giving some applications to the exit problem of a diffusion from a domain. 


\section{Large-deviation principles}
\label{sec:ldp} 


\subsection{A simple example}
\label{ssec:ldp_ex} 

Let $S_n$ be the number of heads obtained when throwing a fair coin $n$ times. 
Then $S_n$ follows a binomial distribution with parameters $(n,\frac12)$, and 
thus has expectation $\frac{n}{2}$ and variance $\frac{n}{4}$. Therefore, the 
law of large numbers shows that 
\begin{equation}
 \label{lgn9}
\lim_{n\to\infty}\Biggprob{\biggabs{\frac{S_n}n - \frac12} \geqs \delta} 
= 0 
\end{equation} 
for any $\delta>0$, that is, $\frac{S_n}{n}$ converges to $\frac12$ in 
probability. The strong law of large numbers states that this convergence also 
holds almost surely. The central limit theorem states that the variable
\begin{equation}
 \widehat S_n = \frac{S_n - \frac{n}{2}}{\sqrt{\frac{n}{4}}}
\end{equation} 
converges in distribution to a standard normal law as $n\to\infty$. This 
indicates that for large $n$, $S_n$ is likely to belong to an interval of order 
$\sqrt{n}$ around $\frac{n}{2}$. 

One may be interested in more precise estimates on the probability of $S_n$ 
being very far from its expected value. One way of estimating this probability 
is via the Bienaym\'e--Chebyshev inequality, which yields  
\begin{equation}
\label{lgn10}
\Biggprob{\biggabs{\frac{S_n}n - \frac12} \geqs \delta}
\leqs \frac1{\delta^2} \variance\biggpar{\frac{S_n}n} = \frac1{4n\delta^2}\;.
\end{equation}
However, while this upper bound is rigorous, it may be too pessimistic for 
certain values of $n$ and $\delta$. 

\begin{example}
\label{ex_lgn1}
For $n = 1000$ and $\delta=0.1$, \eqref{lgn10} yields 
\begin{equation}
\label{lgn11}
\Biggprob{\biggabs{\frac{S_{1000}}{1000} - \frac12} \geqs \delta}
= \Bigprob{S_{1000}\notin[400,600]}
\leqs \frac1{40} = 0.025\;.
\end{equation}
However, one would expect this probability to be much smaller. For instance, 
one might be tempted to infer from the central limit theorem the bound 
\begin{equation}
 \Biggprob{\biggabs{\frac{S_n}{n} - \frac12} \geqs \delta}
 = \Biggprob{\biggabs{S_{n} - \frac n2} \geqs n\delta} 
 \simeq \Biggprob{\abs{Z} \geqs \sqrt{\frac n4}n\delta}
 \leqs \Bigprob{\abs{Z} \geqs 1581}\;, 
\end{equation} 
where $Z$ follows a standard normal law. However, nothing guarantees that 
we are allowed to apply the central-limit theorem in this $n$-dependent way! 
\end{example}

The Bienaym\'e--Chebychev inequality can be improved by using the so-called 
Chernoff bound. For any $t>0$, one has 
\begin{equation}
\label{lgn12}
\biggprob{S_n - \frac n2 \geqs n\delta}
= \Bigprob{\e^{t(S_n - n/2)} \geqs \e^{t n\delta}}
\leqs \frac1{\e^{t n\delta}} \bigexpec{\e^{t(S_n - n/2)}}\;.
\end{equation}
We can write 
\begin{equation}
 S_n = \sum_{i=1}^n X_i
\end{equation} 
where the $X_i$ are independent, identically distributed, taking values $0$ or 
$1$ with probability $\frac12$. Setting $Z_i = \e^{t(X_i-1/2)}$we obtain 
\begin{equation}
\label{lgn13}
\expec{Z_i} = \e^{-t/2} \prob{X_i=0} + \e^{t/2} \prob{X_i=1}
= \cosh\biggpar{\frac{t}{2}}\;.
\end{equation}
Since $\e^{t(S_n - n/2)} = \prod_{i=1}^n
Z_i$, independence of the $Z_i$ implies 
\begin{equation}
\label{lgn14}
\bigexpec{\e^{t(S_n - n/2)}} = \biggexpec{\prod_{i=1}^n Z_i} 
= \prod_{i=1}^n \expec{Z_i} = \cosh^n\biggpar{\frac{t}{2}}\;. 
\end{equation}
This last expression can be written as 
$\exp\brak{n \log\cosh\bigpar{\frac{t}{2}}}$. Inequality \eqref{lgn12} 
thus becomes 
\begin{equation}
\label{lgn15}
\biggprob{S_n - \frac n2 \geqs n\delta} 
\leqs \exp\Biggset{-n\biggbrak{t\delta - 
\log\cosh\biggpar{\frac{t}{2}}}}\;. 
\end{equation}
Consider the function 
\begin{equation}
 f: t\mapsto t\delta - 
\log\cosh\biggpar{\frac{t}{2}}
\end{equation} 
which is maximal for $\tanh\bigpar{\frac{t}{2}}=2\delta$, that is, for 
$t=t^*=\log\frac{1+2\delta}{1-2\delta}$. Let us set   
\begin{align}
\nonumber
\cI(\delta) = f\bigpar{t^*} 
&= \delta \log\frac{1+2\delta}{1-2\delta} - 
\log\Biggpar{\sqrt{\frac{1-2\delta}{1+2\delta}}\,} \\
&= \biggpar{\frac12+\delta} \log(1+2\delta) + \biggpar{\frac12-\delta}
\log(1-2\delta)\;.
\label{lgn16}
\end{align}
Using~\eqref{lgn15} for $t = t^*$, and the same estimate for 
$\prob{S_n - \frac n2 \leqs -n\delta}$, we get 
\begin{equation}
\label{lgn17}
\biggprob{\biggabs{\frac{S_n}n - \frac12} \geqs \delta}
\leqs 2 \e^{-n \cI(\delta)}\;.
\end{equation} 
This is a particular case of large-deviation estimate, and $\cI(\delta)$ is 
called the~\emph{rate function}, see~\figref{fig:LDP}. The fact 
that~\eqref{lgn16} is reminiscent of entropy is no coincidence. 

\begin{figure}
\begin{center}
\scalebox{1.0}{
\begin{tikzpicture}[>=stealth',main node/.style={circle,minimum
size=0.25cm,fill=blue!20,draw},x=8cm,y=8cm]

\draw[->,thick] (-0.7,0) -> (0.7,0);
\draw[->,thick] (0,-0.05) -> (0,0.8);

\foreach \x in {-5,...,5}
\draw[semithick] (0.1*\x,-0.015) -- (0.1*\x,0.015);

\draw[semithick] (0.5,-0.015) -- node[below=0.15cm] {{\small $0.5$}}
(0.5,0.015);

\draw[semithick] (-0.5,-0.015) -- node[below=0.15cm] {{\small
$-0.5\;\;$}} (-0.5,0.015);

\foreach \y in {1,...,7}
\draw[semithick] (-0.015,0.1*\y) -- (0.015,0.1*\y);

\draw[semithick] (-0.015,0.5) -- node[right=0.15cm] {{\small $0.5$}}
(0.015,0.5);

\newcommand*{\imax}{ln(2)}

\draw[dashed,semithick,dash pattern=on 5pt off 5pt] (-0.5,0) -- (-0.5,{\imax})
-- (0.5,{\imax}) -- (0.5,0); 

\draw[blue,very thick,-,smooth,domain=-0.5:0.5,samples=100,/pgf/fpu,
/pgf/fpu/output format=fixed] plot (\x, {
(0.5 + \x )*ln(1 + 2* \x ) + (0.5 - \x )*ln(1- 2* \x ) 
}) -- (0.5,{\imax});

\node[] at (0.65,-0.05) {$t$};
\node[] at (0.06,0.75) {$I(t)$};
\end{tikzpicture}
}
\vspace{-3mm}
\end{center}
\caption[]{Plot of the rate function $\delta \mapsto \cI(\delta)$.}
\label{fig:LDP}
\end{figure}
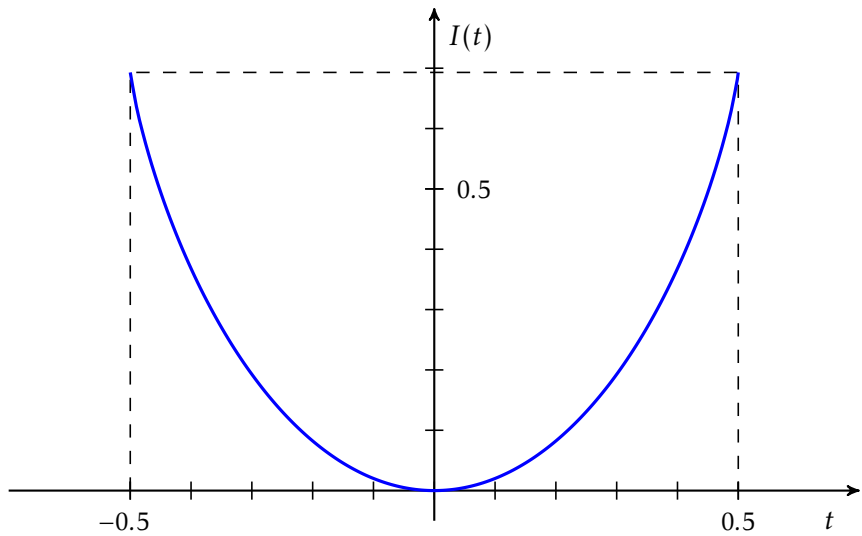

\begin{example}
Continuing with Example~\ref{ex_lgn1}, since $I(0.1)\simeq 0.02$,
we get the much smaller bound
\begin{equation}
\label{lgn18}
\Bigprob{S_{1000}\notin[400,600]} \leqs 2 \e^{-1000 \,I(0.1)} \simeq 3.6
\cdot 10^{-9}\;.
\end{equation} 
\end{example}

In fact, the estimate~\eqref{lgn17} is related to a particular case of 
Cram\'er's theorem. 

\begin{theorem}[Cram\'er's theorem]
\label{eq:thm_Cramer} 
Let $(X_n)_{n\geqs0}$ be a sequence of independent, identically distributed 
random variables, such that the logarithmic moment generating function 
$ \Lambda(t) = \log \bigexpec{\e^{t X_1}}$
is finite for all $t\in\R$. Then 
\begin{equation}
 \lim_{n\to\infty} \frac{1}{n} \log\Biggprob{\sum_{i=1}^n X_i \geqs nx}
 = -\Lambda^*(x)
\end{equation} 
for all $x > \expec{X_1}$, where 
\begin{equation}
 \Lambda^*(x) = \sup_{t\in\R} \bigpar{tx - \Lambda(t)}
\end{equation} 
is the Legendre transform of $\Lambda$.   
\end{theorem}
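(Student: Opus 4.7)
The plan is to establish matching upper and lower bounds separately, following the classical exponential-tilting argument. Let $S_n = X_1 + \dots + X_n$, and note that since $\Lambda$ is finite everywhere, it is $\cC^\infty$ and convex on $\R$, with $\Lambda(0) = 0$, $\Lambda'(0) = \expec{X_1}$, and $\Lambda''(t) = \variance_t(X_1) \geqs 0$, where $\variance_t$ denotes variance under the tilted measure introduced below. I will tacitly assume strict convexity, which rules out the degenerate case where $X_1$ is almost surely constant (in that case the claim is trivial or vacuous).

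First I would prove the upper bound by reproducing the Chernoff calculation from the coin-tossing example. For any $t \geqs 0$, Markov's inequality and independence give
\begin{equation}
\bigprob{S_n \geqs nx}
\leqs \e^{-tnx} \bigexpec{\e^{tS_n}}
= \e^{-n(tx - \Lambda(t))}\;.
\end{equation}
Taking logarithms, dividing by $n$, and optimising over $t \geqs 0$ yields
$\tfrac1n \log\bigprob{S_n \geqs nx} \leqs - \sup_{t\geqs 0}(tx - \Lambda(t))$.
Because $x > \expec{X_1} = \Lambda'(0)$ and $\Lambda$ is convex, the map $t \mapsto tx - \Lambda(t)$ is increasing at $0$, so restricting to $t\geqs 0$ does not change the supremum; hence the right-hand side equals $-\Lambda^*(x)$.

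The more delicate part is the lower bound, which I would obtain by exponential tilting. Since $x > \Lambda'(0)$ and $\Lambda'$ is continuous and strictly increasing, there exists a unique $t^* > 0$ with $\Lambda'(t^*) = x$, and one checks from the definition that $\Lambda^*(x) = t^* x - \Lambda(t^*)$. Define a new probability measure $\widetilde\fP$ under which the $X_i$ remain i.i.d.\ with law determined by
\begin{equation}
\frac{\6\widetilde\fP}{\6\fP}\biggevalat_{X_1} = \e^{t^* X_1 - \Lambda(t^*)}\;,
\end{equation}
so that under $\widetilde\fP$ the log-moment generating function of $X_1$ is $\Lambda(t+t^*)-\Lambda(t^*)$ and in particular $\widetilde\expec{X_1} = \Lambda'(t^*) = x$. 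Writing the probability of interest as a $\widetilde\fP$-expectation and restricting the integration to a small window $\set{nx \leqs S_n \leqs nx + n\eps}$ yields, for any $\eps > 0$,
\begin{equation}
\bigprob{S_n \geqs nx}
= \widetilde\expec{\e^{-t^* S_n + n\Lambda(t^*)} \indicator{S_n \geqs nx}}
\geqs \e^{-n\Lambda^*(x)} \e^{-t^* n\eps}\,
\widetilde\fP\set{nx \leqs S_n \leqs nx + n\eps}\;.
\end{equation}
By the weak law of large numbers under $\widetilde\fP$, the last probability tends to a strictly positive limit (in fact to $\tfrac12$ since $\widetilde\expec{X_1} = x$), so $\liminf_{n\to\infty} \tfrac1n \log\bigprob{S_n \geqs nx} \geqs -\Lambda^*(x) - t^* \eps$. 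Letting $\eps\to 0$ completes the proof.

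The main technical obstacle is the tilting argument for the lower bound: one must verify the existence of the optimiser $t^*$, justify the change of measure (straightforward, since $\Lambda(t^*)<\infty$), and control the window probability via the WLLN under $\widetilde\fP$, which in turn requires $\widetilde\expec{\abs{X_1}}<\infty$, itself a consequence of $\Lambda$ being finite on a neighbourhood of $t^*$. If the finiteness of $\Lambda$ everywhere is weakened, one must additionally deal with boundary effects in the Legendre transform (exposed points), which is where most of the real work in the general Cram\'er theorem lies.
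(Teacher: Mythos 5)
Your proposal is the standard Chernoff-plus-tilting proof, and it is essentially sound; note that the paper itself states Cram\'er's theorem without proof, only deriving the Chernoff upper bound in the special Bernoulli case beforehand, so your argument supplies what the notes omit, in particular the matching lower bound via change of measure. The upper bound is complete: your observation that convexity of $\Lambda$ and $x>\Lambda'(0)$ make the restriction to $t\geqs0$ harmless is exactly right.

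There are, however, two points to tighten. First, the existence of $t^*>0$ with $\Lambda'(t^*)=x$ is \emph{not} guaranteed for every $x>\expec{X_1}$, even under the stated hypothesis that $\Lambda$ is finite on all of $\R$: if $X_1$ is bounded above with essential supremum $M<\infty$ (as in the paper's own coin-tossing example, where $X_1\in\set{0,1}$), then $\Lambda'(t)<M$ for all finite $t$, and no tilt hits any $x\geqs M$. These cases are included in the statement and need separate treatment: for $x>M$ both sides equal $-\infty$ (with the convention $\log 0=-\infty$), while for $x=M$ one checks directly that $\bigprob{S_n\geqs nM}=\prob{X_1=M}^n$ and, by monotone convergence as $t\to\infty$, $\Lambda^*(M)=-\log\prob{X_1=M}$, so the identity holds but not via a finite maximiser $t^*$. (Your closing remark attributes such boundary effects to weakening the finiteness of $\Lambda$, but they already arise under the hypotheses as stated.) Second, in the lower bound the weak law of large numbers under $\widetilde\fP$ only controls the two-sided window $\abs{S_n/n-x}\leqs\eps$; for the one-sided window $nx\leqs S_n\leqs nx+n\eps$ you need either the central limit theorem under $\widetilde\fP$ (which gives your limit $\tfrac12$, and is legitimate since the tilted variable has finite variance, its moment generating function being finite near $0$), or the cheaper fix of tilting to mean $x+\eps/2$ instead of $x$, after which the WLLN alone suffices. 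With these repairs the proof is complete and is the classical argument.
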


\begin{exercise}
Compute $\Lambda^*(x)$ for a Gaussian random variable, and for an exponential 
random variable. 
\end{exercise}


\subsection{Definition of large-deviation principles}
\label{ssec:ldp_def} 

\begin{definition}[Large-deviation principle]
\label{def:ldp} 
Let $(S,d)$ be a separable metric space. We say that a family 
$\set{\mu_\eps}_{\eps>0}$ of probability measures on $S$ satisfies a large 
deviation principle with rate $\eps$ and rate function $\cI$ if
\begin{enumerate}
\item 	 $\cI: S \to [0, +\infty]$ is lower semicontinuous, has compact 
sublevel sets and is not identical to $+\infty$.

\item 	For every closed set $C\subset S$ one has 
\begin{equation}
\label{eq:LPD_upper_bound} 
 \limsup_{\eps\to 0} \eps\log \mu_\eps(C) 
 \leqs -\inf_{s\in C} \cI(s)\;.
\end{equation} 

\item 	For every open set $O\subset S$ one has 
\begin{equation}
\label{eq:LDP_lower_bound} 
 \liminf_{\eps\to 0} \eps\log \mu_\eps(O) 
 \geqs -\inf_{s\in O} \cI(s)\;.
\end{equation} 
\end{enumerate}
\end{definition}

Roughly speaking, the large-deviation principle says that for any  
sufficiently nice set $\Gamma\subset S$, we have 
\begin{equation}
 \mu_\eps(\Gamma)
 \simeq \e^{-\inf_\Gamma \cI/\eps}
\end{equation} 
in the sense of logarithmic equivalence. 

\begin{example}[Cram\'er's theorem as a large-deviation principle]
Setting $n = \intpart{\eps^{-1}}$, Theorem~\ref{eq:thm_Cramer} shows that 
$S_n = \sum_{i=1}^n X_i$ satisfies a large-deviation principle with rate 
function 
\begin{equation}
 \cI(s) = \Lambda^*(s)\;.
\end{equation} 
\end{example}


\subsection{The contraction principle}
\label{ssec:ldp_contraction} 

The contraction principle is a powerful method to generate new large-deviation 
principles from already known ones. The following version of this principle 
follows directly from~\cite[Lemma~2.1.4]{DS}, see 
also~\cite[Lemma~3.3]{HairerWeber}. 

\begin{lemma}[Contraction principle]
\label{lem:contraction_principle} 
Let $\set{\mu_\eps}_{\eps>0}$ be a family of probability measures on a 
separable metric space $(S,d)$, and let $\cI:S\to[0,\infty]$ satisfy the first 
condition of Definition~\ref{def:ldp}. 
Let $(S',d')$ be another separable metric space, and let 
$\setsuch{\Psi_\eps}{\eps\geqs0}$ be a family of maps from $S$ to $S'$ which are 
continuous on a neighbourhood of $\setsuch{s\in S}{\cI(s) < \infty}$. 
We assume that
\begin{enumerate}
 \item 	The probability measures $\set{\mu_\eps}_{\eps>0}$ satisfy a 
large-deviation principle on $S$ with rate function $\cI$.

\item 	For every $c\in\R$, there exists a neighbourhood $O_c$ of $\setsuch{s 
\in S}{\cI(s) \leqs c}$ such that the maps $\Psi_\eps$ converge uniformly on 
$O_c$ to $\Psi_0$.
\end{enumerate}
Then the image measures $\mu_\eps\circ\Psi_\eps^{-1}$ satisfy a 
large-deviation principle on $S'$ with rate function
\begin{equation}
 \cI'(s') = \inf\bigsetsuch{\cI(s)}{s\in S, \Psi_0(s) = s'}\;,
\end{equation}
with the convention that the infimum equals $+\infty$ if the set is empty.
\end{lemma}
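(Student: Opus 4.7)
The plan is to first verify that $\cI'$ is a good rate function, then establish the upper and lower bounds of Definition~\ref{def:ldp} separately, using uniform convergence of $\Psi_\eps$ to $\Psi_0$ on sublevel sets of $\cI$ to bridge the two. Goodness of $\cI'$ follows by observing that $\set{\cI' \leqs c} = \Psi_0\bigpar{\set{\cI\leqs c}}$: this is the continuous image (via the restriction of $\Psi_0$ to its domain of continuity $O_c$) of a compact set, hence itself compact, which yields both lower semicontinuity and compactness of sublevel sets of $\cI'$.

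For the lower bound, fix an open $O \subset S'$ and $s' \in O$ with $\cI'(s') < \infty$, then choose $s \in \Psi_0^{-1}(s')$ with $\cI(s) \leqs \cI'(s') + \eta$ and some $c > \cI(s)$. Continuity of $\Psi_0$ at $s \in O_c$ and openness of $O$ provide a neighborhood $U \subset O_c$ of $s$ with $\Psi_0(U)$ contained in a ball $B(s',r)$ such that $B(s',2r) \subset O$. Uniform convergence of $\Psi_\eps$ to $\Psi_0$ on $O_c$ then ensures $\Psi_\eps(U) \subset O$ for all small $\eps$, so $U \subset \Psi_\eps^{-1}(O)$, and the LDP lower bound for $\mu_\eps$ applied to $U$ gives $\liminf_{\eps\to 0} \eps\log(\mu_\eps\circ\Psi_\eps^{-1})(O) \geqs -\cI(s) \geqs -\cI'(s') - \eta$. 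Letting $\eta \to 0$ and taking the supremum over $s' \in O$ closes this case.

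For the upper bound, fix a closed $C \subset S'$, let $c = \inf_{s'\in C} \cI'(s')$, and pick $c' < c$. By definition of $\cI'$ the compact set $K_{c'} = \set{\cI \leqs c'}$ satisfies $\Psi_0(K_{c'}) \cap C = \emptyset$, and compactness yields a positive distance $2\delta$ between $\Psi_0(K_{c'})$ and $C$; uniform convergence of $\Psi_\eps$ to $\Psi_0$ on the neighborhood $O_{c'}$ of $K_{c'}$ then gives $\Psi_\eps^{-1}(C) \cap K_{c'} = \emptyset$ for all small $\eps$. Fixing $\alpha > c'$, I would decompose $\mu_\eps(\Psi_\eps^{-1}(C)) \leqs \mu_\eps(\Psi_\eps^{-1}(C) \cap K_\alpha) + \mu_\eps(K_\alpha^c)$: the first term is bounded using the LDP upper bound for $\mu_\eps$ applied to the closed set $\Psi_\eps^{-1}(C) \cap K_\alpha$ (closed because $\Psi_\eps$ is continuous on $O_\alpha \supset K_\alpha$) on which $\cI > c'$, giving decay of rate at least $c'$; the second is bounded by $\mu_\eps(\set{\cI \geqs \alpha})$, itself controlled by the LDP upper bound on the closed set $\set{\cI \geqs \alpha}$, giving decay of rate at least $\alpha$. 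Letting $\alpha \to \infty$ and then $c' \uparrow c$ concludes.

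The main obstacle is that the $\Psi_\eps$ are assumed continuous only on neighborhoods of sublevel sets of $\cI$, so $\Psi_\eps^{-1}(C)$ need not be globally closed and the LDP upper bound cannot be applied to it directly. This forces the decomposition above: uniform convergence evicts $\Psi_\eps^{-1}(C)$ from the smaller compact set $K_{c'}$, continuity of $\Psi_\eps$ on $O_\alpha$ makes $\Psi_\eps^{-1}(C) \cap K_\alpha$ closed, and the upper bound for $\mu_\eps(K_\alpha^c)$ comes from the LDP itself applied to the closed sublevel-complement $\set{\cI \geqs \alpha}$.
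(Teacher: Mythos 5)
Your construction of $\cI'$ and your lower bound are essentially correct (for the identity $\set{\cI'\leqs c}=\Psi_0\bigpar{\set{\cI\leqs c}}$ you should note that the infimum defining $\cI'$ is attained, which follows from compactness of $\set{\cI\leqs c+1}$, lower semicontinuity of $\cI$ and continuity of $\Psi_0$ near $\set{\cI<\infty}$; routine, but needed for the inclusion \lq\lq$\subset$\rq\rq). The genuine gap is in the upper bound, at two places. First, you invoke the LDP upper bound for the sets $\Psi_\eps^{-1}(C)\cap K_\alpha$, which depend on $\eps$: the LDP only controls $\limsup_{\eps\to0}\eps\log\mu_\eps(F)$ for a \emph{fixed} closed $F$, and a diagonal application to $\eps$-dependent sets is not licensed. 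Nor can you pass to the fixed closed set $\overline{K_\alpha\setminus K_{c'}}$, since lower semicontinuity only gives $\cI(x)\leqs\liminf_n\cI(x_n)$ at a limit point, so the infimum of $\cI$ over that closure may drop below $c'$. Second, the tail term is not controlled by the LDP at all: $K_\alpha^c$ is open, the superlevel set $\set{\cI\geqs\alpha}$ of a lower semicontinuous function need not be closed, and without an extra exponential tightness assumption the bound you want is simply false. For instance $\mu_\eps=\delta_{x_\eps}$ with $x_\eps\to0$, $x_\eps\neq0$, satisfies an LDP with the good rate function $\cI(0)=0$, $\cI\equiv+\infty$ elsewhere, yet $\mu_\eps\bigpar{\set{\cI\geqs\alpha}}=1$ for every $\alpha$ and every $\eps$.

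The repair makes your decomposition unnecessary. Fix $c'<\inf_C\cI'$ and keep your separation $d'(\Psi_0(s),C)\geqs2\delta$ for $s\in K_{c'}$. By compactness of $K_{c'}$ and continuity of $\Psi_0$ on $O_{c'}$, cover $K_{c'}$ by finitely many balls contained in $O_{c'}$ on which $\Psi_0$ oscillates by less than $\delta/2$; their union is an open set $U$ with $K_{c'}\subset U\subset O_{c'}$ and $d'(\Psi_0(u),C)\geqs\frac32\delta$ for all $u\in U$. Uniform convergence of $\Psi_\eps$ on $O_{c'}$ then yields $\Psi_\eps^{-1}(C)\cap U=\emptyset$ for all small $\eps$, hence $\Psi_\eps^{-1}(C)\subset U^\complement$, which is a fixed closed set with $\inf_{U^\complement}\cI\geqs c'$ because $U\supset\set{\cI\leqs c'}$. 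Applying the LDP upper bound to $U^\complement$ and letting $c'\uparrow\inf_C\cI'$ finishes the proof, with no tail estimate and no $K_\alpha$ needed; this is essentially how the upper bound is handled in the references the lemma is quoted from (\cite[Lemma~2.1.4]{DS}, \cite[Lemma~3.3]{HairerWeber}), the notes themselves giving no proof.
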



\section{Large deviations for SDEs}
\label{sec:ldp_SDE} 


\subsection{Schilder's theorem}
\label{ssec:ldp_Schilder} 

We consider in this section scaled Brownian motion in $\R$ or $\R^n$ defined 
for any $\eps>0$ by 
\begin{equation}
 W^\eps_t = \sqrt{\eps} W_t\;.
\end{equation} 
Our aim is to obtain a large-deviation principle on sample-paths of $W^\eps_t$, 
meaning that it should apply to the random functions $\set{W^\eps_t}_{0\leqs 
t\leqs T}$ for some fixed time horizon $T>0$. Let us first consider some 
particular cases of subsets of sample paths. 

\begin{example}[Brownian motion at a fixed time]
\label{ex:LPD_BM1} 
Fix a time $T>0$ and an open set $A\in\R^n$. Since 
\begin{equation}
 \bigprob{W^\eps_T \in A} 
 = \int_A \frac{1}{\sqrt{(2\pi\eps T)^n}} \e^{-x^2/(2\eps T)} \6x\;,
\end{equation} 
applying the Laplace method shows that 
\begin{equation}
\label{eq:LDP_ex1} 
 \lim_{\eps\to0} \eps \log \bigprob{W^\eps_T \in A}
 = -\frac 12 \inf_{x\in A} \frac{\norm{x}^2}{T}\;.
\end{equation} 
\end{example}

\begin{example}[$1d$ scaled Brownian motion staying below level $h$]
\label{ex:LPD_BM2} 
The reflection principle implies 
\begin{equation}
 \Biggprob{\sup_{0 \leqs t \leqs T} W^\eps_t > h}
 = 2 \bigprob{W^\eps_T > h} 
 = 2 \Biggprob{W_T > \frac{h}{\sqrt{\eps}}}
 = 2 \int_{h/\sqrt{\eps}}^\infty \frac{\e^{-x^2/2}}{\sqrt{2\pi\eps}}\6x\;,
\end{equation} 
which yields
\begin{equation}
\label{eq:LDP_ex2} 
 \lim_{\eps\to0} \eps \log \Biggprob{\sup_{0 \leqs t \leqs T} W^\eps_t > h}
 = -\frac{h^2}{2T}\;.
\end{equation} 
\end{example}

In both cases, we would like to write the right-hand side 
of~\eqref{eq:LDP_ex1} and~\eqref{eq:LDP_ex2} as the infimum of 
some rate function over all sample paths fulfilling the property defining the 
event. It turns out that such a rate function is provided by the following 
theorem.  

\begin{theorem}[Schilder's theorem]
\label{thm:Schilder}
Let $\cC_0$ be the space of continuous functions $\ph:[0,T]\to\R^n$ such that 
$\ph(0)=0$. Scaled Brownian motion on $[0,T]$ satisfies a large-deviation 
principle on $\cC_0$ with rate function 
\begin{equation}
 \cI_{[0,T]}(\ph) = 
 \begin{cases}
  \displaystyle\frac12 \norm{\ph}_{H^1}^2 
  = \frac12 \int_0^T \norm{\dot\ph(s)}^2 \6s 
  & \text{if $\ph\in H^1$ and $\ph(0) = 0$\;,} \\[14pt]
  +\infty 
  & \text{otherwise\;.}
 \end{cases}
\end{equation} 
\end{theorem}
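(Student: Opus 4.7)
The plan is to verify the three conditions in Definition~\ref{def:ldp} separately: the good-rate-function property, the lower bound on open sets, and the upper bound on closed sets.

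\textbf{Good rate function.} First I would show that the sublevel sets $\setsuch{\ph\in\cC_0}{\cI_{[0,T]}(\ph)\leqs c}$ are compact in $\cC_0$. If $\cI_{[0,T]}(\ph)\leqs c$, Cauchy--Schwarz gives
\begin{equation*}
 \norm{\ph(t)-\ph(s)} \leqs \sqrt{t-s}\,\norm{\dot\ph}_{L^2} \leqs \sqrt{2c(t-s)}\;,
\end{equation*}
so the sublevel set is uniformly $\frac12$-H\"older and vanishes at $0$; Arzel\`a--Ascoli then yields precompactness in $\cC_0$. Lower semicontinuity of $\ph\mapsto\tfrac12\norm{\dot\ph}_{L^2}^2$ under uniform convergence is obtained by noting that a uniformly convergent sequence with bounded rate function is bounded in $H^1$, extracting a weakly $H^1$-convergent subsequence, identifying the weak limit with the uniform limit through the compact embedding $H^1\hookrightarrow\cC_0$, and invoking weak lower semicontinuity of the $L^2$-norm.

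\textbf{Lower bound.} Fix an open $O\subset\cC_0$ and $\ph\in O$ with $\cI_{[0,T]}(\ph)<\infty$; choose $\delta>0$ with $B_\delta(\ph)\subset O$. The Cameron--Martin/Girsanov theorem furnishes an equivalent measure $\tilde\fP$ on $\cC_0$ such that $\tilde W_t := W_t - \ph(t)/\sqrt\eps$ is a standard Brownian motion under $\tilde\fP$, with
\begin{equation*}
\dtot{\tilde\fP}{\fP} = \exp\Biggbrak{\frac{1}{\sqrt\eps}\int_0^T \dot\ph(s)\,\6W_s - \frac{1}{2\eps}\int_0^T \norm{\dot\ph(s)}^2\6s}\;.
\end{equation*}
Using $\sqrt\eps W - \ph = \sqrt\eps\tilde W$ and expressing $\6\fP/\6\tilde\fP$ in terms of $\tilde W$, a short computation gives
\begin{equation*}
\mu_\eps(B_\delta(\ph)) = \e^{-\cI_{[0,T]}(\ph)/\eps}\, \tilde\E\Biggbrak{\exp\Biggpar{-\frac{1}{\sqrt\eps}\int_0^T \dot\ph(s)\,\6\tilde W_s}\indicator{\norm{\sqrt\eps\tilde W}_\infty<\delta}}\;.
\end{equation*}
The random variable $Y := \int_0^T\dot\ph\,\6\tilde W$ is centred Gaussian under $\tilde\fP$, and the event $A = \{\norm{\sqrt\eps\tilde W}_\infty<\delta\}$ is invariant under the reflection $\tilde W\mapsto -\tilde W$ (which sends $Y\to -Y$), so $\tilde\E[Y\mid A]=0$. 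Jensen's inequality applied to $x\mapsto\e^{-x}$ then yields $\tilde\E[\e^{-Y/\sqrt\eps}\indicator{A}] \geqs \tilde\fP(A)$. Since $\tilde\fP(A) = \fP(\sqrt\eps\norm{W}_\infty<\delta)\to 1$ as $\eps\to 0$, taking $\eps\log$ and optimising over $\ph\in O$ with finite rate yields~\eqref{eq:LDP_lower_bound}.

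\textbf{Upper bound.} For closed $C\subset\cC_0$ I would combine finite-dimensional large deviations with exponential tightness. For any partition $0=t_0<\cdots<t_N=T$, the increments $W^\eps_{t_k}-W^\eps_{t_{k-1}}$ are independent centred Gaussians of variance $\eps(t_k-t_{k-1})$, and either direct Laplace asymptotics or Cram\'er's theorem applied to the increments gives a large-deviation principle on $(\R^n)^N$ with rate function $\frac12\sum_k\norm{x_k-x_{k-1}}^2/(t_k-t_{k-1})$, which equals $\cI_{[0,T]}$ evaluated at the piecewise-linear interpolation of $(x_0,\dots,x_N)$. The contraction principle (Lemma~\ref{lem:contraction_principle}), applied to the continuous piecewise-linear interpolation map, transfers this to paths in $\cC_0$. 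Closing the gap between $C$ and its piecewise-linear approximations requires an exponential tightness estimate
\begin{equation*}
 \limsup_{\eps\to 0}\eps\log\Biggprob{\sup_{\abs{t-s}\leqs 1/N}\norm{W^\eps_t-W^\eps_s}\geqs \eta} \leqs -c(\eta,N)\;,
\end{equation*}
with $c(\eta,N)\to\infty$ as $N\to\infty$ for fixed $\eta>0$, which follows from reflection-principle estimates and Brownian scaling.

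\textbf{Main obstacle.} The delicate step is the upper bound, specifically the exponential tightness estimate controlling the modulus of continuity of $W^\eps$ uniformly in $\eps$. Without it, one cannot leave the realm of finite-dimensional cylinder events and close the gap between an arbitrary closed set $C$ and its piecewise-linear approximants. By contrast, once the Cameron--Martin shift is in place, the lower bound reduces quite cleanly to the elementary small-ball bound $\fP(\norm{W}_\infty<R)\to 1$ as $R\to\infty$.
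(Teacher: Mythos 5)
Your proposal is correct and follows essentially the same route as the paper's proof: the lower bound via a Cameron--Martin--Girsanov shift combined with Jensen's inequality, and the upper bound via a polygonal approximation of the Brownian path together with control of the modulus of continuity (your explicit symmetry argument for why the conditional mean vanishes under the small-ball event, and the explicit check of the good-rate-function property, are nice additions the paper leaves implicit). Your phrasing of the upper bound through the contraction principle applied to finite-dimensional marginals is a minor repackaging of the paper's direct Markov-inequality estimate on $\cI_{[0,T]}(W^{m,\eps})$, but the underlying argument is the same.
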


Before giving a proof of this result, let us look at some of its consequences. 
Let $\psi:[0,T]\to\R^n$ be a path vanishing at times $0$ and $T$. Then the 
Gateaux derivative of the rate function in the direction $\psi$ is given by 
\begin{equation}
\label{eq:BM_Gateaux} 
 \dtot{}{t} \cI_{[0,T]}(\ph + t \psi) \Bigr\vert_{t=0}
 = \int_0^T \pscal{\dot\ph(s)}{\dot\psi(s)} \6s 
 = -\int_0^T \pscal{\ddot\ph(s)}{\psi(s)} \6s\;.
\end{equation} 
Stationary points of the rate function are thus straight lines of the form 
$\ph(s) = sv$ for some $v\in\R^n$. This is nothing but the Euler--Lagrange 
variational principle applied to the Lagrangian of a free particle. In 
Example~\ref{ex:LPD_BM1}, we can apply the large-deviation principle to the set 
of paths $\ph$ such that $\ph(T)\in A$. The infimum of the rate function is 
achieved by the path 
\begin{equation}
 \ph(s) = \frac{s}{T} x_0\;, \qquad 
 x_0 = \arginf_{x\in A} \norm{x}\;, 
\end{equation} 
and one has indeed $\cI_{[0,T]}(\ph) = \frac{1}{2T}\norm{x_0}^2$. A similar 
argument applies to Example~\ref{ex:LPD_BM2}, with $\ph(s) = \frac sT h$.  

Let us now outline a proof of Schilder's theorem. Its main ingredient is the 
Cameron--Martin--Girsanov formula, which states that any drifted Brownian 
motion is equivalent to standard Brownian motion under a change of measure. 

\begin{lemma}[Cameron--Martin--Girsanov formula]
Let $(W_t)_{t\geqs0}$ be a standard $1$-dimensional Brownian motion on 
$(\Omega,\cF_t,\fP)$. Then for any $h\in L^2$, the process defined by   
\begin{equation}
 \widehat W_t = W_t - \int_0^t h(s)\6s
\end{equation} 
is a standard Brownian motion on $(\Omega,\cF_t,\Q)$, where $\Q$ is defined via 
its Radon--Nikodym derivative
\begin{equation}
\label{eq:Girsanov_RDderivative} 
 \dtot{\Q}{\fP} \Biggr|_{\cF_t} 
 = \exp\Biggset{\int_0^t h(s)\6W_s - \frac12 \int_0^t h(s)^2\6s}\;.
\end{equation} 
\end{lemma}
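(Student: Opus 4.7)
The plan is to verify two essential properties: first, that the Radon--Nikodym derivative on the right-hand side of~\eqref{eq:Girsanov_RDderivative} genuinely defines a probability measure $\Q$ on each $\cF_t$, and second, that under $\Q$ the process $\widehat W_t$ has the finite-dimensional distributions of a standard Brownian motion. Continuity of paths and the condition $\widehat W_0 = 0$ are inherited from $W$, so the second step reduces to checking that $\widehat W_t - \widehat W_s$ is independent of $\cF_s$ and distributed as $\cN(0,t-s)$ for any $0\leqs s<t$.

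For the first step, set $M_t = \exp\set{\int_0^t h(s)\6W_s - \frac12 \int_0^t h(s)^2\6s}$ and apply Ito's formula (Lemma~\ref{lem_Ito}) to $M_t = u(X_t)$ with $u(x)=\e^x$ and $\6X_t = h(t)\6W_t - \frac12 h(t)^2\6t$. Using the mnemonic rules~\eqref{fito7}, one has $\6X_t^2 = h(t)^2\6t$, and the drift term together with the quadratic correction cancel exactly, yielding $\6M_t = h(t) M_t \6W_t$ (compare with~\eqref{fito9}). Because $h$ is deterministic and in $L^2([0,T])$, the integrand $h(t)M_t$ has finite $L^2$-norm on $[0,T]$ (one can check that $M_t$ has all finite moments using the Gaussian character of $\int_0^t h(s)\6W_s$), so $M_t$ is a genuine $\fP$-martingale on $[0,T]$ and $\expec{M_t} = M_0 = 1$. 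This ensures that $\Q$ is a probability measure and that its definition on different $\cF_t$ is consistent.

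For the second step, I would compute the conditional characteristic function of the increments directly. Fix $\xi\in\R$ and $0\leqs s<t\leqs T$. Using the definition of $\Q$ via $\dd\Q/\dd\fP\vert_{\cF_t} = M_t$ together with $M_s$-measurability of $M_s$,
\begin{equation*}
\bigecondlaw{\Q}{\e^{\icx\xi(\widehat W_t - \widehat W_s)}}{\cF_s}
= \frac{1}{M_s} \bigecond{M_t\,\e^{\icx\xi(\widehat W_t-\widehat W_s)}}{\cF_s}
= \bigecond{\e^{\int_s^t (h(u)+\icx\xi)\6W_u - \frac12\int_s^t h(u)^2\6u - \icx\xi\int_s^t h(u)\6u}}{\cF_s}\;,
\end{equation*}
where I have used the factorisation $M_t/M_s = \exp\set{\int_s^t h(u)\6W_u - \frac12\int_s^t h(u)^2\6u}$. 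Since the increment $W_u - W_s$ for $u>s$ is independent of $\cF_s$, and since $\int_s^t g(u)\6W_u$ with deterministic complex $g\in L^2$ is (complex) Gaussian, one has the explicit formula $\expec{\e^{\int_s^t g(u)\6W_u}} = \e^{\frac12\int_s^t g(u)^2\6u}$, which follows by analytic extension from the real case or by passing to the limit from elementary integrands. Applying this with $g(u) = h(u)+\icx\xi$, all terms involving $h$ cancel, and one is left with $\e^{-\xi^2(t-s)/2}$, as required. This shows that $\widehat W_t - \widehat W_s$ is independent of $\cF_s$ with law $\cN(0,t-s)$, so $\widehat W_t$ is a standard Brownian motion under $\Q$.

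The main obstacle is really the careful handling of the complex-valued stochastic integral: one needs either to justify the analytic extension of $\expec{\e^{\int g\6W}} = \e^{\frac12\int g^2}$ from real to complex $g$ (which is routine but requires a brief argument via approximation by elementary functions), or to use Lévy's characterisation of Brownian motion (which is not stated in the excerpt). The martingale property of $M_t$ is granted for free here thanks to the determinism of $h$; in the general adapted case this is precisely where Novikov's condition would enter.
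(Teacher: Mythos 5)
Your proof is correct, and it follows the same overall strategy as the paper's: establish the $\fP$-martingale property of the density $M_t$ (so that $\Q$ is a well-defined, consistent probability), then compute the conditional generating functional of the increments $\widehat W_t - \widehat W_s$ under $\Q$ via the Bayes rule $\bigecondlaw{\Q}{\cdot}{\cF_s} = M_s^{-1}\bigecond{M_t\,\cdot}{\cF_s}$, and recognise the Gaussian answer. The difference lies in which generating functional you use. You take the \emph{characteristic function} $\bigecondlaw{\Q}{\e^{\icx\xi(\widehat W_t - \widehat W_s)}}{\cF_s}$, which forces you into complex-valued stochastic integrals and a (correctly flagged) appeal to analytic extension of $\bigexpec{\e^{\int g\,\6W}} = \e^{\frac12\int g^2}$ to complex $g$. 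The paper instead takes the real \emph{moment generating function} $\Bigexpeclaw{\Q}{Z \e^{\gamma(\widehat W_t - \widehat W_s)}}$ and makes the key observation that both $X_t = M_t$ and $Y_t = X_t\e^{\gamma\widehat W_t - \frac12\gamma^2 t}$ are exponential $\fP$-martingales, so the conditioning can be resolved purely by chaining the tower and martingale properties, with no complex integrals at all. Each route buys something: yours is the more standard probabilistic argument (characteristic functions uniquely determine distributions without any integrability caveat) and makes the martingale property of $M_t$ explicit via Itô's formula, which the paper glosses over; the paper's is slicker because it stays real-valued and avoids the analytic-continuation lemma, at the cost of relying on the MGF to determine the Gaussian law (which is fine here since it exists for all $\gamma$). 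Both are complete up to the same background ingredients (martingale property, Gaussian formula for deterministic integrands).
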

\begin{proof}[\Sketch]
For $h\in L^2$ and $\gamma > 0$, the processes 
\begin{align}
 X_t &= \exp\Biggset{\int_0^t h(s)\6W_s - \frac12 \int_0^t h(s)^2\6s}\;, \\
 Y_t &= \exp\Biggset{\int_0^t \bigpar{\gamma + h(s)}\6W_s 
 - \frac12 \int_0^t \bigpar{\gamma + h(s)}^2\6s} 
 = X_t \exp\Biggset{\gamma \widehat W_t - \frac12\gamma^2 t}
\end{align}
are exponential martingales (similar to exponential Brownian motion) with 
respect to $\fP$. This means that $\bigecond{X_t}{\cF_s} = X_s$ and 
$\bigecond{Y_t}{\cF_s} = Y_s$  whenever $t>s$. 
For any $\cF_s$-measurable random variable $Z$, one has 
\begin{align}
 \Bigexpeclaw{\Q}{Z \e^{\gamma(\widehat W_t - \widehat W_s)}}
 &= \Bigexpeclaw{\fP}{Z X_t\e^{\gamma(\widehat W_t - \widehat W_s)}} \\
 &= \Bigexpeclaw{\fP}{Z \e^{-\gamma \widehat W_s} Y_t\e^{\frac12\gamma^2 t}} \\
 &= \Bigexpeclaw{\fP}{Z \e^{-\gamma \widehat W_s + \frac12\gamma^2 t} 
 \bigecondlaw{\fP}{Y_t}{\cF_s}} \\
 &= \Bigexpeclaw{\fP}{Z \e^{-\gamma \widehat W_s + \frac12\gamma^2 t} Y_s} \\
 &= \Bigexpeclaw{\fP}{Z X_s\e^{\frac12\gamma^2(t-s)}} \\
 &= \bigexpeclaw{\Q}{Z}\e^{\frac12\gamma^2(t-s)}\;,
\end{align}
where the third line follows from the tower property of conditional 
expectations, and the fourth line from the martingale property. 
Since $\widehat W_t - \widehat W_s$ is $\Q$-independent of $\cF_s$, the 
increments if $\widehat W_t$ are independent. The above computation shows that 
these increments are Gaussian of variance $t-s$, proving that indeed $\widehat 
W_t$ is a Brownian motion with respect to the measure $\Q$.
\end{proof}

\begin{proof}[\textsc{Proof of Theorem~\ref{thm:Schilder}}]
Assume for simplicity we are in dimension $n=1$. 
We start by proving the lower bound~\eqref{eq:LDP_lower_bound}. We thus have to 
prove that for any open set $O \subset \cC_0$, one has 
\begin{equation}
 \liminf_{\eps\to0} \eps\log\bigprob{W^\eps \in O}
 \geqs - \inf_{\ph\in O} \cI_{[0,T]}(\ph)\;.
\end{equation} 
For this, it is in fact sufficient to prove that 
\begin{equation}
\label{eq:proof_LDP_lb} 
 \liminf_{\eps\to0} \eps\log\bigprob{W^\eps \in \cB_\delta(\ph)}
 \geqs - \cI_{[0,T]}(\ph)
\end{equation} 
holds for all $\ph\in\cC_0$ such that $\cI_{[0,T]}(\ph) < \infty$ and all 
$\delta>0$, where $\cB_\delta(\ph)$ denotes the ball of radius $\delta$ in the 
sup norm, centred in $\ph$. 

If we set $\widehat W_t = W_t - \frac{1}{\sqrt{\eps}}\ph_t$ and define $\6\Q$ 
by~\eqref{eq:Girsanov_RDderivative}, we have 
\begin{align}
 \bigprob{\norm{W^\eps - \ph}_\infty < \delta} 
 &= \biggprob{\norm{\widehat W}_\infty < \frac{\delta}{\sqrt{\eps}}} 
 = \int_{\widehat W \in \cB_{\delta/\sqrt{\eps}}(0)} \6\fP  \\
 &= \int_{\widehat W \in \cB_{\delta/\sqrt{\eps}}(0)} 
 \exp\biggset{-\frac{1}{\sqrt{\eps}} \int_0^T \dot\ph_s\6\widehat W_s 
 + \frac{1}{2\eps} \int_0^T \ph_s^2 \6s} \6\Q \\
 &= \e^{-\cI_{[0,T]}(\ph)/\eps} 
 \int_{\widehat W \in \cB_{\delta/\sqrt{\eps}}(0)} 
 \exp\biggset{-\frac{1}{\sqrt{\eps}} \int_0^T \dot\ph_s\6\widehat W_s} \6\Q\;.
\end{align}
Jensen's inequality yields 
\begin{multline}
 \frac{1}{\Q\bigset{\widehat W \in \cB_{\delta/\sqrt{\eps}}(0)}}
 \int_{\widehat W \in \cB_{\delta/\sqrt{\eps}}(0)} 
 \exp\biggset{-\frac{1}{\sqrt{\eps}} \int_0^T \dot\ph_s\6\widehat W_s} \6\Q \\
 \geqs{} \exp \Biggset{-\frac{1}{\sqrt{\eps} \Q\bigset{\widehat W \in 
\cB_{\delta/\sqrt{\eps}}(0)}} \int_{\widehat W \in \cB_{\delta/\sqrt{\eps}}(0)} 
\int_0^T \dot\ph(s)\6\widehat W_s \6\Q}\;.
\end{multline} 
It thus follows from the Cameron--Martin--Girsanov formula that 
\begin{align}
 \bigprob{&\norm{W^\eps - \ph}_\infty < \delta} \\
 &\geqs \e^{-\cI_{[0,T]}(\ph)/\eps}  
 \fP\bigset{W \in \cB_{\delta/\sqrt{\eps}}(0)}
 \exp \Biggset{-\frac{1}{\sqrt{\eps} 
 \fP\bigset{W \in \cB_{\delta/\sqrt{\eps}}(0)}} 
 \int_{W \in \cB_{\delta/\sqrt{\eps}}(0)} 
\int_0^T \dot\ph(s)\6W_s \6\fP} \\
 &= \e^{-\cI_{[0,T]}(\ph)/\eps}  
 \fP\bigset{W \in \cB_{\delta/\sqrt{\eps}}(0)}\;.
\end{align}
Noting that the probability in the last expression goes to $1$ as $\eps$ 
decreases to $0$, we obtain indeed the required lower 
bound~\eqref{eq:proof_LDP_lb}. 

Regarding the upper bound~\eqref{eq:LPD_upper_bound}, we have to show that for 
any closed set $C \subset\cC_0$, one has 
\begin{equation}
 \limsup_{\eps\to0} \eps\log\bigprob{W^\eps \in O}
 \geqs - \inf_{\ph\in C} \cI_{[0,T]}(\ph)\;.
\end{equation} 
To do this, given $m\in\N$ we introduce a polygonal approximation $W^{m,\eps}$ 
of the Brownian path $W^\eps$, joining the $m+1$ points $(0,0)$, $(T/m, 
W^\eps_{T/m})$, \dots, $(T,W^\eps_T)$. For any $\delta > 0$, one can find a set 
$C^\delta$ whose boundary is at distance $\delta$ from the boundary of $C$, and 
such that 
\begin{equation}
\label{eq:proof_Schilder_ub} 
 \bigprob{W^\eps \in C}
 \leqs \bigprob{W^{m,\eps} \in C^\delta}
 + \bigprob{\norm{W^\eps - W^{m,\eps}}_\infty \geqs \delta}\;.
\end{equation} 
We first show that the second term on the right-hand side is negligible. 
Indeed, by the stationary increments property of Brownian motion, we have 
\begin{align}
 \bigprob{\norm{W^\eps - W^{m,\eps}}_\infty \geqs \delta}
 &\leqs m \biggprob{\sup_{0\leqs s\leqs T/m} \norm{W^\eps_s - 
W^{m,\eps}_s}_\infty \geqs \delta} \\
 &\leqs m \biggprob{\sup_{0\leqs s\leqs T/m} \norm{W^\eps_s} \geqs 
\frac{\delta}{2}} \\
 &= m \biggprob{\sup_{0\leqs s\leqs T/m} \norm{W_s} \geqs 
\frac{\delta}{2\sqrt{\eps}}} \\
 &\leqs 2m \e^{-m\delta^2/(8\eps T)}\;,
\end{align}
where the last bound is a standard bound for Brownian motion, that can be 
deduced from Doob's submartingale inequality. It follows that for any 
$\delta>0$, one has 
\begin{equation}
 \limsup_{m\to\infty} \limsup_{\eps\to0}
 \bigprob{\norm{W^\eps - W^{m,\eps}}_\infty \geqs \delta} 
 = -\infty\;.
\end{equation}
We now deal with the first term on the right-hand side 
of~\eqref{eq:proof_Schilder_ub}. Note that 
\begin{equation}
 \bigprob{W^{m,\eps} \in C^\delta}
 \leqs \biggprob{\cI_{[0,T]}(W^{m,\eps}) \geqs \inf_{\ph\in C^\delta} 
\cI_{[0,T]}(\ph)}\;.
\end{equation} 
Since $W^{m,\eps}$ is a random polygon, we have 
\begin{equation}
 \cI_{[0,T]}(W^{m,\eps})
 = \frac12 \sum_{k=1}^m \frac{T}{m}
 \Bignorm{\frac{m}{T} \bigpar{W^\eps_{kT/m} - W^\eps_{(k-1)T/m}}}^2
 = \frac{\eps}{2} \sum_{k=1}^m \xi_k^2\;,
\end{equation} 
where the $\xi_k$ are independent, with standard normal distribution. 
Therefore, Markov's inequality yields for any $\gamma < \frac12$ and 
$\ell\geqs0$ the bound 
\begin{equation}
 \Bigprob{\cI_{[0,T]}(W^{m,\eps}) \geqs \ell} 
 \leqs \e^{-2\gamma\ell/\eps} \Bigpar{\expec{\e^{\gamma\xi_1^2}}}^m 
 \leqs \frac{\e^{-2\gamma\ell/\eps}}{(1-2\gamma)^{m/2}}\;.
\end{equation} 
Since $\gamma < \frac12$ is arbitrary, it follows that 
\begin{equation}
 \limsup_{m\to\infty}\, \limsup_{\eps\to0}\,
 \eps\log \biggprob{\cI_{[0,T]}(W^{m,\eps}) \geqs \inf_{\ph\in C^\delta} 
\cI_{[0,T]}(\ph)} 
 \leqs -\inf_{\ph\in C^\delta} \cI_{[0,T]}(\ph)\;, 
\end{equation} 
and the result follows by taking the limit $\delta\searrow0$. 
\end{proof}

\begin{remark}
The proof we have given here is essentially the one found 
in~\cite[Chapter~3, Theorem~2.2]{FW}. There exist other proofs, however, for 
instance a proof based on the Wiener isometry can be found 
in~\cite{HairerWeber}. 
\end{remark}


\subsection{The Freidlin--Wentzell large-deviation principle}
\label{ssec:ldp_FW} 

We consider from now on SDEs of the form 
\begin{equation}
\label{eq:SDE_LDP} 
 \6X^\eps_t = f(X^\eps_t) \6t + \sqrt{\eps} g(X^\eps_t) \6W_t\;,
\end{equation} 
where $W_t$ denotes $k$-dimensional Brownian motion, $X^\eps_t$ is 
$n$-dimensional, $f:\R^n\to\R^n$ is a drift coefficient, and 
$g:\R^n\to\R^{n\times k}$ is a matrix-valued diffusion coefficient. We assume 
that $f$ and $g$ satisfy the Lipshitz and bounded-growth conditions seen in 
Section~\ref{ssec:Ito_SDEs}. Assume furthermore an ellipticity condition on the 
diffusion matrix $D(x) = g(x)g(x)^T$, requiring that 
\begin{equation}
\label{eq:ellipticity} 
 \pscal{\xi}{D(x) \xi} \geqs c \norm{\xi^2} 
 \qquad \forall \xi\in\R^d
\end{equation} 
holds for some $c>0$. Note that this is only possible if $k\geqs n$, since 
otherwise, $D(x)$ cannot have full rank. 

Recall that the solution of~\eqref{eq:SDE_LDP} with initial condition 
$X^\eps_0 = x_0$ is defined as the fixed point of the integral equation 
\begin{equation}
\label{eq:FP_SDE_LDP} 
 X^\eps_t = x_0 + \int_0^t f(X^\eps_s)\6s + \int_0^t g(X^\eps_s)\6W^\eps_s\;,
\end{equation} 
where $W^\eps_s = \sqrt{\eps}W_s$ denotes scaled Brownian motion. We view the 
right-hand side as a map $\Psi$ from realisations of 
$(W^\eps_t)_{t\in[0,T]}$ to sample paths $(X^\eps_t)_{t\in[0,T]}$. 

Consider first the case where $g$ is the identity matrix. Then the contraction 
principle, cf.~Lemma~\ref{lem:contraction_principle}, suggests that $X^\eps$ 
satisfies a large-deviation principle with rate function 
\begin{align}
\cJ_{[0,T]}(\ph)
&= \inf\bigsetsuch{\cI_{[0,T]}(\psi)}{\Psi(\psi) = \ph} \\
&= \inf\Biggsetsuch{\frac12 \int_0^T \norm{\dot\psi(s)}^2\6s}
{x_0 + \int_0^t f(\ph(s))\6s + \int_0^t \dot\psi(s)\6s = \ph(t)
\quad\forall t\in[0,T]}\;.
\end{align}
Taking the time-derivative of the condition suggests that one should take 
$\dot\psi(t) = \dot\ph(t) - f(\ph(t))$, yielding 
\begin{equation}
 \cJ_{[0,T]}(\ph) = \frac12 \int_0^T \norm{\dot\ph(s) - f(\ph(s))}^2 \6s\;.
\end{equation} 
Consider next the case where $f=0$, while $g$ is a general matrix satisfying the 
ellipticity condition~\eqref{eq:ellipticity}. Then~\eqref{eq:FP_SDE_LDP} 
suggests choosing $\psi$ in such a way that $g(\ph) \dot\psi = \dot\ph$. 
However, since $g$ is not a square matrix in general, some care is required 
when solving the variational problem.

\begin{lemma}
The minimum of $\norm{\dot\psi}^2$ under the constraint $g(\ph) \dot\psi = 
\dot\ph$ is achieved for $\dot\psi = g(\ph)^T D(\ph)^{-1} \dot\ph$, and has the 
value $\dot\ph^T D^{-1}\dot\ph = \pscal{\dot\ph}{D^{-1}\dot\ph}$. 
\end{lemma}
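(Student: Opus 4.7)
The plan is to reduce the problem to a pointwise constrained minimization at each time $s\in[0,T]$. Since both the functional $\int_0^T \norm{\dot\psi(s)}^2\6s$ and the constraint $g(\ph(s))\dot\psi(s)=\dot\ph(s)$ decouple in $s$, it suffices to fix $s$ and minimize $\norm{u}^2$ over $u\in\R^k$ subject to the linear constraint $g(\ph(s))u=\dot\ph(s)$. Throughout, abbreviate $g=g(\ph(s))$ and $D=D(\ph(s))=gg^T$; by the ellipticity assumption~\eqref{eq:ellipticity}, $D$ is a positive definite (hence invertible) $n\times n$ matrix, and in particular $g$ has full rank $n$, so the constraint is always solvable.

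First I would verify that the candidate $u^* = g^T D^{-1}\dot\ph$ satisfies the constraint: indeed, $gu^* = gg^T D^{-1}\dot\ph = DD^{-1}\dot\ph = \dot\ph$. Next I would establish optimality by an orthogonality argument. Any $u$ satisfying $gu=\dot\ph$ can be written as $u = u^* + v$ with $v := u-u^*\in\ker g$, since $g(u-u^*)=\dot\ph-\dot\ph=0$. The key point is that $u^*\in\mathrm{range}(g^T)=(\ker g)^\perp$, so that $\pscal{u^*}{v}=0$, giving
\begin{equation}
\norm{u}^2 = \norm{u^*}^2 + \norm{v}^2 \geqs \norm{u^*}^2\;,
\end{equation}
with equality if and only if $v=0$. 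This identifies $u^*$ as the unique minimiser.

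Finally, I would compute the minimum value directly:
\begin{equation}
\norm{u^*}^2
= \pscal{g^T D^{-1}\dot\ph}{g^T D^{-1}\dot\ph}
= \pscal{\dot\ph}{D^{-1} g g^T D^{-1}\dot\ph}
= \pscal{\dot\ph}{D^{-1}\dot\ph}\;,
\end{equation}
using $gg^T=D$ and symmetry of $D^{-1}$. Integrating in time yields the claimed rate-function value $\frac12\int_0^T\pscal{\dot\ph(s)}{D(\ph(s))^{-1}\dot\ph(s)}\6s$.

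There is no real obstacle here beyond recognising the problem as the computation of a minimum-norm preimage (equivalently, the Moore--Penrose pseudoinverse applied to $\dot\ph$): the ellipticity condition guarantees both the invertibility of $D$ and the surjectivity of $g$, and the rest is the standard orthogonal-decomposition argument. The one place where care is required is to confirm that no measurability or regularity issue arises when integrating in $s$, but this is immediate since $\ph\in H^1$ and $g$ is continuous, so $s\mapsto g(\ph(s))^T D(\ph(s))^{-1}\dot\ph(s)$ lies in $L^2([0,T];\R^k)$.
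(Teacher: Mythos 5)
Your proof is correct, but it takes a different route from the paper. The paper argues via the Lagrange multiplier theorem: it writes the stationarity condition $2y = g(\ph)^T\lambda$, solves the constraint to get $\lambda = 2D(\ph)^{-1}\dot\ph$, and reads off the candidate $y = g(\ph)^T D(\ph)^{-1}\dot\ph$; that it is indeed the minimum is left implicit (it follows from coercivity and convexity of $\norm{y}^2$ on the affine constraint set, but the paper does not spell this out). You instead recognise the problem as a minimum-norm preimage computation and use the orthogonal decomposition $\R^k = \ker g \oplus \mathrm{range}(g^T)$: since your candidate $u^* = g^T D^{-1}\dot\ph$ lies in $\mathrm{range}(g^T) = (\ker g)^\perp$ and any feasible point differs from it by an element of $\ker g$, Pythagoras gives $\norm{u}^2 = \norm{u^*}^2 + \norm{u-u^*}^2$, which proves global minimality \emph{and} uniqueness directly, without appealing to multipliers or to second-order considerations. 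What each approach buys: yours is self-contained and settles the "is it really a minimum" question that the multiplier argument only addresses implicitly; the paper's is the mechanical route that would extend unchanged to non-quadratic cost functionals. Your closing remarks on the pointwise-in-$s$ reduction and on measurability of $s\mapsto g(\ph(s))^T D(\ph(s))^{-1}\dot\ph(s)$ are sound and go slightly beyond what the paper records, which treats the minimisation purely pointwise.
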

\begin{proof}
The problem amounts to minimising $p(y) = \norm{y}^2$ over $y\in\R^k$ under the 
constraint $q(y) = g(\ph)y - \dot\ph = 0$. By the Lagrange multiplier theorem, 
there exists a vector $\lambda\in\R^n$ such that 
\begin{equation}
 2y_j 
 = \dpar{p}{y_j}(y) 
 = \sum_{i=1}^n \lambda_i \dpar{q_i}{y_j}(y)
 = \sum_{i=1}^n \lambda_i g_{ij}(\ph)\;, 
 \qquad 
 j\in\set{1,\dots,k}\;.
\end{equation} 
This amounts to setting $2y = g(\ph)^T \lambda$, and the constraint $q(y)=0$ 
yields $g(\ph)g(\ph)^T \lambda = 2\dot\ph$, or $\lambda = 2D(\ph)^{-1}\dot\ph$. 
Therefore, $y = g(\ph)^T D(\ph)^{-1} \dot\ph$, and taking the norm gives the 
result. 
\end{proof}

It follows that for $f=0$, one has 
\begin{equation}
 \cJ_{[0,T]}(\ph) 
 = \frac12 \int_0^T \pscal{\dot\ph(s)}{D(\ph(s))^{-1}\dot\ph(s)} \6s\;. 
\end{equation}
Combining these two special cases yields the following result. Its proof is 
somewhat more complicated than a direct application of the contraction 
principle, because of the continuity properties that need to be verified. 
However, one can work with Euler approximations of the solutions, obtained by a 
time-discretisation. 

\begin{theorem}[Large-deviation principle for SDEs]
\label{thm:WF} 
The processes $(X^\eps_t)_{\eps>0}$ satisfy a large-deviation principle with 
rate function 
 \begin{equation}
 \cJ_{[0,T]}(\ph) 
 = 
 \begin{cases}
  \displaystyle
  \frac12 \int_0^T \pscal{\dot\ph(s) - 
f(\ph(s))}{D(\dot\ph)^{-1}\bigpar{\dot\ph(s) 
  - f(\ph(s))}} \6s 
  & \text{if $\ph\in H^1$ and $\ph(0) = x_0$\;,} \\[14pt]
  +\infty 
  & \text{otherwise\;.}
 \end{cases}
\end{equation}
\end{theorem}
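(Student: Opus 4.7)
The plan is to derive Theorem~\ref{thm:WF} from Schilder's theorem (Theorem~\ref{thm:Schilder}) via a discretization combined with the contraction principle (Lemma~\ref{lem:contraction_principle}), followed by an exponential equivalence step that transfers the LDP from the approximate to the true process. First I would verify that $\cJ_{[0,T]}$ is a valid rate function: lower semicontinuity follows from Fatou's lemma applied to the integrand, which is convex in $\dot\ph$, while compactness of the sublevel sets $\setsuch{\ph}{\cJ_{[0,T]}(\ph)\leqs c}$ follows from Arzela--Ascoli, using the ellipticity condition~\eqref{eq:ellipticity} to bound $\int_0^T \norm{\dot\ph(s)}^2\6s$ in terms of $\cJ_{[0,T]}(\ph)$ and the bounded-growth of $f$. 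It will be convenient to use the equivalent variational form
\begin{equation*}
\cJ_{[0,T]}(\ph) = \inf\biggsetsuch{\frac12 \int_0^T \norm{\dot\psi(s)}^2\6s}{\ph(t) = x_0 + \int_0^t f(\ph(s))\6s + \int_0^t g(\ph(s))\dot\psi(s)\6s}\;,
\end{equation*}
the equivalence with the stated expression being exactly the Lagrange multiplier computation carried out in the excerpt just before the theorem.

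Next, for each $m\in\N$, introduce the partition $0 = t_0 < t_1 < \dots < t_m = T$ with uniform mesh $T/m$, and let $W^{\eps,m}$ be the piecewise linear interpolation of $W^\eps$ at the partition nodes. Define an Euler-type approximation $X^{\eps,m}$ by
\begin{equation*}
\6X^{\eps,m}_t = f(X^{\eps,m}_t)\6t + g(X^{\eps,m}_{t_k})\,\6W^{\eps,m}_t\;, \qquad t\in[t_k, t_{k+1}]\;.
\end{equation*}
Because $W^{\eps,m}$ is piecewise linear, this is a random ODE and the map $\Psi^m: W^{\eps,m}\mapsto X^{\eps,m}$ is continuous in the sup-norm topology, by an iterative Gronwall argument over subintervals. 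Schilder's theorem together with the contraction principle then yields an LDP for $X^{\eps,m}$ with a rate function $\cJ^m_{[0,T]}$, which is a time-discretized version of the constrained variational formula above. A direct computation shows that $\cJ^m_{[0,T]}(\ph)\to \cJ_{[0,T]}(\ph)$ as $m\to\infty$ for every absolutely continuous $\ph$, and the upper/lower LDP bounds for $\cJ^m$ converge to those for $\cJ$.

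The main step, and the main difficulty, is to establish the exponential equivalence
\begin{equation*}
\lim_{m\to\infty}\;\limsup_{\eps\to 0}\;\eps\log \fP\biggset{\norm{X^\eps - X^{\eps,m}}_\infty > \delta} = -\infty \qquad \forall\delta>0\;.
\end{equation*}
Writing $Z^{\eps,m}_t = X^\eps_t - X^{\eps,m}_t$, one has
\begin{equation*}
Z^{\eps,m}_t = \int_0^t \bigbrak{f(X^\eps_s)-f(X^{\eps,m}_s)}\6s + \int_0^t \bigbrak{g(X^\eps_s) - g(X^{\eps,m}_{\bar s})}\6W^\eps_s\;,
\end{equation*}
where $\bar s$ denotes the left endpoint of the partition interval containing $s$. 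The drift term is controlled via the Lipschitz property of $f$ and Gronwall's inequality; the stochastic term is controlled via Bernstein-type exponential martingale inequalities, combined with the Lipschitz property of $g$ and an a priori bound on the increments $\norm{X^{\eps,m}_s - X^{\eps,m}_{\bar s}}$ on each subinterval of length $T/m$. A stopping-time argument confines the analysis to a large compact set where Lipschitz constants are uniform, and the probability of leaving this set is made exponentially small via Ito's formula applied to $\norm{X^\eps_t}^2$ together with an exponential martingale inequality.

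The hard part will be closing the self-referential estimate: the bound on $Z^{\eps,m}$ depends on $g(X^\eps)$, which in turn depends on $Z^{\eps,m}$. I would close this via a bootstrap combining a stochastic Gronwall lemma with the exponential tail bounds, iterated over partition intervals of length $T/m$ so that each increment is small. Once exponential equivalence is established, a standard transfer theorem (see~\cite{DZ}) combined with the convergence $\cJ^m\to\cJ$ concludes the LDP for $X^\eps$ with rate function $\cJ_{[0,T]}$.
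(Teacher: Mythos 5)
Your outline follows exactly the route the paper sketches after the two special-case computations preceding the theorem: time-discretise the solution map via an Euler scheme, apply Schilder's theorem and the contraction principle to the continuous approximate maps, then remove the discretisation by an exponential-equivalence argument. The paper only hints at this (``one can work with Euler approximations of the solutions, obtained by a time-discretisation'') and omits the details; your proposal fills in precisely the steps one would carry out, correctly identifying the self-referential Gronwall/Bernstein estimate on $X^\eps - X^{\eps,m}$ as the main technical obstacle.
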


\begin{remark}
If $D(x)$ is only positive semi-definite, the large-deviation principle remains 
valid with 
\begin{equation}
 \cJ_{[0,T]}(\ph) 
 = 
 \inf\Biggsetsuch{\cI(\ph)}{\ph\in H^1, 
 \ph(t) = x_0 + \int_0^t f(\ph(s))\6s + \int_0^t D(\ph(s))^{1/2} \dot\ph(s)\6s 
\quad \forall t\in[0,T]}\;.
\end{equation} 
\end{remark}

\begin{example}[Interpretation of the rate function]
Fix a continuous path $\ph_0:[0,T]\to\R^n$, and define for $\delta>0$ the set 
\begin{equation}
 \Gamma_\delta = \biggsetsuch{\ph\in\cC([0,T],\R^n)}
 {\sup_{t\in[0,T]}\norm{\ph(t) - \ph_0(t)} > \delta}\;,
\end{equation} 
which one can check is open in the topology induced by the supremum norm. 
Then we have $\bigprob{\Gamma_\delta} = \bigprob{\tau(\delta) \leqs T}$, 
where $\tau(\delta)$ denotes the first-exit time from a $\delta$-neighbourhood 
of the path $\ph_0$. The large-deviation principle yields 
\begin{equation}
 -\inf_{\ph\in\Gamma_\delta} \cJ_{[0,T]}(\ph)
 \leqs \liminf_{\eps\to0} \eps\log\bigprob{\tau(\delta) \leqs T} 
 \leqs \limsup_{\eps\to0} \eps\log\bigprob{\tau(\delta) \leqs T} 
 \leqs -\inf_{\ph\in\overline\Gamma_\delta} \cJ_{[0,T]}(\ph)\;.
\end{equation}
Taking the limit $\delta\to0$, we obtain 
\begin{equation}
 \cJ_{[0,T]}(\ph_0) = 
 -\lim_{\delta\to0} \lim_{\eps\to0} 
 \eps\log\bigprob{\tau(\delta) \leqs T}\;.
\end{equation} 
Therefore, $\cJ_{[0,T]}(\ph_0)$ can be interpreted as the cost of following the 
path $\ph_0$. 
\end{example}

Let us also mention Varadhan's lemma, which applies to general random variables 
satisfying a large-deviation principle. The lemma allows to obtain results on 
\lq\lq tilted\rq\rq\ processes, in which unlikely outcomes are made likely by 
changing the probability measure. 

\begin{theorem}[Varadhan's lemma]
Let $\cC$ denote the set of continuous functions $\ph:[0,T]\to\R$, and let 
$\phi:\cC\to\R$ be a continuous map. Assume the tail condition 
\begin{equation}
 \lim_{L\to\infty} \limsup_{\eps\to0} 
 \eps \log \int_{\set{\phi(X^\eps)\geqs L}} 
 \e^{\phi(X^\eps)/\eps} \6\fP 
 = -\infty\;.
\end{equation}
Then 
\begin{equation}
 \lim_{\eps\to 0} \eps \log \int \e^{\phi(X^\eps)/\eps} \6\fP 
 = \sup_{\ph\in\cC} \Bigbrak{\phi(\ph) - \cJ_{[0,T]}(\ph)}\;.
\end{equation} 
\end{theorem}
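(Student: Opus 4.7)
The plan is to establish the claimed limit by proving separately the matching lower and upper bounds for $\liminf$ and $\limsup$ of $\eps\log\int \e^{\phi(X^\eps)/\eps}\6\fP$, using Theorem~\ref{thm:WF} together with continuity of $\phi$; denote $M = \sup_{\ph\in\cC}[\phi(\ph)-\cJ_{[0,T]}(\ph)]$.

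For the lower bound I would fix an arbitrary $\ph_0\in\cC$ with $\cJ_{[0,T]}(\ph_0)<\infty$, and for any $\delta>0$ use continuity of $\phi$ to pick an open neighbourhood $O\ni\ph_0$ on which $\phi(\ph)\geqs\phi(\ph_0)-\delta$. Then
\begin{equation*}
\int \e^{\phi(X^\eps)/\eps}\6\fP
\geqs \e^{(\phi(\ph_0)-\delta)/\eps}\,\bigprob{X^\eps\in O}\;,
\end{equation*}
and applying the LDP lower bound~\eqref{eq:LDP_lower_bound} to the open set $O$ yields
\begin{equation*}
\liminf_{\eps\to 0}\eps\log\int \e^{\phi(X^\eps)/\eps}\6\fP
\geqs \phi(\ph_0)-\delta-\cJ_{[0,T]}(\ph_0)\;.
\end{equation*}
Sending $\delta\to0$ and taking the supremum over $\ph_0$ gives the desired bound by $M$.

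For the upper bound I would split, for each $L>0$ and each discretisation step $\delta>0$, as
\begin{equation*}
\int \e^{\phi(X^\eps)/\eps}\6\fP
= \int_{\{\phi(X^\eps)\geqs L\}}\e^{\phi(X^\eps)/\eps}\6\fP
+ \int_{\{\phi(X^\eps)<L\}}\e^{\phi(X^\eps)/\eps}\6\fP\;.
\end{equation*}
The first term is controlled directly by the hypothesised tail condition. For the second I would slice according to the value of $\phi$: partitioning $(-\infty,L)$ into intervals $[k\delta,(k+1)\delta)$ with $k\in\Z$ and $k\leqs L/\delta$, and using that $\{\phi\geqs k\delta\}$ is closed by continuity of $\phi$, I would bound
\begin{equation*}
\int_{\{\phi<L\}}\e^{\phi/\eps}\6\fP
\leqs \sum_{k\leqs L/\delta} \e^{(k+1)\delta/\eps}\bigprob{\phi(X^\eps)\geqs k\delta}\;.
\end{equation*}
The LDP upper bound~\eqref{eq:LPD_upper_bound} applied to the closed set $\{\phi\geqs k\delta\}$ gives $\eps\log\bigprob{\phi(X^\eps)\geqs k\delta}\leqs -\inf_{\phi\geqs k\delta}\cJ_{[0,T]}+o(1)$, so each term is at most $\exp\bigpar{(\delta+\sup(\phi-\cJ_{[0,T]})+o(1))/\eps}$; the tail of negative $k$ contributes a geometric sum $\sum_{j\geqs 0}\e^{-j\delta/\eps}$ whose $\eps\log$ vanishes. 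Collecting everything yields $\limsup_{\eps\to 0}\eps\log\int\leqs\max\{L\text{-tail bound},\;\delta+M\}$, and sending $\delta\to 0$ and then $L\to\infty$ closes the gap.

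The main obstacle is the upper bound: one needs to be careful that the discretisation sum has only finitely many non-negligible terms after taking $\eps\log$, and that the tail condition is precisely what allows one to truncate at a finite $L$ independent of $\eps$. Compactness of the sublevel sets of $\cJ_{[0,T]}$, which is built into the LDP, is what makes the infimum $\inf_{\phi\geqs k\delta}\cJ_{[0,T]}$ attained and usable through the bound $k\delta-\inf_{\phi\geqs k\delta}\cJ_{[0,T]}\leqs\sup_\ph[\phi(\ph)-\cJ_{[0,T]}(\ph)]=M$; without continuity of $\phi$ this closed-set argument would fail, which is the reason continuity is assumed in the statement.
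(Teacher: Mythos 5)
The paper states Varadhan's lemma without proof (it is cited as a known fact before moving on to the exit problem), so there is no internal argument to compare against; your proposal must therefore be judged on its own merits, and it is essentially the classical proof found in standard references such as Dembo--Zeitouni.

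Both halves of your argument are sound. For the lower bound, fixing $\ph_0$ with $\cJ_{[0,T]}(\ph_0)<\infty$, using continuity of $\phi$ to find an open $O\ni\ph_0$ on which $\phi>\phi(\ph_0)-\delta$, applying the LDP lower bound to $O$, and then optimizing over $\ph_0$ and $\delta$ is exactly right. For the upper bound, the slicing $\{\phi<L\}=\bigcup_k\{k\delta\leqs\phi<(k+1)\delta\}$ combined with the LDP upper bound on the closed sets $\{\phi\geqs k\delta\}$ gives, for each $k$ contributing non-trivially, the estimate $(k+1)\delta-\inf_{\{\phi\geqs k\delta\}}\cJ_{[0,T]}\leqs\delta+M$, and then the three contributions (finitely many slices with $0\leqs k<L/\delta$, the geometric tail $k<0$, and the $L$-tail from the hypothesis) all limit correctly. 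Two small expository remarks. First, the key inequality $k\delta-\inf_{\{\phi\geqs k\delta\}}\cJ_{[0,T]}\leqs M$ does not actually require the infimum to be attained: for any $\eta>0$ pick $\ph$ in the set with $\cJ_{[0,T]}(\ph)<\inf+\eta$, observe $\phi(\ph)\geqs k\delta$, and let $\eta\to0$; so attributing this step to compactness of sublevel sets is slightly misleading, even though the compactness is indeed part of the LDP hypotheses. Second, it is worth being explicit that one exchanges $\limsup_{\eps\to0}$ with the maximum over the finitely many indices $0\leqs k<L/\delta$, and handles the infinite tail $k<0$ separately via the crude bound $\prob{\phi\geqs k\delta}\leqs1$; the finiteness of the number of ``dangerous'' slices is precisely what makes the argument close, as you correctly flag.
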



\section{Application to the stochastic exit problem}
\label{sec:exit} 

One of the many applications of the large-deviation principle for SDEs is the 
stochastic exit problem, that we have already encountered in 
Section~\ref{sec:diffusions}. In particular, the large-deviation principle gives 
information on the probability to leave a given set in finite time, on the 
expected time required to leave the set, and on the most probable path the 
process takes to do so. The approach is in fact quite similar to the geometric 
optics limit of the wave equation, where waves are replaced by light rays moving 
in straight lines. It can also be viewed as a version of  Richard Feynman's 
path-integral approach to quantum mechanics. 


\subsection{The Ornstein--Uhlenbeck process}
\label{ssec:ldp_OU} 

As a relatively simple warm-up example, let us consider an Ornstein--Uhlenbeck 
process in $\R$, defined by the SDE
\begin{equation}
\label{eq:OU} 
 \6X^\eps_t = -X^\eps_t \6t + \sqrt{\eps}\6W_t\;.
\end{equation} 
Let us fix an initial value $x_0 \geqs 0$ and a level $h > x_0$, and ask the 
question of how likely it is that $X^\eps_t$ reaches the level $h$ at or before 
time $T$, when $\eps$ is very small. For that, we define the set 
\begin{equation}
 \Gamma = \biggsetsuch{\ph\in\cC([0,T],\R)}{\ph(0) = x_0, \sup_{0\leqs t\leqs 
T} \ph(t) > h}\;.
\end{equation}
One checks that $\Gamma$ is open in the topology induced by the supremum norm, 
and that its closure is obtained by replacing the condition $\ph(t) > h$ by 
$\ph(t) \geqs h$. 

We have to compute the infimum over $\Gamma$ of the rate function 
\begin{equation}
 \cJ_{[0,T]}(\ph) 
 = \frac12 \int_0^T \bigbrak{\dot\ph(s) + \ph(s)}^2 \6s\;.
\end{equation} 
Proceeding as in~\eqref{eq:BM_Gateaux}, we find that the Gateaux derivative of 
the rate function in the direction $\psi$, where $\psi(0) = \psi(T) = 0$, is 
given by 
\begin{equation}
 \dtot{}{t} \cJ_{[0,T]}(\ph + t \psi) \Bigr\vert_{t=0}
 = \int_0^T \brak{\dot\ph(s) + \ph(s)}\brak{\dot\psi(s) + \psi(s)} \6s
 = \int_0^T \brak{-\ddot\ph(s) + \ph(s)}\psi(s) \6s\;.
\end{equation} 
Stationary points of the rate function thus solve the linear differential 
equation 
\begin{equation}
\label{eq:ODE_OU_LDP} 
 \ddot\ph(s) = \ph(s)\;,
\end{equation} 
the general solution of which has the form $\ph(s) = A\e^s + B\e^{-s}$. 
We still have to deal with the boundary conditions. For that, we fix any time 
$t\in(0,T)$, and define $\ph_t$ by 
\begin{equation}
 \ph_t(s) = 
 \begin{cases}
 \dfrac{h-x_0\e^{-t}}{2\sinh(t)} \e^s 
 + \dfrac{x_0\e^{t}-h}{2\sinh(t)} \e^{-s} 
 & \text{for $0 \leqs s \leqs t$\;,} \\
 h\e^{-(s-t)} & \text{for $t < s \leqs T$\;.}
 \end{cases}
\end{equation} 
\begin{figure}
\begin{center}
\scalebox{1.0}{
\begin{tikzpicture}[>=stealth',main 
node/.style={draw,semithick,circle,fill=white,minimum size=4pt,inner 
sep=0pt},x=2cm,y=1.5cm]

\draw[->,thick] (-0.5,0) -> (5.5,0);
\draw[->,thick] (0,-0.5) -> (0,2.7);

\draw[semithick,dashed] (0,2) -- (5,2); 
\draw[semithick,dashed] (1,0) -- (1,2); 
\draw[semithick,dashed] (3,0) -- (3,2); 
\draw[semithick,dashed] (5,0) -- (5,2); 

\newcommand*{\Aa}{(2-exp(-1))/(2*sinh(1))}
\newcommand*{\Ba}{(exp(1)-2)/(2*sinh(1))}

\newcommand*{\Ab}{(2-exp(-3))/(2*sinh(3))}
\newcommand*{\Bb}{(exp(3)-2)/(2*sinh(3))}

\newcommand*{\Ac}{(2-exp(-5))/(2*sinh(5))}
\newcommand*{\Bc}{(exp(5)-2)/(2*sinh(5))}

\draw[blue,very thick,-,smooth,domain=0:1,samples=25,/pgf/fpu,
/pgf/fpu/output format=fixed] plot (\x, {\Aa *exp(\x) + \Ba *exp(-\x)});

\draw[blue,very thick,-,smooth,domain=1:5,samples=75,/pgf/fpu,
/pgf/fpu/output format=fixed] plot (\x, {2*exp(-(\x-1))});

\draw[violet,very thick,-,smooth,domain=0:3,samples=75,/pgf/fpu,
/pgf/fpu/output format=fixed] plot (\x, {\Ab *exp(\x) + \Bb *exp(-\x)});

\draw[violet,very thick,-,smooth,domain=3:5,samples=25,/pgf/fpu,
/pgf/fpu/output format=fixed] plot (\x, {2*exp(-(\x-3))});

\draw[purple,very thick,-,smooth,domain=0:5,samples=100,/pgf/fpu,
/pgf/fpu/output format=fixed] plot (\x, {\Ac *exp(\x) + \Bc *exp(-\x)});

\node[main node] at (0,2) {};
\node[main node] at (0,1) {};
\node[main node] at (1,0) {};
\node[main node] at (3,0) {};
\node[main node] at (5,0) {};

\node[] at (0.15,2.5) {$x$};

\node[] at (-0.2,1) {$x_0$};
\node[] at (-0.15,2) {$h$};

\node[] at (5.4,-0.2) {$s$};

\node[] at (1,-0.25) {$t_1$};
\node[] at (3,-0.25) {$t_2$};
\node[] at (5,-0.25) {$T$};

\node[blue] at (1.7,1.4) {$\ph_{t_1}(s)$};
\node[violet] at (3.7,1.4) {$\ph_{t_2}(s)$};
\node[purple] at (4.7,1) {$\ph_{T}(s)$};

\end{tikzpicture}
}
\vspace{-3mm}
\end{center}
\caption[]{Several paths used to minimise the rate function for the 
Ornstein--Uhlenbeck process. The infimum is attained on the path $\ph_T$.}
\label{fig:OU_LDP}
\end{figure}
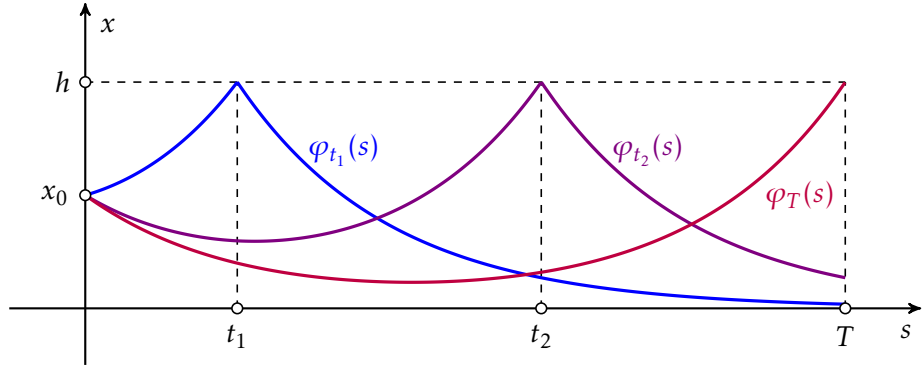
One readily checks (see \figref{fig:OU_LDP}) that 
\begin{itemize}
\item 	$\ph_t$ is continuous, satisfies the equation~\eqref{eq:ODE_OU_LDP} on 
$[0,t)$, the deterministic limit $\dot\ph = -\ph$ of~\eqref{eq:OU} on 
$(t,T]$, and $\ph_t(0) = x_0$ while $\ph_t(t) = h$;
\item 	if $\cosh(t) \leqs h/x_0$, then $\ph_t(s)$ is increasing in $s$ on 
$[0,t]$;
\item 	if $\cosh(t) > h/x_0$, then $\ph_t(s)$ is first decreasing and then 
increasing in $s$ on $[0,t]$, with a positive minimal value.
\end{itemize}
In particular, $\ph_t$ is an element of $\overline\Gamma$, reaching its maximal 
value $h$ at time $t$. Furthermore, 
\begin{align}
 \cJ_{[0,T]}(\ph_t) 
 &= \cJ_{[0,t]}(\ph_t) + \cJ_{[t,T]}(\ph_t)
 = \frac12 \int_0^t \bigbrak{\dot\ph_t(s) + \ph_t(s)}^2 \6s + 0 \\
 &= \frac{(h\e^{t/2} - x_0\e^{-t/2})^2}{2\sinh(t)}
 = h^2 + x_0^2 + \frac{h^2-x_0^2}{\tanh(t)} + \frac{hx_0}{\sinh(t)}\;.
 \label{eq:OU_I0T} 
\end{align} 
Clearly, any modification of $\ph_t$ on the interval $(t,T]$ can only increase 
the value of the rate function. The same is true for modifications on 
the interval $[0,t)$. One way of seeing this is to compute the second variation 
of the rate function, which is given by the bilinear form 
\begin{equation}
 (\psi_1, \psi_2) \mapsto 
 \int_0^T \brak{-\ddot\psi_1(s) + \psi_1(s)}\psi_2(s) \6s\;. 
\end{equation} 
The associated quadratic form is positive definite, because the eigenvalues of 
the second derivative (with Dirichlet boundary conditions on $[0,T]$) are 
negative. Since~\eqref{eq:OU_I0T} is a decreasing function of $t$, we have thus 
obtained 
\begin{equation}
 \inf_{\ph\in\overline\Gamma} \cJ_{[0,T]}(\gamma) 
 = \cJ_{[0,T]}(\gamma_T) 
 = \frac{(h\e^{T/2} - x_0\e^{-T/2})^2}{2\sinh(T)}\;.
\end{equation} 
Making the same argument with $h_1 > h$, and letting $h_1$ decrease to $h$ 
shows that this infimum coincides with the infimum over the open set $\Gamma$ 
itself. By Theorem~\ref{thm:WF}, we conclude that the first-passage time 
$\tau = \inf\setsuch{t\in[0,T]}{X^\eps_t > h}$ satisfies 
\begin{equation}
 \lim_{\eps\to0} \eps\log\bigprob{\tau < T}
 = - \frac{(h\e^{T/2} - x_0\e^{-T/2})^2}{2\sinh(T)}\;.
\end{equation} 
In particular, we have 
\begin{equation}
\label{eq:OU_limit} 
\lim_{T\to\infty} \lim_{\eps\to0} \eps\log\bigprob{\tau < T}
= -h^2\;,
\end{equation} 
showing that for large $T$, the probability of reaching level $h$ before time 
$T$ behaves roughly like $\e^{-h^2/\eps}$, independently of $x_0\in[0,h)$. 


\subsection{Lagrangian and Hamiltonian formulations}
\label{ssec:ldp_Lagrange_Hamilton} 

The minimisation problems we encountered for Brownian motion and for the 
Ornstein--Uhlen\-beck process are in fact well known in analytical mechanics. 
Indeed, the rate function is a Lagrangian action, of the form 
\begin{equation}
 \cJ_{[0,T]}(\ph) = \int_0^T L(\ph(s),\dot\ph(s))\6s\;,
\end{equation} 
where $L$ is a Lagrangian given by 
\begin{equation}
\label{eq:Lagrangian_LDP} 
 L(\ph,\dot\ph) = \frac12 \pscal{\dot\ph - f(\ph)}
{D(\dot\ph)^{-1}\bigpar{\dot\ph - f(\ph)}}\;.
\end{equation} 
Let us disregard for a moment the question of boundary conditions. The 
stationary points of the action are known to satisfy the Euler--Lagrange 
equations 
\begin{equation}
 \dtot{}{t} \biggpar{\dpar{L}{\dot\ph_i}}
 = \dpar{L}{\ph_i}\;, 
 \qquad
 i = 1,\dots, n\;.
\end{equation} 

\begin{exercise}
Write the Euler--Lagrange equations corresponding to Brownian motion, and 
those corresponding to the Ornstein--Uhlenbeck process. Check that their 
solutions are indeed of the form discussed in the previous subsections.  
\end{exercise}

It is often more convenient to use the Hamiltonian formalism. This is done by 
introducing the conjugate momenta 
\begin{equation}
 \psi_i = \dpar{L}{\dot\ph_i}
\end{equation} 
and the Hamiltonian 
\begin{equation}
 H(\ph,\psi) = \sum_{i=1}^n \psi_i\dot\ph_i - L(\ph,\dot\ph)\;,
\end{equation} 
where $\dot\ph$ has to be expressed in terms of the $\psi_i$ on the right-hand 
side. The stationary points of the action are then solutions of the Hamilton 
equations 
\begin{equation}
\label{eq:Hamilton} 
 \dtot{\ph_i}{t} = \dpar{H}{\psi_i}\;, \qquad 
 \dtot{\psi_i}{t} = -\dpar{H}{\ph_i}\;.
\end{equation} 
For the Lagrangian~\eqref{eq:Lagrangian_LDP}, the Hamiltonian reads 
\begin{equation}
\label{eq:Hamiltonian_LDP} 
 H(\ph,\psi) = \frac12 \pscal{\psi}{D(\ph)\psi} + \pscal{\psi}{f(\ph)}\;,
\end{equation} 
and the rate function takes the value 
\begin{equation}
 \cJ_{[0,T]}(\ph,\psi) = \frac12 \int_0^T 
\pscal{\psi(s)}{D(\ph(s))\psi(s)}\6s\;.
\end{equation} 
One should note that while the first term in the 
Hamiltonian~\eqref{eq:Hamiltonian_LDP} can be interpreted as a kinetic energy, 
the second term plays a different role from the usual potential energy in a 
particle system. 

The Hamilton equations~\eqref{eq:Hamilton} take the form 
\begin{equation}
 \dtot{\ph}{t} = D(\ph)\psi + f(\ph)\;, \qquad 
 \dtot{\psi}{t} = \frac12\nabla_\ph \pscal{\psi}{D(\ph)\psi}
 -\nabla_\ph \pscal{\psi}{f(\ph)}\;.
\end{equation} 

\begin{figure}
\begin{center}
\scalebox{1.0}{
\begin{tikzpicture}[>=stealth',main 
node/.style={draw,semithick,circle,fill=white,minimum size=4pt,inner 
sep=0pt},x=4cm,y=1.5cm]

\draw[->,thick] (-0.5,0) -> (2.4,0);
\draw[->,thick] (0,-1) -> (0,4.3);

\draw[thick,blue,->] (0,0) -> (0.5,1); 
\draw[thick,blue] (-0.5,-1) -- (2,4); 

\draw[thick,blue] (-0.2,0) -- (2,0);
\draw[thick,blue,->] (2,0) -> (0.5,0);
\draw[thick,blue,->] (-0.5,0) -> (-0.25,0);
\draw[thick,blue,->] (0,0) -> (-0.25,-0.5);

\draw[semithick, dashed] (2,0) -- (2,4);

\newcommand*{\psiplus}{2+sqrt(2)}
\newcommand*{\psiminus}{2-sqrt(2)}

\draw[blue,thick,-,smooth,domain={\psiminus}:{\psiplus},samples=25,/pgf/fpu,
/pgf/fpu/output format=fixed] plot ({1/\x + 0.5*\x}, \x);

\newcommand*{\psiplusA}{2+sqrt(3)}
\newcommand*{\psiminusA}{2-sqrt(3)}

\draw[blue,thick,-,smooth,domain={\psiminusA}:{\psiplusA},samples=25,/pgf/fpu,
/pgf/fpu/output format=fixed] plot ({0.5/\x + 0.5*\x}, \x);

\draw[blue,thick,-,smooth,domain=-0.5:{15/8},samples=25,/pgf/fpu,
/pgf/fpu/output format=fixed] plot (\x,{\x + sqrt(\x *\x + 1)});

\draw[blue,thick,-,smooth,domain=-0.5:1.5,samples=25,/pgf/fpu,
/pgf/fpu/output format=fixed] plot (\x,{\x + sqrt(\x *\x + 4)});

\newcommand*{\amp}{1/(2*sinh(5))}

\draw[purple,very thick,-,smooth,domain=0:5,samples=100,/pgf/fpu,
/pgf/fpu/output format=fixed] 
plot ({\amp *((2-exp(-5))*exp(\x) + (exp(5)-2)*exp(-\x))}, 
{2*\amp *(2-exp(-5))*exp(\x)});

\node[main node] at (1,0) {};
\node[main node] at (2,0) {};

\node[main node,purple,fill=white] at (2, {2*\amp *(2-exp(-5))*exp(5)}) {};

\node[] at (2.3,-0.2) {$\ph$};
\node[] at (-0.07,4) {$\psi$};

\node[] at (1,-0.2) {$x_0$};
\node[] at (2,-0.2) {$h$};

\end{tikzpicture}
}
\vspace{-3mm}
\end{center}
\caption[]{Hamiltonian flow corresponding to the Ornstein--Uhlenbeck process. 
The purple curve is the optimal trajectory allowing to reach level $h$ starting 
from $x_0$ in a given time. It corresponds to $\ph_T$ in \figref{fig:OU_LDP}.}
\label{fig:OU_Hamilton}
\end{figure}
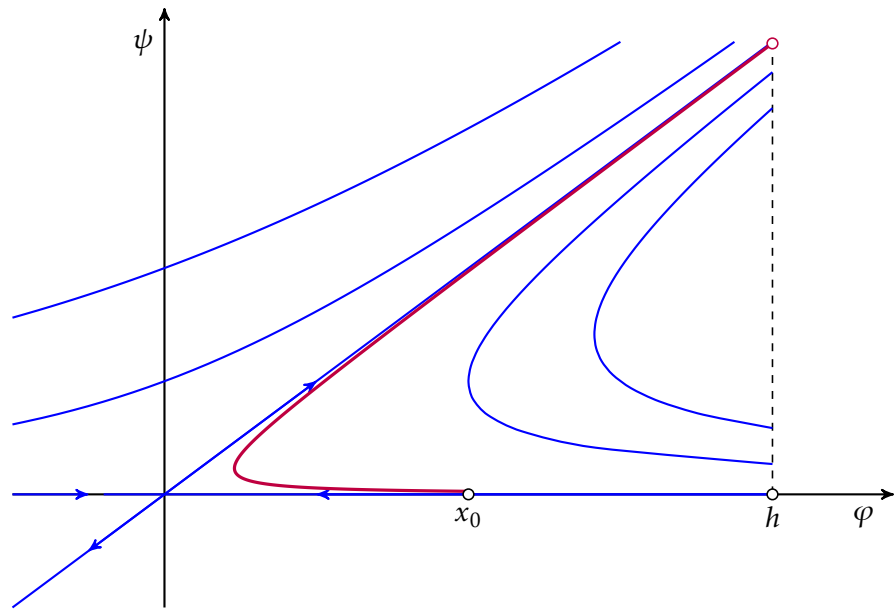

\begin{example}[Ornstein--Uhlenbeck process]
For the Ornstein--Uhlenbeck process~\eqref{eq:OU}, the Hamiltonian has the form 
\begin{equation}
 H(\ph,\psi) = \frac12 \psi^2 - \psi\phi\;,
\end{equation} 
and the Hamilton equations read 
\begin{equation}
  \dtot{\ph}{t} = \psi - \ph\;, \qquad 
 \dtot{\psi}{t} = \psi\;.
\end{equation} 
In this one-dimensional situation, one can take advantage of the fact that the 
Hamiltonian is a constant of motion, so that solutions of the Hamilton 
equations belong to level curves of $H$, see~\figref{fig:OU_Hamilton} (this 
remains true for more general processes). Note that as the time $T$ allowed to 
reach a given level $h$ becomes large, the optimal path becomes closer and 
closer to the stable and unstable manifolds of the origin, on which the 
Hamiltonian has value $0$. This also remains true in more generality. 
\end{example}


\subsection{The probability of exiting a domain in a time independent of $\eps$}
\label{ssec:ldp_exit1} 

Consider now a general diffusion of the form 
\begin{equation}
\label{eq:SDE_LDP_exit} 
 \6X^\eps_t = f(X^\eps_t) \6t + \sqrt{\eps} g(X^\eps_t) \6W_t\;, 
\end{equation} 
with the usual assumptions on $f$ and $g$ guaranteeing existence and uniqueness 
of a strong solution, and let $\cD\subset\R^n$ be a bounded, open set with 
smooth boundary. Then the large-deviation principle has the following 
consequence. 

\begin{proposition}
Fix a point $x_0\in\cD$, and define for any $y\notin\cD$ and $t>0$
\begin{equation}
 V(x_0,y; t)
 = \inf\Bigsetsuch{\cJ_{[0,T]}(\ph)}{\ph\in\cC([0,t],\R^n), \ph(0)=x_0, 
\ph(t)=y}\;.
\end{equation}
Let $\tau^\eps$ denote the first-exit time from $\cD$ of the solution 
of~\eqref{eq:SDE_LDP_exit} starting at $x_0$. Then 
\begin{equation}
\label{eq:first_exit_V} 
 \lim_{\eps\to0} \eps\log\bigprobin{x_0}{\tau^\eps \leqs T} 
 = -\inf\Bigsetsuch{V(x_0,y; t)}{t\in[0,T], y\not\in\cD}
\end{equation} 
holds for any $T>0$. 
\end{proposition}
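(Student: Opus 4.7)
The plan is to translate the exit event $\set{\tau^\eps \leqs T}$ into an event on sample paths and apply the large-deviation principle of Theorem~\ref{thm:WF}. Let $\cC = \cC([0,T],\R^n)$ with the supremum norm, and consider the sets
\begin{align*}
 F &= \bigsetsuch{\ph\in\cC}{\ph(0)=x_0,\ \exists\, t\in[0,T]\text{ with }
\ph(t)\notin\cD}\;, \\
 G &= \bigsetsuch{\ph\in\cC}{\ph(0)=x_0,\ \exists\, t\in[0,T]\text{ with }
\ph(t)\notin\overline{\cD}}\;.
\end{align*}
Since $\cD$ is open, the complement of $F$ (restricted to paths starting at $x_0$) is open, so $F$ is closed; conversely $G$ is open. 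By continuity of sample paths, $\set{\tau^\eps \leqs T} = \set{X^\eps\in F}$, and the events $\set{X^\eps\in G}\subset\set{\tau^\eps\leqs T}\subset\set{X^\eps\in F}$ sandwich the quantity of interest.

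For the upper bound in~\eqref{eq:first_exit_V}, I apply the upper bound~\eqref{eq:LPD_upper_bound} of the LDP to the closed set $F$. Any $\ph\in F$ with finite cost satisfies $\ph(t)\notin\cD$ for some $t\in[0,T]$, hence $\cJ_{[0,t]}(\ph_{\vert[0,t]}) \geqs V(x_0,\ph(t);t)$. Restricting to $[0,t]$ does not increase the cost (the integrand in the rate function is non-negative), so
\[
 \inf_{\ph\in F}\cJ_{[0,T]}(\ph)
 \geqs \inf\bigsetsuch{V(x_0,y;t)}{t\in[0,T],\ y\notin\cD}\;,
\]
and the reverse inequality is obtained by taking, for given $y\notin\cD$ and $t\in[0,T]$, a near-optimal path reaching $y$ at time $t$ and extending it arbitrarily on $[t,T]$.

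For the lower bound, I apply~\eqref{eq:LDP_lower_bound} to the open set $G$. Given $y^*\notin\overline{\cD}$, $t^*\in(0,T)$ and $\eta>0$, pick $\ph^*$ joining $x_0$ to $y^*$ on $[0,t^*]$ with $\cJ_{[0,t^*]}(\ph^*)\leqs V(x_0,y^*;t^*)+\eta$, and extend it to $[0,T]$ by, say, following the deterministic flow of $f$ from $y^*$. Any path sufficiently close to this extension in sup norm lies in $G$, so $\ph^*\in G$ (viewing $G$ as the interior of its closure suffices), and the LDP yields
\[
 \liminf_{\eps\to0}\eps\log\bigprob{X^\eps\in G}
 \geqs -V(x_0,y^*;t^*)-\eta\;.
\]
Optimising over $y^*$, $t^*$, $\eta$ and then noting that
\[
 \inf\bigsetsuch{V(x_0,y;t)}{t\in[0,T],\ y\notin\overline{\cD}}
 = \inf\bigsetsuch{V(x_0,y;t)}{t\in[0,T],\ y\notin\cD}
\]
(by approximating any boundary target from the exterior along a near-optimal path, using lower semicontinuity of $V$ in $y$ and $t$) gives the required lower bound.

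The main obstacle is the last equality, i.e.\ showing that the infimum is not strictly lowered by allowing boundary points $y\in\partial\cD$. This requires a mild regularity argument: given an admissible path $\ph$ ending at $y\in\partial\cD$ at time $t$, one extends it slightly past the boundary in time $\delta$ at a controllable additional cost $\Order\delta$, then lets $\delta\to0$. The smoothness of $\partial\cD$ and the ellipticity condition~\eqref{eq:ellipticity} make this extension possible, and the rest of the argument is then routine.
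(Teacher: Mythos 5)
Your proof is correct and follows exactly the route the paper alludes to: identify $\set{\tau^\eps\leqs T}$ with the path-space event of leaving $\cD$, apply Theorem~\ref{thm:WF} to the closed set $F$ and the open set $G$, and show that the resulting infima agree by pushing near-optimal paths slightly past $\partial\cD$ (the step the paper calls \lq\lq showing that the upper and lower bound coincide\rq\rq). Two cosmetic remarks: the parenthetical \lq\lq viewing $G$ as the interior of its closure suffices\rq\rq\ is unnecessary, since $\ph^*(t^*)\notin\overline\cD$ already puts $\ph^*$ in $G$ by definition; and the closedness of $F$ uses that a continuous path confined to the open set $\cD$ on the compact interval $[0,T]$ stays a positive distance from $\partial\cD$, a point worth spelling out.
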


The proof proceeds in the same way as in the examples we have seen so far, the 
main point being to show that the upper and lower bound in the large-deviation 
principle coincide. 

\begin{figure}
\begin{center}
\vspace{-14mm}
\begin{tikzpicture}[>=stealth',main node/.style={draw,circle,fill=white,minimum
size=3pt,inner sep=0pt},scale=1.1
]



\path[fill=blue!10] (-4,0) .. controls (-4,5) and (4,3) .. (4,1) -- 
(-4,0) .. controls (-4,-4) and (4,-2) .. (4,1);

\draw[thick,blue] (-4,0) .. controls (-4,5) and (4,3) .. (4,1);
\draw[thick,blue] (-4,0) .. controls (-4,-4) and (4,-2) .. (4,1);


\pgfmathsetseed{1}	
\draw[red] plot [domain=-0.5:4.5,samples=80,smooth]
(\x+0.12*rand,{0.5*\x + 0.5*sin(90*\x) +0.12*rand});


\draw[semithick,blue,->] (2,2.72) -- (2.3,3.73);
\draw[semithick,->] (2,2.72) -- (0.5,2);

\node[blue] at (2.6,3.35) {$n(y)$};
\node[] at (1.5,2.1) {$f(y)$};


\node[main node,thick] at (-0.45,-0.6) {}; 
\node[] at (-0.75,-0.6) {$x_0$};

\node[main node,thick] at (0,0.6) {}; 
\node[] at (-0.3,0.7) {$x^*$};

\node[main node,thick] at (3.75,1.65) {}; 
\node[] at (4.2,1.6) {$X^\eps_{\tau^\eps}$};

\node[main node,thick] at (2,2.72) {}; 
\node[] at (2.3,2.8) {$y$};

\node[blue] at (3.0,-1.3) {$\partial\cD$};

\node[blue] at (-2.5,2.0) {$\cD$};

\end{tikzpicture}
\vspace{-16mm}
\end{center}
\caption[]{Exit of a diffusion $(X^\eps_t)_{t\geqs0}$ from a domain $\cD$. The 
vector $n(y)$ is the unit outward normal vector at $y\in\partial\cD$, and the 
vector field $f(y)$ should satisfy $\pscal{f(y)}{n(y)} < 0$ at any 
$y\in\partial\cD$.}
\label{fig:exit_problem}
\end{figure}

The main difficulty at this stage remains solving the variational problem. In 
general this can be quite intricate, but one can say more under some additional 
assumptions. We assume from now on (see \figref{fig:exit_problem}) that 
\begin{itemize}
\item 	the deterministic equation $\dot x = f(x)$ has a unique equilibrium 
point in $\cD$, say at $x^* = 0$;
\item 	the closure $\overline\cD$ is contained in the basin of attraction of 
$x^*$ for the deterministic evolution, meaning that for any initial 
condition $x_0\in\overline\cD$, the solution of $\dot x = f(x)$ 
starting from $x_0$ converges to $x^*$ as time goes to infinity;
\item 	the vector field $f$ points inward on the boundary $\cD$ of the domain, 
meaning that if $n(y)$ denotes the unit outward normal vector at $y$, then  
$\pscal{f(y)}{n(y)} < 0$ for any $y\in\partial\cD$ (the boundary 
$\partial\cD$ is called non-characteristic). 
\end{itemize}

\begin{definition}[Quasipotential]
\label{def:quasipotential} 
The quasipotential is the function defined for any $y\notin\cD$ by 
\begin{equation}
\label{eq:quasipotential} 
 V(x^*,y) = \inf_{t>0} V(x^*,y;t)\;.
\end{equation} 
\end{definition}

\begin{example}[Gradient system]
\label{ex:gradient_SDE_LDP} 
Consider a gradient SDE of the form 
\begin{equation}
\label{eq:SDE_gradient_LDP} 
 \6X_t = -\nabla U(X_t)\6t + \sqrt{\eps} \6W_t\;,
\end{equation} 
where $U$ is a potential that admits a unique minimum in $\cD$ at $x^*$. 
For any $T>0$, we have 
\begin{align}
\cJ_{[0,T]}(\ph) 
&= \frac12 \int_0^T \norm{\dot\ph(s) + \nabla U(\ph(s))}^2 \6s \\
&= \frac12 \int_0^T \norm{\dot\ph(s) - \nabla U(\ph(s))}^2 \6s 
+ 2 \int_0^T \pscal{\dot\ph(s)}{\nabla U(\ph(s))}\6s \\
&= \frac12 \int_0^T \norm{\dot\ph(s) - \nabla U(\ph(s))}^2 \6s 
+ 2 \bigbrak{U(\ph(T)) - U(\ph(0))}\;.
\end{align}
Taking $\ph(0) = x^*$ and $\ph(T) = y \notin \cD$, we see that $V(x^*,y; T)$ is 
bounded below by $2[U(y) - U(x^*)]$. Our assumptions imply that we can make the 
integral on the right-hand side arbitrarily small as $T\to\infty$, by choosing 
for $\ph$ a path connecting $y$ and $x^*$ that follows the reversed 
deterministic dynamics $\dot x = \nabla U(x)$. It follows that the 
quasipotential is simply given in this case by 
\begin{equation}
\label{eq:quasipotential_reversible} 
 V(x^*,y) = 2\bigbrak{U(y) - U(x^*)}\;.
\end{equation} 
\end{example}

\begin{proposition}
\label{prop:taueps_V} 
Under the above assumptions, we have for any initial 
condition $x_0\in\cD$
\begin{equation}
 \lim_{T\to\infty}
 \lim_{\eps\to0} \eps\log\bigprobin{x_0}{\tau^\eps \leqs T} 
 = - \overline V\;,
\end{equation} 
where $\overline{V}$ is the infimum of the quasipotential on the boundary: 
\begin{equation}
 \overline V = \inf_{y\in\partial\cD} V(x^*,y)\;. 
\end{equation}
\end{proposition}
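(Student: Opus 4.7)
The plan is to combine the large-deviation principle for the first-exit time from the preceding proposition with a two-sided analysis of the associated variational quantity. Setting
\begin{equation*}
V_T(x_0) = \inf\Bigsetsuch{V(x_0,y;t)}{t\in[0,T],\ y\notin\cD}\;,
\end{equation*}
the preceding proposition yields $\lim_{\eps\to0}\eps\log \bigprobin{x_0}{\tau^\eps\leqs T} = -V_T(x_0)$ for every $T>0$. Since $V_T(x_0)$ is non-increasing in $T$, it converges as $T\to\infty$ to $V_\infty(x_0) = \inf_{t>0,\,y\notin\cD}V(x_0,y;t)$, and because sample paths are continuous, any trajectory reaching $\R^n\setminus\cD$ must first cross $\partial\cD$; restricting to the first crossing yields $V_\infty(x_0) = \inf_{y\in\partial\cD} V(x_0,y)$. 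The remaining task is to show $V_\infty(x_0) = \overline V$.

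First I would establish the upper bound $V_\infty(x_0) \leqs \overline V$ by path concatenation. Given $y\in\partial\cD$ and $\delta>0$, pick a path $\psi:[0,T_1]\to\R^n$ from $x^*$ to $y$ with cost at most $V(x^*,y)+\delta/2$. Because $\overline\cD$ lies in the basin of attraction of $x^*$, the deterministic trajectory $\ph^{\det}$ solving $\dot x = f(x)$ with $\ph^{\det}(0)=x_0$ enters any ball $B(x^*,\eta)$ in finite time $T_0(\eta)$, and this piece has zero rate-function cost because the integrand $\pscal{\dot\ph - f(\ph)}{D(\ph)^{-1}(\dot\ph - f(\ph))}$ vanishes along solutions of $\dot\ph=f(\ph)$. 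Using the ellipticity of $D$, one can connect $\ph^{\det}(T_0(\eta))$ to $x^*$ with a short corrective segment of cost $\Order{\eta}$, and then follow $\psi$. For $\eta$ small enough this produces a path from $x_0$ to $y$ of total cost at most $V(x^*,y)+\delta$, hence $V(x_0,y)\leqs V(x^*,y)$, and taking the infimum over $y\in\partial\cD$ gives $V_\infty(x_0)\leqs \overline V$.

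The reverse inequality $V_\infty(x_0)\geqs\overline V$ will be the main obstacle, because the naive variational argument would require prepending a path from $x^*$ to $x_0$ to an arbitrary competitor, and running the deterministic flow backward is typically costly. The standard remedy combines the strong Markov property with the basin structure. For $\eta>0$ small, let $\tau_\eta = \inf\setsuch{t\geqs 0}{X^\eps_t\in \overline B(x^*,\eta)}$. A large-deviation argument based on the closeness of $X^\eps$ to the deterministic flow during the time $T_0(\eta)$ needed to reach $B(x^*,\eta)$, together with the non-characteristic boundary assumption, shows that the probability of exiting $\cD$ before reaching $B(x^*,\eta)$ decays at least as fast as $\e^{-(\overline V - o_\eta(1))/\eps}$. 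Combining this with the strong Markov decomposition
\begin{equation*}
\bigprobin{x_0}{\tau^\eps \leqs T}
\leqs \bigprobin{x_0}{\tau^\eps < \tau_\eta}
+ \sup_{x\in \overline B(x^*,\eta)}\bigprobin{x}{\tau^\eps \leqs T}\;,
\end{equation*}
then applying the preceding proposition at starting points $x$ close to $x^*$ and invoking continuity of $V(x,y;t)$ in $x$, one obtains $\limsup_{\eps\to0}\eps\log\bigprobin{x_0}{\tau^\eps\leqs T} \leqs -\overline V + o_\eta(1)$ for every $T>0$. Taking $T\to\infty$ and then $\eta\to 0$ completes the proof.
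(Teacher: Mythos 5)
Your first paragraph (reduction, via the preceding proposition, to the variational identity $\inf_{y\in\partial\cD}V(x_0,y)=\overline V$) and your upper bound $\inf_{y\in\partial\cD}V(x_0,y)\leqs\overline V$ are correct, and the latter parallels the constructive half of the paper's argument. The gap is in your reverse inequality. The estimate you invoke there --- that $\bigprobin{x_0}{\tau^\eps<\tau_\eta}$ decays at least like $\e^{-(\overline V-o_\eta(1))/\eps}$ --- is not a consequence of closeness to the deterministic flow, and it is false in general. Take the gradient case $\6X_t=-\nabla U(X_t)\6t+\sqrt{\eps}\,\6W_t$ with a single minimum at $x^*$: a starting point $x_0$ lying \lq\lq uphill\rq\rq\ towards a boundary point $y$ can exit directly, along a path that never approaches $x^*$, at cost $2\bigbrak{U(y)-U(x_0)}$, which is strictly smaller than $V(x^*,y)=2\bigbrak{U(y)-U(x^*)}$. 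The same example shows that the identity you are trying to prove fails pointwise: one always has $V(x_0,y)\leqs V(x^*,y)$, and the inequality can be strict, so $\inf_{y\in\partial\cD}V(x_0,y)$ can be strictly smaller than $\overline V$ for fixed $x_0\neq x^*$. Since your first paragraph correctly identifies the double limit with $-\inf_{y\in\partial\cD}V(x_0,y)$, no strong-Markov decomposition can rescue the step: the obstruction sits in the variational quantity itself, not in the probabilistic estimate.

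This is precisely why the paper does not argue pointwise in $x_0$. Its proof establishes the matching bounds \eqref{eq:proof_exit_lowerbound} and \eqref{eq:proof_exit_upperbound} only uniformly over starting points in a small ball $\cB_\delta(x^*)$, where adding or removing a short straight connector to $x^*$ changes the cost by a quantity that vanishes with $\delta$; in particular the hard direction (the upper bound on the exit probability) is obtained by contradiction, since a path exiting at cost below $\overline V-\eta$ from some $x_0\in\cB_\delta(x^*)$ would, once prolonged by a segment from $x^*$ to $x_0$, contradict the definition of $\overline V$. Arbitrary starting points are treated only through the preliminary reduction \eqref{eq:proof_exit_annulus}, which controls the event of lingering in $\cD\setminus\cB_\delta(x^*)$ without exiting; the statement \lq\lq for any $x_0\in\cD$\rq\rq\ has to be read through this reduction, i.e.\ for trajectories that have first relaxed to a neighbourhood of $x^*$, and not as the pointwise variational identity your proposal sets out to prove. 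To repair your argument you should therefore prove the two bounds uniformly for $x_0\in\cB_\delta(x^*)$, let $\delta\to0$, and use the relaxation step --- rather than attempt a lower bound on the cost of direct exit from a general $x_0$, which does not hold.
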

\begin{proof}
We will prove a slightly more quantitative result, which will be useful later 
on. 
\begin{enumerate}
\item 	The first step is to show that it is costly to remain outside a 
neighbourhood of $x^*$ for long. Indeed, let $\cB_\delta(x^*)$ be the ball of 
radius $\delta>0$ around $x^*$. One can show that there exist constants $c, T_0 
>0$ such that, for any path remaining in $\cD\setminus\cB_\delta(x^*)$ during a 
time interval $[0,T]$, one has $\cJ_{[0,T]}(\ph) \geqs c(T-T_0)$. This is due to 
the fact that $\norm{f}$ is bounded away from zero in 
$\cD\setminus\cB_\delta(x^*)$, and that the only way to keep the rate function 
small is to follow the deterministic dynamics that leads towards $x^*$ 
(cf.~\cite[Chapter~4, Lemma~2.2]{FW}). Therefore, we have for any $\delta>0$ 
\begin{equation}
\label{eq:proof_exit_annulus} 
 \lim_{T\to\infty} \limsup_{\eps\to0} 
 \eps\log \biggbrak{\sup_{x_0\in\cD} 
\Bigprobin{x_0}{X^\eps_t\in\cD\setminus\cB_\delta(x^*) \; \forall t\in[0,T]}}
= -\infty\;.
\end{equation} 
This justifies restricting the argument to starting points $x_0$ in the ball 
$\cB_\delta(x^*)$ of radius $\delta$ centred in $x^*$. 

\item 	The second step consists in showing that for any $\eta > 0$, there 
exists $\delta_0 > 0$ such that, for any $\delta < \delta_0$, there is a $T_0 > 
0$ such that for any $T\geqs T_0$, one has
\begin{equation}
\label{eq:proof_exit_lowerbound} 
 \liminf_{\eps\to0} \eps 
 \log \biggbrak{\inf_{x_0\in\cB_\delta(x^*)} 
 \bigprobin{x_0}{\tau^\eps \leqs T}}
 > -(\overline V + \eta)\;.
\end{equation} 
In view of the lower bound~\eqref{eq:LDP_lower_bound} in the large-deviation 
principle, it is sufficient to construct, for any $x_0\in\cB_\delta(x^*)$, a 
particular continuous path $\ph^*:[0,T]\to\R^n$ such that $\ph^*(0) = x_0$, 
$\ph^*(T)\in\partial\cD$, and $\cJ_{[0,T]}(\ph^*) \leqs \overline V + 
\eta$. To construct this path, let $y^*$ be a point on $\partial\cD$ where the 
quasipotential reaches its minimum (which exists by a compactness argument), 
and let $\ph_\infty$ be a path minimising the quasipotential in potentially 
infinite time. This path intersects the boundary of $\cB_\delta(x^*)$ at a point 
$x_1$ (\figref{fig:exit_path}). We may assume that $\ph_\infty$ is parametrised 
in such a way that $\ph_\infty(0) = y^*$ and $\ph_{\infty}(-T_1) = x_1$ for 
some $T_1 = T_1(\delta) > 0$. Then define for $T > T_1$ 
\begin{equation}
\ph^*(t) = 
\begin{cases}
x_0 + \dfrac{t}{T-T_1} (x_1 - x_0)
& \text{for $0 \leqs t \leqs T-T_1$\;,} \\
\ph_\infty(t-T)
& \text{for $T-T_1 \leqs t \leqs T$\;.}
\end{cases}
\end{equation}
The contribution of the first part of $\ph^*$ to the rate function goes to $0$ 
with $\delta$, while the contribution of the second part is smaller than 
$\overline V$. Therefore, for any $\eta>0$, one has indeed 
$\cJ_{[0,T]}(\ph^*) = \overline V + \eta$ if $\delta$ is small enough, as 
required. 

\begin{figure}
\begin{center}
\vspace{-18mm}
\begin{tikzpicture}[>=stealth',main node/.style={draw,circle,fill=white,minimum
size=3pt,inner sep=0pt},scale=1.1
]



\path[fill=blue!10] (-4,0) .. controls (-4,5) and (4,3) .. (4,1) -- 
(-4,0) .. controls (-4,-4) and (4,-2) .. (4,1);

\draw[thick,blue] (-4,0) .. controls (-4,5) and (4,3) .. (4,1);
\draw[thick,blue] (-4,0) .. controls (-4,-4) and (4,-2) .. (4,1);

\draw[semithick,violet,fill=violet!10] (0,0.6) circle (0.8);


\draw[semithick,orange] (0.4,0.2) -- (0.65,1.1);

\pgfmathsetseed{1}	
\draw[purple, thick] plot [domain=0:3.3,samples=80,smooth]
(\x,{0.6 + 0.5*\x + 0.2*sin(90*\x)});


\node[main node,thick] at (0.4,0.2) {}; 
\node[] at (0.1,0.2) {$x_0$};

\node[main node,thick] at (0,0.6) {}; 
\node[] at (-0.25,0.7) {$x^*$};

\node[main node,thick] at (3.28,{0.6 + 0.5*3.28 + 0.2*sin(90*3.28)}) {}; 
\node[] at (3.5,2.2) {$y^*$};

\node[main node,orange,thick,fill=white] at (0.64,1.08) {}; 
\node[orange] at (0.65,1.4) {$x_1$};

\node[blue] at (3.0,-1.3) {$\partial\cD$};
\node[blue] at (-2.5,2.0) {$\cD$};

\node[purple] at (1.8,1.3) {$\ph_\infty$};
\node[violet] at (-1,-0.3) {$\cB_\delta(x^*)$};

\node[orange]  at (1,1) {$\ph^*$};

\end{tikzpicture}
\vspace{-20mm}
\end{center}
\caption[]{Construction of the path $\ph^*$ in the proof of 
Proposition~\ref{prop:taueps_V}. The path is obtained by taking a 
path $\ph_\infty$ minimising the rate function, removing its part inside a ball 
$\cB_\delta(x^*)$, and adding a linear path connecting it to $x_0$.}
\label{fig:exit_path}
\end{figure}
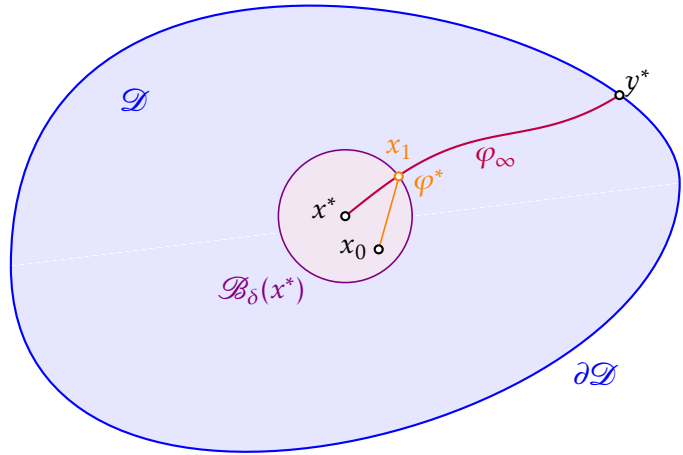

\item 	Next we argue that for any $\eta > 0$, there exists $\delta_0 > 0$ such 
that, for any $\delta < \delta_0$, there is a $T_0 > 0$ such that  
\begin{equation}
\label{eq:proof_exit_upperbound} 
 \limsup_{\eps\to0} \eps 
 \log \biggbrak{\sup_{x_0\in\cB_\delta(x^*)} 
 \bigprobin{x_0}{\tau^\eps \leqs T}}
 < -(\overline V - \eta)
\end{equation} 
holds for any $T\geqs T_0$. 
Indeed, assume that this is not the case. Then by the upper 
bound \eqref{eq:LPD_upper_bound} of the large-deviation principle, there would 
exist $x_0\in\cB_\delta(x^*)$ and a continuous path $\ph:[0,T]\to\R^n$ such 
that $\ph(0) = x_0$ and $\ph(t)\notin\cD$ for some $t\in[0,T]$, satisfying 
$\cJ_{[0,T]}(\ph) \leqs \overline V - \eta$. Adding a linear piece from $x^*$ 
to $x_0$ to this path, one would obtain a path with rate function smaller than 
$\overline V - \frac{\eta}{2}$ is $\delta$ is small enough, contradicting the 
definition of $\overline V$.
\end{enumerate}
Since $\eta>0$ was arbitrary, combining these three steps yields the claimed 
result.
%
\end{proof}

\begin{remark}
For the Ornstein--Uhlenbeck process, we can take $V(x^*,y)$ given 
by~\eqref{eq:quasipotential_reversible} with $U(x) = \frac12 
x^2$, so that we recover the result~\eqref{eq:OU_limit}. 
\end{remark}


\subsection{Mean exit times and exit locations}
\label{ssec:ldp_exit2} 

So far, we have described the exit problem on timescales independent of $\eps$. 
This is not quite satisfactory, however, since really interesting new phenomena 
appear in these systems for times that are exponentially large in $\eps$. One 
of the classical results in this context is the following one. 

\begin{theorem}[Exit from the neighbourhood of a stable equilibrium point]
\label{thm:exit} 
Let $\cD\subset\R^n$ be a bounded, open set with smooth boundary. 
Assume that the deterministic equation $\dot x = f(x)$ has a unique equilibrium 
point $x^* \in \cD$, that the closure $\overline\cD$ is contained in the 
deterministic basin of attraction of $x^*$, and that $\pscal{f(y)}{n(y)} < 0$ 
for any $y\in\partial\cD$, where $n(y)$ denotes the unit normal vector to the 
boundary at $y$. Then the following properties hold for any initial condition 
$x_0\in\cD$.
\begin{enumerate}
\item 	For any $\eta>0$, one has 
\begin{equation}
\lim_{\eps\to 0} \Bigprobin{x_0}{\e^{(\bar V - \eta)/\eps} < \tau^\eps 
< \e^{(\bar V + \eta)/\eps}}
= 1\;.
\end{equation} 
\item 	The mean first-exit time satisfies 
\begin{equation}
\label{eq:exit_domain_expect} 
 \lim_{\eps\to0} \eps\log\bigexpecin{x_0}{\tau^\eps} = \overline{V}\;.
\end{equation} 
\item 	For any closed subset $N\subset\partial\cD$ satisfying $\inf_{y\in 
N}V(x^*,y) > \overline V$, one has 
\begin{equation}
 \lim_{\eps\to 0} \bigprobin{x_0}{X^\eps_{\tau^\eps} \in N} = 0\;.
\end{equation} 
\end{enumerate}
\end{theorem}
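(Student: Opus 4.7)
The plan is to derive all three parts of the theorem from the finite-time estimates established in the proof of Proposition \ref{prop:taueps_V}, by combining them with a renewal/Markov chain argument on exponential timescales. The key observation is that the inequalities \eqref{eq:proof_exit_lowerbound} and \eqref{eq:proof_exit_upperbound} give, uniformly for $x_0 \in \cB_\delta(x^*)$ and $T$ large enough,
\begin{equation}
\e^{-(\overline V + \eta)/\eps} \leqs \bigprobin{x_0}{\tau^\eps \leqs T} \leqs \e^{-(\overline V - \eta)/\eps}\;,
\end{equation}
while \eqref{eq:proof_exit_annulus} ensures that when the process does not exit, it returns quickly to $\cB_\delta(x^*)$ with probability exponentially close to $1$.

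First I would prove the upper bound on $\tau^\eps$. Fix $\eta > 0$ and $T$ large enough that both estimates above hold, together with $\bigprobin{x_0}{X^\eps_T \in \cB_\delta(x^*)} \geqs \frac12$ for all $x_0 \in \overline\cD$ (a consequence of deterministic attraction combined with the LDP). Define stopping times $\sigma_0 = 0$ and $\sigma_{k+1} = \sigma_k + T$. By the strong Markov property at the times $\sigma_k$, each block gives an independent lower bound of $\frac12\e^{-(\overline V + \eta)/\eps}$ on the probability of exiting during that block, so the number of blocks needed is stochastically dominated by a geometric random variable. This yields
\begin{equation}
\bigprobin{x_0}{\tau^\eps > k T} \leqs \Bigbrak{1 - \tfrac12 \e^{-(\overline V + \eta)/\eps}}^k\;,
\end{equation}
from which $\bigexpecin{x_0}{\tau^\eps} \leqs C T \e^{(\overline V + \eta)/\eps}$ and $\bigprobin{x_0}{\tau^\eps > \e^{(\overline V + 2\eta)/\eps}} \to 0$ follow directly.

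Next I would handle the lower bound by iterating \eqref{eq:proof_exit_upperbound}. A union bound over $k$ blocks of length $T$ gives
\begin{equation}
\bigprobin{x_0}{\tau^\eps \leqs k T} \leqs k \e^{-(\overline V - \eta)/\eps}\;,
\end{equation}
so choosing $k = \intpart{\e^{(\overline V - 2\eta)/\eps}}$ shows that $\bigprobin{x_0}{\tau^\eps < \e^{(\overline V - 2\eta)/\eps}} \to 0$. Together with the upper bound, this yields part $1$ and, via $\bigexpecin{x_0}{\tau^\eps} \geqs \e^{(\overline V - 2\eta)/\eps}\bigprobin{x_0}{\tau^\eps \geqs \e^{(\overline V - 2\eta)/\eps}}$, also part $2$. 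For part $3$, the same renewal scheme applies but one compares exit probabilities through $N$ and through the boundary: the argument of \eqref{eq:proof_exit_upperbound} with $N$ in place of $\partial\cD$ gives an upper bound $\e^{-(V_N - \eta)/\eps}$ with $V_N = \inf_{y\in N} V(x^*,y) > \overline V$, while the lower bound for exit through a small neighbourhood of a minimiser $y^*$ remains of order $\e^{-(\overline V + \eta)/\eps}$. Summing over blocks and conditioning on exit, the ratio is bounded by $\e^{-(V_N - \overline V - 2\eta)/\eps} \to 0$.

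The main obstacle is to make the renewal argument fully rigorous, because the process is not literally reset between blocks: one must carefully control the dependence on the starting point of each block, using that \eqref{eq:proof_exit_lowerbound}--\eqref{eq:proof_exit_upperbound} are uniform over $x_0 \in \cB_\delta(x^*)$, together with the deterministic return property to bring the process back near $x^*$ before starting a new attempt. The cleanest way is to introduce the auxiliary stopping times $\theta_k$ defined inductively as the first time after $\sigma_k$ at which $X^\eps$ enters either $\cB_\delta(x^*)$ or $\partial\cD$, and use the strong Markov property at the $\theta_k$ together with \eqref{eq:proof_exit_annulus} to show that these times do not accumulate. Once this is set up, the probabilistic estimates above can be applied block by block without loss.
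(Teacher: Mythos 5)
Your overall strategy---reduce everything to the finite-time estimates \eqref{eq:proof_exit_lowerbound}, \eqref{eq:proof_exit_upperbound} and \eqref{eq:proof_exit_annulus} from the proof of Proposition~\ref{prop:taueps_V}, then run a geometric-trials argument over exponentially many attempts---is the same as the paper's, and your upper bound (the right halves of parts~1 and~2) is essentially the paper's argument, with the harmless extra step of forcing a return to $\cB_\delta(x^*)$ within each block. The genuine gap is in the lower bound. The union bound $\bigprobin{x_0}{\tau^\eps\leqs kT}\leqs k\,\e^{-(\overline V-\eta)/\eps}$ is not justified: applying the Markov property at the deterministic times $jT$ requires a bound on $\probin{x}{\tau^\eps\leqs T}$ that is uniform over all positions $x$ the surviving process may occupy at those times, i.e.\ over all of $\cD$, whereas \eqref{eq:proof_exit_upperbound} is uniform only over $\cB_\delta(x^*)$. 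For $x$ close to $\partial\cD$ the probability $\probin{x}{\tau^\eps\leqs T}$ is of order one, not exponentially small, since the cost of exiting from such an $x$ tends to zero as $x$ approaches the boundary; the same issue already affects the very first block, because the theorem allows any $x_0\in\cD$, not only $x_0\in\cB_\delta(x^*)$. Your proposed repair via the stopping times $\theta_k$ (first entry into $\cB_\delta(x^*)\cup\partial\cD$ after time $kT$) inherits the problem: on each stretch $[kT,\theta_k]$ the process starts from an uncontrolled point of $\cD$, and nothing in your scheme rules out an exit there at sub-exponential cost.

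The clean repair---and what the paper actually does---is to build the renewal structure from hitting times of two concentric spheres rather than from deterministic times: with $\sigma_m$ the successive hitting times of $\partial\cB_\delta(x^*)$ and $\tau_m$ the subsequent hitting times of $\partial\cB_{\delta/2}(x^*)\cup\partial\cD$, every attempt of the chain $Z_m=X^\eps_{\tau_m}$ starts on a sphere on which \eqref{eq:proof_exit_upperbound}, combined with \eqref{eq:proof_exit_annulus} to exclude excursions of duration $\geqs T$ in $\cD\setminus\cB_{\delta/2}(x^*)$, yields a uniform per-attempt exit probability $\leqs\e^{-(\overline V-\eta)/\eps}$; the initial stretch from a general $x_0\in\cD$ is handled separately by showing that the path reaches $\cB_{\delta/2}(x^*)$ before $\partial\cD$ with probability tending to one. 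For the lower bound on the expectation one also needs that each attempt takes at least a time $T_1>0$ on average, i.e.\ $\inf_{x\in\partial\cB_\delta(x^*)}\expecin{x}{\tau_1}\geqs T_1$, which again follows from \eqref{eq:proof_exit_annulus}; your time-block formulation would have made this automatic, but only at the price of the invalid per-block bound. Once this excursion structure is in place, your sketches of part~1 and of part~3 (comparing the per-attempt probability of exiting through $N$, of order $\e^{-(V_N-\eta)/\eps}$, with the per-attempt probability of exiting at all, of order at least $\e^{-(\overline V+\eta)/\eps}$) do go through; note that the paper itself only sketches part~2.
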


Let us briefly comment on the meaning of these assertions.
\begin{enumerate}
\item 	The first claim means that $\tau^\eps$ concentrates around 
$\e^{\overline{V}/\eps}$, albeit in a rather weak sense, since there remain 
exponentially large windows on either side of this value.
\item 	The second claim says that the mean first-exit time behaves like 
$\e^{\overline{V}/\eps}$ in the sense of logarithmic equivalence, a property 
known as Arrhenius' law. 
\item 	The third claim says that the diffusion is likely to exit $\cD$ near 
those points on its boundary where the quasipotential is the smallest. 
\end{enumerate}

\begin{proof}[\Sketch]
We will give a proof of~\eqref{eq:exit_domain_expect}, which is the 
most intuitive part of the result. It is based on the idea that the 
probabilities to exit during time intervals $[mT,(m+1)T]$ follow 
approximately a  geometric law with probability of success 
$\probin{x^*}{\tau^\eps < T}$, which has expectation $(\probin{x^*}{\tau^\eps < 
T})^{-1}$. 

\begin{figure}
\begin{center}
\vspace{-18mm}
\begin{tikzpicture}[>=stealth',main node/.style={draw,circle,fill=white,minimum
size=3pt,inner sep=0pt},scale=1.1
]


\path[fill=blue!10] (-4,0) .. controls (-4,5) and (4,3) .. (4,1) -- 
(-4,0) .. controls (-4,-4) and (4,-2) .. (4,1);

\draw[thick,blue] (-4,0) .. controls (-4,5) and (4,3) .. (4,1);
\draw[thick,blue] (-4,0) .. controls (-4,-4) and (4,-2) .. (4,1);

\draw[semithick,violet,fill=violet!10] (0,0.6) circle (1);
\draw[semithick,teal,fill=teal!10] (0,0.6) circle (0.56);


\pgfmathsetseed{1}	
\draw[red, thick] (0.4,0.2) -- plot [domain=0:2.25,samples=80,smooth]
({0.4 + (\x + 0.8*sin(200*\x))*cos(180*\x) +0.12*rand},{0.2 + (\x + 
0.8*sin(200*\x))*sin(180*\x)+0.12*rand});


\node[main node,thick] at (0.4,0.2) {}; 
\node[] at (0.15,0.3) {$x_0$};

\node[main node,thick] at (0,0.6) {}; 
\node[] at (-0.1,0.75) {$x^*$};

\node[main node,thick] at (2.7,2.4) {}; 
\node[] at (3,2.6) {$X^\eps_{\tau^\eps}$};

\node[main node,thick] at (1,0.6) {}; 
\node[] at (1.4,0.55) {$X^\eps_{\sigma_0}$};

\draw[semithick] (-0.85,1.75) -- (-0.25,1.1);
\node[main node,thick] at (-0.25,1.1) {}; 
\node[] at (-1.15,1.85) {$X^\eps_{\tau_1}$};

\node[main node,thick] at (0.3,-0.35) {}; 
\node[] at (0.7,-0.5) {$X^\eps_{\sigma_1}$};

\node[blue] at (3.0,-1.3) {$\partial\cD$};
\node[blue] at (-2.5,2.0) {$\cD$};

\node[teal] at (-1.9,0.3) {$\cB_{\delta/2}(x^*)$};
\draw[semithick,teal] (-1.2,0.3) -- (-0.55,0.5);
\node[violet] at (-1,-0.5) {$\cB_\delta(x^*)$};


\end{tikzpicture}
\vspace{-20mm}
\end{center}
\caption[]{Construction of the Markov chain $(Z_m)_{m\geqs0} = 
(X^\eps_{\tau_m})_{m\geqs0}$ used in the proof of Theorem~\ref{thm:exit}.}
\label{fig:exit_expect}
\end{figure}

Fix some $T>0$. For any $x_0\in\cD$ and any $m\in\N_0$, we have
\begin{align}
\bigprobin{x_0}{\tau^\eps > (m+1)T}
&= \Bigexpecin{x_0}{\indexfct{\tau^\eps > mT} 
\bigprobin{X^\eps_{mT}}{\tau^\eps > T}} \\
&\leqs \biggpar{\sup_{x_1\in\cD} \bigprobin{x_1}{\tau^\eps > T}} \; 
\bigprobin{x_0}{\tau^\eps > mT}\;.
\end{align}
It thus follows by induction that for all $m\geqs0$, 
\begin{equation}
 \bigprobin{x_0}{\tau^\eps > mT} 
 \leqs p^m\;, \qquad 
 \text{where }p = \sup_{x_1\in\cD} \bigprobin{x_1}{\tau^\eps > T}\;.
\end{equation} 
Therefore, integration by parts shows that 
\begin{align}
\bigexpecin{x_0}{\tau^\eps}
= \int_0^\infty \bigprobin{x_0}{\tau^\eps > t}\6t 
\leqs T \sum_{m=0}^\infty \bigprobin{x_0}{\tau^\eps > mT} 
\leqs T \sum_{m=0}^\infty p^m
= \frac{T}{1-p}\;.
\end{align}
Note that $p$ is the complement of the probability described in 
Proposition~\ref{prop:taueps_V}. In particular, \eqref{eq:proof_exit_lowerbound} 
shows that for any $\eta>0$, there exists $\eps_0>0$ such that for any $\eps < 
\eps_0$, one has 
\begin{equation}
 \eps\log(1 - p) \geqs -(\overline V + \eta)
\end{equation} 
if $\delta$ is small enough. Since $\eta > 0$ is arbitrary, this proves  
the upper bound for~\eqref{eq:exit_domain_expect}. 

This argument does not work directly for the lower bound on the expectation, 
because $\bigprobin{x_1}{\tau^\eps > T}$ is not bounded below uniformly in 
$\cD$. Instead, we introduce a discrete-time Markov chain defined as follows. 
Set $\tau_0 = 0$, and define an increasing sequence of stopping times $\tau_0 < 
\sigma_1 < \tau_1 < \dots$ by 
\begin{align}
 \sigma_m &= \inf\bigsetsuch{t > \tau_m}{X^\eps_t \in 
\partial\cB_\delta(x^*)}\;, \\
 \tau_m &= \inf\bigsetsuch{t > \sigma_{m-1}}{X^\eps_t \in 
\partial\cB_{\delta/2}(x^*)\cup\partial\cD}
\end{align}
(\figref{fig:exit_expect}). 
The process $(Z_m)_{m\geqs0}$ defined by $Z_m = X^\eps_{\tau_m}$ is a 
discrete-time Markov chain on the set 
$\partial\cB_{\delta/2}(x^*)\cup\partial\cD$, and we have $\tau^\eps = 
\tau_\nu$, where $\nu = \inf\setsuch{m\geqs1}{Z_m\in\partial\cD}$. 
Now we note that for $\norm{x_0 -x^*} \leqs \frac{\delta}{2}$, one has 
\begin{align}
\bigexpecin{x_0}{\tau^\eps}
&= \biggexpecin{x_0}{\sum_{m=1}^{\nu-1} (\tau_{m+1} - \tau_m)} 
\geqs \biggexpecin{x_0}{\sum_{m=1}^{\nu-1} (\tau_{m+1} - \sigma_m)} \\
&\geqs \bigexpecin{x_0}{\nu}\,
\inf_{x\in\partial\cB_\delta(x^*)}\bigexpecin{x}{\tau_1} \;.
\end{align}
The infimum is bounded below by a constant $T_1$ independent of $\eps$ thanks 
to~\eqref{eq:proof_exit_annulus}. To estimate $\bigexpecin{x_0}{\nu}$, we note 
that for any $T>0$, 
\begin{align}
\bigprobin{x_0}{\nu = 1} 
&= \bigprobin{x_0}{\tau_1 = \tau^\eps} \\
&= \bigprobin{x_0}{\tau_1 = \tau^\eps, \tau_1 < T}
+ \bigprobin{x_0}{\tau_1 = \tau^\eps, \tau_1 \geqs T}\;.
\end{align}
By~\eqref{eq:proof_exit_upperbound}, for any $\eta > 0$ the first term on the 
right-hand side can be bounded above by $\e^{-(\overline V-\eta)/\eps}$ if 
$\delta$ is small enough and $T$ is large enough. The second term can be made 
smaller than the first one by taking $T$ large enough, again thanks 
to~\eqref{eq:proof_exit_annulus}. It follows that 
\begin{equation}
 \bigexpecin{x_0}{\tau^\eps} 
 \geqs T_1 \sum_{m=0}^\infty \biggpar{1 - 
\sup_{x_0\in\cB_{\delta/2}(x^*)} \bigprobin{x_0}{\nu = 1}}^m 
 \leqs T_1 \e^{(\overline V-\eta)/\eps}\;,
\end{equation} 
proving the lower bound for starting points in $\cB_{\delta/2}(x^*)$. 
The result can be extended to general starting points in $\cD$ by using the 
lower bound 
\begin{equation}
 \bigexpecin{x_0}{\tau^\eps} \geqs 
\bigexpecin{x_0}{\indexfct{\tau_\eps > 
\tau_1}\bigexpecin{X^\eps_{\tau_1}}{\tau^\eps}}
\geqs \bigprobin{x_0}{\tau_\eps > \tau_1} 
\inf_{x_1\in\partial\cB_\delta(x^*)} \bigexpecin{x_1}{\tau^\eps}
\end{equation}
and the fact that $\bigprobin{x_0}{\tau_\eps > \tau_1}$ has order $1$. 
\end{proof}

\begin{figure}
\begin{center}
\begin{tikzpicture}[>=stealth',main node/.style={draw,circle,fill=white,minimum
size=2.5pt,inner sep=0pt},scale=0.65,
]


\path[ultra thin,fill=teal!20] (-1,4) .. controls (-1,3) and (-0.3,1) .. (0,0)
.. controls (0.3,-1) and (1,-3) .. (1,-4) -- (-7,-4) -- (-7,4);
\path[fill=green!15] (-1,4) .. controls (-1,3) and (-0.3,1) .. (0,0)
.. controls (0.3,-1) and (1,-3) .. (1,-4) -- (9,-4) -- (9,4);



\draw[thick] (-1,4) .. controls (-1,3) and (-0.3,1) .. (0,0)
\arrowonline{0.5};
\draw[thick] (1,-4) .. controls (1,-3) and (0.3,-1) .. (0,0)
\arrowonlineback{0.5};


\draw[green!50!black,thick] plot[smooth cycle,tension=1]
  coordinates{(-5,0) (-2,0) (-2.2,-2) (-5.2,-2)};


  

\draw[semithick] (0,0) .. controls (-1,-3) and (-4,-2.85) .. (-5,-2.25) 
\arrowonlinebacksemi{0.4};


\draw[semithick] (0,0) .. controls (1,3) and (3,3) .. (5,2.5)
\arrowonlinesemi{0.4};


\draw[semithick] (1.1,-4) .. controls (0.8,-2.1) and (2,2.5) .. (5,2.5)
\arrowonlinesemi{0.4};


\draw[semithick,->] (-3.5,-1) .. controls (-4,0) and (-2.8,0.4) .. (-2,-0.2);
\draw[semithick,->] (-3.5,-1) .. controls (-3,-2) and (-4.4,-2.4) ..
(-5.2,-1.8);




\node[main node] at (0,0) {};
\node[main node] at (-3.5,-1) {};

\node[main node,green!50!black,thick,fill=white] at (5,2.5) {};


\node[] at (-4.3,0.7) {$\cP$};
\node[] at (5.5,2.7) {$x^*$};
\node[] at (0.5,-0.1) {$z^*$};

\end{tikzpicture}
\vspace{-2mm}
\end{center}
\caption[]{Example of a two-dimensional vector field with two attractors (an
equilibrium point $x^*$ and a stable periodic orbit $\cP$). Their basins of 
attraction are shaded in different colours. When weak noise is added to the 
system, solutions spend long times in the neighourhood of attractors, and make
occasional transitions between attractors.  
}
\label{fig:attractors}
\end{figure}
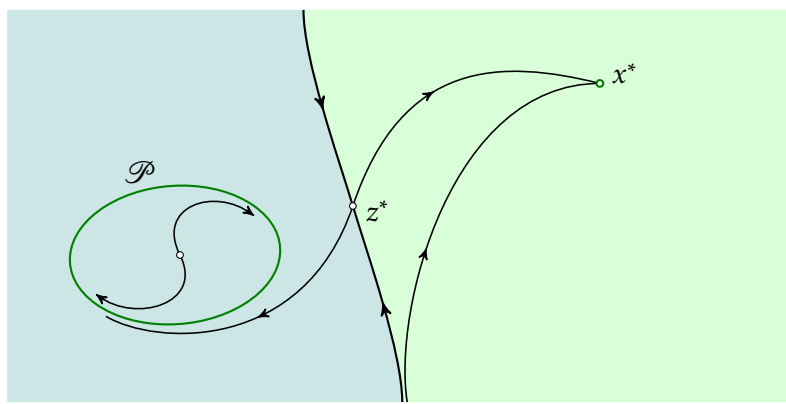

One major limitation of Theorem~\ref{thm:exit} is the condition that orbits 
starting on $\partial\cD$ are attracted by the equilibrium point $x^*$ (the 
condition on the vector field pointing inward on the boundary of $\cD$ is 
actually not essential for this result to hold). It is of great interest to 
extend this result to situations with multiple attractors, as illustrated in 
\figref{fig:attractors}. Theorem~\ref{thm:exit} cannot be applied, for 
instance, to describe transitions from the stable equilibrium $x^*$ to the 
stable periodic orbit $\cP$, or the other way, because this would mean 
leaving a set having the saddle point $z^*$ on its boundary. 

There exists a theory, discussed in~\cite[Chapter~6]{FW}, dealing with 
such situations in great generality, by approximating the dynamics with a 
suitable Markov chain. This theory is quite involved, and here we will only 
outline the argument in a relatively simple bistable system, as shown 
in~\figref{fig:attractors}. In particular, one would like to show that the 
first-hitting time $\tau^\eps$ of a neighbourhood of $\cP$ satisfies 
\begin{align}
\label{eq:expec_exit_bistable} 
 \lim_{\eps\to0} \eps\log\bigexpecin{x^*}{\tau^\eps} 
 &= \overline{V} \\
 &= \inf_{T>0} \, \inf\Bigsetsuch{\cJ_{[0,T]}(\ph)}{\ph\in\cC([0,t],\R^n), 
\ph(0)=x^*, \ph(t)=z^*}\;.
\end{align} 
An estimate that turns out to be useful to show such a result is the following 
\lq\lq three-set argument\rq\rq\ from~\cite[Corrollary~5.8]{BG12a}.
For sets $A, B, C\subset \R^n$, we denote by 
\begin{equation}
 \tau^+_A = \inf\setsuch{t>0}{X^\eps_t\in A}
\end{equation} 
the first-hitting time of $A$, and we write
\begin{equation}
 \bigprobin{A}{\tau^+_B < \tau^+_C} 
 = \sup_{x\in A} \bigprobin{x}{\tau^+_B < \tau^+_C}\;, \qquad 
 \bigexpecin{A}{\tau^+_B} 
 = \sup_{x\in A} \bigexpecin{x}{\tau^+_B}\;.  
\end{equation} 
Then we have the following simple estimate. 

\begin{lemma}[Three-set argument]
Let $A,B,C\subset\R^n$ be pairwise disjoint sets with smooth boundary. 
If $\bigprobin{A}{\tau^+_C < \tau^+_B} < 1$, then 
\begin{equation}
 \bigexpecin{A}{\tau^+_B} 
 \leqs \frac{\bigexpecin{A}{\tau^+_{B\cup C}} + 
 \bigprobin{A}{\tau^+_C < \tau^+_B} \bigexpecin{\partial C}{\tau^+_{A\cup B}}}
 {1 - \bigprobin{A}{\tau^+_C < \tau^+_B}}\;.
\end{equation} 
\end{lemma}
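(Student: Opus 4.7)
The plan is to apply the strong Markov property twice, once at the first-hitting time of $B\cup C$ starting from $A$, and once at the first-hitting time of $A\cup B$ starting from $\partial C$, to obtain a self-referential inequality for the quantity one wishes to bound.

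First, I would fix $x\in A$ and decompose $\tau^+_B$ according to whether $\tau^+_B < \tau^+_C$ or $\tau^+_C < \tau^+_B$. On the first event one has $\tau^+_B = \tau^+_{B\cup C}$, while on the second event one has $\tau^+_C = \tau^+_{B\cup C}$ and hence $\tau^+_B = \tau^+_{B\cup C} + (\tau^+_B - \tau^+_C)$. Adding the two contributions yields
\begin{equation}
\bigexpecin{x}{\tau^+_B} = \bigexpecin{x}{\tau^+_{B\cup C}} + \bigexpecin{x}{(\tau^+_B - \tau^+_C)\indicator{\tau^+_C < \tau^+_B}}\;.
\end{equation}
By the strong Markov property at the stopping time $\tau^+_C$, the last term equals $\bigexpecin{x}{\indicator{\tau^+_C<\tau^+_B}\bigexpecin{X_{\tau^+_C}}{\tau^+_B}}$, and bounding the inner expectation by its supremum over $\partial C$ gives
\begin{equation}
 \bigexpecin{A}{\tau^+_B} \leqs \bigexpecin{A}{\tau^+_{B\cup C}} + \bigprobin{A}{\tau^+_C < \tau^+_B}\,\bigexpecin{\partial C}{\tau^+_B}\;.
\end{equation}

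Next I would apply the same decomposition scheme starting from $y\in\partial C$, but splitting $\tau^+_B$ according to whether $\tau^+_A < \tau^+_B$ or $\tau^+_B < \tau^+_A$. The strong Markov property at $\tau^+_A$ then yields
\begin{equation}
 \bigexpecin{y}{\tau^+_B} \leqs \bigexpecin{y}{\tau^+_{A\cup B}} + \bigprobin{y}{\tau^+_A<\tau^+_B}\,\bigexpecin{A}{\tau^+_B}\;.
\end{equation}
Since the probability is at most $1$, taking the supremum over $y\in\partial C$ produces $\bigexpecin{\partial C}{\tau^+_B} \leqs \bigexpecin{\partial C}{\tau^+_{A\cup B}} + \bigexpecin{A}{\tau^+_B}$. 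Substituting this into the first inequality, writing $p = \bigprobin{A}{\tau^+_C<\tau^+_B}$, and collecting the terms involving $\bigexpecin{A}{\tau^+_B}$ on the left-hand side gives $(1-p)\bigexpecin{A}{\tau^+_B} \leqs \bigexpecin{A}{\tau^+_{B\cup C}} + p\,\bigexpecin{\partial C}{\tau^+_{A\cup B}}$. Dividing by $1-p>0$, which is permitted by the hypothesis, yields the claimed bound.

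There is no real obstacle in this argument beyond careful bookkeeping of the decompositions and an honest application of the strong Markov property; one only needs to check that the identities $\tau^+_B = \tau^+_{B\cup C}$ on $\{\tau^+_B<\tau^+_C\}$ (and the analogous identity for the second step) are used correctly so that no time is double-counted or omitted. The smooth boundary assumption on $A$, $B$, $C$ serves to ensure that the first-hitting times are genuine stopping times to which the strong Markov property applies, and that the auxiliary expectations and probabilities are well-defined.
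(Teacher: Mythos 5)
Your proof is correct and follows essentially the same route as the paper: the first inequality is obtained by splitting at $\tau^+_{B\cup C}$ and applying the strong Markov property at $\tau^+_C$, the second by the symmetric decomposition from $\partial C$ (with the probability factor dropped), and the bound follows by substituting the second inequality into the first and solving for $\bigexpecin{A}{\tau^+_B}$. The only stylistic difference is that the paper writes the first decomposition directly as $\tau^+_B\indexfct{\tau^+_B<\tau^+_C}+\tau^+_B\indexfct{\tau^+_C<\tau^+_B}$ and then inserts $\tau^+_C+\bigexpecin{X_{\tau^+_C}}{\tau^+_B}$ on the second event, whereas you isolate $\tau^+_{B\cup C}$ first and then add the overshoot; these are identical computations.
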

\begin{proof}
For any $x\notin(B\cup C)$, we have 
\begin{align}
 \bigexpecin{x}{\tau^+_B} 
 &= \bigexpecin{x}{\indexfct{\tau^+_B < \tau^+_C}\tau^+_B} + 
 \bigexpecin{x}{\indexfct{\tau^+_C < \tau^+_B}\tau^+_B} \\
 &= \bigexpecin{x}{\indexfct{\tau^+_B < \tau^+_C}\tau^+_B}
 + \Bigexpecin{x}{\indexfct{\tau^+_C < \tau^+_B}\bigpar{\tau^+_C + 
\bigexpecin{X^\eps_{\tau^+_C}}{\tau^+_B}}} \\
 &= \bigexpecin{x}{\tau^+_{B\cup C}} 
 + \Bigexpecin{x}{\indexfct{\tau^+_C < \tau^+_B} 
\bigexpecin{X^\eps_{\tau^+_C}}{\tau^+_B}}\;,
\end{align}
because $\tau^+_B\wedge\tau^+_C = \tau^+_{B\cup C}$. Since sample paths are 
continuous, we have $X^\eps_{\tau^+_C} \in \partial C$, so that   
\begin{equation}
\label{eq:threeset1} 
 \bigexpecin{A}{\tau^+_B}
 \leqs \bigexpecin{A}{\tau^+_{B\cup C}}
 + \bigprobin{A}{\tau^+_C < \tau^+_B} \bigexpecin{\partial C}{\tau^+_B}\;.
\end{equation} 
Exchanging the roles of $A$ and $\partial C$, we get 
\begin{equation}
\label{eq:threeset2} 
 \bigexpecin{\partial C}{\tau^+_B}
 \leqs \bigexpecin{\partial C}{\tau^+_{A\cup B}}
 + \bigprobin{\partial C}{\tau^+_A < \tau^+_B} \bigexpecin{A}{\tau^+_B}
 \leqs \bigexpecin{\partial C}{\tau^+_{A\cup B}}
 + \bigexpecin{A}{\tau^+_B}\;.
\end{equation} 
Substituting~\eqref{eq:threeset2} in~\eqref{eq:threeset1} and solving for 
$\bigexpecin{A}{\tau^+_B}$ gives the claimed result. 
\end{proof}

Let us outline how this result is useful to prove the upper bound 
in~\eqref{eq:expec_exit_bistable}. Let $A$ and $B$ be open sets containing 
respectively $x^*$ and $\cP$, and let $C$ be the complement of a sufficiently 
large bounded set, containing both $A$ and $B$. We further assume that there 
are no attractors for the deterministic dynamics in $C$. Then the 
following holds. 
\begin{enumerate}
\item 	The probability $\bigprobin{A}{\tau^+_C < \tau^+_B}$ can be shown to be 
smaller than, say, $\frac12$, if the quasipotential on $\partial C$ is large 
enough. This follows from an argument of the same type as in the proof of 
Theorem~\ref{thm:exit} (see~\cite[Proposition~5.14]{BG12a}). 
\item	The expectation $\bigexpecin{\partial C}{\tau^+_{A\cup B}}$ can be 
shown to be at most of order $\log(\eps^{-1})$, by using the fact that sample 
paths are likely to remain close to the deterministic orbit starting at the 
same point. The logarithmic behaviour is due to the time possibly spent near 
the saddle point. 
\item 	It follows that $\bigexpecin{A}{\tau^+_B}$ behaves like 
\begin{equation}
 \bigexpecin{A}{\tau^+_{B\cup C}} 
 = \bigexpecin{A}{\tau_{(B\cup C)^c}}
 \leqs \bigexpecin{C^c}{\tau^+_{(B\cup C)^c}}\;,
\end{equation} 
which can be estimated by a Markov chain agument, as in the proof of 
Theorem~\ref{thm:exit}. 
\end{enumerate}

\begin{remark}
The assumption that $C$ does not contain any attractor is important to control 
the value of $\bigexpecin{\partial C}{\tau^+_{A\cup B}}$. Otherwise, there are 
cases where $C$ contains an attractor that is much more stable than those in 
$A$ and $B$, completely dominating the expectation of $\tau^+_{A\cup B}$ and 
$\tau^+_B$. 
\end{remark}

\begin{remark}[Eyring--Kramers law]
In the gradient case 
\begin{equation}
 \6X_t = -\nabla U(X_t)\6t + \sqrt{\eps} \6W_t\;,
\end{equation} 
the bistable situation occurs when $U$ has exactly two local minima $x^*$ and 
$y^*$, separated by a saddle point $z^*$. As pointed out in 
Example~\ref{ex:gradient_SDE_LDP}, the mean transition time from $x^*$ to $y^*$ 
obeys the Arrhenius law 
\begin{equation}
 \lim_{\eps\to0} \eps\log \bigexpecin{x^*}{\tau^+_{\cB_\delta(y^*)}}
 = 2 \bigbrak{U(z^*) - U(x^*)}\;.
\end{equation} 
However, using more sophisticated techniques, such as potential 
theoretic-methods relying on the links between PDEs and exit times seen in 
Chapter~\ref{chap:SDE_PDE}, one can prove the so-called Eyring--Kramers law 
\begin{equation}
 \bigexpecin{x^*}{\tau^+_{\cB_\delta(y^*)}} 
 = \frac{2\pi}{\abs{\lambda_-(z^*)}}
 \sqrt{\frac{\abs{\det\Hess U(z^*)}}{\det\Hess U(x^*)}}
 \e^{2 [U(z^*) - U(x^*)]/\eps} 
 \bigbrak{1 + \Order{\eps}}\;,
\end{equation} 
where $\Hess U(x)$ denotes the Hessian matrix of $U$ at $x$, and 
$\lambda_-(z^*)$ denotes the unique negative eigenvalue of $\Hess U(z^*)$. 
See for instance~\cite{Berglund_irs_MPRF} for an overview, 
and~\cite{Bovier_denHollander_book} for a comprehensive account of the 
potential-theoretic approach allowing to prove this result. 
See~\cite{Bouchet_Reygner_16,Landim_Mariani_Seo_17} for some extensions to 
non-gradient cases. 
\end{remark}


\cleardoublepage
\phantomsection
\bibliographystyle{alpha}
\addcontentsline{toc}{chapter}{Bibliography}
\bibliography{KESM}

\vfill

\bigskip\bigskip\noindent
{\small
Nils Berglund \\
Institut Denis Poisson (IDP) \\ 
Universite d'Orleans, Universite de Tours, CNRS -- UMR 7013 \\
B\^atiment de Mathematiques, B.P. 6759\\
45067~Orleans Cedex 2, France \\
{\it E-mail address: }
{\tt nils.berglund@univ-orleans.fr} \\
{\tt https://www.idpoisson.fr/berglund} 

\end{document}